\let\cal\mathcal
\newtheorem{theorem}{Theorem}[section]
\newtheorem{remark}[theorem]{Remark}
\newtheorem{lemma}[theorem]{Lemma}
\newtheorem{corollary}[theorem]{Corollary}
\newtheorem{example}[theorem]{Example}
\newtheorem{definition}[theorem]{Definition}
\newtheorem{proposition}[theorem]{Proposition}
\numberwithin{equation}{section}
\def\span{\operatorname{span}}
\def\supp{\operatorname{supp}}
\def\qed{\hfill $\square$}
\def\D{\mathbb{D}}
\def\C{\mathbb C}
\def\O{\Omega}
\newcommand\al{\alpha}
\renewcommand\a{\alpha}
\newcommand\gam{\gamma}
\def\norm#1{\| #1 \|}
\newcommand\T{\mathbb{T}}
\newcommand\be{\begin{equation}}
\newcommand\ee{\end{equation}}
\newcommand\ov{\overline}
\newcommand\half{{\tfrac 12}}
\newcommand\calh{\mathcal{H}}
\newcommand\calk{\mathcal{K}}
\newcommand\calm{\mathcal{M}}
\newcommand\calp{\mathcal{P}}
\newcommand\cala{\mathcal{A}}
\newcommand\calb{\mathcal{B}}
\newcommand\cals{\mathcal{S}}
\newcommand\calv{\mathcal{V}}
\newcommand\calw{\mathcal{W}}
\newcommand\lam{\lambda}
\newcommand\inv{^{-1}}
\newcommand{\ip}[2]{\left\langle #1, #2 \right\rangle}
\newcommand\black{\color{black}}
\renewcommand\phi{\varphi}
\newcommand\fa{\mbox{ for all }}
\newcommand\bbm{\begin{bmatrix}}
\newcommand\ebm{\end{bmatrix}}
\newcommand\bpm{\begin{pmatrix}}
\newcommand\epm{\end{pmatrix}}
\newcommand{\threepartdef}[6]
{
	\left\{
	\begin{array}{lll}
		#1 & \mbox{ if } #2 \\
		#3 & \mbox{ } #4 \\
		#5 & \mbox{ if } #6
	\end{array}
	\right.
}
\renewcommand\vec[2]{\begin{pmatrix} #1 \\#2 \end{pmatrix} }
\begin{document}

\title[Singularities of functions]{A Hilbert space approach to singularities of
	functions}

\author{Jim Agler}
\address{Department of Mathematics, University of California at San Diego, CA \textup{92103}, USA}
%\curraddr{}
\email{jagler@ucsd.edu}

\author{Zinaida A. Lykova}
\address{School of Mathematics,  Statistics and Physics, Newcastle University, Newcastle upon Tyne
	NE\textup{1} \textup{7}RU, U.K.}
%\curraddr{}
\email{Zinaida.Lykova@ncl.ac.uk}

%    author three information
\author{N. J. Young}
\address{School of Mathematics, Statistics and Physics, Newcastle University, Newcastle upon Tyne NE1 7RU, U.K.
	{\em and} School of Mathematics, Leeds University,  Leeds LS2 9JT, U.K.}
%\curraddr{}
\email{Nicholas.Young@ncl.ac.uk}
%    \date is required; it is the date received by the editor.

\date{20th December 2022}

\subjclass[2020]{46E22, 47B38, 46E20}

\keywords{Reproducing kernel Hilbert space, Hilbert function space, multipliers}

\thanks{Partially supported by National Science Foundation Grants
	DMS 1361720 and 1665260, a Newcastle URC Visiting Professorship, the Engineering and Physical Sciences Research Council grant EP/N03242X/1 and London Mathematical Society Grant 42013}

\begin{abstract} 
	We introduce the notion of a {\em pseudomultiplier} of a Hilbert space $\cal H$ of functions on a set $\Omega$.  Roughly, a pseudomultiplier of $\calh$ is a function which multiplies a finite-codimensional subspace of $\cal H$ into $\cal H$, where we allow the possibility that a pseudomultiplier is not defined on all of $\Omega$.  A pseudomultiplier of $\calh$ has {\em singularities}, which comprise a subspace of $\mathcal H$, and generalize the concept of singularities of an analytic function, even though the elements of $\mathcal H$ need not enjoy any sort of analyticity.
We analyse the natures of these singularities, and obtain a broad classification of them in function-theoretic terms.
\end{abstract} 

\maketitle
\tableofcontents   
\black

\section{Introduction}\label{intro} 

Many different mathematical structures have been brought
to bear on the study of functions.  In the last half-century
notable theories which have yielded insights into
function-theoretic problems include those of locally convex
spaces, ordered vector spaces, commutative Banach
algebras and the geometry of Banach spaces.  Perhaps the
richest structure of all is that of Hilbert spaces; when a
function-theoretic problem can be formulated in terms of
Hilbert space geometry one can have high hopes of making
progress on it \cite{amy}.  Accordingly, there has been a substantial
development of the theory of Hilbert function spaces (also
called reproducing kernel Hilbert spaces and functional
Hilbert spaces) \cite{Ar}, \cite{Sai}.   In this paper we approach singularities of functions from
a Hilbert space perspective.

By a {\em Hilbert function space on a set $\Omega$} we mean a Hilbert space $\calh$ whose elements are
complex-valued functions on $\Omega$ such that, for each $\lam\in\Omega$, the linear functional $f \mapsto f(\lam)$ is continuous on $\calh$.  It follows that, for each $\lam\in\Omega$ there exists an element of $\calh$, which we shall denote by $k_\lam$, such that $f(\lam)=\ip{f}{k_\lam}$ for every $f\in\calh$.  We call $k_\lam$ the {\em kernel} corresponding to $\lam$, and we call the function $k:\Omega\times\Omega \to \C$ given by
\[
k(\mu,\lam)= k_\lam(\mu) \quad \fa \lam,\mu \in \Omega
\]
the {\em reproducing kernel} of $\calh$.

Even when we study problems which are not framed in Hilbert space terms we may still be able to use Hilbert space methods.  This seminal idea is
the basis of a substantial body of work stemming from
papers by Sarason \cite{S} and Adamyan, Arov and Krein
\cite{AAK} which has had a strong influence not only in
analysis but also in control theory \cite{G}.  In these papers
bounded analytic functions are analysed and constructed in
terms of their action as multipliers on the Hardy space
$H^2$. 
See the beginning of Section 2 for a definition of $H^2$.

Both the elements of $H^2$ and the multipliers of $H^2$
 are functions possessing a high degree of
regularity, and one might expect that the same
assertion will be true of most Hilbert function spaces one is
likely to encounter, and that therefore operator-theoretic
methods should be well adapted to problems involving
highly regular functions, but of less use for the study of
singularities of functions.  However, the cited paper of
Adamyan, Arov and Krein and its forerunner by Akhiezer
\cite{Ak} shows that this is not entirely true.  One can
construct meromorphic functions in the disc with a
prescribed number of poles by the spectral analysis of
operators on $H^2$.  In this approach a function with
singularities determines a multiplier not on the whole of
$H^2$ but on a closed subspace of finite codimension; such an object is called a {\em pseudomultiplier} of the Hilbert space in question.  The
power of operator theory in analysing functions with
singularities is thus far from vacuous, and we are led to
wonder about its potential and its limitations.

In \cite{AY1}  the domain of definition
of a pseudomultiplier, its
associated ``Pick kernel", and the particular case of Hilbert
spaces of analytic functions of one variable were studied.
The paper \cite{AY2} is
devoted to a key result of Adamyan, Arov and Krein and 
the extent to which it can have analogues for other function
spaces.  Pseudomultipliers also
appear naturally in connection with nearly invariant subspaces for backward shift operators on vector-valued Hardy spaces (see \cite{CCP}). 

In this paper we will develop the theory of pseudomultipliers and local subspaces.
The main notions and results are presented in the overview of the paper and illustrated by examples.

We are very grateful to a helpful referee who made numerous discerning comments and suggestions.

\section{Overview of the paper}\label{overview}

Before giving a formal definition of a pseudomultiplier, let us consider some archetypal examples.
The Hardy space $H^2$ is the space of analytic functions $f$ on the open unit disc $\D$ such that
\[
\sup_{0<r<1} \int_0^{2\pi} |f(re^{i\theta})|^2 \ d\theta < \infty.
\]
When endowed with pointwise addition and scalar multiplication and the inner product
\[
\ip{f}{g} = \lim_{r\to 1-} \frac{1}{2\pi} \int_0^{2\pi} f(re^{i\theta}) \overline{g(re^{i\theta})} \ d\theta
\]
$H^2$ is a Hilbert function space, whose reproducing kernel is the {\em Szeg\H{o} kernel $k$}, defined by
\[
k(\lam,\mu)= \frac{1}{1-\bar\mu\lam} \quad \fa \lam,\mu\in\D.
\]
An elementary introduction to $H^2$ can be found in \cite[Chapter 13]{y1988}. More thorough and far-reaching treatments are in \cite{hof62,koosis}.\\

\begin{center}{\sc 2.1. Pseudomultipliers}\label{pseudo1} \end{center}

\begin{definition} \label{mul1.1}
A {\em multiplier} 
of a Hilbert function space $\calh$ on a set $\Omega$ is defined to be a function $\phi:\Omega \to\C$ such that $\phi f \in \calh$ for every $f\in \calh$.  Here $\phi f$ denotes the pointwise product of $\phi$ and $f$.

\end{definition}
One can show directly from the definitions that every function in the space $H^\infty$ of bounded analytic functions on $\D$ is a multiplier of $H^2$.
  On the other hand, if $\phi$ is a multiplier of $H^2$, then since the constant function $\mathbf{1} \in H^2$, we have $\phi=\phi\mathbf{1} \in H^2$, and so $\phi$ is analytic on $\D$.  Moreover, the operator $X_\phi$ on $H^2$ defined,  for all $f\in H^2$, by
  \begin{equation}\label{1}
(X_\varphi f) (\lambda) = \varphi(\lambda) f(\lambda), \quad \fa
\lambda \in \Omega
\end{equation}
  is linear and is easily seen to have a closed graph, and hence $X_\phi$ is a bounded linear operator on $H^2$.  The calculation
\begin{eqnarray*}
\ip{X_\phi^* k_\lam }{ f }  & = &\ip{k_\lam}{X_\phi f} = \ip{k_\lam}{\phi f} \\
\;     &=& \overline{(\phi f)(\lam)} = \overline{\phi(\lam)}\ip{k_\lam}{f} \fa \lam\in\D\; \text{ and all }\; f\in H^2
\end{eqnarray*}
shows that $X_\phi^*k_\lam = \overline{\phi(\lam)} k_\lam$, so that $ \overline{\phi(\lam)}$ is an eigenvalue of $X_\phi^*$, and therefore $|\phi(\lam)| \leq \|X_\phi\|$ for all $\lam \in\D$, which is to say that $\phi$ is bounded on $\D$.  Thus the multipliers of $H^2$ are precisely the elements of $H^\infty$.

\begin{example}\label{arch1} { A pseudomultiplier of $H^2$.}  \rm
Consider the function $\phi(z)=1/z$, defined on the set $\D\setminus\{0\}$.  This function is clearly not a multiplier of $H^2$, since it is unbounded on $\D$, nor even defined on the whole of $\D$.  On the other hand it is close to being a multiplier in the sense that there is a finite-codimensional closed subspace of $H^2$, namely $zH^2$, which is a closed subspace of codimension $1$ in $H^2$ with the property that, for every $f \in zH^2$, $\phi f$ is the restriction to $\D\setminus \{0\}$ of a function in $H^2$. 
\end{example}
\begin{example}\label{arch2}  { Another pseudomultiplier of $H^2$.}  \rm
 Choose distinct complex numbers $a,b$ and define
 \[
 \psi(z) = \threepartdef{a}{z\neq 0}{}{}{b}{z=0.}
 \]
For any $f\in H^2$, it is easy to see that $\psi f \in H^2$ if and only if $f(0)=0$, that is, if and only if $f$ is in the closed $1$-codimensional subspace $zH^2$ of $H^2$.  Hence $\psi$ is a pseudomultiplier of $H^2$.
 \end{example}

In the light of these two examples we introduce the following notion.

\begin{definition}\label{1.1}
	{\rm Let $\cal H$ be a Hilbert function space on a set
		$\Omega$.  We say that a function
		
		$$
		\varphi: D_\varphi \subset \Omega \to \mathbb C
		$$
		is a {\it pseudomultiplier} of $\cal H$ if
		
		\begin{enumerate}
			\item [(1)] $D_\varphi$ is a set of uniqueness for $\cal H$;
			
			\item [(2)] the subspace $E_\varphi$ of $\calh$, defined to be
			\[
			\left\{h\in \calh: {\rm
				there\ exists}\ g\in \calh \ {\rm such\ that}\ g(\lambda)=
			\varphi(\lambda) h(\lambda)\ {\rm for\ all}\ \lambda \in
			D_\varphi\right\}
			\]
			is closed in $\cal H$, and
			
			\item [(3)]$E_\varphi$ has finite codimension in $\calh$.
		\end{enumerate}
		
	If $\calh$ is a Hilbert function space on a set $\Omega$ then we say that a subset $D$ of $\Omega$ is a {\em set of uniqueness for $\calh$} if, for any functions $f,g\in \calh$, if $f(\lam)=g(\lam)$ for all $\lam \in D$ then $f=g$.

		When conditions (1) to (3) hold we define the operator
		$X_\varphi:E_\varphi \to \cal{ H}$ by $X_\varphi h=g$ where
		$g$ is the element of $\cal H$ (necessarily unique, by condition (1))
		such that $g(\lambda) = \varphi(\lambda) h(\lambda)$ for all
		$\lambda \in D_\varphi$. 
		The space $E_\phi$ will be called the {\em regular space of $\phi$},
	    while the space $E_\phi^\perp$ will be called the {\em singular space} of $\phi$ and will be denoted by $\cals_\phi$.
	 }
\end{definition}
 
For the pseudomultiplier $\phi(z)=1/z$ on $H^2$ of Example \ref{arch1}, $D_\phi=\D\setminus\{0\}$ and $E_\phi$ is the closed subspace $zH^2$ of $H^2$, which has codimension $1$ in $H^2$. Thus $\cals_\phi= H^2\ominus zH^2$, the subspace of constant functions in $H^2$.  The operator $X_\phi:zH^2 \to H^2$ is given by
$X_\phi zf=f$ for all $f\in H^2$. 
For the  pseudomultiplier $\psi$  of $H^2$ of Example \ref{arch2},  $D_\psi =\D$,  $E_\psi=zH^2$, $\cals_\psi$ is the subspace of constant functions in $H^2$ and $X_\psi$ is the scalar operator $X_\psi f= af$ on $zH^2$.

The Hilbert function spaces on which pseudomultipliers act need not be spaces of analytic functions.

\begin{example}\label{intro2.3}{ A pseudomultiplier on a Sobolev space.}
	{\rm Let $\cal{ H}$ be the space $W^{1,2}[0,1]$ of absolutely continuous
		functions on $[0, 1]$ whose weak derivatives are in $L^2$ and
		with inner product given by 
		$$
		\ip{f}{g} = \int^1_0 f(t) \overline {g(t)} dt + \int^1_0
		f^\prime (t) \overline{ g^\prime(t)} dt.
		$$ 
It is well known that $W^{1,2}[0,1]$ is a reproducing kernel Hilbert space, with reproducing kernel $k$ given by 
	\begin{eqnarray}\label{kerW}
k(\lambda, \mu) = \left\{ \begin{array}{ll}
	{\rm cosech} 1 \cosh(1 -\mu) \cosh \lambda & \mbox{if $0 \leq \lambda \leq \mu \leq 1,$} \\
	{\rm cosech} 1 \cosh(1 -\lambda) \cosh \mu & \mbox{if $0 \leq \mu \leq  \lambda \leq 1$.}
\end{array} \right.
	\end{eqnarray}
 See, for example, \cite[Problem 6.10]{y1988}, where verification of the reproducing formula is left as an exercise, or \cite[Section 3]{aak} for a full calculation.

Let us check that $\chi(t)=\sqrt{t}$ is a pseudomultiplier of
	$\cal{ H}$ with $E_\chi = k^\perp_0$ and $\cals_\chi = \C k_0$.
	Consider a function $f\in k_0^\perp$, so that $f\in W^{1,2}[0,1]$ and $f(0)=0$. 
 Let us prove that the function  $\chi f$ is absolutely continuous and  $(\chi f)'$ is in $L^2(0,1)$.
Note that 
\[
(\chi f)'(t) = \frac{1}{2\sqrt{t}}f(t) + \sqrt{t}f'(t).
\]
To prove that  $(\chi f)'$ is in $L^2(0,1)$, it is enough to show that $\frac{1}{\sqrt{t}}f(t)$ is in 
 $L^2(0,1)$. Since  $f(0)=0$, for $x\in (0,1)$,
 \begin{eqnarray}\label{W11.1}
 x^{-\tfrac 12}f(x) &= &x^{-\tfrac 12}\int_0^x f'(t)\, dt \nonumber \\
 	&=&\sqrt{x} \left( x^{-1}\int_0^x f'(t)\, dt \right).
 \end{eqnarray}
 By assumption  $f'$ is in $L^2(0,1)$, and so, by Hardy's inequality,
 $\left( x^{-1}\int_0^x f'(t)\, dt \right)$ is in $L^2(0,1)$. Thus
  $\frac{1}{\sqrt{t}}f(t)$ is in 
 $L^2(0,1)$. Hence  $(\chi f)'$ is in $L^2(0,1)$. 
Therefore $(\chi f)' \in L^1(0,1)$, and so 
$$
(\chi f)(x) = \int_0^x (\chi f)'(t)\ dt
$$ 
is absolutely continuous on $[0,1]$.
Therefore, for $f\in k_0^\perp$, $\chi f \in W^{1,2}$.
Conversely, if $f(0)\neq 0$ then $(\chi f)'\notin L^2[0,1]$ and so $\chi f \notin W^{1,2}[0,1]$.  Thus, $\chi$ is a pseudomultiplier of $W^{1,2}$,  $E_\chi = k_0^\perp$ and $\cals_\chi=\C k_0$.	\qed
}\end{example}

Nor need it be true that the kernel vectors $\{k_\lambda:\lambda\in\Omega\}$ are linearly independent in $\calh$.
\begin{example}\label{ups} { Linearly dependent kernels}. \rm
Let $a,b$ be distinct points in $\D$ and let $\calh$ comprise the functions $\{f\in H^2:f(a)=f(b)\}$, with the inner product inherited from $H^2$. Thus, if $k$ is the reproducing kernel of $\calh$, we have $k_a=k_b$.  Consider an element $\upsilon\in H^\infty$ such that $\upsilon(a)\neq\upsilon(b)$.  It is clear that, for any $f\in\calh$, $\upsilon f\in\calh $ if and only if $f(a)=f(b)=0$.  Thus $E_\upsilon=k_a^\perp=k_b^\perp$, a closed subspace of $\calh$ of codimension $1$, and $\upsilon$ is a pseudomultiplier of $\calh$.\\
\end{example}

\begin{center}{\sc 2.2. The singular space} \label{singularspace}  \end{center}

 Informally speaking, the singular space $\cals_\phi$ is what stops the pseudomultiplier $\phi$ from being a multiplier.
  Pseudomultipliers that are not multipliers always have {\em singularities} in the sense that $\cals_\phi$ is non-zero. 
The reader will have noticed that in the three examples \ref{arch1}, \ref{arch2} and \ref{intro2.3} the pseudomultipliers $\phi,\psi$, and $\chi$ all fail to be multipliers of $\calh$, in each case on account of a lack of regularity at $0$; $\phi$ is unbounded in a neighborhood of $0$, $\psi$ is discontinuous at $0$ and 
 $\chi$ is not locally in $L^2$ at $0$.  These examples motivated us to call $\cals_\phi$ the {\em singular space} of $\phi$.

 In this paper we develop a structure theory for the singular spaces of pseudomultipliers. 
 The principal result is that $\cals_\phi$ can be orthogonally decomposed into the direct sum of
 the {\em polar space} of $\phi$, which we denote by $\calp_\phi$, and the set of {\em ambiguous vectors of $\phi$}, which we denote by $\cala_\phi$.
 The definitions of  $\calp_\phi$ and $\cala_\phi$, which we shall give shortly, depend on the notion of {\em $\phi$ seeing a vector}.\\

\begin{center}{\sc 2.3 Seeing a vector}\label{seeing} \end{center}

  Let $\cal{H}$ be a Hilbert function space on $\Omega$ and fix $v\in
\cal{ H}$.  
We shall formalise the idea of enlarging the set $\Omega$ by adjoining an extra point $p\notin\O$
in such a way that the vector $v$ is the reproducing kernel associated with $p$.
Let $\tilde\Omega = \Omega \cup \{p\}$, and for $f\in \cal{ H}$
define $ f_v:\tilde \Omega \to \mathbb C$ by the formula
$$
 f_v(\lambda)=\left\{ \begin{array}{ll} f(\lambda)\qquad  &
{\rm if}\qquad \lambda \in \Omega\\ \\
\ip{f}{v}\qquad & {\rm if} \ \lambda =
p\ .\end{array}\right.
$$ 
Let $\calh_v = \{ f_v: f\in \cal{H}\}$.  Since $f=0$ in
$\calh$ implies that $f_v=0$ in $\calh_v$, we see that
the formula
$$
\ip{f_v}{ g_v}_{\calh_v}\ =\ \ip{f}{g}_{\calh}
$$
defines an inner product on $\calh_v$.  Furthermore, with
this inner product, $\calh_v$ is a Hilbert function space on
$\tilde \Omega$, and the restriction map
$$
\calh_v \ni f_v \mapsto f_v|\Omega= f \in \calh
$$
is a Hilbert space isomorphism. 

Now let $\cal{ H}$ be a Hilbert function space, fix $v\in \calh$, and let $\varphi$ be a pseudomultiplier of $\cal{ H}$. 
Since $D_\varphi$ is a set of uniqueness for $\cal{ H}$,
$D_\varphi$ is also a set of uniqueness for $\calh_v$ and
thus $\varphi$ is also a pseudomultiplier of $\cal{ H}_v$.  
Its domain when so regarded is still just $D_\varphi$ and its
regular space is unchanged.  One can wonder, however, whether
 $\varphi$ can be extended to a pseudomultiplier of $\calh_v$ with domain containing $D_\phi\cup\{p\}$.

\begin{definition}\label{1.7}
	{\rm Let $\cal{ H}$ be a Hilbert function space on $\Omega$,
		let $v\in \cal{ H}$,  let $c\in \mathbb C$, and let $\varphi$ be a
		pseudomultiplier of $\calh$.  We say that $\varphi$ {\it sees $v$
			with value $c$} if there exists a pseudomultiplier $\psi$ of
		$\cal{ H}_v$ such that $\psi$ extends $\varphi$ (that is,
		$D_\varphi \subseteq D_\psi$ and $\varphi =
		\psi|D_\varphi$), $p \in D_\psi, E_\varphi = E_\psi
		|\O$, and $\psi(p) =c$.  We say $\varphi$ {\it sees} $v$
		if there exists $c\in \mathbb C$ such that $\varphi$ sees $v$
		with value $c$; in this case we also say that $v$ is {\it visible}
		to $\varphi$.}
\end{definition}

\begin{example}\label{1.75}
	\rm Let $\cal{H}$ be a Hilbert function space on $\Omega$ and let $\varphi$ be a
	pseudomultiplier of $\cal{H}$.  For any $c\in \C, \phi$ sees the zero vector in $\calh$ with value $c$.  To prove this fact,
	in Definition \ref{1.7} choose $v=0$ and $\psi$ to be given by $\psi(\lam)=\phi(\lam)$ for $\lam\in\O$ and $\psi(p)=c$. One finds that $f_v\in E_\psi$ if and only if $f\in E_\phi$ and that $\psi$ is a pseudomultiplier of $\calh_v$.  Thus $\phi$ sees $0$ with value $c$.	
\end{example}

\begin{example}\label{1.8}
\rm Let $\cal{ H}$ be a Hilbert function space on $\Omega$
		and let
		$\varphi$ be a pseudomultiplier of $\cal{ H}$.  If $\lambda \in
		D_\varphi$, then $\varphi$ sees $k_\lambda$ with value
		$\varphi(\lambda)$.
				
To prove this statement, let $\tilde\O = \O\cup\{p\}$, where $p \notin\O$, and let $D_\psi=D_\phi\cup\{p\}$.  Define a function 
$\psi:D_\psi \to \C$ by $\psi(z)=\phi(z)$ if $z\in D_\phi$ and $\psi(p)=\phi(\lam)$.  Let $v=k_\lam$, so that $\calh_v$ is a Hilbert space of functions on $\tilde\O$.  We claim that $\psi$ is a pseudomultiplier of $\calh_v$ and that $E_\phi=E_\psi|\O$. Indeed, consider any function $f\in\calh$.  We have
\begin{eqnarray*}%\label{E-phi=E-psi}
\hspace{-1cm} f_v \in E_\psi & \Leftrightarrow & \psi f_v \in \calh_v|_{D_\psi}        \nonumber\\
& \Leftrightarrow & \text{there exists } \; g\in \calh \; \; \text{such that}\; (\psi f)_v=g_v|_{D_\psi} 
\end{eqnarray*} 
Now, for any $g\in\calh$, $ (\psi f)_v=g_v|_{D_\psi}$ if and only if $\phi(z)f(z)=g(z)$ for $z\in D_\phi$ and $\psi(p)f_v(p)=\ip{g}{v}$, which is to say that $\phi f=g$ on $D_\phi$ and $\phi(\lam)\ip{f}{v}=\ip{g}{v}$.
Thus $f_v \in E_\psi$ if and only if $\phi f$ extends to an element $g\in\calh$ and $\phi(\lam)f(\lam)=g(\lam)$.
Thus $f_v\in E_\psi$ if and only if $f\in E_\phi$.  Hence, by Definition \ref{1.7}, $\phi$ sees $k_\lam$ with value $\phi(\lam)$, as claimed. \qed
\end{example}

\begin{example}\label{1.9int} What vectors does $1/z$ see in $H^2$? \rm
Consider the pseudomultiplier $\phi$ of Example \ref{arch1}: $\phi(z)=1/z$ on $(\calh,\Omega)=(H^2,\D)$.  Here, $D_\phi=\D\setminus\{0\}$ and $E_\phi= zH^2$. 
We claim that the vectors in $\calh=H^2$ visible to $\phi$ are precisely the scalar multiples of the kernels $k_\lam$, for $\lam\in \D\setminus\{0\}.$ 

To prove this claim, we first note that, by Example \ref{1.8}, $\phi$ does see $k_\lam$ with value $\phi(\lam)=1/\lam$ for every $\lam\in D_\phi$. Conversely, suppose $\phi$ sees a vector $v\neq 0$ with value $c$.  By definition, there exists a pseudomultiplier $\psi$ of $\calh_v$ such that $\psi$ extends $\phi$, $p \in D_\psi, E_\varphi = E_\psi|\D$, and $\psi(p) =c$.  Consider any function $h\in H^2$. The function $f$ defined by $f(z)=zh(z)$ is in $E_\phi=zH^2$.  Hence there exists $\tilde f\in E_\psi$ such that $\tilde f|\D\setminus\{0\}=f$.  We have $\psi\tilde f \in \calh_v$, and therefore 
\[
(\psi\tilde f)(p) = \ip{(\psi\tilde f)|\D\setminus\{0\}}{v}=\ip{\tfrac 1z f}{v}= \ip{\tfrac 1z zh}{v}= \ip{h}{v}.
\]
Now $(\psi\tilde f)(p) =\psi(p) \tilde f(p) = c\ip{f}{v}= c\ip{zh}{v}$.
Therefore, for every $h\in H^2$, $c\ip{zh}{v}=\ip{h}{v}$, and so, if $S$ denotes the forward shift operator on $H^2$, given by $(Sh)(z)=zh(z)$ for all $h\in H^2$, then 
\[
\ip{h}{v}= \ip{czh}{ v} =  \ip{cSh}{ v}= \ip{h}{\bar c S^*v}.
\]
Hence $\bar c S^* v= v$.  Since, by assumption, $v\neq 0$, $v$ is an eigenvector of $S^*$ corresponding to the eigenvalue $1/\bar c$.  It is a simple calculation to show (see for example \cite[Lemma 1.67]{amy})  that the eigenvectors of $S^*$ are the non-zero scalar multiples of the Szeg\H{o} kernels $k_\lam$ for $\lam\in\D$, that is, $S^*k_\lam = \bar\lam k_\lam$.  Hence $\bar c \bar\lam k_\lam= k_\lam$, and so $c\lam =1$, and we conclude that $\lam\in\D\setminus\{0\}$, and that $v$ is a scalar multiple of the kernel $k_\lam$, for some $\lam\in \D\setminus\{0\}$, as claimed. \qed 
\end{example}

\begin{center}{\sc 2.4. Ambiguous points and the ambiguous space}\label{ambiguity} \end{center}

In order to define the ambiguous space $\cala_\phi$ of a pseudomultiplier $\phi$ we first consider the points in $\Omega$ at which the value of $\phi$ is not determined by the operator $X_\phi$. We call such points {\em ambiguous points} of $\phi$.
For $c\in \mathbb C$ and $\alpha\in D_\phi$, define the function
$\varphi_{c, \alpha}$ on $D_\varphi$ by
\begin{equation}\label{modify}
\varphi_{c, \alpha} (\lambda) = \left\{ \begin{array}{cl}
\varphi(\lambda) & \ \  {\rm if}\quad \lambda \in
D_\varphi\setminus\{\alpha\}\\ \\
c& \ \ {\rm if}\quad \lambda = \alpha.\end{array}\right.
\end{equation}
Typically, modification of $\varphi$ in
this manner will affect the operator $X_\varphi:E_\varphi \to
\cal{ H}$.  This modification could happen in two possible ways: $E_\varphi$
could be changed, or $E_\varphi$ could remain unaltered but
the action of $X_\varphi$ could be changed.

\begin{definition}\label{2.1}
	{\rm Let $\varphi$ be a pseudomultiplier and let $\alpha \in	D_\varphi$.  We say $\alpha $ is an {\it ambiguous point for }
		$\varphi$, or an {\em ambiguity of $\phi$}, if there exists $c\in \mathbb C$ with the properties
		that $c\not= \varphi(\alpha),\; E_\varphi = E_{\varphi_{c, \alpha}}$, and
		$X_{\varphi_{c, \alpha}} = X_\varphi$.  We say $\alpha$ is an {\it
			unambiguous point for} $\varphi$ if  $\alpha$ is not an
		ambiguous point for $\varphi$.}
\end{definition}

The pseudomultiplier $\psi$ of $H^2$ given in Example \ref{arch2} has a discontinuity at $0$ which is an ambiguous point for $\psi$.  Indeed, for any choice of $c \in \C\setminus\{a,b\}$, the one-point modification $\psi_{c, 0}$ of $\psi$ is still an instance of Example \ref{arch2}, so that $E_{\psi_{c, 0}}=k_0^\perp= E_\psi$.  Moreover, for any $f\in k_0^\perp$,
$X_\psi f=af =X_{\psi_{c, 0}} f$.  Another example of an ambiguous point is furnished by the pseudomultiplier $\chi$ in Example \ref{intro2.3}.  It has an ambiguous point at $0$, as we show in Example \ref{2.3} below.

 In Section \ref{ambiguous}, Theorem \ref{2.14}, we shall prove the following connection between visibility of vectors and ambiguous points.
\begin{theorem}\label{2.14intro}
 Let $\phi$ be a pseudomultiplier of a Hilbert function space $\calh$ on a set $\Omega$ and let  $\alpha \in\Omega$. Then $\alpha$ is an ambiguous point for 
$\varphi$ if and only if $\varphi$ sees $k_\alpha$ with
arbitrary value.
\end{theorem}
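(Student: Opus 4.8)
The plan is to pass to the enlarged Hilbert function space $\calh_v$ with $v=k_\alpha$ and to exploit the fact that there the adjoined point $p$ satisfies $f_v(p)=\ip{f}{k_\alpha}=f(\alpha)$ for every $f\in\calh$, so that $p$ and $\alpha$ are indistinguishable as far as evaluation is concerned. (I take $\alpha\in D_\phi$, as is implicit in the notion of an ambiguous point.) The first step is a computational criterion: for $c\in\C$, $\phi$ sees $k_\alpha$ with value $c$ if and only if $(X_\phi h)(\alpha)=c\,h(\alpha)$ for every $h\in E_\phi$. To prove it I would first reduce to witnesses $\psi$ with $D_\psi=D_\phi\cup\{p\}$: replacing $D_\psi$ by $D_\phi\cup\{p\}$ only enlarges $E_\psi$, but any $h_v$ in the enlarged regular space already forces $\phi h$ to extend over $D_\phi$, so $h\in E_\phi$ and the identity $E_\psi|\Omega=E_\phi$ persists. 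For such a $\psi$ (with $\psi|D_\phi=\phi$ and $\psi(p)=c$), unwinding the definition of $E_\psi$ and using $h_v(p)=h(\alpha)$, $g_v(p)=g(\alpha)$ shows that $h_v\in E_\psi$ exactly when $h\in E_\phi$ and $(X_\phi h)(\alpha)=c\,h(\alpha)$, whence $E_\psi|\Omega=E_\phi$ exactly when that identity holds throughout $E_\phi$. Letting $c$ range over $\C$, and noting that $(X_\phi h)(\alpha)=\phi(\alpha)h(\alpha)$ since $\alpha\in D_\phi$, it follows that $\phi$ sees $k_\alpha$ with arbitrary value if and only if $h(\alpha)=0$ for every $h\in E_\phi$, that is, if and only if $k_\alpha\in\cals_\phi$.

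It remains to prove that, for $\alpha\in D_\phi$, the point $\alpha$ is ambiguous for $\phi$ if and only if $k_\alpha\in\cals_\phi$. The forward implication is quick: if $c_0\ne\phi(\alpha)$ witnesses ambiguity, then for $h\in E_\phi=E_{\phi_{c_0,\alpha}}$ the vector $X_\phi h=X_{\phi_{c_0,\alpha}}h$ has value at $\alpha$ equal both to $\phi(\alpha)h(\alpha)$ and to $c_0h(\alpha)$, so $h(\alpha)=0$, i.e.\ $k_\alpha\perp E_\phi$. For the converse, suppose $E_\phi\subseteq k_\alpha^\perp$. If $h\in E_\phi$ then $h(\alpha)=0$ and $g:=X_\phi h$ has $g(\alpha)=0$, so $g=\phi_{c,\alpha}h$ on all of $D_\phi$ for every $c$; hence $E_\phi\subseteq E_{\phi_{c,\alpha}}$ and $X_{\phi_{c,\alpha}}$ agrees with $X_\phi$ on $E_\phi$, for every $c$. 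Moreover, if $h\in E_{\phi_{c,\alpha}}$ with $h(\alpha)=0$, the witnessing $g$ satisfies $g(\alpha)=c\,h(\alpha)=0=\phi(\alpha)h(\alpha)$, so $g=\phi h$ on $D_\phi$ and $h\in E_\phi$. Thus it suffices to exhibit a single $c_0\ne\phi(\alpha)$ with $E_{\phi_{c_0,\alpha}}\subseteq k_\alpha^\perp$.

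Producing this $c_0$ is the heart of the matter, and I would split on whether $D_\phi\setminus\{\alpha\}$ is a set of uniqueness for $\calh$. If it is not, there is a non-zero $u\in\calh$ vanishing on $D_\phi\setminus\{\alpha\}$; then $u(\alpha)\ne0$ (else $u$ vanishes on $D_\phi$), and $\phi u$ extends to $\phi(\alpha)u\in\calh$, so $u\in E_\phi$ but $u\notin k_\alpha^\perp$, contradicting the hypothesis — so this case does not arise. If $D_\phi\setminus\{\alpha\}$ is a set of uniqueness, then on the finite-codimensional subspace $\widehat E:=\{h\in\calh:\ \phi h|_{D_\phi\setminus\{\alpha\}}\text{ extends to }\calh\}\supseteq E_\phi$ the extension is unique, and its value at $\alpha$ defines a linear functional $F$ on $\widehat E$, with $E_\phi=\{h\in\widehat E:F(h)=\phi(\alpha)h(\alpha)\}$ and $E_{\phi_{c,\alpha}}=\{h\in\widehat E:F(h)=c\,h(\alpha)\}$. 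Writing $e$ for the evaluation functional $h\mapsto h(\alpha)$ on $\widehat E$, the hypothesis $E_\phi\subseteq k_\alpha^\perp$ says that $\ker(F-\phi(\alpha)e)\subseteq\ker e$; since both sides are kernels of single functionals, either $e\equiv0$ on $\widehat E$ (in which case every $c_0\ne\phi(\alpha)$ works) or $F=\gamma e$ for some scalar $\gamma$, necessarily with $\gamma\ne\phi(\alpha)$. In the latter case pick any $c_0\in\C\setminus\{\gamma,\phi(\alpha)\}$; then the relation defining $E_{\phi_{c_0,\alpha}}$ reads $(\gamma-c_0)h(\alpha)=0$, so $E_{\phi_{c_0,\alpha}}\subseteq k_\alpha^\perp$, and by the previous paragraph this $c_0$ witnesses that $\alpha$ is ambiguous. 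The one genuine obstacle is this last step — deducing proportionality of $F$ with evaluation at $\alpha$ from $E_\phi\subseteq k_\alpha^\perp$, and eliminating the non-uniqueness case; the remainder is routine unpacking of the constructions of $\calh_v$, of seeing a vector, and of one-point modifications.
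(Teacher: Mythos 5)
For $\alpha \in D_\phi$ your argument is correct and follows the paper's overall architecture: you reduce the theorem to the characterization ``$\alpha$ is ambiguous iff $E_\phi \subseteq k_\alpha^\perp$'' (the paper's Proposition~\ref{2.7}), obtained from a computational criterion for seeing $k_\alpha$ which is essentially Lemma~\ref{3.4} specialized to kernel vectors. Your route through the ``heart of the matter'' differs from the paper's in an interesting way. The paper's Lemma~\ref{2.6} shows that at most one $c$ can have $E_{\phi_{c,\alpha}} \not\subseteq k_\alpha^\perp$ by taking two hypothetical exceptional values $c_1,c_2$, constructing an explicit linear combination of elements of $E_{\phi_{c_1,\alpha}}$ and $E_{\phi_{c_2,\alpha}}$ that lands in $E_\phi$, and evaluating at $\alpha$ to force $c_1=c_2$. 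You instead introduce the functionals $F$ (extension value at $\alpha$) and $e$ (evaluation at $\alpha$) on the intermediate space $\widehat E$, observe that the hypothesis reads $\ker(F-\phi(\alpha)e)\subseteq\ker e$, and invoke the nested-kernels fact to conclude $F$ is proportional to $e$. Both work; yours is a cleaner abstraction of what the paper's computation secretly proves, and both correctly handle the degenerate possibility that $D_\phi\setminus\{\alpha\}$ fails to be a set of uniqueness.

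The genuine gap is one of scope. The theorem quantifies over all $\alpha\in\Omega$, and your parenthetical claim that $\alpha\in D_\phi$ ``is implicit in the notion of an ambiguous point'' is not correct in this paper: Definition~\ref{d2.12} explicitly extends the notion of an ambiguous point to $\alpha\in\Omega\setminus D_\phi$, declaring such an $\alpha$ ambiguous if $\phi$ sees $k_\alpha$ with some value $c$ and $\alpha$ is ambiguous for the one-point extension $\psi_{c,\alpha}$ of $\phi$ to $D_\phi\cup\{\alpha\}$. Your proof does not engage with this case, and your computational criterion does not collapse as neatly there: when $\alpha\notin D_\phi$ one cannot rewrite $(X_\phi h)(\alpha)$ as $\phi(\alpha)h(\alpha)$, so seeing $k_\alpha$ with arbitrary value is equivalent to the two conditions $h(\alpha)=0$ and $(X_\phi h)(\alpha)=0$ for all $h\in E_\phi$ (that is, $k_\alpha\in E_\phi^\perp\cap\ker X_\phi^*$), and one must still relate that to ambiguity of $\psi_{c,\alpha}$ at $\alpha$. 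The paper does this via Propositions~\ref{ambessential} and~\ref{2.13}. Without an argument of this kind your proof covers only half the theorem.
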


Theorem \ref{2.14intro} leads naturally to a more general notion, an
{\em ambiguous vector} for a pseudomultiplier. 

\begin{definition}\label{3.1}
	{\rm Let $\varphi$ be a pseudomultiplier of $\cal{ H}$ and
		let
		$v\in \cal{ H}$.  We say that {\it $v$ is an ambiguous vector
			for }
		$\varphi$ if $\varphi$ sees $v$ with arbitrary value, and
	we define  $\cal{ A}_\varphi$ to be the set of ambiguous vectors for $\varphi$. }
\end{definition}

It follows from Theorem \ref{2.14intro} that if $\alpha$ is a ambiguous point for $\phi$ then $k_\alpha$ is an ambiguous vector for $\phi$. 
Observe also that Example \ref{1.75} immediately implies that, for any Hilbert function space $\calh$, the zero vector of $\calh$ is an ambiguous vector of every pseudomultiplier of $\calh$.

\begin{example}\label{arch1-A}  $\phi(z)=1/z$ on $H^2$ has no non-zero ambiguities. \rm By Example \ref{1.9int},  the pseudomultiplier $\phi(z)=1/z$ of $H^2$ sees only the scalar multiples of kernels $k_\lam$ for some 
$\lam\in\D\setminus\{0\}$. Moreover, for such $\lam$, $\phi$ sees $k_\lam$ only with value $\phi(\lam)$.  Thus $\phi$ does not see any non-zero vector with arbitrary value, which is to say that $\phi$ has no ambiguous vectors other than $0$.
\end{example}

\begin{example}\label{ambpsi-int} \rm 
	Consider the pseudomultiplier $\chi(t)=\sqrt{t}$ on $W^{1,2}[0,1] $  of Example \ref{intro2.3}. 
	We will show in Example \ref{ambchi}, that  $\cala_\chi=\C k_0$.	
\end{example}

\begin{center}{\sc 2.5. Definable vectors and the polar space}\label{definable} \end{center}

In addition to singularities that arise from ambiguous vectors, pseudomultipliers can possess another type of singular vector, which we call a {\em polar vector}.
  We say that a vector
$d\in \cal{ H}$ is {\it definable for} $\varphi$ if $d\not= 0,
\varphi$ sees $d$, and $d\perp \cal{ A}_\varphi$.  We denote by
$\mathbf{D}_\varphi$ the set of vectors that are
definable for $\varphi$.
For any definable vector $v$ for a pseudomultiplier $\phi$, there is a unique $c\in\C$ such that $\phi$ sees $v$ with value $c$.  We denote this number $c$ by $\phi(v)$.  Thus we can regard $\phi$ as a function on $\mathbf{D}_\phi$.

\begin{example} \label{definableex} The definable vectors for $1/z$ on $H^2$.  \rm  By Example  \ref{arch1-A}, for the pseudomultiplier $ \phi(z)=1/z$ of $H^2$, $ \cala_\phi= \{0\}$, and so the definable vectors for $ \phi$ coincide with the non-zero vectors visible to $ \phi$.  Hence, by Example  \ref{1.9int}, 
$\mathbf{D}_\phi$  comprises the non-zero scalar multiples of the
kernels $k_\lam $, for $\lam \in  \D \setminus \{0\}$.	
\end{example}

\begin{definition}\label{3.11}
\rm Let $\varphi$ be a pseudomultiplier and let $p\in \cal{H}$.  We say $p$ is a {\it polar vector for} $\varphi$ if $p\not= 0$ and there exists a sequence of definable vectors $\{d_n\} \subseteq
		 \mathbf{D}_\varphi$ such that $d_n\to p$ (with respect to 
		 $\|\cdot \|_{\cal{H}}$) and	 $\varphi(d_n) \to \infty$   as $n\to \infty$.
We define $\cal{ P}_\varphi$, the
	{\it polar space of
		$\varphi$}, to be the set of vectors $v\in \cal{ H}$ such that
	either $v$ is a polar vector of $\varphi$ or $v=0$.
\end{definition}

\begin{remark} \label{pseudopole}  \rm In \cite[Section 1]{AY1} we introduced a notion more directly modelled on the familiar notion of a pole from Complex Analysis.  If $\phi$ is a pseudomultiplier of a Hilbert function space $\calh$ on a set $\O$ we said in \cite{AY1} that a point $\al\in\O$ is a {\em pseudopole} of $\phi$ if there exist functions $g \in\calh$ and $h\in E_\phi$ such that $h(\al)=0$, $g$ extends $\phi h$ and $g(\al) \neq 0$. 

From the perspective of this paper, pseudopoles are superseded by the more general notion of polar vectors. Indeed, we shall prove in Proposition \ref{thm-pseudopole} that  if $\al$ is a pseudopole of a pseudomultiplier $\phi$ and $\al$ is in the closure of $D_\phi$ in $\O$  then $ P_{\cala_\phi^\perp} k_\al$ is a polar vector of $\phi$.  Here, by the closure of $D_\phi$ we mean the closure in $\Omega$  with respect to the natural metric on $\O$, induced by the norm on $\calh$ and the identification of a point of $\O$ with its reproducing kernel in $\calh$.  \qed
\end{remark}

\begin{example}\label{polar-of-1/z} The polar space of $1/z$. \rm 
In Example \ref{definableex} we proved that the set of definable vectors $\mathbf{D}_\phi$ for $\phi(z)=1/z$ on $H^2$  comprises the scalar multiples of the kernels $k_\lam $, for $\lam \in  \D \setminus \{0\}$. Recall that the singular space $\cals_\phi=E_\phi^\perp = H^2\ominus zH^2$. Let us show  that
  $$\calp_\phi=\cals_\phi=E_\phi^\perp = H^2\ominus zH^2 =\{  \text{constant functions on} \; \D \}. $$ 
  It is enough  to show that $1= k_0 $ is in $\calp_\phi$. 
Choose $\lambda_n \in \D$, $n \ge 1$, such that $\lambda_n \to 0$. By Example \ref{1.8}, $\phi $ sees $k_{\lambda_n} $ with value $\frac{1}{\lambda_n} $. Since $k_{\lambda_n} \to k_{\mu}$ in $H^2$ if and only if  $\lambda_n \to \mu$ in the usual topology of $ \D$,
   $\{k_{\lambda_n}\}$ is a sequence of definable vectors such that $k_{\lambda_n} \to k_{0}$ in $H^2$ and 
$\phi(k_{\lambda_n}) =\phi(\lambda_n) = \frac{1}{\lambda_n} \to \infty$. This implies that $ k_0 \in \calp_\phi $. \qed
\end{example}

In Section \ref{polar-vectors}, Theorem \ref{3.14},
we shall prove the following fact.

\begin{theorem}\label{3.14intro}
	Let $\varphi$ be a pseudomultiplier of $\calh$.  Then $\varphi$ is locally
	bounded on $\mathbf{ D}_\varphi^-$ if and only if $\varphi$ has no
	polar vectors.
\end{theorem}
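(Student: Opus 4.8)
The plan is to read ``$\varphi$ is locally bounded on $\mathbf{D}_\varphi^-$'' as the statement that for every $w\in\mathbf{D}_\varphi^-$ there is $r>0$ with $\sup\{|\varphi(d)|:d\in\mathbf{D}_\varphi,\ \|d-w\|<r\}<\infty$, where $\varphi$ is regarded as the scalar function on $\mathbf{D}_\varphi$ defined just before Example \ref{definableex}, and to prove both implications by contraposition. One direction is immediate: if $p$ is a polar vector then, by Definition \ref{3.11}, $p\ne0$ and there is a sequence $\{d_n\}\subseteq\mathbf{D}_\varphi$ with $d_n\to p$ and $\varphi(d_n)\to\infty$; hence $p\in\mathbf{D}_\varphi^-$, and every ball about $p$ contains all but finitely many of the $d_n$, so $\varphi$ is not bounded near $p$ and thus not locally bounded on $\mathbf{D}_\varphi^-$.

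For the converse, suppose $\varphi$ is not locally bounded on $\mathbf{D}_\varphi^-$. Choosing a witnessing point $w\in\mathbf{D}_\varphi^-$ and then, for each $n$, a vector $d_n\in\mathbf{D}_\varphi$ with $\|d_n-w\|<1/n$ and $|\varphi(d_n)|>n$, we get $d_n\to w$ and $\varphi(d_n)\to\infty$. If $w\ne0$ then $w$ is itself a polar vector and we are done, so the real work is the case $w=0$. Here I would first record a scaling fact, checked directly from Definition \ref{1.7}: if $d$ is definable and $\lambda\ne0$ then $\lambda d$ is definable and $\varphi(\lambda d)=\varphi(d)$ --- one uses the extending pseudomultiplier of $\calh_{\lambda d}$ that equals $\varphi$ on $\Omega$ and takes the value $\varphi(d)$ at the adjoined point, noting that the condition defining its regular space is unchanged because $\langle f,\lambda d\rangle=\bar\lambda\langle f,d\rangle$. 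Replacing each $d_n$ by $d_n/\|d_n\|$, we may therefore assume $\|d_n\|=1$.

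The heart of the argument is the estimate that, for every unit definable vector $d$ with $c:=\varphi(d)$, the component $d^r:=P_{E_\varphi}d$ satisfies
\[
|c|\,\|d^r\|\ \le\ \|X_\varphi\|.
\]
To prove it, note that $X_\varphi$ is bounded (the closed-graph argument given for $H^2$ in Section \ref{overview}), and let $\psi$ be a pseudomultiplier of $\calh_d$ witnessing that $\varphi$ sees $d$ with value $c$, so $\psi=\varphi$ on $\Omega$, $\psi(p)=c$ and $E_\psi|\Omega=E_\varphi$. Since $d^r\in E_\varphi$, the vector $(d^r)_d$ lies in $E_\psi$; evaluating the defining relation of $E_\psi$ for $(d^r)_d$ at the points of $D_\varphi$ identifies the corresponding element of $\calh_d$ as $(X_\varphi d^r)_d$, and evaluating it at the adjoined point $p$ yields $c\langle d^r,d\rangle=\langle X_\varphi d^r,d\rangle$; as $\langle d^r,d\rangle=\|d^r\|^2$ and $\|X_\varphi d^r\|\le\|X_\varphi\|\,\|d^r\|$, the estimate follows. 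Applying this to our unit vectors $d_n$ and using $|\varphi(d_n)|\to\infty$ gives $\|d_n^r\|\to0$, hence $\|P_{\cals_\varphi}d_n\|=\sqrt{1-\|d_n^r\|^2}\to1$. Since $\cals_\varphi$ is finite-dimensional, its unit sphere is compact, so along a subsequence $P_{\cals_\varphi}d_n/\|P_{\cals_\varphi}d_n\|\to e$ for some unit $e\in\cals_\varphi$; then $P_{\cals_\varphi}d_n\to e$ and $d_n^r\to0$, so $d_n\to e$ in $\calh$. Thus $\{d_n\}$ is a sequence of definable vectors converging to the nonzero vector $e$ with $\varphi(d_n)\to\infty$, so $e$ is a polar vector by Definition \ref{3.11}, contradicting the hypothesis.

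I expect the main obstacle to be the estimate $|c|\,\|d^r\|\le\|X_\varphi\|$, and in particular the bookkeeping inside $\calh_d$: one must use that $D_\varphi$ is a set of uniqueness --- so that the restriction map $\calh_d\to\calh$ is injective and $(d^r)_d$ is genuinely the element of $E_\psi$ lying over $d^r$ --- and one must verify that the value of $\psi$ at the adjoined point enters the relation precisely as the scalar $c$ multiplying $\langle d^r,d\rangle$. Granted that identity, the scaling fact is routine and the conclusion is just compactness of the unit ball of the finite-dimensional space $\cals_\varphi$.
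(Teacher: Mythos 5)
Your proof is correct, but it takes a different route from the paper's. The paper obtains this theorem in two lines as a corollary of Proposition \ref{3.12}, the four-way equivalence (polar vector $\Leftrightarrow$ local unboundedness at $p$ $\Leftrightarrow$ $p\in \mathbf{D}_\varphi^-\setminus\mathbf{D}_\varphi$ $\Leftrightarrow$ $\varphi(d)\to\infty$ as $d\to p$), where all the work is hidden; you instead give a self-contained contrapositive argument. Your key estimate $|\varphi(d)|\,\|P_{E_\varphi}d\|\le\|X_\varphi\|$ for unit definable $d$ is exactly Lemma \ref{3.4}(ii) (the identity $X_\varphi^*d=\overline{\varphi(d)}\,P_{E_\varphi}d$) combined with boundedness of $X_\varphi$ — you re-derive that lemma from Definition \ref{1.7} rather than citing it — and your compactness argument in the finite-dimensional space $\cals_\varphi$ parallels the mechanism the paper uses in the proof of Theorem \ref{4.2}. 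What your approach buys: an explicit and careful treatment of the point $0\in\mathbf{D}_\varphi^-$ via the scaling fact, whereas the paper's proof applies Proposition \ref{3.12}, which is stated only for $p\neq 0$, at every point of $\mathbf{D}_\varphi^-$; your normalization step closes that small gap. One remark: your scaling fact ($\lambda d$ definable with $\varphi(\lambda d)=\varphi(d)$ for $\lambda\neq 0$) is the correct statement, and is what the computation in the paper's proof of Corollary \ref{1.71} actually yields, even though that corollary's statement reads ``value $\bar\lambda c$''; the value is unchanged under scaling, as your calculation with $\ip{f}{\lambda d}=\bar\lambda\ip{f}{d}$ shows.
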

	
\noindent In Section \ref{polar-vectors},  Theorem \ref{4.3}, we shall also establish an alternative description of the polar space in terms of definable vectors.
\begin{theorem}\label{4.3intro}
If $\calh$ is infinite-dimensional and $\phi$ is a pseudomultiplier of $\calh$  then $\calp_\phi=\ov{\mathbf{D}_\phi }\setminus \mathbf{D}_\phi$.
\end{theorem}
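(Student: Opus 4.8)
The plan is to translate ``$\varphi$ sees $v$ with value $c$'' into a single operator identity and then extract both inclusions from it. First I would establish the following description of $X_\varphi$ (the boundedness statement is the usual closed‑graph argument, $E_\varphi$ being closed and point evaluations continuous): $X_\varphi:E_\varphi\to\calh$ is bounded, so its adjoint $X_\varphi^*$ is a bounded operator from $\calh$ into $E_\varphi$. Writing $P$ for the orthogonal projection of $\calh$ onto $E_\varphi$, a short unwinding of Definition \ref{1.7} gives, for $v\in\calh$ and $c\in\C$,
\[
\varphi \text{ sees } v \text{ with value } c \quad\iff\quad X_\varphi^* v=\bar c\,Pv,
\]
the point being that the requirement $E_\psi|\O=E_\varphi$ is exactly the demand that $\langle X_\varphi f,v\rangle=c\langle f,v\rangle$ for every $f\in E_\varphi$, i.e.\ that $X_\varphi^*v-\bar c v\perp E_\varphi$; and when this holds the function $\psi$ that agrees with $\varphi$ on $D_\varphi$ and sends $p\mapsto c$ is checked to be a pseudomultiplier of $\calh_v$ with $E_\psi=\{f_v:f\in E_\varphi\}$. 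From this I read off three facts used repeatedly: $\cala_\varphi=\ker X_\varphi^*\cap\cals_\varphi$, so in particular $\cala_\varphi$ is \emph{closed}; every definable vector $d$ has $Pd\neq0$ (it is visible, so $X_\varphi^* d=\bar c\,Pd$ for some $c$, whence $Pd=0$ would force $X_\varphi^* d=0$ too, i.e.\ $d\in\cala_\varphi\cap\cala_\varphi^\perp=\{0\}$) together with $X_\varphi^* d=\overline{\varphi(d)}\,Pd$; and, by linearity of $X_\varphi^*$, $\mathbf D_\varphi$ is invariant under multiplication by nonzero scalars, with $\varphi(td)=\varphi(d)$.

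Next, the inclusion $\ov{\mathbf D_\varphi}\setminus\mathbf D_\varphi\subseteq\calp_\varphi$. Let $v$ lie in this set. If $v=0$ then $v\in\calp_\varphi$ by definition of $\calp_\varphi$, so assume $v\neq0$ and choose $d_n\in\mathbf D_\varphi$ with $d_n\to v$. I claim $|\varphi(d_n)|\to\infty$. If not, some subsequence has $\varphi(d_{n_k})$ bounded; passing to a further subsequence, $\varphi(d_{n_k})\to c\in\C$, and then, since $X_\varphi^*$ and $P$ are bounded,
\[
X_\varphi^* v=\lim_k X_\varphi^* d_{n_k}=\lim_k \overline{\varphi(d_{n_k})}\,P d_{n_k}=\bar c\,Pv.
\]
As $v\perp\cala_\varphi$ (each $d_n\perp\cala_\varphi$ and $\cala_\varphi$ is closed) and $v\neq0$, the characterisation above makes $v$ a definable vector, contradicting $v\notin\mathbf D_\varphi$. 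Hence $\varphi(d_n)\to\infty$, so $v$ is a polar vector and $v\in\calp_\varphi$.

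For the reverse inclusion I first use $\dim\calh=\infty$ to show $\mathbf D_\varphi\neq\varnothing$. Since $E_\varphi$ has finite codimension, $E_\varphi\neq\{0\}$, so by the set‑of‑uniqueness property of $D_\varphi$ not every $k_\lambda$ $(\lambda\in D_\varphi)$ lies in $E_\varphi^\perp=\cals_\varphi$; a fortiori some $k_\lambda\notin\cala_\varphi$. Fix such $\lambda$ and put $d:=P_{\cala_\varphi^\perp}k_\lambda\neq0$. By Example \ref{1.8} and the characterisation, $X_\varphi^* k_\lambda=\overline{\varphi(\lambda)}\,Pk_\lambda$; and $k_\lambda-d\in\cala_\varphi\subseteq\ker X_\varphi^*\cap\ker P$, so $X_\varphi^* d=\overline{\varphi(\lambda)}\,Pd$ with $Pd=Pk_\lambda\neq0$ (else $k_\lambda\in\cala_\varphi$), hence $d\in\mathbf D_\varphi$. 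By scalar invariance $\{\tfrac1n d\}\subseteq\mathbf D_\varphi$ converges to $0$, so $0\in\ov{\mathbf D_\varphi}\setminus\mathbf D_\varphi$, definable vectors being nonzero. For a nonzero polar vector $p$, take $d_n\in\mathbf D_\varphi$ with $d_n\to p$ and $\varphi(d_n)\to\infty$; then $p\in\ov{\mathbf D_\varphi}$, and if $p\in\mathbf D_\varphi$ then $Pp\neq0$, so $\|Pd_n\|\to\|Pp\|>0$ while $\|X_\varphi^* d_n\|=|\varphi(d_n)|\,\|Pd_n\|\to\|X_\varphi^* p\|<\infty$ would force $|\varphi(d_n)|$ bounded, a contradiction; hence $p\notin\mathbf D_\varphi$, and $\calp_\varphi\subseteq\ov{\mathbf D_\varphi}\setminus\mathbf D_\varphi$.

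The main obstacle, I expect, is the visibility characterisation of the first paragraph: one must verify carefully that the candidate extension $\psi$ really is a pseudomultiplier of $\calh_v$ (that $E_\psi$ is closed and of finite codimension) and that nothing is lost in allowing $D_\psi$ to be larger than $D_\varphi\cup\{p\}$ a priori. Once that identity is in hand the rest is soft, and the only genuine use of the hypothesis $\dim\calh=\infty$ is to force $\mathbf D_\varphi\neq\varnothing$; indeed the statement fails in general when $E_\varphi=\{0\}$, since then $\mathbf D_\varphi=\varnothing$ while $0\in\calp_\varphi$.
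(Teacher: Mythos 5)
Your proof is correct and follows the paper's essential route: you re-derive the operator characterisation of visibility, $X_\phi^*v=\bar c\,P_{E_\phi}v$ (the paper's Lemma \ref{3.4}), use it to identify the nonzero polar vectors with the points of $\ov{\mathbf{D}_\phi}\setminus\mathbf{D}_\phi$ (this is the equivalence of (i) and (iii) in the paper's Proposition \ref{3.12}, which you re-prove inline by a mild variant of the same estimates), and establish that $\mathbf{D}_\phi\neq\varnothing$. The one genuine variation is the nonemptiness step: you argue constructively --- take $f\in E_\phi\setminus\{0\}$, use the set-of-uniqueness property of $D_\phi$ to find $\lam\in D_\phi$ with $k_\lam\notin E_\phi^\perp\supseteq\cala_\phi$, and verify that $P_{\cala_\phi^\perp}k_\lam$ is definable --- whereas the paper argues by contraposition, showing that if $\mathbf{D}_\phi=\varnothing$ then every $\alpha\in D_\phi$ is ambiguous, so $\cala_\phi$ contains the closed span of $\{k_\alpha:\alpha\in D_\phi\}$, namely all of $\calh$, contradicting $\dim\cala_\phi\le\dim E_\phi^\perp<\infty$. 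These are two phrasings of the same observation, and both make visible --- as you note at the end --- that $\dim\calh=\infty$ enters only to guarantee $E_\phi\neq\{0\}$, hence $\mathbf{D}_\phi\neq\varnothing$, hence $0\in\ov{\mathbf{D}_\phi}\setminus\mathbf{D}_\phi$.
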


\begin{center}{\sc 2.6. Decomposition of the singular space}\label{decomposition}  \end{center}

In Theorem \ref{4.2} we shall prove that every singular vector of a pseudomultiplier can be  represented  in a unique way as the sum of a polar vector and an ambiguous vector.

\begin{theorem}\label{4.2intro}
 Let $\cal H$ be a Hilbert function space on a set
$\Omega$.
	Let $\varphi$ be a pseudomultiplier of $\calh$.  $\cal{ P}_\varphi$ is a closed
	subspace of $\cal{ H}$ and 
	$$\cal{ S}_\varphi=\cal{ P}_\varphi \oplus \cal{A}_\varphi.$$
	\end{theorem}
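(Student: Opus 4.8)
The plan is to reduce the whole statement to a single linear-algebraic identity. First I would observe that $X_\varphi\colon E_\varphi\to\calh$ is bounded, since evaluation at each $\lambda\in D_\varphi$ is continuous on $\calh$ and $D_\varphi$ is a set of uniqueness, so the graph of $X_\varphi$ is closed and the closed graph theorem applies. Writing $X_\varphi^{*}\colon\calh\to E_\varphi$ for its adjoint and $P_{E_\varphi}$ for the orthogonal projection of $\calh$ onto $E_\varphi$, the key lemma is that
\[
\varphi\ \text{sees}\ v\ \text{with value}\ c\quad\Longleftrightarrow\quad X_\varphi^{*}v=c\,P_{E_\varphi}v .
\]
For ``$\Leftarrow$'' one takes $\psi$ defined on $D_\varphi\cup\{p\}$ by $\psi|D_\varphi=\varphi$, $\psi(p)=c$; using that the restriction map $\calh_v\to\calh$ is an isometric isomorphism carrying the regular space $E_\psi$ onto $\{f_v:f\in E_\varphi\}$, one checks, exactly as in Example~\ref{1.8}, that $E_\psi$ is closed of the same finite codimension in $\calh_v$, so $\psi$ is a pseudomultiplier of $\calh_v$ that sees $v$ with value $c$. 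For ``$\Rightarrow$'', any witnessing $\psi$ must have $E_\psi=\{f_v:f\in E_\varphi\}$, and spelling out $f_v\in E_\psi$ at the point $p$ and on $D_\varphi$ yields $\ip{X_\varphi f}{v}=c\ip{f}{v}$ for every $f\in E_\varphi$, which is precisely $X_\varphi^{*}v=c\,P_{E_\varphi}v$.

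Granting the key lemma, a vector $v$ is ambiguous exactly when $X_\varphi^{*}v=c\,P_{E_\varphi}v$ holds for two (hence all) values of $c$, i.e.\ $P_{E_\varphi}v=0$ and $X_\varphi^{*}v=0$. Thus $\cala_\varphi=(E_\varphi+X_\varphi E_\varphi)^{\perp}$, a closed subspace of $\cals_\varphi=E_\varphi^{\perp}$. Set $Z:=\cals_\varphi\ominus\cala_\varphi$; then $Z\subseteq\cals_\varphi$ is finite dimensional and $\cala_\varphi^{\perp}=E_\varphi\oplus Z$. A nonzero vector $v\perp\cala_\varphi$ is definable precisely when $X_\varphi^{*}v=c\,P_{E_\varphi}v$ for some $c$; then $P_{E_\varphi}v\neq0$, this $c$ is unique, and $\varphi(v)=c$. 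It now suffices to prove $\calp_\varphi=Z$, for this gives simultaneously that $\calp_\varphi$ is a closed subspace and that $\cals_\varphi=\cala_\varphi\oplus Z=\cala_\varphi\oplus\calp_\varphi$ orthogonally.

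The inclusion $\calp_\varphi\subseteq Z$ is easy: if $w=\lim d_n$ with $d_n$ definable and $\varphi(d_n)\to\infty$, then $w\perp\cala_\varphi$ since each $d_n$ is, and for each $f\in E_\varphi$ the identity $\ip{X_\varphi f}{d_n}=\varphi(d_n)\ip{f}{d_n}$ has convergent left-hand side, so $\ip{f}{d_n}\to0$ and hence $\ip{f}{w}=0$; thus $w\in E_\varphi^{\perp}\cap\cala_\varphi^{\perp}=Z$, and also $0\in Z$. For the reverse inclusion, fix $w\in Z\setminus\{0\}$. Since $w\perp E_\varphi$ but $w\notin\cala_\varphi=(E_\varphi+X_\varphi E_\varphi)^{\perp}$, the vector $b:=X_\varphi^{*}w\in E_\varphi$ is nonzero, while $P_{E_\varphi}w=0$. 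Let $T:=X_\varphi^{*}|_{E_\varphi}$, a bounded operator on $E_\varphi$, and for each integer $n>\|T\|$ set
\[
d_n:=(nI-T)^{-1}b+w\ \in\ E_\varphi\oplus Z=\cala_\varphi^{\perp}.
\]
Then $d_n\neq0$, $d_n\perp\cala_\varphi$, $P_{E_\varphi}d_n=(nI-T)^{-1}b$, and $X_\varphi^{*}d_n=T(nI-T)^{-1}b+b=n(nI-T)^{-1}b=n\,P_{E_\varphi}d_n$, so by the key lemma $d_n$ is definable with $\varphi(d_n)=n$. Since $\|(nI-T)^{-1}b\|\le\|b\|/(n-\|T\|)\to0$, we get $d_n\to w$ while $\varphi(d_n)\to\infty$, so $w$ is a polar vector; hence $Z\subseteq\calp_\varphi$.

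I expect the only genuinely non-routine step to be the construction in the last paragraph: recognising that the resolvents $(nI-T)^{-1}b$ of $T=X_\varphi^{*}|_{E_\varphi}$ manufacture definable vectors whose $\varphi$-values run off to infinity while the vectors themselves converge to an arbitrary prescribed element of $Z$. Once the key lemma is in place, the identification of $\cala_\varphi$ as $(E_\varphi+X_\varphi E_\varphi)^{\perp}$, the inclusion $\calp_\varphi\subseteq Z$, and the final assembly of the orthogonal direct sum are all routine.
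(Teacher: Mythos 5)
Your proof is correct and follows essentially the same route as the paper: you establish the paper's Lemma~\ref{3.4} (that $\varphi$ sees $v$ with value $c$ iff $X_\varphi^{*}v=\overline{c}\,P_{E_\varphi}v$ — note the conjugate, which you dropped but which is harmless since your construction only uses real $c=n$), identify $\cala_\varphi=E_\varphi^{\perp}\cap\ker X_\varphi^{*}$, and prove $\calp_\varphi=\cals_\varphi\ominus\cala_\varphi$ by a resolvent construction. Your vector $d_n=(nI-T)^{-1}b+w$ is in fact literally the paper's $\overline{c}_n(\overline{c}_n I-X_\varphi^{*})^{-1}v$ with $c_n=n$, merely written in $E_\varphi\oplus Z$ coordinates so that membership in $\cala_\varphi^{\perp}$ is manifest and the paper's extra projection onto $\cala_\varphi^{\perp}$ becomes redundant.
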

		
This theorem contains the very  interesting fact that the polar space $\cal{ P}_\varphi$, which is the set of polar vectors of a pseudomultiplier $\varphi$, together with the zero vector, constitutes  a linear subspace  of $\cal H$. 

\begin{example}\label{2dim-polarspace} \rm 
The pseudomultiplier	  $\phi(z)=\frac{1}{z(z-\half)}$ on $H^2$ has no ambiguities
$\cala_\phi=\{0\}$, and the polar space of $\varphi$
$$\cal{ P}_\varphi =  \span\{k_{0}, k_{\half}\} = H^2\ominus z (z -\half)H^2 = E_\phi^\perp.$$
See Example \ref{2kernels-phi} for proofs.
\end{example}

\begin{example}\label{polarspaces} \rm If 
$\chi(t)=\sqrt{t}$ on 
$W^{1,2}[0,1]$ is the pseudomultiplier of
 Example \ref{intro2.3},  then, as Example \ref{ambpsi-int} implies,
  the ambiguous space $\cala_\chi =  \C k_0 = \cals_\chi$, and so we deduce from Theorem \ref{4.2} that $\calp_\chi=\{0\}$.  Thus $\chi$ has no polar vectors.
  This example illustrates Theorem \ref{3.14intro}.\\
\end{example}

\begin{center}{\sc 2.7. Locality}\label{locality}  \end{center}

In Section \ref{localness} we shall introduce the notion of a visible subspace for a pseudomultiplier.
We generalise the notion of $\phi$ seeing a vector to $\phi$ seeing a closed subspace of $\calh$.
We say that $\phi$ sees a closed subspace $\cal V$ if there is a bounded linear operator $C:\cal{ V}
		\to \cal{ V}$ such that, for all $f\in E_\varphi$,
		\begin{equation}\label{22int}
		P_\cal{V} X_\varphi f = C P_\cal{ V} f
		\end{equation}
where $P_\cal{ V}$ is the orthogonal projection of $\calh$ onto $\cal{V}$.

\begin{example}\label{5.4.int}
	{\rm Let $\varphi \in H^\infty$, so that $\varphi$ is a
		multiplier of $H^2$. 		
		 The only vectors in $H^2$ seen by
		$\varphi$ are the eigenvectors of $X^*_\varphi$.  In
		particular,  if $\varphi$ is such that $\varphi'(0) \neq 0$, then $\varphi$ does not see the  vector		$v(\lambda) = \lambda, \lambda
		\in \mathbb D$.  However, $\varphi$ does see the space
		$\cal{ V} \stackrel{\rm def}{=} \span \{1, v\}$: for $f\in
		H^2, P_\cal{ V} X_\varphi f$ depends only the first two
		Taylor coefficients of $f$, that is, on $P_\cal{ V} f$.
		Thus $\phi$ sees $\cal V$, and there is a {\it unique} $C$
		such that equation (\ref{22}) holds for  all $f\in E_{\varphi}$.		\qed}
\end{example}
We observe that in the case when $\varphi$ is a
multiplier of $\cal{ H}$, $\phi$ sees a closed subspace $\cal V$ of $\calh$
if and only if  $\calv$ is an invariant subspace for $X^*_\varphi$. 
In Proposition \ref{5.5} we show that  a closed subspace $\calv$ is visible to a psudomultiplier $\phi$ if and only if $X_\phi^*\calv \subseteq \calv + E_\phi^\perp$.

In Section \ref{singularities}
we introduce the idea of a {\em local subspace} of a
Hilbert function space over a subset $D$ of $\Omega$. 
In Section \ref{local_spaces} we shall clarify the sense in which a subspace
which is local over a set $D$ can genuinely be attached to a
set of points in $D$. The following key example shows the naturalness of the notion of a local subspace.

\begin{example}\label{loc-sub-intro} \rm
 Consider the subspace
$\cal{ M}=\span\{1, z\}$ of $H^2$.  The elements of $\cal{M}^\perp$ are the functions which vanish together with their
first derivatives at $0$, and we accordingly regard $\cal{ M}$
as being local at $0$.  Notice that
$\cal{ M}$ is not spanned by kernel functions -- rather, it is a
limit of two-dimensional spaces spanned by kernels.  For
$f\in H^2$ we have
$$f'(0) = \lim\limits_{\alpha \to 0} \frac{f(\alpha) -
	f(0)}{\alpha} = \lim\limits_{ \alpha\to 0} \ip{f} {\frac{k_\alpha
	-k_0}{\overline \alpha}}.
$$  
In a sense to be made
precise,
$\cal{ M}$ is the limit as $\alpha \to 0$ of the space
$\span\{1,
\overline \alpha^{-1}(k_\alpha-1)\}$ 
(which equals $\span\{1, k_\alpha\}$).  Likewise $\span\{1, z, z^2\}$ is
the limit  as
$\alpha, \beta \to 0$ of \newline $\span\{1, k_\alpha, k_\beta\}$. 
\end{example}
This example provides the model for the notion of a
local subspace of a general Hilbert function space.

Let $D \subset \Omega$. A closed subspace $\cal M$ of
$\cal H$ is {\it local over} $D$ if $\calm$ is a cluster point of a collection of spans of uniformly boundedly many kernels of points in $D$ (see Definition \ref{6.8}).
 We say that $\alpha \in
\Omega$ is a {\it support point } of $\cal{ M}$ if there exist a positive integer $n$, a sequence $(\alpha_j)$ in $\Omega$, a sequence $(c_j)$ in
$\mathbb C$ and a sequence $(E_j)$ of subsets of $\Omega$
such that $\#E_j = n, \alpha_j \in E_j, \span\{k_\lam:\lam\in E_j\}\to \cal{M}$ and $c_jk_{\alpha_j}\to k_\alpha$.  The {\it support} of
$\cal{ M}$ is the set of support points of $\cal{ M}$.  It will
be denoted by $\supp \cal{ M}$.  We say that $\calm$ is {\em punctual} if $\supp \calm$ is a singleton set. Example \ref{loc-sub-intro} shows that the local subspace
$\cal{ M}=\span\{1, z\}$ of $H^2$ is punctual with $\supp \cal{ M} = \{0\}$. 

The main result of Section \ref{singularities} is Theorem \ref{6.11}, which asserts that all local spaces are seen by all pseudomultipliers.  This fact confirms the importance of the notion of local subspaces.

\begin{theorem}\label{6.11intro}
	Let $\varphi$ be a pseudomultiplier of $\cal H$.  If $\cal M$ is a
	subspace of $\cal H$ which is local over $D_\varphi$,
	then $\phi$ sees $\cal M$.
\end{theorem}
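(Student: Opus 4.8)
The plan is to reduce the statement to the criterion of Proposition \ref{5.5}: a closed subspace is visible to $\varphi$ precisely when $X_\varphi^*$ maps it into itself modulo $E_\varphi^\perp$. Writing $\cals_\varphi=E_\varphi^\perp$, which is finite dimensional, it therefore suffices to prove that $X_\varphi^*\calm\subseteq\calm+\cals_\varphi$ for every subspace $\calm$ that is local over $D_\varphi$.

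First I would treat the base case of a span of finitely many kernels at points of $D_\varphi$. For $\lambda\in D_\varphi$ and $f\in E_\varphi$,
\[
\ip{X_\varphi f}{k_\lambda}=(X_\varphi f)(\lambda)=\varphi(\lambda)f(\lambda)=\varphi(\lambda)\ip{f}{k_\lambda},
\]
which, since $X_\varphi^*$ (the adjoint of $X_\varphi\colon E_\varphi\to\calh$) maps $\calh$ into $E_\varphi$, gives $X_\varphi^*k_\lambda=\overline{\varphi(\lambda)}\,P_{E_\varphi}k_\lambda=\overline{\varphi(\lambda)}(k_\lambda-P_{\cals_\varphi}k_\lambda)\in\C k_\lambda+\cals_\varphi$. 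Consequently, for any finite $E\subseteq D_\varphi$ the span $\calv_E:=\span\{k_\lambda:\lambda\in E\}$ satisfies $X_\varphi^*\calv_E\subseteq\calv_E+\cals_\varphi$, so by Proposition \ref{5.5} each such $\calv_E$ is seen by $\varphi$.

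It then remains to pass to the limit. By Definition \ref{6.8} there is a net $(\calv_i)$ of such kernel spans, all of dimension at most some fixed $n$, of which $\calm$ is a cluster point; after passing to a subnet, $\calv_i\to\calm$. The uniform bound forces $\dim\calm\le n$ and, more usefully, allows a compactness argument. Given $v\in\calm$, choose $v_i\in\calv_i$ with $v_i\to v$ (available from the convergence $\calv_i\to\calm$ in the sense of Definition \ref{6.8}); then $X_\varphi^*v_i\to X_\varphi^*v$, and by the base case $X_\varphi^*v_i=u_i+s_i$ with $u_i\in\calv_i$, $s_i\in\cals_\varphi$. Passing to a further subnet, using boundedness of $(u_i)$ and $(s_i)$ together with the uniform finite dimensionality of the $\calv_i$ and of $\cals_\varphi$, one gets $u_i\to u\in\calm$ and $s_i\to s\in\cals_\varphi$, so that $X_\varphi^*v=u+s\in\calm+\cals_\varphi$. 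Hence $X_\varphi^*\calm\subseteq\calm+\cals_\varphi$, and Proposition \ref{5.5} finishes the proof. (Equivalently, one could transport the operators $C_i\colon\calv_i\to\calv_i$ witnessing that $\varphi$ sees $\calv_i$ to a limiting operator $C\colon\calm\to\calm$; this runs into the same difficulty.)

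The hard part is securing the boundedness invoked above. The splitting $X_\varphi^*v_i=u_i+s_i$ is not canonical when $\calv_i\cap\cals_\varphi\neq\{0\}$, and for an unwise choice of approximants $v_i$ and of the splitting the norms $\|u_i\|$, $\|s_i\|$ can diverge: already for $\varphi(z)=1/z$ on $H^2$ and Szeg\H{o} kernels $k_{\lambda_i}$ with $\lambda_i\to 0$, the naive approximants $v_i$ proportional to $k_{\lambda_i}$ give $X_\varphi^*v_i$ proportional to $\tfrac1{\bar\lambda_i}(k_{\lambda_i}-\mathbf 1)$, whose $\calv_i$- and $\cals_\varphi$-components are both unbounded. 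So the crux is to exploit the precise content of Definition \ref{6.8} — that the approximating kernel spans converge in a sufficiently strong sense, with their kernel data uniformly controlled — to guarantee that the decompositions remain bounded, and hence that the finite-dimensional compactness argument applies. Put differently, the substance of the theorem is the closedness, under the formation of local limits, of the class of closed subspaces satisfying the criterion of Proposition \ref{5.5}; everything else is bookkeeping.
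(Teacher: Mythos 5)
Your route is exactly the paper's: reduce via Proposition \ref{5.5} to the inclusion $X_\varphi^*\calm \subseteq \calm + E_\varphi^\perp$, establish it for the finite kernel spans $\calv_{F_j}$ (Example \ref{1.8} together with Example \ref{viorthogonal}), and then pass to the limit $\calv_{F_j}\to\calm$. You correctly identify that the limit passage is the crux; the paper's proof makes the same move but simply asserts ``Since $\calv_{F_j}\to\calm$, it follows that $X_\varphi^*\calm\subset\calm+E_\varphi^\perp$'' with no further argument.

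Where you go wrong is in your assessment of the gap. You cast it as a boundedness problem for the splittings $X_\varphi^*v_i=u_i+s_i$, to be controlled by exploiting Definition \ref{6.8}, and you summarise the theorem as asserting the closedness, under local limits, of the class of subspaces satisfying the criterion of Proposition \ref{5.5}. That class is \emph{not} closed under local limits, and the very example you cite proves it. For $\varphi(z)=1/z$ on $H^2$, $E_\varphi=zH^2$ and $E_\varphi^\perp=\C\mathbf{1}$; taking $\lambda_j\to 0$ in $D_\varphi=\D\setminus\{0\}$ gives $\C k_{\lambda_j}\to\C k_0=\C\mathbf{1}=:\calm$, so $\calm$ is local over $D_\varphi$. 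Each $\calv_j=\C k_{\lambda_j}$ does satisfy $X_\varphi^*k_{\lambda_j}=\bar\lambda_j^{-1}(k_{\lambda_j}-\mathbf{1})\in\calv_j+\C\mathbf{1}$, but the norm limit $X_\varphi^*k_0 = z$ lies outside $\calm+E_\varphi^\perp=\C\mathbf{1}$; the unbounded parts of the splitting do not cancel, and the limit genuinely escapes the target space. Directly, for $f=z\in E_\varphi$ one has $P_\calm X_\varphi f=\mathbf{1}\neq 0=CP_\calm f$ for any $C$, so $\varphi$ does not see $\calm$. Thus the gap you flag is not bookkeeping: the inclusion fails in the limit, Theorem \ref{6.11intro} fails as stated for this $\calm$, and no refinement of the choice of approximants or decompositions will rescue the argument. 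Some additional hypothesis --- for instance $\calm\cap E_\varphi^\perp=\{0\}$, or a requirement that the approximating kernel spans stay uniformly transversal to $E_\varphi^\perp$ --- would be needed to exclude this degenerate configuration.
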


In light of Theorem \ref{6.11intro} it appears that the structure of local subspaces is an important component of the theory of pseudomultipliers.  Therefore, in the final section of the paper we study such local subspaces. In Theorem \ref{6.18}, we establish  the structure
of local subspaces $\cal M$ of projectively complete Hilbert function spaces $\cal H$, as follows.
  
\begin{theorem}\label{6.18intro}
	Let $\cal{ H}$ be a projectively complete Hilbert function
	space on $\Omega$, let $D\subset \Omega$ and let $\cal{
		M}$ be a local space over $D$.  There exist subspaces $\cal{
		M}_1, \dots, \cal{ M}_N$ of $\cal{ H}$ such that
	\begin{itemize}
		\item[\rm (i)] $\cal{ M}_i$ is punctual for $i=1, \dots, N$;
		
		\item[\rm (ii)] $\supp \cal{ M}_i= \{\alpha_i\} \subset {\rm pc}(D), 1 \le
		i \le N$;
		
		\item [(iii)]$\cal{ M} = \cal{ M}_1+\cal{ M}_2+\dots + \cal{
			M}_N$;
		
		\item[\rm (iv)] $\sum\limits^N_{i=1} \dim \cal{ M}_i = \dim
		\cal{ M}$, and
		
		\item[\rm (v)] $\supp \cal{ M}=\{\alpha_1, \dots, \alpha_N\}$.		
	\end{itemize}
\end{theorem}
In this theorem,  $\mathrm{pc}(D)$ denotes the projective closure of a subset $D$ of $\O$, which is defined immediately after Definition \ref{6.17}.

\section{Examples of pseudomultipliers}\label{pseudomultipliers}
 
In Definition \ref{1.1} we introduced pseudomultipliers $\phi$ and their associated multiplication operators $X_\phi$, as a generalization of the notion of a multiplier (as in Definition \ref{mul1.1}) of a Hilbert function space. 
Firstly, we
allowed the possibility that $\varphi$ only be defined on a
proper subset of $\Omega$ and secondly we allowed
$X_\varphi$ only to be defined on a proper subspace of $\cal{
H}$ of finite codimension.

Let $\cal H$ be a Hilbert function space on a set
		$\Omega$.
We will start with examples of functions
			$$
		\varphi: D_\varphi \subset \Omega \to \mathbb C
		$$
		which are not pseudomultipliers.
Firstly, we assume that $D_\varphi$ is a set of uniqueness for $\cal H$.
Let the subset $E_\varphi$ of $\calh$ be defined by
			\[
			\left\{h\in \calh: {\rm
				there\ exists}\ g\in \calh \ {\rm such\ that}\ g(\lambda)=
			\varphi(\lambda) h(\lambda)\ {\rm for\ all}\ \lambda \in
			D_\varphi\right\}
			\]	
There are two ways that a function $\varphi$
defined on a set of uniqueness can fail to be a
pseudomultiplier:  $E_\varphi$ might not be closed or
$E_\varphi$ might be closed but not of finite codimension.
	
	Let		$X_\varphi$ be the operator
		$X_\varphi:E_\varphi \to \cal{ H}$ given by $X_\varphi h=g$ where
		$g$ is the element of $\cal H$
		such that $g(\lambda) = \varphi(\lambda) h(\lambda)$ for all
		$\lambda \in D_\varphi$. 			
\begin{remark}\label{Ephi-Xphi} \rm
In the case that $E_\varphi$ is closed
in $\cal H$, it is an easy consequence of the closed graph
theorem that $X_\varphi$ is bounded.  Conversely, if
$X_\varphi$ is bounded then $E_\varphi$ is closed.  For
suppose $f_n \in E_\varphi$ and $f_n\to g$ in $\cal H$.  Then
the sequence $(X_\varphi f_n)$ is bounded in $\cal H$ and, by
passing to a subsequence if necessary, we can suppose that
$X_\varphi f_n$ tends weakly to some $h\in \cal{ H}$.  For any
$\lambda \in D_\varphi$ we have
$$(X_\varphi f_n) (\lambda) = \varphi(\lambda) f_n(\lambda)
\to \varphi(\lambda) g(\lambda)$$
and
$$(X_\varphi f_n) (\lambda) \to h(\lambda).$$
Thus $h\in \cal{ H}$ extends $\varphi(g|D_\varphi)$, and so
$g\in E_\varphi$.  Thus $E_\varphi$ is closed. In particular, if
$E_\varphi$ has finite codimension, then $\varphi$ is a
pseudomultiplier if and only if $X_\varphi$ is a bounded
operator.
\end{remark}

\begin{example}\label{1.2}
{\rm  Let $\Omega=\mathbb D$ and let $\cal{ H} = H^2$, the
classical Hardy space.  Let $D_\varphi = \mathbb D$ and
define $\varphi$ by $\varphi(z) = (1-z)^{-1}$.  Here
$E_\varphi=(1-z)H^2$ which is a nonclosed subspace of
$H^2$.  Thus $\varphi$ is not a pseudomultiplier. \qed}
\end{example}

\begin{example}\label{1.2.2}
{\rm  Let $\Omega=\mathbb D$ and let $\cal{ H} = H^2$. Let $\psi$ be an inner function on $\D$ which is not a finite Blascke product and let $\phi=1/\psi$.
  Let
$D_\varphi = \D\setminus \psi\inv\{0\}$.
 Here
$E_\varphi= \psi H^2$  is a  closed
subspace of
$\cal H$.  Yet $\varphi$ is not a pseudomultiplier of $\cal H$
since $E_\varphi$ has infinite codimension in $\cal H$. \qed}
\end{example}

\begin{example}\label{1.3}
{\rm  Let $\Omega=\mathbb D^2$ and let $\cal{ H} =
H^2(\mathbb D^2)$, the  Hardy space on the bidisc.  Let
$D_\varphi =
\{z\in \mathbb D^2:z_1\not= 0\}$ and define $\varphi$
by
$\varphi(z) = z^{-1}_1$.  Here
$E_\varphi= z_1H^2(\mathbb D^2)$  is a  closed
subspace of
$\cal H$.  Yet $\varphi$ is not a pseudomultiplier of $\cal H$
since $E_\varphi$ has infinite codimension. \qed}
\end{example}

There
are also two ways that a pseudomultiplier $\varphi$
can fail to be a multiplier: $D_\varphi = \Omega $ but
$E_\varphi \not= \cal{ H}$ or $E_\varphi = \cal{ H}$ but
$D_\varphi \not= \Omega$.

\begin{example}\label{1.4}
{\rm  Let $\Omega$ be a nonempty open proper subset of
$\mathbb D$ and let
$\cal{ H}$ be the space of analytic functions $f$ on $\Omega$
that have an extension $f^\sim$ to $\mathbb D$ such that
$f^\sim \in H^2$.  Let $\cal H$ have the inner product defined
by
\begin{equation}\label{2}
\ip{f}{g}_\cal{ H}\ =\ \ip{f^\sim}{g^\sim}_{H^2}.
\end{equation}
Fix $\alpha \in \mathbb D\backslash \Omega$ and define
$\varphi$ on $\Omega$ by $\varphi(z) = (z-\alpha)^{-1}$. 
Then $D_\varphi = \Omega$ and, since equation (\ref{2}) implies that
we may identify $\cal H$ with $H^2$, we have 
$$
E_\varphi = \{f\in \cal{ H}: f^\sim (\alpha) = 0\},
$$
a closed subspace of $\cal H$ of codimension 1.  Hence
$\varphi$ is an everywhere-defined pseudomultiplier of $\calh$ but is
not a multiplier of $\calh$. \qed}
\end{example}

\begin{example}\label{1.5}
{\rm  Let $\Omega=\mathbb D$ and  $\cal{ H} = H^2$. Fix a
bounded analytic function $\psi$ on $\mathbb D$ and let
$S\subset
\mathbb D$ be a set of uniqueness for $\cal H$.  If $\varphi=
\psi|S$ then  $E_\varphi= \cal{ H}$ and $\varphi$ is a
pseudomultiplier of $\cal H$.  However, $\varphi$ is not a
multiplier unless $S=\mathbb D$.}
\end{example}

\begin{example}\label{1.6}
{\rm  Let $\Omega_1=\mathbb D\times \{1\}, \Omega_2=
\mathbb D\times \{2\}$ and 
$\Omega = \Omega_1 \cup \Omega_2$. Fix a linear
transformation $L$
of $H^2$.  For $f\in H^2$ define $f^\sim$ on $\Omega$ by the
formula
$$f^\sim((\lambda, i)) = \left\{\begin{array}{ll} 
f(\lambda)\qquad&
{\rm if}\qquad i=1,\\ \\
(Lf)(\lambda)\qquad & {\rm if}  \qquad i=2.
\end{array}\right.
$$
Let $\cal{ H} = \{f^\sim: f\in H^2\}$ with inner product
\begin{equation}\label{3}
\ip{f^\sim}{g^\sim}_\cal{ H} = \ip{f}{g}_{H^2}.
\end{equation}
If $\psi$ is a bounded analytic function on $\mathbb D$ and
$\varphi$ is defined on  $\Omega_1$  by $\varphi((\lambda, 1))
= \psi(\lambda)$ then $E_\varphi = \cal{ H}$ and $\varphi$ is
a pseudomultiplier.  However, $\varphi$ is not a multiplier of
$\cal H$ since $D_\varphi = \Omega_1 \not= \Omega$. \qed }
\end{example}

\begin{example}\label{direct-sum} A direct sum of pseudomultipliers.  \rm  In this example we conjoin  pseudomultipliers on two function spaces, $\phi$ of $\calh_1$ on a set $\Omega_1$  and $\chi$ of $\calh_2$ on  a set $\Omega_2$.     Define $\Omega$ to be the set-theoretic disjoint union $\Omega_1 \sqcup \Omega_2$ of $\O_1$ and $\O_2$.  Let $\calh$ be  the direct sum $\calh_1 \oplus \calh_2$.
  Clearly elements of $\calh$ are functions on $\O$ in a natural way.

  Consider the function $\gamma:D_\phi\sqcup D_\chi\to\C$ defined by
  \[
  \gamma (x) =\left\{ \begin{array}{lcl}     \phi(x) & \text{ if } & x\in D_\phi \\
  					\chi(x)  &\text{ if } & x\in D_\chi.
  					\end{array}\right.
  \]
where  $D_\phi$ and $D_\chi$ are the domains of $\phi$ and $\chi$, and so are sets of uniqueness for $\calh_1, \calh_2$ respectively. 
 For any $\vec{f}{g} \in\calh$, the function $\gamma\vec{f}{g}$ on $D_\phi\sqcup D_\chi$ is given by
  \[
\gamma\vec{f}{g} (x) =\left\{ \begin{array}{lcl}     \phi(x)f(x) & \text{ if } & x\in D_\phi \\
  					\chi(x)g(x)  &\text{ if } & x\in D_\chi.
  					\end{array}\right. 
  \] 
  
 Now a vector $\vec{f}{g} \in \calh$ satisfies $\gamma\vec{f}{g}\in\calh$
 if and only if $\phi f$ extends to an element $h_1$ of $\calh_1$ and $\chi g$ extends to an element $h_2$ of $\calh_2$.
   
 \[
 E_{\gamma} = \left\{\vec{f}{g} \in\calh:\text{ there exists } h=\vec{h_1}{h_2}\in\calh\text{ such that }\gamma\vec{f}{g}=h|(D_\phi\sqcup D_\chi)\right\}
 \]
 Hence 
 \be\label{Dir-Ephichi}
 E_{\gamma}= \left\{\vec{f}{g} \in\calh: f\in E_{\phi}, g\in E_{\chi}\right\} = E_\phi\oplus E_\chi.
 \ee
It is easy to see that if $\dim E_{\phi}^\perp= n$ and $\dim E_{\chi}^\perp= m$, then $E_{\gamma}$ has codimension $n +m$ in $\calh$. Therefore  $\gamma$ is a pseudomultiplier of $\calh$. \qed
\end{example}

In Examples \ref{1.4}, \ref{1.5} and \ref{1.6}, $\Omega$ has
been contrived to be either too large or too small.  In other
words, $D_\varphi$ is too small or too large.  In Example
\ref{1.4} we can envisage every $f\in \cal{ H}$ extending
naturally off $\Omega$ to the larger set $\mathbb D$.  The
function $\varphi$ apparently ``sees" this larger set as well,
and though it is bounded on $\Omega$ it fails to be a
multiplier as its values are unbounded near $\alpha$.  On the
other hand, in Example \ref{1.5}, $\varphi$ becomes a
multiplier if we block from  our vision the points of
$\mathbb D\backslash S$ (that is, replace $\cal{ H}$ with
$H^2|S$); alternatively, from its very definition,
$\varphi$ has an extension which  is a multiplier of $\cal H$.

At first glance Example \ref{1.6} resembles Example
\ref{1.5}: in both we can convert $\varphi$ into a multiplier
by restricting the domain of the Hilbert function space $\cal
H$.  However, there is a qualitative difference: in the former
$\varphi$ has an extension which is a multiplier, in the latter
in general it has not.  In Example \ref{1.5} $\varphi$ can
somehow see the points outside $D_\varphi$, in Example
\ref{1.6} it cannot.

It is clear that if we are to obtain any kind of classification of
pseudomultipliers then we need to address the issue of
modification of domains. 
There are different approaches to this issue.
Cowen and McCluer in \cite{cowen} and McCarthy and Shalit in \cite{McCShalit} have explored the notion that every Hilbert function space 
(or even Banach function space $X$) has a ``natural" domain of definition. In \cite{cowen}
the authors define a  Banach function space $X$ on a set $D$ to be {\em algebraically consistent}
if every continuous linear functional $x^*$ on $X$ that is partially multiplicative (in the sense that whenever $f,g$ and $fg$ all belong to $X$, the relation $x^*(fg)=x^*(f)x^*(g)$ holds)
is a point-evaluation functional at a point of $D$.
 The set of partially multiplicative linear functionals is analogous to the maximal ideal space of a
commutative Banach algebra.  However, in the context of a {\em particular } pseudomultiplier, there is another natural notion, which we regard as the most basic and fruitful concept in the general theory --  that of a pseudomultiplier {\em seeing} a vector.

We formalized the concept of a pseudomultiplier 
seeing a vector with value $c$ in Definition \ref{1.7} in the Overview of the paper (Section \ref{overview}).
Recall that a pseudomultiplier $\varphi$ of $\calh$ sees a vector $v\in\calh$
	with value $c$ if there exists a pseudomultiplier $\psi$ of
$\cal{ H}_v$ such that $\psi$ extends $\varphi$, $p \in D_\psi, E_\varphi = E_\psi
|\Omega$ and $\psi(p) =c$.

We first probe the meaning of ``seeing a vector" in the case of a multiplier $\phi$ of $\calh$. We can regard $\phi$ as a pseudomultiplier of $\calh$, with $D_\phi=\Omega$, $E_\phi=\calh$, with singular space $\{0\}$ and $X_\phi$ the corresponding operator $M_\phi:\calh\to\calh$ 
of multiplication by $\phi$. 

\begin{proposition}\label{multipliers} 
Let $\phi$ be a multiplier of a Hilbert function space $\calh$ on a set $\Omega$. 
 The vectors which are visible to $\phi$ are precisely the eigenvectors of $M_\phi^*$. 
\end{proposition}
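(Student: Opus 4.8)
The plan is to unravel Definition \ref{1.7} in the special case that $\phi$ is a multiplier, and to reduce "$\phi$ sees $v$ with value $c$" to the eigenvalue equation $M_\phi^* v = \bar c v$.

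First I would show that any pseudomultiplier $\psi$ of $\calh_v$ witnessing that $\phi$ sees $v$ must in fact be a multiplier of $\calh_v$. Since $D_\phi=\Omega$ and $\psi$ extends $\phi$ with $p\in D_\psi$, we get $D_\psi = \tilde\Omega := \Omega\cup\{p\}$. Since $\phi$ is a multiplier we have $E_\phi=\calh$, and the restriction map $\calh_v\to\calh$, $f_v\mapsto f$, is a Hilbert space isomorphism; hence the requirement $E_\phi = E_\psi|\Omega$ forces $E_\psi=\calh_v$. A function on $\tilde\Omega$ whose regular space (as a pseudomultiplier) is all of $\calh_v$ and whose domain is all of $\tilde\Omega$ is precisely a multiplier of $\calh_v$, and conversely every multiplier $\psi$ of $\calh_v$ with $\psi|\Omega=\phi$ is such a witness. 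So $\phi$ sees $v$ with value $c$ if and only if the function $\psi$ on $\tilde\Omega$ defined by $\psi|\Omega=\phi$, $\psi(p)=c$, is a multiplier of $\calh_v$.

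Next I would translate the latter condition into an inner-product identity. For $g\in\calh$ with associated element $g_v\in\calh_v$, the pointwise product $\psi g_v$ is the function on $\tilde\Omega$ equal to $\phi g$ on $\Omega$ (which lies in $\calh$, as $\phi$ is a multiplier) and taking the value $c\,g_v(p)=c\ip{g}{v}$ at $p$. For $\psi g_v$ to lie in $\calh_v$ it must coincide with $h_v$ for some $h\in\calh$; comparing values on $\Omega$ gives $h=\phi g$, and comparing values at $p$ gives $\ip{\phi g}{v}=c\ip{g}{v}$. Thus $\psi$ is a multiplier of $\calh_v$ if and only if $\ip{M_\phi g}{v}=c\ip{g}{v}$ for every $g\in\calh$, i.e.\ $\ip{g}{M_\phi^*v}=\ip{g}{\bar c v}$ for every $g$, i.e.\ $M_\phi^* v=\bar c v$. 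Combining with the first step, $\phi$ sees $v$ with value $c$ if and only if $M_\phi^* v=\bar c v$; hence the vectors visible to $\phi$ are exactly the $v$ with $M_\phi^* v\in\C v$, that is, the eigenvectors of $M_\phi^*$ (the zero vector, visible to every pseudomultiplier by Example \ref{1.75}, corresponds to the trivial solution of this relation for every scalar).

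The only point needing care is the reduction in the first paragraph: one must confirm that the closedness and finite-codimensionality requirements hidden in the word "pseudomultiplier" are automatically satisfied here, so that a witnessing pseudomultiplier $\psi$ of $\calh_v$ is forced to be an honest multiplier of $\calh_v$. Once that collapse is justified, the rest is the short computation with the inner product on $\calh_v$ carried out above.
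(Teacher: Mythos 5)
Your proof is correct and follows essentially the same route as the paper: both reduce the definition of $\phi$ seeing $v$ with value $c$ to the identity $c\ip{f}{v}=\ip{\phi f}{v}=\ip{f}{M_\phi^*v}$ for all $f\in\calh$, hence $M_\phi^*v=\bar c v$. You merely make explicit the preliminary observation (left implicit in the paper's one-line "the steps are all reversible") that, via the isomorphism $\calh_v\cong\calh$, the condition $E_\psi|\Omega=E_\phi=\calh$ forces $E_\psi=\calh_v$ and thus the witnessing pseudomultiplier $\psi$ to be an honest multiplier of $\calh_v$.
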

\begin{proof}
 Suppose that $\phi$ sees $v$ with value $c$.  That means that there is a pseudomultiplier $\psi$ of $\calh_v$ defined on $\Omega \cup\{p\}$ such that $\psi$ extends $\phi$ and $\psi(p)=c$.  Now, for any $f\in\calh$, as functions on $\Omega \cup\{p\}$, $\psi f_v$ equals $\phi f$ on $\Omega$ and equals $c\ip{f}{v}$ at $p$.  Hence 
\[
c\ip{f}{v}=\ip{\phi f}{v} = \ip{f}{M_\phi^*v}.
\]
Thus $M_\phi^* v= \bar c v$. The steps are all reversible, so $\phi$ sees $v$ with value $c$ if and only if $M_\phi^* v= \bar c v$.  
\end{proof}
In particular, for $\lam\in\Omega$, a multiplier $\phi$ sees the kernel $k_\lam$ with value $\phi(\lam)$, a fact that we generalized to pseudomultipliers in Example \ref{1.8}.	

In the next example $\calh\calh$ denotes the class of functions on $\Omega$ which can be expressed as a product $fg$ where $f,g\in\calh$.

\begin{example}\label{1.9}
{\rm The result \cite[Theorem 2.1]{AY1} implies that if $\cal{
H}$ is a nonsingular Hilbert function space (in the sense that the vectors $\{k_\lam:\lam\in\O\}$ are linearly independent in $\calh$) and $\varphi$ is a
pseudomultiplier of $\cal{ H}$ with the property that
$D_\varphi$ is a set of uniqueness for $\cal{ H}\cal{ H}$, then
$\varphi$ sees $k_\lambda$ for all $\lambda \in \Omega
\backslash F$ for some set $F \subset \Omega$ containing at
most $\dim(\cal{ H} \ominus E_\varphi)$ points.}
\end{example}

\begin{remark}\label{1.10}
{\rm Definition \ref{1.7} allows us to formalize the heuristic
remarks made following Examples \ref{1.4}, \ref{1.5}, and
\ref{1.6}.  Thus, in Example \ref{1.4}, $\varphi$ sees
$k_\lambda$ with value $(\lambda-\alpha)^{-1}$ for all
$\lambda \in \mathbb D\backslash \{\alpha\}$ while $\varphi$
does not see $k_\alpha$.  On the other hand, in Example
\ref{1.5} while strictly speaking $k_\lambda(z) =
(1-\overline \lambda z)^{-1}$ is the reproducing
kernel for $\cal{ H}$ only when $\lambda \in 
\Omega$, $\varphi$ sees
$k_\lambda$ with  value
$\psi(\lambda)$ for all $\lambda \in \mathbb D$.  In Example
\ref{1.6}, in contrast to Example \ref{1.5}, $\varphi$ does  not
necessarily always see the kernel function.  Thus, if $\omega
= (\lambda, 1) \in \Omega_1$, then $\varphi$ sees
$k_\omega$ with value $\psi(\lambda)$ but if $\omega =
(\lambda, 2) \in \Omega_2$, then the reader can easily check
that $\varphi$ sees $k_\omega$ with value $c$ if and only if
$L^* k_\lambda$ is an eigenvector for $M^{*}_\varphi$ with
corresponding eigenvalue $\overline c$, a situation that
need not occur.}
\end{remark}

\begin{example}\label{1.11}
{\rm In Example \ref{1.4}, $\varphi$ does not see $k_\alpha$. 
If we were to drop the requirement in Definition \ref{1.7} that
$E_\varphi=E_\psi|\Omega$ this would no longer be true: 
with the resulting definition, $\phi$ {\em would} see $k_\alpha$.
In accordance with Definition \ref{1.7}, to determine whether $\phi$ sees the vector $v=k_\alpha$ in $\calh=H^2|\Omega$ with value $c\in\C$
we must consider whether there is a pseudomultiplier $\psi$ of the Hilbert function space $\calh_v$ on $\Omega \cup \{p\}$ such that $\psi$ extends $\phi$, $E_\phi=E_\psi|\Omega$ and $\psi(p)=c$.
Such a function $\psi$ on $\Omega \cup \{p\}$ is necessarily given by
$$
\psi(\lambda) = \left\{
\begin{array}{cl}
\varphi(\lambda) &\qquad \mbox{ for } \lambda \in \Omega\\ \\
c&\qquad \mbox{ if } \lambda = p.
\end{array} \right.
$$
Now $E_\psi$ comprises those functions $f_v\in \calh_v$ on $\Omega \cup \{p\}$ such that $\psi f_v \in \calh_v$.
By construction of $\calh_v$, $\psi f_v \in \calh_v$ if and only if there exists $g\in H^2$ such that, as functions on 
$\Omega \cup \{p\},\ \psi f_v = (g|\Omega)_v$, which is to say that $\phi f=g$ on $\Omega$ and, corresponding to values at $p$, $c\ip{f}{v} = \ip{g|\Omega}{v}.$  Since $v=k_\alpha$, the reproducing kernel of $\alpha$ in $H^2$, this pair of equations becomes
\begin{eqnarray}
	\frac{f(\lambda)}{\lambda-\alpha}&=&g(\lambda) \mbox{ for all } \lambda\in\Omega \label{main}\\
cf^\sim(\alpha)=c\ip{f}{v} = \ip{g|\Omega}{v}&=&\ip{g}{k_\alpha}=g(\alpha) \label{atp}
\end{eqnarray}
(recall that $f^\sim$ is the function in $H^2$ whose restriction to $\Omega$ is $f$). 
From equation \eqref{main} we can infer that $f^\sim(\alpha)=0$ and $g(\alpha)=(f^\sim)'(\alpha)$ which, in conjunction with equation \eqref{atp} further implies that $g(\alpha)=0$, and therefore $f^\sim(\alpha)= (f^\sim)'(\alpha)=0$.   Thus 
if $f_v\in E_\psi$ then $f^\sim(\alpha)= (f^\sim)'(\alpha)=0$.  Conversely, if $f^\sim(\alpha)= (f^\sim)'(\alpha)=0$ then equations \eqref{main} and \eqref{atp} hold and so $E_\psi=\{f_v\in\calh_v: f^\sim(\alpha)=(f^\sim)'(\alpha)=0\}$ and so $E_\psi|\Omega$ is a proper subset of $E_\phi$.
The present example explains why
we require that $E_\varphi=E_\psi|\Omega$ in Definition \ref{1.7}. \qed}
\end{example}

\begin{lemma}\label{Epsi=Ephi}
Let $\phi$ be a pseudomultiplier of a Hilbert function space $(\calh,\O)$, let $v\in\calh$ and let $p$ be a point not in $\O$. For any pseudomultiplier $\psi$ of $\calh_v$ such that $D_\psi \supset D_\phi\cup\{p\}$ and $\psi|D_\phi=\phi$, the inclusion $E_\psi|\O \subseteq E_\phi$ holds.
\end{lemma}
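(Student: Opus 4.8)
The plan is to unwind the two definitions in play — that of the regular space $E_\psi$ of the pseudomultiplier $\psi$ of $\calh_v$, and that of the regular space $E_\phi$ of $\phi$ on $\calh$ — and to observe that the defining membership condition for $E_\psi$, when the relevant identity is restricted from $D_\psi$ down to the subset $D_\phi$, becomes exactly the defining membership condition for $E_\phi$. No analytic input is needed; the statement is pure bookkeeping about restricting a "$g=\varphi h$ on the domain" relation to a smaller domain.

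Concretely, first I would fix an arbitrary element of $E_\psi|\O$. By the construction of $\calh_v$ it has the form $f_v|\O = f$ for some $f\in\calh$ with $f_v\in E_\psi$. Applying the definition of the regular space to the pseudomultiplier $\psi$ of $\calh_v$ (whose domain is $D_\psi$), there exists $g\in\calh$ such that the function $g_v\in\calh_v$ satisfies $g_v(\lambda)=\psi(\lambda)f_v(\lambda)$ for every $\lambda\in D_\psi$.

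Next I would restrict this identity to $\lambda\in D_\phi$, using three elementary facts: $D_\phi\subseteq\O$ (so that $f_v(\lambda)=f(\lambda)$ and $g_v(\lambda)=g(\lambda)$ for such $\lambda$), $D_\phi\subseteq D_\psi$ (part of the hypothesis, so the identity is available at every point of $D_\phi$), and $\psi|D_\phi=\phi$ (also part of the hypothesis). These combine to give $g(\lambda)=\phi(\lambda)f(\lambda)$ for all $\lambda\in D_\phi$ with $g\in\calh$, which is precisely the condition that $f\in E_\phi$. Since $f_v\in E_\psi$ was arbitrary, this yields $E_\psi|\O\subseteq E_\phi$.

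I do not anticipate any genuine obstacle: the only points requiring (minimal) care are the identifications of $f_v$ and $g_v$ with their restrictions $f$ and $g$ on $D_\phi\subseteq\O$, and the explicit invocation of the hypotheses $D_\phi\cup\{p\}\subseteq D_\psi$ and $\psi|D_\phi=\phi$. It is worth remarking that the value $\psi(p)$ and the behaviour of $\psi$ on $D_\psi\setminus(D_\phi\cup\{p\})$ play no role whatsoever — the inclusion is in this sense automatic. This is exactly why, in Definition \ref{1.7}, the reverse inclusion $E_\phi\subseteq E_\psi|\O$ is the substantive half of the condition ``$\phi$ sees $v$''.
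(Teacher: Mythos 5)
Your proof is correct and is essentially identical to the paper's: both arguments take an arbitrary $f_v\in E_\psi$, extract $g\in\calh$ with $g_v=\psi f_v$ on $D_\psi$, restrict the identity to $D_\phi$ using $\psi|D_\phi=\phi$, and conclude $f\in E_\phi$. The extra remarks about which hypotheses are used and why the reverse inclusion is the substantive one are accurate but not part of the paper's (shorter) proof.
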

\begin{proof}
Consider an arbitrary element of $E_\psi|\O$: it can be expressed in the form $f_v|\O$ for some $f\in\calh$.  By definition $f_v\in E_\psi$ if and only if  there exists $g\in\calh$ such that $g_v=\psi f_v$ on $D_\psi$.  Evaluating both sides of this equation at any point $z\in D_\phi$, we find that $g(z)=\phi(z)f(z)$, and so $f\in  E_\phi$.  Thus $E_\psi|\O \subseteq E_\phi$, as claimed.
\end{proof}

\section{Ambiguous points}\label{ambiguous}

In this section we shall repeatedly use a notation introduced in the overview of the paper.
Let $\cal{ H}$ be a Hilbert function space on $\Omega$.
For a pseudomultiplier $\phi$ of $\calh$, $c\in \mathbb C$ and $\alpha\in D_\phi$, define the pseudomultiplier
$\varphi_{c, \alpha}$ on $D_\varphi$ by
\begin{equation}\label{modify-2}
\varphi_{c, \alpha} (\lambda) = \left\{ \begin{array}{cl}
\varphi(\lambda) & \ \  {\rm if}\quad \lambda \in
D_\varphi\setminus\{\alpha\}\\ \\
c& \ \ {\rm if}\quad \lambda = \alpha.\end{array}\right.
\end{equation}

 Recall Definition \ref{2.1}: a point $\alpha \in
D_\varphi$  is an {\it ambiguous point for } a pseudomultiplier
$\varphi$ of $\calh$ if there exists $c\in \mathbb C$ with the properties
that $c\not= \varphi(\alpha), E_\varphi = E_{\varphi_{c,\alpha}}$, and
$X_{\varphi_{c,\alpha}} = X_\varphi$. 
An ambiguous point is a point at which the value of a
pseudomultiplier $\varphi$ is not determined by the
associated operator $X_\varphi$. 

In Definition \ref{d2.12} below we shall extend the notion of an ambiguous point of $\phi$ to points $\alpha\in\Omega$ not assumed to be in $D_\phi$. We then prove in Theorem \ref{2.14} that 
$\alpha \in \Omega$ is ambiguous for $\varphi$ if and
only if, for every $c\in \mathbb C,\ \varphi$ sees
$k_\alpha$ with value $c$.
This linkage between ambiguous points and the notion of seeing a vector is useful for the
organization of the wide variety of examples of
pseudomultipliers and for the analysis of their singularities.

\begin{lemma}\label{2.2}
Let $\psi$ be a  multiplier of a Hilbert function space $\cal{ H}$ on a set $\Omega$.  Suppose that $\alpha\in \Omega$ has the property
that there exists a sequence $\{\lambda_n\} \subseteq
\Omega$ with $\lambda_n \not= \alpha$ and $k_{\lambda_n}
\to k_\alpha$.   Let
$\varphi=\psi_{z,\alpha}$ where $z\in \mathbb C\setminus\{ \psi(\alpha)\}$.  Then $\phi$ is a pseudomultiplier of $\calh$ and $\alpha$ is an
ambiguous point for $\varphi$. 
\end{lemma}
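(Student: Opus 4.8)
The plan is to pin down $E_\varphi$ explicitly: I will show that $E_\varphi=k_\alpha^\perp$ and that $X_\varphi$ is simply the restriction of the multiplication operator $M_\psi$ to $k_\alpha^\perp$, so that the value $z$ assigned by $\varphi$ at $\alpha$ is invisible to $X_\varphi$. The hypothesis on the sequence $\{\lambda_n\}$ enters only through the observation that $\Omega\setminus\{\alpha\}$ is still a set of uniqueness for $\calh$: if $f\in\calh$ vanishes on $\Omega\setminus\{\alpha\}$ then $f(\alpha)=\ip{f}{k_\alpha}=\lim_n\ip{f}{k_{\lambda_n}}=\lim_n f(\lambda_n)=0$, whence $f=0$ in $\calh$.

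Next I would carry out a single computation that handles everything. Given $w\in\C$, let $\theta$ denote the function on $\Omega$ agreeing with $\psi$ off $\alpha$ and with $\theta(\alpha)=w$; note $D_\theta=\Omega$ is a set of uniqueness. A vector $h\in\calh$ lies in $E_\theta$ precisely when there is $g\in\calh$ with $g=\psi h$ on $\Omega\setminus\{\alpha\}$ and $g(\alpha)=w\,h(\alpha)$. Since $\psi$ is a multiplier, $\psi h\in\calh$; and since $\Omega\setminus\{\alpha\}$ is a set of uniqueness, $g$ is forced to equal $\psi h$. The outstanding requirement $g(\alpha)=w\,h(\alpha)$ therefore becomes $(\psi(\alpha)-w)\,h(\alpha)=0$. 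Consequently $E_\theta=\calh$ when $w=\psi(\alpha)$, while $E_\theta=k_\alpha^\perp$ when $w\neq\psi(\alpha)$; in the latter case, for $h\in k_\alpha^\perp$ both $\psi h$ and $\theta h$ vanish at $\alpha$ and agree off $\alpha$, so $X_\theta h=\psi h=M_\psi h$.

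Taking $\theta=\varphi=\psi_{z,\alpha}$, so that $w=z\neq\psi(\alpha)$, we obtain $E_\varphi=k_\alpha^\perp$, which is closed and of codimension at most $1$; hence $\varphi$ is a pseudomultiplier, with $X_\varphi$ the restriction of $M_\psi$ to $k_\alpha^\perp$. To show $\alpha$ is an ambiguous point for $\varphi$, choose $c\in\C\setminus\{z,\psi(\alpha)\}$, which exists since $\C$ is infinite. Then $c\neq\varphi(\alpha)$, and applying the computation once more with $w=c$ gives $E_{\varphi_{c,\alpha}}=k_\alpha^\perp=E_\varphi$ and $X_{\varphi_{c,\alpha}}h=\psi h=X_\varphi h$ for all $h\in k_\alpha^\perp$, i.e. $X_{\varphi_{c,\alpha}}=X_\varphi$. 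By Definition \ref{2.1}, $\alpha$ is an ambiguous point for $\varphi$.

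There is no serious obstacle; the only point needing care is the set-of-uniqueness observation for $\Omega\setminus\{\alpha\}$, which is what forces the comparison function $g$ to coincide with $\psi h$ and thereby pins down both $E_\varphi$ and $X_\varphi$ independently of the modified value at $\alpha$.
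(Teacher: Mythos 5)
Your proposal is correct and follows essentially the same route as the paper: both arguments identify $E_{\psi_{w,\alpha}}=k_\alpha^\perp$ and $X_{\psi_{w,\alpha}}=M_\psi|k_\alpha^\perp$ for every $w\neq\psi(\alpha)$, using the hypothesis $k_{\lambda_n}\to k_\alpha$ to force a function vanishing on $\Omega\setminus\{\alpha\}$ to vanish at $\alpha$ as well (you package this as ``$\Omega\setminus\{\alpha\}$ is a set of uniqueness''; the paper applies the limit directly to the difference $g=\psi_{c,\alpha}f-\psi f$). The conclusion, comparing two distinct values $z$ and $c$ both different from $\psi(\alpha)$, is the same.
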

\begin{proof}

 This follows immediately
from Definition \ref{2.1} and the following simple observation:
\begin{equation}\label{5}
{\rm if}\ c\not= \psi(\alpha), \ {\rm then}\ E_{\psi_{c,\alpha}}
= k^\perp_\alpha \ {\rm and}\ X_{\psi_{c, \alpha}} = X_\psi
|k^\perp_\alpha.
\end{equation}
To prove equation (\ref{5}), fix $c\not= \psi(\alpha)$ and first assume
that $f\in k^\perp_\alpha$.  Evidently,
$\psi_{c,\alpha}(\lambda) f(\lambda) = \psi(\lambda)
f(\lambda)$ for all $\lambda \in \Omega\backslash
\{\alpha\}$.  But since $f \in k^\perp_\alpha, f(\alpha) =0$, and
so $\psi_{c,\alpha}(\alpha) f(\alpha) = \psi(\alpha) f(\alpha)$. 
Thus, $f\in k^\perp_\alpha$ implies that $\psi_{c,\alpha}f =
\psi f \in \cal{ H}$ and we have shown that $k^\perp_\alpha
\subseteq E_\psi$ and $X_{\psi_{c,\alpha}}|k^\perp_\alpha =
X_\psi| k^\perp_\alpha$.  There remains to show that
$E_{\psi_{c,\alpha}} \subseteq k^\perp_\alpha$.  Accordingly,
fix $f\in E_{\psi_{c,\alpha}}$.  Thus, $\psi_{c, \alpha} f \in \cal{
H}$.  But  $\psi$ is a multiplier and so we see that
$g=\psi_{c,\alpha} f - \psi f \in \cal{ H}$.  Furthermore,
$g(\lambda) =0$ if $\lambda \not= \alpha$.   Recalling our
technical hypothesis we see  that 
$$
g(\alpha) = \ip{g}{ k_\alpha} =
\lim\limits_{n\to \infty} \ip{g}
{k_{\lambda_n}}=\lim\limits_{n\to \infty} g(\lambda_n)=0.
$$  
Hence $0=g(\alpha) = (c-\psi(\alpha)) f(\alpha).$  
But
$c\not= \psi(\alpha)$, so $f(\alpha) =0$ i.e. $f\in
k^\perp_\alpha$.  This concludes the proof of equation (\ref{5}).
\end{proof}

\begin{example}\label{2.3}
	{\rm Let $\cal{ H}$ be the space $W^{1,2}[0,1]$ of absolutely continuous
		functions on $[0, 1]$ whose weak derivatives are in $L^2$ and
		with inner product given by
		$$
		\ip{f}{g} = \int^1_0 f(t) \overline {g(t)} dt + \int^1_0
		f^\prime (t) \overline{ g^\prime(t)} dt.
		$$
		We showed in Example \ref{intro2.3}	in the introduction that $\chi(t)=\sqrt{t}$ is a pseudomultiplier of
	$\cal{ H}$ with $E_\chi = k^\perp_0$.	
			 We claim that $0$
		is an ambiguous point for $\chi$.  Indeed,
		\begin{equation}\label{6}
		{\rm if}\ c\in \mathbb C\ {\rm is\ arbitrary,\ then}\
		E_{\chi_{c,0}} = k^\perp_0 \ {\rm and}\ 
		X_{\chi_{c,0}}=X_\chi.
		\end{equation}
		The proof of the implication (\ref{6}) is very simple.  On the one hand, if
		$f\in k^\perp_0$, then $\chi_{c, 0} f=\chi f$ so that
		$f\in E_{\chi_{c,0}}$ and $X_{\chi_{c,0}} f= X_\chi
		f$.  On the other hand, if $f\in E_{\chi_{c,0}}$, then
		$\sqrt{t}f\in\cal{ H}$ which forces $f(0)=0$.
		\qed}
\end{example}

Observe that
there is a qualitative difference  between the examples of ambiguous points implied by Lemma 
\ref{2.2} and the ambiguous point in Example \ref{2.3}.  In Lemma \ref{2.2}, $\varphi$ can be
redefined  at the ambiguous point $\alpha$ in such a way
that $\alpha$ is no longer an ambiguous point.
But in Example \ref{2.3}, $0$ remains an ambiguous point for
$\chi$, regardless of how $\chi$ is redefined at $0$
(this follows from the implication (\ref{6})).  We formalize these two 
cases in the following definition.

\begin{definition}\label{2.4}
\rm Let $\varphi$ be a pseudomultiplier of $\calh$ and let  $\alpha \in D_\varphi$ be
an ambiguous point for $\varphi$.  We say $\alpha$ is a
{\em removable ambiguous point} for $\varphi$ if there exists
$c\in \mathbb C$ such that $\alpha$ is not an ambiguous
point for $\varphi_{c,\alpha}$. 
 We say $\alpha$ is an {\em essential
ambiguous point} for $\varphi$ if $\alpha$ is an ambiguous point but
not a removable ambiguous point for $\varphi$.  
\end{definition}

Thus, in Lemma \ref{2.2} $\alpha$ is a removable ambiguity while
in Example \ref{2.3}, $\alpha$ is an essential ambiguity. 
Before continuing our investigation of ambiguous points
we state and prove some useful technical lemmas.

\begin{lemma}\label{2.5}
If $\varphi$ is a pseudomultiplier, $\alpha \in D_\varphi$
and $E_\varphi \subset k^\perp_\alpha$ then, for any $c\in
\mathbb C, E_\varphi = E_{\varphi_{c,\alpha}} \cap
k^\perp_\alpha$ and
$X_\varphi = X_{\varphi_{c,\alpha}}|E_\varphi$.
\end{lemma}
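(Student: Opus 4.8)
The plan is to unwind both set-theoretic inclusions directly from the definition of $E_\phi$ and of the one-point modification $\phi_{c,\alpha}$, using the hypothesis $E_\phi\subset k_\alpha^\perp$ to control what happens at the single point $\alpha$. First I would observe that $\phi$ and $\phi_{c,\alpha}$ agree on $D_\phi\setminus\{\alpha\}$, so for any $f\in\calh$ the products $\phi f$ and $\phi_{c,\alpha} f$ agree at every point of $D_\phi$ except possibly $\alpha$; they also agree at $\alpha$ whenever $f(\alpha)=0$, since then both are zero there. Consequently, if $f\in k_\alpha^\perp$ (so $f(\alpha)=0$), then $\phi f=\phi_{c,\alpha}f$ as functions on $D_\phi$, and hence $f\in E_\phi$ if and only if $f\in E_{\phi_{c,\alpha}}$; moreover in that case the extending element $g\in\calh$ is the same, so $X_\phi f=X_{\phi_{c,\alpha}}f$. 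Since by hypothesis $E_\phi\subseteq k_\alpha^\perp$, this already gives $E_\phi\subseteq E_{\phi_{c,\alpha}}\cap k_\alpha^\perp$ together with the agreement of the operators on $E_\phi$.

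For the reverse inclusion $E_{\phi_{c,\alpha}}\cap k_\alpha^\perp\subseteq E_\phi$, I would take $f\in E_{\phi_{c,\alpha}}$ with $f(\alpha)=0$. Then there is $g\in\calh$ with $g=\phi_{c,\alpha}f$ on $D_\phi$; but again, because $f(\alpha)=0$, we have $\phi_{c,\alpha}f=\phi f$ on all of $D_\phi$, so the same $g$ witnesses $f\in E_\phi$. This closes the two inclusions and shows $E_\phi=E_{\phi_{c,\alpha}}\cap k_\alpha^\perp$, and the identity $X_\phi=X_{\phi_{c,\alpha}}|E_\phi$ follows since on $E_\phi$ the extending element $g$ defining $X_\phi f$ coincides with the one defining $X_{\phi_{c,\alpha}}f$ (uniqueness of $g$ being guaranteed by condition (1) of Definition \ref{1.1}, as $D_\phi$ is a set of uniqueness).

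I do not expect any real obstacle here; the argument is essentially bookkeeping about a function modified at one point, and the hypothesis $E_\phi\subset k_\alpha^\perp$ is exactly what is needed so that the modification at $\alpha$ is invisible on $E_\phi$. The one subtlety worth stating carefully is the direction of the inclusion: the hypothesis is needed so that every element of $E_\phi$ vanishes at $\alpha$; without it one would only get $E_\phi\cap k_\alpha^\perp = E_{\phi_{c,\alpha}}\cap k_\alpha^\perp$. I would write the proof in two short paragraphs mirroring the two inclusions, invoking the set-of-uniqueness property only to justify that $X_\phi$ is well defined and that the two operators agree pointwise hence as operators.
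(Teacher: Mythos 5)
Your argument is correct and follows essentially the same route as the paper's proof: both directions are obtained by noting that when $f(\alpha)=0$ the products $\varphi f$ and $\varphi_{c,\alpha}f$ coincide on $D_\varphi$, so the same extending element $g\in\calh$ works for both, which gives the equality of the regular spaces intersected with $k_\alpha^\perp$ and the agreement of the operators. The small reformulation you make (deriving $E_\varphi\cap k_\alpha^\perp = E_{\varphi_{c,\alpha}}\cap k_\alpha^\perp$ before invoking the hypothesis) is a harmless rearrangement of the same bookkeeping.
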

\begin{proof} If $f\in E_\varphi$ then $f(\alpha)=0$
and there exists $h\in \cal{ H}$ such that $h=f\varphi$ on
$D_\varphi$.  Hence $h=\varphi f = \varphi_{c,\alpha}f$ on
$D_\varphi$, and so $f\in E_{\varphi_{c,\alpha}}$.  Thus
$E_\varphi \subseteq E_{\varphi_{c,\alpha}} \cap
k^\perp_\alpha$ and $X_\varphi =
X_{\varphi_{c,\alpha}}|E_\varphi$.  Conversely, assume that
$f\in E_{\varphi_{c,\alpha}} \cap k^\perp_\alpha$.  Then
$f(\alpha) =0$ and there exists $h\in \cal{ H}$ such that
$\varphi_{c,\alpha} f=h$ on $D_\varphi$.  Hence $\varphi f =
\varphi_{c,\alpha}f =h$ on $D_\varphi$ and  so $f\in
E_\varphi$.  Thus $E_{\varphi_{c,\alpha}}\cap
k^\perp_\alpha \subseteq E_\varphi$. \end{proof}

\begin{lemma}\label{2.6}
If $\varphi$ is a pseudomultiplier, $\alpha \in D_\varphi$
and $E_\varphi \subseteq k^\perp_\alpha$, then there can
exist at most one complex number $c$ such that
$E_{\varphi_{c,\alpha}}$ is not a subset of $k^\perp_\alpha$.
\end{lemma}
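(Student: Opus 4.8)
The plan is to argue by contradiction. Suppose there were two distinct complex numbers $c_1 \neq c_2$ such that neither $E_{\varphi_{c_1,\alpha}}$ nor $E_{\varphi_{c_2,\alpha}}$ is contained in $k_\alpha^\perp$. Then for each $i\in\{1,2\}$ we may pick $f_i \in E_{\varphi_{c_i,\alpha}}$ with $f_i(\alpha)\neq 0$; by scaling we can arrange $f_1(\alpha)=f_2(\alpha)=1$. For each $i$ there exists $g_i\in\calh$ with $g_i = \varphi_{c_i,\alpha} f_i$ on $D_\varphi$; in particular $g_i(\lambda)=\varphi(\lambda)f_i(\lambda)$ for all $\lambda\in D_\varphi\setminus\{\alpha\}$, while $g_i(\alpha)=c_i f_i(\alpha)=c_i$ (here I am using that $\alpha\in D_\varphi$, so the point $\alpha$ is among the points where the identity $g_i=\varphi_{c_i,\alpha}f_i$ is imposed).

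Now consider $f := f_1 - f_2$. Since $f_1(\alpha)=f_2(\alpha)=1$, we have $f(\alpha)=0$, so $f\in k_\alpha^\perp$. On $D_\varphi\setminus\{\alpha\}$ we have $\varphi f = \varphi f_1 - \varphi f_2 = g_1 - g_2$, and since $f(\alpha)=0$ the value of $\varphi_{c,\alpha}$ at $\alpha$ is irrelevant: $\varphi f = g_1 - g_2 \in\calh$ on all of $D_\varphi$ in the sense that $g_1-g_2$ extends $\varphi(f|D_\varphi)$. Hence $f\in E_\varphi$, and $E_\varphi\subseteq k_\alpha^\perp$ forces nothing new here, but the key point is that $X_\varphi f = g_1 - g_2$, and evaluating at $\alpha$ gives $(g_1-g_2)(\alpha) = c_1 - c_2 \neq 0$. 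On the other hand, $f\in E_\varphi\subseteq k_\alpha^\perp$, yet there is as yet no contradiction from $f$ alone — the contradiction must come from comparing with an element of $E_\varphi$ that is genuinely off $k_\alpha^\perp$, which there is none of. So instead I extract the contradiction differently: since $g_1-g_2\in\calh$ vanishes nowhere forced except we know $(g_1-g_2)(\alpha)=c_1-c_2$, and $g_1 - g_2 = \varphi f$ on $D_\varphi\setminus\{\alpha\}$ with $f\in E_\varphi$, we have $X_\varphi f = g_1-g_2$; but then take any $h\in E_\varphi$ and note $E_\varphi\subseteq k_\alpha^\perp$ means $h(\alpha)=0$, while $(X_\varphi h)(\alpha)$ is unconstrained — so this line does not immediately close.

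The cleaner route, which I would actually write, is this: from $f_i(\alpha)=1$ and $g_i(\alpha)=c_i$, form $f = f_1 - f_2 \in E_\varphi$ (using Lemma \ref{2.5} with the two modifications, or directly as above) with $f(\alpha)=0$, and form $g = g_1 - g_2 = X_\varphi f \in \calh$ with $g(\alpha) = c_1 - c_2 \neq 0$. The hypothesis $E_\varphi\subseteq k_\alpha^\perp$ is not yet contradicted, so I push further: because $f_1\in E_{\varphi_{c_1,\alpha}}\setminus k_\alpha^\perp$, the element $f_1$ together with $E_\varphi$ spans a subspace of $E_{\varphi_{c_1,\alpha}}$ on which the analogue of Lemma \ref{2.5} can be inspected; but $E_{\varphi_{c_1,\alpha}}\cap k_\alpha^\perp = E_\varphi$ by Lemma \ref{2.5}, so $E_{\varphi_{c_1,\alpha}} = \span\{f_1\}\dotplus E_\varphi$ and likewise $E_{\varphi_{c_2,\alpha}} = \span\{f_2\}\dotplus E_\varphi$. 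Now $f_1 - f_2\in E_\varphi$ means $f_2 = f_1 - (f_1-f_2)\in \span\{f_1\}+E_\varphi = E_{\varphi_{c_1,\alpha}}$, hence $\varphi_{c_1,\alpha} f_2\in\calh$, i.e. there is $g\in\calh$ with $g = \varphi f_2$ on $D_\varphi\setminus\{\alpha\}$ and $g(\alpha) = c_1 f_2(\alpha) = c_1$. But also $\varphi_{c_2,\alpha} f_2 = g_2\in\calh$ with $g_2 = \varphi f_2$ on $D_\varphi\setminus\{\alpha\}$ and $g_2(\alpha) = c_2$. Thus $g - g_2\in\calh$ vanishes on $D_\varphi\setminus\{\alpha\}$, which is a set of uniqueness for $\calh$ together with $\alpha$; more precisely $g - g_2$ and the zero function agree on $D_\varphi\setminus\{\alpha\}$. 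To conclude $g=g_2$ I would invoke the standing hypothesis that $D_\varphi$ is a set of uniqueness for $\calh$ — but I need $D_\varphi\setminus\{\alpha\}$ to be a set of uniqueness, which need not hold in general, so the honest finish uses the $E_\varphi\subseteq k_\alpha^\perp$ hypothesis once more: since $g-g_2$ extends $\varphi\cdot 0$ on $D_\varphi\setminus\{\alpha\}$, and...

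The genuine obstacle, then, is precisely handling the single point $\alpha$: knowing two functions in $\calh$ agree on $D_\varphi\setminus\{\alpha\}$ does not force them to agree at $\alpha$ without extra input. The right extra input is the hypothesis $E_\varphi\subseteq k_\alpha^\perp$, used as follows: I would show that under this hypothesis, for $f\in E_{\varphi_{c,\alpha}}$ the extending function $g$ satisfies $g(\alpha) = (X_\varphi \pi(f))(\alpha) + c\cdot(f(\alpha) - (\pi f)(\alpha))$ where $\pi$ is the projection of $E_{\varphi_{c,\alpha}}$ onto $E_\varphi$ along $\span\{f_c\}$ — making the value $g(\alpha)$ an affine function of the single scalar $f(\alpha)$, with slope determined by $c$ and $(X_\varphi f_c)(\alpha)$. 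Two distinct slopes $c_1, c_2$ cannot both be consistent with the same underlying affine relation coming from $X_\varphi$ on $E_\varphi$, and that is the contradiction. I expect the main difficulty to be organizing this single-point bookkeeping cleanly, most likely by invoking Lemma \ref{2.5} to get the decompositions $E_{\varphi_{c_i,\alpha}} = \span\{f_i\}\dotplus E_\varphi$ and then tracking the value at $\alpha$ of the extension along each summand; everything else is routine linear algebra.
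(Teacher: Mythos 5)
There is a genuine gap, and the paper's proof hinges on exactly the point where your argument stalls. Your plan is to take $f = f_1 - f_2$ (with $f_1(\alpha)=f_2(\alpha)=1$) and to show $f\in E_\varphi$ by exhibiting $g_1 - g_2$ as an extension of $\varphi f$ off $D_\varphi$. But this extension fails at the single point $\alpha$: since $\alpha\in D_\varphi$, membership in $E_\varphi$ requires a function $g\in\calh$ with $g(\lambda)=\varphi(\lambda)f(\lambda)$ for \emph{every} $\lambda\in D_\varphi$, including $\lambda=\alpha$; there you would need $g(\alpha)=\varphi(\alpha)f(\alpha)=0$, whereas $(g_1-g_2)(\alpha)=c_1-c_2\neq 0$. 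So the statement ``$g_1-g_2$ extends $\varphi(f|D_\varphi)$'' is false, and $f_1-f_2\in E_\varphi$ is never established. You notice this, circle around it, and end with a sketch of an ``affine bookkeeping'' idea that isn't carried out. The later step ``$f_1-f_2\in E_\varphi$ means $f_2 = f_1 - (f_1-f_2) \in E_{\varphi_{c_1,\alpha}}$'' is also circular, since it assumes the very membership you are trying to prove.

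What the paper does instead is choose the linear combination
\[
f = \bigl(c_2-\varphi(\alpha)\bigr)f_1 - \bigl(c_1-\varphi(\alpha)\bigr)f_2,
\qquad
h = \bigl(c_2-\varphi(\alpha)\bigr)h_1 - \bigl(c_1-\varphi(\alpha)\bigr)h_2.
\]
The coefficients are rigged precisely so that the value at $\alpha$ comes out right: a short computation gives $\varphi(\alpha)f(\alpha) = \varphi(\alpha)(c_2-c_1)$ and $h(\alpha) = (c_2-\varphi(\alpha))c_1 - (c_1-\varphi(\alpha))c_2 = \varphi(\alpha)(c_2-c_1)$, so $\varphi f = h$ holds on all of $D_\varphi$, including $\alpha$, and hence $f\in E_\varphi$ without any uniqueness assumption on $D_\varphi\setminus\{\alpha\}$. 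Then $E_\varphi\subseteq k_\alpha^\perp$ forces $f(\alpha)=c_2-c_1=0$, and the conclusion follows. The single missing idea in your proposal is exactly this choice of coefficients; with unweighted coefficients $1,-1$ the value of the candidate extension at $\alpha$ is wrong, and no amount of rearranging of the rest of the argument can repair that.
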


\begin{proof} Let $\varphi$ and $\alpha$ satisfy the
hypotheses of the lemma and assume that $c_1, c_2 \in
\mathbb C$ with $E_{\varphi_{c_1,\alpha}} \not\subseteq
k^\perp_\alpha$ and $E_{\varphi_{c_2,\alpha}}
\not\subseteq k^\perp_\alpha$.  We need to prove that
$c_1=c_2$.  Since
$E_{\varphi_{c_1,\alpha}} \not\subseteq k^\perp_\alpha$,
there exists $f_1 \in E_{\varphi_{c_1,\alpha}}$ with
$f_1(\alpha)=1$.  Similarly, pick $f_2\in
E_{\varphi_{c_2,\alpha}}$ with $f_2(\alpha)=1$.  Since 
$f_1\in E_{\varphi_{c_1,\alpha}}$, there exists $h_1\in \cal{
H}$ such that $\varphi_{c_1,\alpha} f_1=h_1$ on $D_\varphi$. 
Similarly, there exists $h_2\in \cal{ H}$ such that
$\varphi_{c_2,\alpha}f_2 =h_2$ on $D_\varphi$.  Note that
since $f_1(\alpha) = f_2(\alpha) =1,
\varphi_{c_1,\alpha}(\alpha)=c_1$ and
$\varphi_{c_2,\alpha}(\alpha)=c_2$, we have
$h_1(\alpha)=c_1$ and $h_2(\alpha)=c_2$.  Now define $f\in
\cal{ H}$ by the formula
$$
f=(c_2-\varphi(\alpha)) f_1 - (c_1-\varphi(\alpha)) f_2,
$$
and define $h\in \cal{ H}$ by the formula
$$h=(c_2-\varphi(\alpha)) h_1 - (c_1-\varphi(\alpha))
h_2.$$
We claim that $\varphi f=h$ on $D_\varphi$.  To see this,
first fix $\lambda \in D_\varphi\backslash\{\alpha\}$ and
note that
\begin{eqnarray*}
\varphi(\lambda) f(\lambda) &=& (c_2-\varphi(\alpha))
\varphi(\lambda) f_1(\lambda) - (c_1-\varphi(\alpha))
\varphi(\lambda) f_2(\lambda)\\ 
&=&  (c_2-\varphi(\alpha))
\varphi_{c_1,\alpha}(\lambda) f_1(\lambda) -
(c_1-\varphi(\alpha))
\varphi_{c_2,\alpha}((\lambda) f_2(\lambda)\\ 
&=& (c_2-\varphi(\alpha))
h_1(\lambda)  - (c_1-\varphi(\alpha))
h_2(\lambda)\\  
&=& h(\lambda).
\end{eqnarray*}
On the other hand,
\begin{eqnarray*}
\varphi(\alpha) f(\alpha) &=&
\varphi(\alpha)\Big((c_2-\varphi(\alpha)
 f_1(\alpha) - (c_1-\varphi(\alpha))
 f_2(\alpha)\Big)\\  
&=&  \varphi(\alpha)\Big((c_2-\varphi(\alpha))
 - (c_1-\varphi(\alpha))\Big)\\  
&=&  \varphi(\alpha)(c_2- c_1)\\  
&=&
\Big(c_2-\varphi(\alpha)\Big)c_1 -
\Big(c_1-\varphi(\alpha)\Big)c_2\\
&=&  \Big(c_2-\varphi(\alpha)\Big)h_1(\alpha)
 - \Big(c_1-\varphi(\alpha)\Big)h_2(\alpha)\\  
&=& h(\alpha).
\end{eqnarray*}
Thus, in fact, $\varphi f=h$ on $D_\varphi$.  But this means
that $f\in E_\varphi$.  Since, by assumption, $E_\varphi
\subseteq k^\perp_\alpha, f \in k^\perp_\alpha$, i.e.
$f(\alpha)=0$.  Thus
\begin{eqnarray*}
0 &=&
f(\alpha)\\ 
&=&  \Big(c_2-\varphi(\alpha)\Big)f_1(\alpha)
 - \Big(c_1-\varphi(\alpha)\Big)\\ 
&=&
\Big(c_2-\varphi(\alpha)\Big)  -
\Big(c_1-\varphi(\alpha)\Big) \\
&=& c_2-\ c_1 ,
\end{eqnarray*}
so that $c_1=c_2$ as was to be shown. 
\end{proof}
\vspace{1mm}

Lemmas \ref{2.5} and \ref{2.6} combine to give a
simple criterion for a point $\alpha$ to be an ambiguous
point for a pseudomultiplier.

\begin{proposition}\label{2.7}
Let $\varphi$ be a pseudomultiplier and let $\alpha \in
D_\varphi$.  Then $\alpha$ is an ambiguous point for
$\varphi$ if and only if $E_\varphi \subseteq
k^\perp_\alpha$.
\end{proposition}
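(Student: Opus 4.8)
The plan is to prove the two implications separately, using Lemmas \ref{2.5} and \ref{2.6} for the nontrivial direction.

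First, for the ``only if'' direction, suppose $\alpha$ is an ambiguous point for $\varphi$, so by Definition \ref{2.1} there is $c\neq\varphi(\alpha)$ with $E_\varphi=E_{\varphi_{c,\alpha}}$ and $X_{\varphi_{c,\alpha}}=X_\varphi$. I would take an arbitrary $f\in E_\varphi$ and put $g=X_\varphi f\in\calh$. Evaluating the defining relation $g(\lambda)=\varphi(\lambda)f(\lambda)$ (for $\lambda\in D_\varphi$) at $\alpha$ gives $g(\alpha)=\varphi(\alpha)f(\alpha)$; but $f$ also lies in $E_{\varphi_{c,\alpha}}$ with $X_{\varphi_{c,\alpha}}f=g$, and evaluating $g(\lambda)=\varphi_{c,\alpha}(\lambda)f(\lambda)$ at $\alpha$ gives $g(\alpha)=cf(\alpha)$. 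Subtracting yields $(\varphi(\alpha)-c)f(\alpha)=0$, hence $f(\alpha)=0$ since $c\neq\varphi(\alpha)$. As $f$ was arbitrary, $E_\varphi\subseteq k^\perp_\alpha$.

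For the ``if'' direction, suppose $E_\varphi\subseteq k^\perp_\alpha$; I must produce a suitable $c$. By Lemma \ref{2.6}, there is at most one complex number $c_0$ with $E_{\varphi_{c_0,\alpha}}\not\subseteq k^\perp_\alpha$, and this $c_0$ cannot equal $\varphi(\alpha)$, because $\varphi_{\varphi(\alpha),\alpha}=\varphi$ and $E_\varphi\subseteq k^\perp_\alpha$ by hypothesis. Since $\C$ is infinite I can choose $c\in\C\setminus\{\varphi(\alpha),c_0\}$, and then $E_{\varphi_{c,\alpha}}\subseteq k^\perp_\alpha$. Applying Lemma \ref{2.5} to this $c$ gives $E_\varphi=E_{\varphi_{c,\alpha}}\cap k^\perp_\alpha=E_{\varphi_{c,\alpha}}$ and $X_\varphi=X_{\varphi_{c,\alpha}}|E_\varphi=X_{\varphi_{c,\alpha}}$; in particular $\varphi_{c,\alpha}$ is again a pseudomultiplier, since $E_{\varphi_{c,\alpha}}=E_\varphi$ is closed of finite codimension in $\calh$ and $D_{\varphi_{c,\alpha}}=D_\varphi$ is a set of uniqueness. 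Thus $c$ witnesses that $\alpha$ is an ambiguous point for $\varphi$.

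The ``only if'' direction is essentially immediate once one compares the two values of $g(\alpha)$ forced respectively by $\varphi$ and by $\varphi_{c,\alpha}$. The only point requiring a little care — and the main obstacle, such as it is — lies in the ``if'' direction: one must ensure a legal choice of $c$ exists, that is, that $\varphi(\alpha)$ is not the single exceptional value permitted by Lemma \ref{2.6}. This is exactly where the hypothesis $E_\varphi\subseteq k^\perp_\alpha$ is used, namely at the value $c=\varphi(\alpha)$.
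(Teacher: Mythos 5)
Your proof is correct. The ``if'' direction is essentially the paper's argument: use Lemma \ref{2.6} to produce a legal $c \neq \varphi(\alpha)$ for which $E_{\varphi_{c,\alpha}}\subseteq k_\alpha^\perp$, then invoke Lemma \ref{2.5} to conclude $E_\varphi = E_{\varphi_{c,\alpha}}$ and $X_\varphi = X_{\varphi_{c,\alpha}}$. Your explicit observation that the exceptional value $c_0$ cannot be $\varphi(\alpha)$ (because $\varphi_{\varphi(\alpha),\alpha}=\varphi$) is a small clarification the paper leaves implicit.

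For the ``only if'' direction your route differs from the paper's, and is in fact simpler. The paper argues by contradiction: it fixes $f\in E_\varphi$ with $f(\alpha)\neq 0$, forms $h = h_1 - h_2$ where $h_1, h_2$ are the $\calh$-extensions of $\varphi f$ and $\varphi_{c,\alpha} f$, derives a formula showing $h\neq 0$, and then shows $X_\varphi h = \varphi(\alpha) h \neq c h = X_{\varphi_{c,\alpha}} h$, contradicting $X_\varphi = X_{\varphi_{c,\alpha}}$. You instead argue directly: set $g = X_\varphi f = X_{\varphi_{c,\alpha}} f$ and evaluate at $\alpha\in D_\varphi$ both in the $\varphi$-equation and in the $\varphi_{c,\alpha}$-equation to obtain $\varphi(\alpha) f(\alpha) = g(\alpha) = c f(\alpha)$, so $f(\alpha)=0$. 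This gets to the heart of the matter in one step and avoids the detour through the auxiliary function $h$; the only thing it uses is the hypothesis $\alpha\in D_\varphi$ so that evaluation at $\alpha$ is one of the defining conditions on the extension $g$. Both arguments are valid, but yours is shorter and more transparent.
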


\begin{proof}  First assume that $E_\varphi\subseteq
k^\perp_\alpha$.  By Lemma \ref{2.6}, there exists  $c\not=
\varphi(\alpha)$ such that $E_{\varphi_{c,\alpha}} \subseteq
k^\perp_\alpha$.  But then Lemma \ref{2.5}  implies that
$E_\varphi=E_{\varphi_{c,\alpha}}$ and $X_\varphi =
X_{\varphi_{c,\alpha}}$.  Thus  $\alpha$ is an ambiguous
point for $\varphi$.

Conversely, assume that $\alpha$ is an ambiguous point for
$\varphi$.  Thus  there exists $c\not= \varphi(\alpha)$ such
that $E_\varphi = E_{\varphi_{c,\alpha}}$ and $X_\varphi =
X_{\varphi_{c,\alpha}}$.  We need to show that $E_\varphi
\subseteq k^\perp_\alpha$.  Accordingly, fix $f\in
E_\varphi$.  To see that  $f\in k^\perp_\alpha$ we argue by
contradiction. Assume that $f(\alpha) \not= 0$.  Now
since $f\in E_\varphi=E_{\varphi_{c,\alpha}}$ there exist
$h_1, h_2 \in \cal{ H}$ such that $\varphi f=h_1$ and
$\varphi_{c,\alpha} f=h_2$ on $D_\varphi$.  Hence, if we set
$h=h_1-h_2$ we have that $h\in \cal{ H}$ and 
\begin{eqnarray*}
h(\lambda)&=& h_1(\lambda) - h_2(\lambda)\\ \\
&=& \varphi(\lambda) f(\lambda) -
\varphi_{c,\alpha}(\lambda) f(\lambda)\\ \\
&=& \left\{ \begin{array}{cl}
0& {\rm if}\quad \lambda \in
D_\varphi\backslash\{\alpha\}\\ \\
\Big(\varphi(\alpha) - c\Big) f(\alpha)\qquad & {\rm
if}\quad
\lambda=\alpha.
\end{array}\right.
\end{eqnarray*}
Note that since $\varphi(\alpha) \not= c$ and $f(\alpha)
\not= 0$, this formula implies that $h\not= 0$.  Also, the
formula implies that $\varphi h=\varphi(\alpha) h$ on
$D_\varphi$ so that $h\in E_\varphi$ and $X_\varphi
h=\varphi(\alpha)h$.  But the formula similarly implies
that $X_{\varphi_{c,\alpha}} h=ch$.  Thus, since
$\varphi(\alpha) \not= c$ and $h\not= 0,
X_{\varphi_{c,\alpha}} h\not= X_\varphi h$, contradicting
the fact that $X_\varphi = X_{\varphi_{c,\alpha}}$.  This
completes the proof of Proposition \ref{2.7}. \end{proof}
\vspace{1mm}

Thus $\alpha$ is an ambiguous point for $\varphi$ if and
only if $E_\varphi \subseteq k^\perp_\alpha$.  A
consequence of Lemmas \ref{2.5} and \ref{2.6} is that, for
ambiguous points $\alpha$, there is a rather rigid
dichotomy for the behavior of the spaces
$E_{\varphi_{c,\alpha}}$ as $c$ varies.

\begin{lemma}\label{2.8}
Let $\varphi$ be a pseudomultiplier and let $\alpha \in D_\phi$. If $\alpha$ is an
ambiguous point for $\varphi$ then exactly one of the following two
statements holds:
\begin{enumerate}
\item[\rm (i)] \begin{equation}
\label{2.11}
E_{\varphi_{c,\alpha}} \subseteq
k^\perp_\alpha \text{ for all }c\in \mathbb C;
\end{equation}

\item[\rm (ii)]
  There exists $\gamma \in
\mathbb C$ such that
\begin{equation}
\label{2.12} 
E_{\varphi_{\gamma, \alpha}}
\not\subset k^\perp_\alpha \text{ but } E_{\varphi_{c,\alpha}}
\subset k^\perp_\alpha
\end{equation}
for all $c\in
\mathbb C\smallsetminus\{\gamma\}$.
\end{enumerate}

\noindent Furthermore, 

if equation \eqref{2.11} holds then
$E_{\varphi_{c,\alpha}} = E_\varphi$ and
$X_{\varphi_{c,\alpha}} = X_\varphi$ for all
$c\in \mathbb C$; 

if equation \eqref{2.12} holds then
$E_{\varphi_{c,\alpha}} = E_\varphi$ and
$X_{\varphi_{c,\alpha}}=X_\varphi$ for all
$c\in \mathbb C\smallsetminus \{\gamma\}$ while
$E_\varphi$ is a subspace of $E_{\varphi_{\gamma, \alpha}}$
of codimension 1 and $X_\varphi=
X_{\varphi_{\gamma,\alpha}}|E_\varphi$.
\end{lemma}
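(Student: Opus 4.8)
The plan is to derive the entire statement from Proposition \ref{2.7} together with Lemmas \ref{2.5} and \ref{2.6}, so that essentially no new computation is needed. Since $\alpha$ is assumed to be an ambiguous point of $\varphi$, Proposition \ref{2.7} gives $E_\varphi \subseteq k^\perp_\alpha$, which is precisely the standing hypothesis of both Lemmas \ref{2.5} and \ref{2.6}. Lemma \ref{2.6} then asserts that there is at most one $c \in \mathbb{C}$ with $E_{\varphi_{c,\alpha}} \not\subseteq k^\perp_\alpha$. This yields the dichotomy at once: either there is no such $c$, which is alternative (i), or there is exactly one, which we call $\gamma$, and this is alternative (ii). The two alternatives are manifestly mutually exclusive, so precisely one of them holds.

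Next I would treat the ``generic'' values of $c$, meaning every $c$ in case (i) and every $c \neq \gamma$ in case (ii), for which we have $E_{\varphi_{c,\alpha}} \subseteq k^\perp_\alpha$. For such $c$, Lemma \ref{2.5} gives $E_\varphi = E_{\varphi_{c,\alpha}} \cap k^\perp_\alpha$; since $E_{\varphi_{c,\alpha}} \subseteq k^\perp_\alpha$ the intersection collapses to $E_{\varphi_{c,\alpha}}$, so $E_{\varphi_{c,\alpha}} = E_\varphi$. Lemma \ref{2.5} also gives $X_\varphi = X_{\varphi_{c,\alpha}}|E_\varphi$, and as the two regular spaces now coincide this says simply $X_{\varphi_{c,\alpha}} = X_\varphi$. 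In case (i) this covers all $c$ and the proof is complete; in case (ii) it establishes the stated behaviour for every $c \neq \gamma$.

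It remains, in case (ii), to analyse the exceptional value $c = \gamma$. I would first note that $\gamma$ can exist only when $k_\alpha \neq 0$: if $k_\alpha = 0$ then $k^\perp_\alpha = \calh$ and no value of $c$ can be exceptional. Hence in case (ii) the (continuous) evaluation functional $f \mapsto f(\alpha)$ on $\calh$ is not identically zero. Applying Lemma \ref{2.5} with $c = \gamma$ gives $E_\varphi = E_{\varphi_{\gamma,\alpha}} \cap k^\perp_\alpha$ and $X_\varphi = X_{\varphi_{\gamma,\alpha}}|E_\varphi$; in particular $E_\varphi \subseteq E_{\varphi_{\gamma,\alpha}}$. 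By the defining property of $\gamma$ there is some $f_0 \in E_{\varphi_{\gamma,\alpha}}$ with $f_0(\alpha) \neq 0$, so the evaluation functional at $\alpha$ restricts to a nonzero linear functional on $E_{\varphi_{\gamma,\alpha}}$; its kernel, which is exactly $E_{\varphi_{\gamma,\alpha}} \cap k^\perp_\alpha = E_\varphi$, therefore has codimension $1$ in $E_{\varphi_{\gamma,\alpha}}$. Together with $X_\varphi = X_{\varphi_{\gamma,\alpha}}|E_\varphi$ this is the last assertion.

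Since the argument just chains together the three earlier results, there is no genuinely hard step; the only points needing a little care are the bookkeeping that ``$E_{\varphi_{c,\alpha}} = E_\varphi$ and $X_\varphi = X_{\varphi_{c,\alpha}}|E_\varphi$'' together force $X_{\varphi_{c,\alpha}} = X_\varphi$, and the small observation that case (ii) forces $k_\alpha \neq 0$, which is what makes the codimension exactly $1$ rather than possibly $0$.
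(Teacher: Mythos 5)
Your proof is correct and follows essentially the same route as the paper's: obtain $E_\varphi \subseteq k_\alpha^\perp$ from Proposition \ref{2.7}, derive the dichotomy from Lemma \ref{2.6}, and then apply Lemma \ref{2.5} in each of the generic and exceptional cases. You are somewhat more explicit than the paper in noting that case (ii) forces $k_\alpha \neq 0$ and in using the evaluation functional to pin down that the codimension is exactly $1$, but these are the intended (and implicit) justifications.
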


\begin{proof}  That either of equations  (\ref{2.11}) or (\ref{2.12}) occurs is
simply a restatement of Lemma \ref{2.6}.  If (\ref{2.11}) occurs,
then Lemma \ref{2.5} implies that $E_{\varphi_{c,\alpha}} =
E_\varphi$ and $X_{\varphi_{c,\alpha}} = X_\varphi$ for all
$c\in \mathbb C$.  If (\ref{2.12}) occurs then again Lemma
\ref{2.5} implies that $E_{\varphi_{c,\alpha}}=E_\varphi$
and $X_{\varphi_{c,\alpha}} = X_\varphi$ for all $c\in
\mathbb C\backslash\{\gamma\}$.  Finally, suppose that
(\ref{2.12}) occurs, so that $E_{\varphi_{\gamma,\alpha}}
\not\subseteq k^\perp_\alpha$.  Since by Lemma \ref{2.5}, 
$E_\varphi = E_{\varphi_{\gamma,\alpha}}\cap k^\perp_\alpha$,  
necessarily $E_\varphi\subseteq
E_{\varphi_{\gamma,\alpha}}$ and $E_\varphi$ has
codimension 1 in $E_{\varphi_{\gamma, \alpha}}$.  Also, in
this case, Lemma \ref{2.5} implies that $X_\varphi=
X_{\varphi_{\gamma,\alpha}}|E_\varphi$.
\end{proof}

Lemma \ref{2.8} has a number of corollaries about the
nature of ambiguous points.  The key observation is that in
Lemma
\ref{2.8}, the inclusion (\ref{2.11}) occurs when $\alpha$ is essential, and the relation
(\ref{2.12}) occurs when $\alpha$ is removable.

\begin{proposition}\label{2.9}
Let $\varphi$ be a pseudomultiplier and assume that
$\alpha \in D_\varphi$ is an ambiguous point for $\varphi$.   Then $\alpha$
is an essential ambiguous point for $\phi$ if
and only if  $E_{\varphi_{c,\alpha}} \subset k^\perp_\alpha$
for all $c\in \mathbb C$.
\end{proposition}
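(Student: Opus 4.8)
The plan is to derive Proposition \ref{2.9} from the rigid dichotomy of Lemma \ref{2.8} together with the criterion of Proposition \ref{2.7}. Since $\alpha$ is assumed to be an ambiguous point for $\varphi$, Lemma \ref{2.8} tells us that exactly one of the alternatives \eqref{2.11} and \eqref{2.12} holds, and Definition \ref{2.4} tells us that exactly one of ``essential'' and ``removable'' holds for $\alpha$. Hence it suffices to prove the two matched implications: \eqref{2.12} forces $\alpha$ to be removable, and \eqref{2.11} forces $\alpha$ to be essential. The asserted equivalence then follows by comparing the two dichotomies.

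First I would handle the case \eqref{2.12}. Here there exists $\gamma\in\mathbb C$ with $E_{\varphi_{\gamma,\alpha}}\not\subseteq k^\perp_\alpha$, and by the final clause of Lemma \ref{2.8}, $E_\varphi$ is a codimension-one subspace of $E_{\varphi_{\gamma,\alpha}}$. Since $E_\varphi$ is closed and of finite codimension in $\calh$, so is $E_{\varphi_{\gamma,\alpha}}$ (a one-dimensional extension of a closed subspace is closed); thus $\varphi_{\gamma,\alpha}$ is itself a pseudomultiplier, with $\alpha\in D_{\varphi_{\gamma,\alpha}}=D_\varphi$. Applying Proposition \ref{2.7} to $\varphi_{\gamma,\alpha}$ at the point $\alpha$, the failure of $E_{\varphi_{\gamma,\alpha}}\subseteq k^\perp_\alpha$ means that $\alpha$ is \emph{not} an ambiguous point for $\varphi_{\gamma,\alpha}$. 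By Definition \ref{2.4} (take $c=\gamma$), $\alpha$ is therefore a removable ambiguous point for $\varphi$.

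Next I would handle the case \eqref{2.11}, arguing by contradiction. Suppose $\alpha$ were removable; then there is $c_0\in\mathbb C$ for which $\alpha$ is not an ambiguous point for $\varphi_{c_0,\alpha}$. Under \eqref{2.11} the concluding part of Lemma \ref{2.8} gives $E_{\varphi_{c_0,\alpha}}=E_\varphi$ and $X_{\varphi_{c_0,\alpha}}=X_\varphi$, so $\varphi_{c_0,\alpha}$ is a pseudomultiplier; moreover $E_{\varphi_{c_0,\alpha}}=E_\varphi\subseteq k^\perp_\alpha$, the inclusion holding because $\alpha$ is ambiguous for $\varphi$ (Proposition \ref{2.7}). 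Applying Proposition \ref{2.7} once more, now to $\varphi_{c_0,\alpha}$ at $\alpha$, we conclude that $\alpha$ \emph{is} an ambiguous point for $\varphi_{c_0,\alpha}$, contradicting the choice of $c_0$. Hence $\alpha$ is not removable, i.e.\ $\alpha$ is an essential ambiguous point for $\varphi$. Combining the two cases with the two dichotomies completes the proof. The only point needing care — and the step I would flag as the main (if minor) obstacle — is the repeated verification that the one-point modifications $\varphi_{c,\alpha}$ are themselves genuine pseudomultipliers, so that Proposition \ref{2.7} may legitimately be applied to them; this is exactly where the codimension bookkeeping supplied by the last sentence of Lemma \ref{2.8} is used.
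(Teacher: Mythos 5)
Your proof is correct, but it reaches the conclusion by a more roundabout route than the paper. The paper's own argument is a direct two-step consequence of the definitions: by Definition \ref{2.4} (and the fact that $\varphi_{\varphi(\alpha),\alpha}=\varphi$), $\alpha$ is essential if and only if $\alpha$ is an ambiguous point for $\varphi_{c,\alpha}$ for \emph{every} $c\in\mathbb C$; and by Proposition \ref{2.7} applied to each $\varphi_{c,\alpha}$, the latter holds if and only if $E_{\varphi_{c,\alpha}}\subseteq k_\alpha^\perp$ for every $c$. That is the whole proof. You instead invoke the dichotomy of Lemma \ref{2.8} and prove the two matched implications (alternative \eqref{2.12} forces removable, alternative \eqref{2.11} forces essential) before comparing the two dichotomies. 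That is logically sound, but the detour through Lemma \ref{2.8} is not needed once one writes out the definitional equivalence; you are essentially re-deriving Lemma \ref{2.8}'s conclusions along the way. On the other hand, your proof has the merit of making explicit a point the paper silently relies on: that the one-point modifications $\varphi_{c,\alpha}$ are genuine pseudomultipliers (so that Proposition \ref{2.7} may be applied to them). You use the final clause of Lemma \ref{2.8} to verify the codimension bookkeeping, whereas the paper treats the pseudomultiplier property of $\varphi_{c,\alpha}$ as established at the outset of the section. Both arguments are correct; the paper's is shorter, yours is more self-contained.
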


\begin{proof}  By definition, $\alpha $ is essential
if and only if $\alpha$  is ambiguous for $\varphi_{c,\alpha}$
for all $c\in \mathbb C$.  Hence the proposition follows
immediately from Proposition \ref{2.7}. \end{proof}

Notice that in Definition \ref{2.1} $\alpha$ was said to be
ambiguous if $\varphi$ could be redefined at $\alpha$
without the underlying operator $X_\varphi$ changing.  In
fact, if $\alpha$ is ambiguous, $\varphi$ can be {\it
arbitrarily} redefined at $\alpha$ if $\alpha$ is
essential and arbitrarily redefined at $\alpha$ with
one exception if $\alpha$ is removable.

\begin{proposition}\label{2.10}
 Let $\varphi$ be a pseudomultiplier and let
$\alpha \in D_\varphi$ be an ambiguous point for $\varphi$. 
\begin{enumerate}
\item [\rm{(i)}] If $\alpha$ is essential, then $E_{\varphi_{c,\alpha}} =
E_\varphi$ and $X_{\varphi_{c,\alpha}} =X_\varphi$ for all
$c\in \mathbb C$.  

\item [\rm{(ii)}] If $\alpha$ is removable then there exists
a unique $\gamma \in \C$ such that $E_{\varphi_{c,\alpha}} = E_\varphi$
 if and only if $c\in \mathbb C\backslash\{\gamma\}$.  Moreover, this number $\gam$ has the property that  $X_{\varphi_{c,\alpha}} = X_\varphi$ if and only if $c\in \mathbb C\backslash\{\gamma\}$.
\end{enumerate}
\end{proposition}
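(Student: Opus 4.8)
The plan is to read off both parts from the dichotomy in Lemma~\ref{2.8}, using Proposition~\ref{2.9} to decide which of its two alternatives is in force and Lemma~\ref{2.6} for the uniqueness of the exceptional value.

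First I would treat part (i). Assume $\alpha$ is an essential ambiguous point of $\varphi$. By Proposition~\ref{2.9}, $E_{\varphi_{c,\alpha}}\subseteq k^\perp_\alpha$ for every $c\in\C$; that is, alternative (i) of Lemma~\ref{2.8}, namely equation~(\ref{2.11}), holds. The concluding clause of Lemma~\ref{2.8} then yields at once $E_{\varphi_{c,\alpha}}=E_\varphi$ and $X_{\varphi_{c,\alpha}}=X_\varphi$ for all $c\in\C$, which is exactly the content of (i).

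Next I would treat part (ii). Assume $\alpha$ is removable, so $\alpha$ is ambiguous but not essential. By Proposition~\ref{2.9} it is therefore false that $E_{\varphi_{c,\alpha}}\subseteq k^\perp_\alpha$ for all $c$, so alternative (ii) of Lemma~\ref{2.8} must hold: there is $\gamma\in\C$ with $E_{\varphi_{\gamma,\alpha}}\not\subset k^\perp_\alpha$ while $E_{\varphi_{c,\alpha}}\subset k^\perp_\alpha$ for every $c\neq\gamma$ (uniqueness of such a $\gamma$ is Lemma~\ref{2.6}). The concluding clause of Lemma~\ref{2.8} then says that $E_{\varphi_{c,\alpha}}=E_\varphi$ and $X_{\varphi_{c,\alpha}}=X_\varphi$ for all $c\neq\gamma$, whereas $E_\varphi$ is a codimension-$1$ subspace of $E_{\varphi_{\gamma,\alpha}}$ and $X_\varphi=X_{\varphi_{\gamma,\alpha}}|E_\varphi$. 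In particular $E_{\varphi_{\gamma,\alpha}}\supsetneq E_\varphi$, so $\{c\in\C: E_{\varphi_{c,\alpha}}=E_\varphi\}=\C\setminus\{\gamma\}$; comparing this description for two candidate exceptional values forces them to coincide, so $\gamma$ is unique with the stated property. Finally, since $X_{\varphi_{\gamma,\alpha}}$ is defined on $E_{\varphi_{\gamma,\alpha}}$, which strictly contains the domain $E_\varphi$ of $X_\varphi$, we have $X_{\varphi_{\gamma,\alpha}}\neq X_\varphi$, whence $\{c\in\C: X_{\varphi_{c,\alpha}}=X_\varphi\}=\C\setminus\{\gamma\}$ as well.

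I do not expect a real obstacle here: the proposition is essentially a repackaging of Lemma~\ref{2.8}. The two places that warrant a moment's care are (a) the step where ``removable, hence not essential'' is converted, via Proposition~\ref{2.9}, into the assertion that alternative (ii) of Lemma~\ref{2.8} rather than alternative (i) occurs, and (b) the observation that the equality $X_{\varphi_{c,\alpha}}=X_\varphi$ genuinely fails at $c=\gamma$, simply because the two operators then have different domains, which is what makes the ``if and only if'' in (ii) sharp.
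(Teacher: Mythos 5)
Your proof is correct and takes essentially the same route as the paper's: Proposition~\ref{2.9} to select the right alternative of the dichotomy in Lemma~\ref{2.8}, and that lemma's concluding clause to read off the conclusions. If anything, your handling of part (i) is a little tighter — the paper invokes only Proposition~\ref{2.7} (which applies to \emph{any} ambiguous point) and leaves the reverse inclusion $E_{\varphi_{c,\alpha}}\subseteq E_\varphi$ implicit, whereas you use Proposition~\ref{2.9} to place yourself in alternative (i) of Lemma~\ref{2.8}, from which the equalities follow at once.
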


\begin{proof} % This follows immediately from Proposition \ref{2.9} and Lemma \ref{2.8}. 

(i) Suppose that $\al$ is an essential ambiguous point for $\phi$ and $c\in\C$.  By Proposition \ref{2.7} $E_\phi \subseteq k_\alpha^\perp$.   Thus, for any function $f\in E_\phi$, $f(\al)=0$ and so $\varphi_{c,\alpha}f= \phi f$ on $D_\phi$, from which it follows that $E_{\varphi_{c,\alpha}} =
E_\varphi$ and $X_{\varphi_{c,\alpha}} =X_\varphi$.

(ii)  Suppose that $\al$ is a removable ambiguous point for $\phi$.  Then $\al$ is not an essential ambiguous point for $\phi$, and so, by Proposition \ref {2.9}, there exists $\gam\in\C$ such that $E_{\phi_{\gam,\al}} \not \subset k_\al^\perp$.  Thus, in the dichotomy described in Lemma \ref{2.8} it is alternative (ii) that holds, and so $E_{\phi_{c,\al}} \subset k_\al^\perp$ for all $c\in\C\setminus\{\gam\}$, and by the ``Furthermore" of Lemma \ref{2.8}, $E_{\varphi_{c,\alpha}} =
E_\varphi$ and $X_{\varphi_{c,\alpha}} =X_\varphi$ for all $c\in\C\setminus\{\gam\}$.
It follows that $\gam$ is unique.
\end{proof}
\vspace{1mm}

We conclude our remarks concerning ambiguous  points
$\alpha \in D_\varphi$ by noting the following fact.

\begin{proposition}\label{p2.11}
Let $\varphi$ be a pseudomultiplier on $\calh$ and let
$\alpha \in D_\varphi$. If $\alpha$ is a removable ambiguous point for
$\varphi$ then 
\begin{enumerate}
\item [\rm{(i)}]
 there exists a unique $c\in \mathbb C$ such that
$\alpha$ is not an ambiguous point for $\varphi_{c,\alpha}$,
\item [\rm{(ii)}] there exists a unique $d\in \mathbb C$  such that 
$E_{\varphi} \subset E_{\varphi_{d,\alpha}}$. 
 Finally,
$c = d$ and $X_\varphi = X_{\varphi_{c,\alpha}}|E_\varphi$.
\end{enumerate}
\end{proposition}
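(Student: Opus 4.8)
The plan is to reduce the proposition to the dichotomy recorded in Lemma~\ref{2.8} and to show that the number $c$ of part~(i) and the number $d$ of part~(ii) both coincide with the distinguished complex number $\gamma$ produced there. Since $\alpha$ is an ambiguous point for $\varphi$, Proposition~\ref{2.7} gives $E_\varphi\subseteq k_\alpha^\perp$, so the standing hypotheses of Lemma~\ref{2.8} are met. Since $\alpha$ is a \emph{removable} ambiguous point it is not essential, so by the contrapositive of Proposition~\ref{2.9} it is not the case that $E_{\varphi_{c,\alpha}}\subseteq k_\alpha^\perp$ for every $c\in\C$. Hence in Lemma~\ref{2.8} it is alternative~(ii) that occurs: there is a complex number $\gamma$ --- unique by Lemma~\ref{2.6} --- with $E_{\varphi_{\gamma,\alpha}}\not\subseteq k_\alpha^\perp$ while $E_{\varphi_{c,\alpha}}\subseteq k_\alpha^\perp$ for every $c\neq\gamma$; moreover, by the ``Furthermore'' of Lemma~\ref{2.8}, $E_{\varphi_{c,\alpha}}=E_\varphi$ and $X_{\varphi_{c,\alpha}}=X_\varphi$ for every $c\neq\gamma$, while $E_\varphi$ is a codimension-$1$ subspace of $E_{\varphi_{\gamma,\alpha}}$ and $X_\varphi=X_{\varphi_{\gamma,\alpha}}|E_\varphi$. (Automatically $\gamma\neq\varphi(\alpha)$, since $\varphi_{\varphi(\alpha),\alpha}=\varphi$ and $E_\varphi\subseteq k_\alpha^\perp$.)

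For part~(i) I would first observe that, for every $c\in\C$, $\varphi_{c,\alpha}$ is again a pseudomultiplier of $\calh$: by the previous paragraph its regular space is $E_\varphi$ when $c\neq\gamma$ and the codimension-$1$ superspace $E_{\varphi_{\gamma,\alpha}}$ of $E_\varphi$ when $c=\gamma$, and in either case it is closed and of finite codimension in $\calh$. I would also note the elementary identity $(\varphi_{c,\alpha})_{c',\alpha}=\varphi_{c',\alpha}$ for re-modification at the same point $\alpha$. Applying Proposition~\ref{2.7} to the pseudomultiplier $\varphi_{c,\alpha}$ and the point $\alpha\in D_{\varphi_{c,\alpha}}=D_\varphi$, we get that $\alpha$ is an ambiguous point for $\varphi_{c,\alpha}$ if and only if $E_{\varphi_{c,\alpha}}\subseteq k_\alpha^\perp$, and by the first paragraph this holds precisely when $c\neq\gamma$. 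Hence $\gamma$ is the unique complex number $c$ for which $\alpha$ fails to be an ambiguous point for $\varphi_{c,\alpha}$, which is part~(i) with $c=\gamma$.

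For part~(ii), the ``Furthermore'' of Lemma~\ref{2.8} (invoked in the first paragraph) already gives $E_{\varphi_{d,\alpha}}=E_\varphi$ for every $d\neq\gamma$, whereas $E_\varphi$ is a proper codimension-$1$ subspace of $E_{\varphi_{\gamma,\alpha}}$. Thus $\gamma$ is the unique $d$ for which $E_\varphi$ is a proper subspace of $E_{\varphi_{d,\alpha}}$, establishing part~(ii) with $d=\gamma$. The final assertions then follow at once: $c=d=\gamma$, and $X_\varphi=X_{\varphi_{\gamma,\alpha}}|E_\varphi$ is precisely the last clause of alternative~(ii) of Lemma~\ref{2.8}.

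This argument is essentially bookkeeping on top of Lemmas~\ref{2.5}, \ref{2.6} and \ref{2.8} and Propositions~\ref{2.7} and \ref{2.9}, and I do not expect a serious obstacle. The only points that need a little care are: (a) verifying that $\varphi_{c,\alpha}$ is itself a pseudomultiplier, so that Proposition~\ref{2.7} may legitimately be applied to it (this uses that a subspace of $\calh$ containing a closed subspace of finite codimension is automatically closed); (b) the identity $(\varphi_{c,\alpha})_{c',\alpha}=\varphi_{c',\alpha}$; and (c) being careful in part~(ii) that the relevant containment is the proper one $E_\varphi\subsetneq E_{\varphi_{\gamma,\alpha}}$, since $E_\varphi\subseteq E_{\varphi_{d,\alpha}}$ in fact holds for every $d$.
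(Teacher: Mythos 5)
Your proof is correct and follows essentially the same route as the paper's: both arguments reduce to the dichotomy of Lemma~\ref{2.8} (equivalently Proposition~\ref{2.10}(ii)) to isolate the distinguished value $\gamma$, and then read off parts (i) and (ii) together with $c=d$ and $X_\varphi=X_{\varphi_{c,\alpha}}|E_\varphi$ from its ``Furthermore'' clause. Your observation that the containment in part (ii) must be read as proper --- since $E_\varphi\subseteq E_{\varphi_{d,\alpha}}$ holds for \emph{every} $d$ by Lemma~\ref{2.5} --- is accurate and is exactly what the paper's own argument in fact establishes.
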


\begin{proof} % This  follows immediately from Proposition \ref{2.9} and Proposition \ref{2.10}. 

 Suppose that $\alpha$ is a removable ambiguous point for $\varphi$.
 
(i) By Definition \ref{2.4}, there exists $c\in\C$ such that $\al$ is not an ambiguous point for $\phi_{c,\al}$, so that, to prove statement (i), we must only show that this $c$ is unique.  But by Proposition \ref{2.10}(ii), this value (denoted by $\gam$) is unique.  So statement (i) holds.

(ii) Since $\alpha$ is a removable ambiguous point for $\phi$, $\al$ is not an essential ambiguous point for $\phi$, and so, by Proposition \ref {2.9}, we may choose $d\in\C$ such that $E_{\phi_{d,\al}} \not \subset k_\al^\perp$. Thus, in the dichotomy described in Lemma \ref{2.8} it is alternative (ii) that holds, with $\gam=d$, and so $E_{\phi_{z,\al}} \subset k_\al^\perp$ for all $z\in\C\setminus\{d\}$, 
$E_\varphi$ is a subspace of $E_{\varphi_{d, \alpha}}$
of codimension 1 and $X_\varphi=
X_{\varphi_{d,\alpha}}|E_\varphi$. We claim that this $d$ is unique.  But by Proposition \ref{2.10}(ii), this value (denoted by $\gam$) is unique. It is clear that $\al$ is not an ambiguous point for $\phi_{d,\al}$, and so, by the uniqueness statement in part (i),  $c=d$. Thus statement (ii) of Proposition \ref{p2.11}   holds.
\end{proof}

The concept of an ambiguous point can be extended
naturally to points not assumed to be in the domain of
$\varphi$. 
Recall Example \ref{1.8}, if  $\varphi$ is a pseudomultiplier of a Hilbert function space $\cal{ H}$ 
on $\Omega$ and if $\lambda \in D_\varphi$, then $\varphi$ sees $k_\lambda$ with value
		$\varphi(\lambda)$.
Let us combine the definition of an ambiguous point $\alpha \in D_\phi$ with the notion of $\phi$ seeing $k_\alpha$ to yield a concept of an ambiguous point $\alpha\in\O\setminus D_\phi$.

\begin{definition}\label{d2.12}
Let $\varphi$ be a  pseudomultiplier and let $\alpha\in
\Omega\backslash D_\varphi$.  
We say $\alpha$ is an {\it
ambiguous point for } $\varphi$ if there exists $c\in
\mathbb C$ such that $\varphi$ sees $k_\alpha$ with value
$c$ and $\alpha$ is an ambiguous point for the
pseudomultiplier $\psi_{c,\alpha}$ defined on $D_\varphi \cup
\{\alpha\}$ by the formula
\begin{equation}\label{7}
\psi_{c,\alpha}(\lambda) = \left\{ \begin{array}{cl}
\varphi(\lambda) \qquad & \mbox{ if }\lambda\in D_\varphi\\ \\
c\qquad & \mbox{ if } \lambda = \alpha \end{array}\right. .
\end{equation}
\end{definition}

\begin{proposition}\label{ambessential} Let $\varphi$ be a  pseudomultiplier of $\calh$ on $\O$ and let $\alpha\in \Omega\setminus D_\varphi$.  If $\alpha$ is an ambiguous point of $\phi$ then $\alpha$ is an  essential ambiguous point for $\psi_{c,\alpha}$ as defined in equation \eqref{7}. 
\end{proposition}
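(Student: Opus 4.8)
The plan is to fix the scalar $c$ supplied by Definition \ref{d2.12} and to verify the internal criterion for essentiality, Proposition \ref{2.9}, for the pseudomultiplier $\psi_{c,\alpha}$ of \eqref{7}. By that definition, $\psi_{c,\alpha}$ is a pseudomultiplier of $\calh$ with domain $D_\varphi\cup\{\alpha\}$, the point $\alpha$ is an ambiguous point of $\psi_{c,\alpha}$, and $\varphi$ sees $k_\alpha$ with value $c$. For each $c'\in\C$ the one-point modification $(\psi_{c,\alpha})_{c',\alpha}$ is, by inspection, exactly $\psi_{c',\alpha}$ --- the function on $D_\varphi\cup\{\alpha\}$ equal to $\varphi$ on $D_\varphi$ and to $c'$ at $\alpha$. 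Hence Proposition \ref{2.9}, applied with $\psi_{c,\alpha}$ in the role of the pseudomultiplier and $\alpha$ its ambiguous point, reduces the proposition to showing
\[
E_{\psi_{c',\alpha}}\subseteq k_\alpha^\perp \qquad\text{for every }c'\in\C .
\]

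For this I will use the two inclusions $E_{\psi_{c',\alpha}}\subseteq E_\varphi$ (for all $c'$) and $E_\varphi\subseteq k_\alpha^\perp$. The first is immediate: if $f\in E_{\psi_{c',\alpha}}$ there is $g\in\calh$ with $g=\psi_{c',\alpha}f$ on $D_\varphi\cup\{\alpha\}$, and restriction to $D_\varphi$ shows $g=\varphi f$ there, so $f\in E_\varphi$. The second is the heart of the matter and is where the hypothesis that $\varphi$ sees $k_\alpha$ with value $c$ is used. I first claim $E_\varphi=E_{\psi_{c,\alpha}}$. Indeed $E_{\psi_{c,\alpha}}\subseteq E_\varphi$ is the case $c'=c$ of the first inclusion above, so only $E_\varphi\subseteq E_{\psi_{c,\alpha}}$ needs proof. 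Let $v=k_\alpha$ and let $\psi$ be a pseudomultiplier of $\calh_v$ witnessing that $\varphi$ sees $k_\alpha$ with value $c$, so that $\psi|D_\varphi=\varphi$, $p\in D_\psi$, $E_\varphi=E_\psi|\Omega$ and $\psi(p)=c$. Given $f\in E_\varphi$, the identity $E_\psi|\Omega=\{h\in\calh:h_v\in E_\psi\}$ shows $f_v\in E_\psi$, so there exists $k\in\calh$ with $(\psi f_v)(\lambda)=k_v(\lambda)$ for all $\lambda\in D_\psi$, and $D_\psi\supseteq D_\varphi\cup\{p\}$. Evaluating this identity on $D_\varphi$ gives $k=\varphi f$ there, and evaluating it at $p$, using $f_v(p)=\ip{f}{k_\alpha}=f(\alpha)$ and $k_v(p)=\ip{k}{k_\alpha}=k(\alpha)$, gives $k(\alpha)=cf(\alpha)$. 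Thus $k$ extends $\psi_{c,\alpha}f$ over $D_\varphi\cup\{\alpha\}$, so $f\in E_{\psi_{c,\alpha}}$, which proves the claim. Now $\alpha$ is an ambiguous point of the pseudomultiplier $\psi_{c,\alpha}$, so Proposition \ref{2.7} gives $E_{\psi_{c,\alpha}}\subseteq k_\alpha^\perp$; combined with $E_\varphi=E_{\psi_{c,\alpha}}$ this yields $E_\varphi\subseteq k_\alpha^\perp$, and with the first inclusion we get $E_{\psi_{c',\alpha}}\subseteq k_\alpha^\perp$ for all $c'$, as required.

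The only step calling for any care is the evaluation at $p$ in the previous paragraph: the domain $D_\psi$ of the witnessing pseudomultiplier is allowed to be strictly larger than $D_\varphi\cup\{p\}$, so the element $k$ produced by $f_v\in E_\psi$ may satisfy further constraints; all we extract, and all we need, are the two equations coming from the points of $D_\varphi$ and from $p$. The remaining ingredients --- the identification of $(\psi_{c,\alpha})_{c',\alpha}$ with $\psi_{c',\alpha}$, the description of $E_\psi|\Omega$ in terms of $\calh$, and the applications of Propositions \ref{2.7} and \ref{2.9} to $\psi_{c,\alpha}$ --- are routine.
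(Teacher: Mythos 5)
Your argument is correct and follows essentially the same route as the paper's proof: reduce, via Proposition \ref{2.9} and the observation that $(\psi_{c,\alpha})_{c',\alpha}=\psi_{c',\alpha}$, to showing $E_{\psi_{c',\alpha}}\subseteq E_\varphi\subseteq k_\alpha^\perp$ for all $c'$, with the second inclusion coming from $E_\varphi=E_{\psi_{c,\alpha}}$ together with Proposition \ref{2.7} applied to the ambiguous point $\alpha$ of $\psi_{c,\alpha}$. The only difference is that you prove the inclusion $E_\varphi\subseteq E_{\psi_{c,\alpha}}$ in detail from the definition of seeing $k_\alpha$ with value $c$ (evaluating the witnessing extension at the points of $D_\varphi$ and at $p$), a step the paper asserts without elaboration, so your write-up is if anything more complete.
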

\begin{proof}
By the definition of ambiguous point of $\phi$, there is $c\in\C$ such that $\varphi$ sees $k_\alpha$ with
value $c$ and  $E_\varphi = E_{\psi_{c,\alpha}}$. 
Also by definition, $\alpha$ is
ambiguous for $\psi_{c,\alpha},$ and so, by Proposition \ref{2.7}, $ E_{\psi_{c,\alpha}} \subseteq
k^\perp_\alpha$, and therefore $E_\varphi \subseteq
k^\perp_\alpha$. 
We claim that, for every $d\in \C$, $\alpha$ is an ambiguous point for $\phi_{d,\alpha}$.
For all $d\in \mathbb C,
\psi_{d,\alpha}$ extends $\varphi$, and so $ E_{\psi_{d,\alpha}} \subseteq E_\varphi $.  Thus, for all $d\in \mathbb C, E_{\psi_{d, \alpha}} \subseteq k_\alpha^\perp$.  
Therefore, for all $d\in \mathbb
C, E_{(\psi_{c,\alpha})_{d,\alpha}} =
E_{\psi_{d,\alpha}}\subseteq k^\perp_\alpha$. 
 By
Proposition \ref{2.9} $\alpha$ is essential for
$\psi_{c,\alpha}$. 
\end{proof}

In light of Proposition \ref{ambessential}, 
the following analog to Proposition \ref{2.10} is not a surprise.

\begin{proposition}\label{2.13}
Let $\varphi$ be a pseudomultiplier and let $\alpha \in
\Omega \smallsetminus D_\varphi$.  For $c\in \mathbb C$
let $\psi_{c,\alpha}$ be defined by equation  \eqref{7}.  Then $\alpha$
is an ambiguous point for $\varphi$ if and only if, for all
$c\in \mathbb C, E_{\psi_{c,\alpha}}=E_\varphi$ and
$X_{\psi_{c,\alpha}}=X_\varphi$.
\end{proposition}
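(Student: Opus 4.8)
The plan is to obtain Proposition \ref{2.13} by assembling Propositions \ref{ambessential}, \ref{2.10}(i) and \ref{2.7}, in the same spirit in which Proposition \ref{2.10} was derived from Propositions \ref{2.7} and \ref{2.9}; the text itself advertises \ref{2.13} as ``the analog of Proposition \ref{2.10}''. The bridge between the two extension notions in play is the fact --- already used in the proof of Proposition \ref{ambessential}, and a routine adaptation of Example \ref{1.8} --- that for $\alpha\in\O\setminus D_\varphi$ and $c\in\C$,
\[
(\star)\qquad \varphi \text{ sees } k_\alpha \text{ with value } c \iff E_{\psi_{c,\alpha}}=E_\varphi,
\]
with $\psi_{c,\alpha}$ as in \eqref{7}. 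If $(\star)$ has not already been recorded I would prove it thus: a pseudomultiplier $\psi$ of $\calh_v$ (with $v=k_\alpha$) extending $\varphi$ with $\psi(p)=c$ may, by Lemma \ref{Epsi=Ephi}, be taken to have domain $D_\varphi\cup\{p\}$, and then $f_v\in E_\psi$ iff there is $g\in\calh$ with $g=\varphi f$ on $D_\varphi$ and, evaluating at $p$ and using $v=k_\alpha$, $g(\alpha)=\langle g,v\rangle=(\psi f_v)(p)=c\langle f,v\rangle=cf(\alpha)$; but this is exactly the condition $f\in E_{\psi_{c,\alpha}}$, so $E_\psi|\O=E_{\psi_{c,\alpha}}$, and since $\calh_v\cong\calh$ and the relevant domains are sets of uniqueness, $\psi$ is a pseudomultiplier of $\calh_v$ if and only if $\psi_{c,\alpha}$ is a pseudomultiplier of $\calh$. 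I will also use throughout the identity $(\psi_{c_0,\alpha})_{c,\alpha}=\psi_{c,\alpha}$ and the remark that $E_{\psi_{c,\alpha}}=E_\varphi$ forces $X_{\psi_{c,\alpha}}=X_\varphi$, since the two operators then agree on $D_\varphi$ and hence everywhere by uniqueness.

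For the forward implication, assume $\alpha$ is an ambiguous point for $\varphi$. By Definition \ref{d2.12} there is $c_0$ such that $\varphi$ sees $k_\alpha$ with value $c_0$ and $\alpha$ is an ambiguous point for $\psi_{c_0,\alpha}$; by $(\star)$, $E_{\psi_{c_0,\alpha}}=E_\varphi$. Proposition \ref{ambessential} upgrades this to: $\alpha$ is an essential ambiguous point for $\psi_{c_0,\alpha}$. Applying Proposition \ref{2.10}(i) with $\psi_{c_0,\alpha}$ in the role of $\varphi$ (note $\alpha\in D_{\psi_{c_0,\alpha}}$) shows that a further one-point modification of $\psi_{c_0,\alpha}$ at $\alpha$ changes neither its regular space nor its operator; since such a modification is precisely $\psi_{c,\alpha}$, we conclude $E_{\psi_{c,\alpha}}=E_{\psi_{c_0,\alpha}}=E_\varphi$ and $X_{\psi_{c,\alpha}}=X_{\psi_{c_0,\alpha}}=X_\varphi$ for every $c\in\C$, which is the asserted conclusion.

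For the converse, assume $E_{\psi_{c,\alpha}}=E_\varphi$ and $X_{\psi_{c,\alpha}}=X_\varphi$ for every $c$. Take the witness $c_0=0$. Then $E_{\psi_{0,\alpha}}=E_\varphi$ is closed of finite codimension and $D_\varphi\cup\{\alpha\}$ is a set of uniqueness, so $\psi_{0,\alpha}$ is a pseudomultiplier of $\calh$; by $(\star)$, $\varphi$ sees $k_\alpha$ with value $0$. It remains to check that $\alpha$ is an ambiguous point for $\psi_{0,\alpha}$ in the sense of Definition \ref{2.1}: with $c=1\neq 0=\psi_{0,\alpha}(\alpha)$ we have $(\psi_{0,\alpha})_{1,\alpha}=\psi_{1,\alpha}$, and by hypothesis $E_{\psi_{1,\alpha}}=E_\varphi=E_{\psi_{0,\alpha}}$ and $X_{\psi_{1,\alpha}}=X_\varphi=X_{\psi_{0,\alpha}}$, so the requirements of Definition \ref{2.1} hold. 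Hence, by Definition \ref{d2.12} with witness $c_0=0$, $\alpha$ is an ambiguous point for $\varphi$.

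The only genuinely delicate step is $(\star)$: one must keep straight the two parallel point-adjunction constructions --- the abstract one ($\calh_v$ with the external point $p$, used to define visibility) and the concrete one (adjoining $\varphi$'s value at $\alpha\in\O$, which produces $\psi_{c,\alpha}$) --- and observe that the equation cutting $E_\psi$ down ``at $p$'' is literally the equation cutting $E_{\psi_{c,\alpha}}$ down ``at $\alpha$'', because pairing against $v=k_\alpha$ is evaluation at $\alpha$. Once this is in hand the rest is a mechanical application of the preceding propositions.
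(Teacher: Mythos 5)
Your proof is correct, and it fills a genuine gap: the paper states Proposition~\ref{2.13} without proof, signalling only that ``in light of Proposition~\ref{ambessential} [it] is not a surprise.'' Your argument does precisely what that remark invites. The bridge $(\star)$ is the essential new ingredient and you verify it carefully, including the reduction (via Lemma~\ref{Epsi=Ephi}) to a $\psi$ with domain exactly $D_\varphi\cup\{p\}$, and the observation that $E_\psi|\O = E_{\psi_{c,\alpha}}$ because pairing against $v=k_\alpha$ is literally evaluation at $\alpha$. The forward direction then runs Definition~\ref{d2.12}, $(\star)$, Proposition~\ref{ambessential}, and Proposition~\ref{2.10}(i) through the identity $(\psi_{c_0,\alpha})_{c,\alpha}=\psi_{c,\alpha}$; the converse picks $c_0=0$ and verifies ambiguity of $\psi_{0,\alpha}$ directly from Definition~\ref{2.1} using the hypothesis with $c=1$. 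Both the closedness/finite-codimension bookkeeping (needed so that the extensions are genuinely pseudomultipliers) and the remark that $E_{\psi_{c,\alpha}}=E_\varphi$ forces $X_{\psi_{c,\alpha}}=X_\varphi$ by uniqueness on $D_\varphi$ are correct. This is the proof the authors evidently intended but omitted.
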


We conclude the section with a result which gives a
succinct characterization of ambiguity applicable to both
cases, $\alpha \in D_\varphi$ and $\alpha \in \Omega
\smallsetminus D_\varphi$.

\begin{theorem}\label{2.14}
Let $\varphi$ be a pseudomultiplier and let $\alpha \in
\Omega$.  Then $\alpha$ is ambiguous for $\varphi$ if and
only if, for every $c\in \mathbb C,\ \varphi$ sees
$k_\alpha$ with value $c$.
\end{theorem}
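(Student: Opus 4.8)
The plan is to split into the two cases $\alpha\in D_\varphi$ and $\alpha\in\Omega\setminus D_\varphi$, and in each to reduce the statement to a single basic computation, namely that, for any $\alpha\in\Omega$ and $c\in\C$, the pseudomultiplier $\varphi$ sees $k_\alpha$ with value $c$ precisely when $(X_\varphi f)(\alpha)=c\,f(\alpha)$ for every $f\in E_\varphi$. This equivalence is really the heart of the matter. The one point that needs care is keeping straight the ``external'' picture of adjoining a point $p$ with $k_p=k_\alpha$ (Definition \ref{1.7}) versus the ``internal'' picture of redefining $\varphi$ at the point $\alpha\in\Omega$ itself, together with the routine but necessary check that the candidate extensions one writes down really are pseudomultipliers, and not merely functions on a set of uniqueness.

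To prove the basic computation, put $v=k_\alpha$ and form $\calh_v$ on $\tilde\Omega=\Omega\cup\{p\}$, noting $f_v(p)=\ip{f}{k_\alpha}=f(\alpha)$. For the forward direction, let $\psi$ be a pseudomultiplier of $\calh_v$ witnessing visibility, so $\psi|D_\varphi=\varphi$, $p\in D_\psi$, $\psi(p)=c$ and $E_\psi|\Omega=E_\varphi$. For $f\in E_\varphi$ the last equality (with the restriction isomorphism $\calh_v\to\calh$) gives $f_v\in E_\psi$, so there is $g\in\calh$ with $g_v=\psi f_v$ on $D_\psi$; restricting to $D_\varphi$ yields $g=X_\varphi f$ by uniqueness, and evaluating at $p$ yields $c\,f(\alpha)=g(\alpha)=(X_\varphi f)(\alpha)$. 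For the reverse direction, assume the identity and define $\psi$ on $D_\varphi\cup\{p\}$ by $\psi|D_\varphi=\varphi$, $\psi(p)=c$. The same bookkeeping shows $f_v\in E_\psi$ if and only if $f\in E_\varphi$ and $(X_\varphi f)(\alpha)=c\,f(\alpha)$, so by hypothesis $E_\psi|\Omega=E_\varphi$. The restriction isomorphism then carries $E_\psi$ onto $E_\varphi$, which is closed of finite codimension, so $E_\psi$ is closed of finite codimension in $\calh_v$; since $D_\psi$ is a set of uniqueness for $\calh_v$, $\psi$ is a pseudomultiplier of $\calh_v$, and hence $\varphi$ sees $k_\alpha$ with value $c$.

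Now suppose $\alpha\in D_\varphi$. By Proposition \ref{2.7}, $\alpha$ is ambiguous for $\varphi$ if and only if $E_\varphi\subseteq k_\alpha^\perp$. If $E_\varphi\subseteq k_\alpha^\perp$, then for every $f\in E_\varphi$ and every $c$ we have $f(\alpha)=0$ and $(X_\varphi f)(\alpha)=\varphi(\alpha)f(\alpha)=0=c\,f(\alpha)$, so by the basic computation $\varphi$ sees $k_\alpha$ with value $c$, for all $c$. Conversely, if $\varphi$ sees $k_\alpha$ with every value, choose any $c\neq\varphi(\alpha)$; the basic computation gives $c\,f(\alpha)=(X_\varphi f)(\alpha)=\varphi(\alpha)f(\alpha)$ for all $f\in E_\varphi$, whence $(\varphi(\alpha)-c)f(\alpha)=0$, so $f(\alpha)=0$, i.e. $E_\varphi\subseteq k_\alpha^\perp$.

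Finally suppose $\alpha\in\Omega\setminus D_\varphi$. By Proposition \ref{2.13}, $\alpha$ is ambiguous for $\varphi$ if and only if $E_{\psi_{c,\alpha}}=E_\varphi$ and $X_{\psi_{c,\alpha}}=X_\varphi$ for all $c$, where $\psi_{c,\alpha}$ extends $\varphi$ to $D_\varphi\cup\{\alpha\}$ with value $c$ at $\alpha$. Running the bookkeeping of the basic computation with $\alpha$ in place of $p$ gives $E_{\psi_{c,\alpha}}=\{f\in E_\varphi:(X_\varphi f)(\alpha)=c\,f(\alpha)\}$, so $E_{\psi_{c,\alpha}}=E_\varphi$ is equivalent to $\varphi$ seeing $k_\alpha$ with value $c$; moreover $E_{\psi_{c,\alpha}}\subseteq E_\varphi$ always, and uniqueness on $D_\varphi$ forces $X_{\psi_{c,\alpha}}=X_\varphi$ on $E_{\psi_{c,\alpha}}$, so the operator condition is automatic whenever the subspace condition holds. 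Hence $\alpha$ is ambiguous for $\varphi$ if and only if $\varphi$ sees $k_\alpha$ with value $c$ for every $c$, completing the proof. I expect no genuine obstacle beyond the bookkeeping isolated in the second paragraph, in particular the verification that the $\psi$ built in the reverse implication is a bona fide pseudomultiplier.
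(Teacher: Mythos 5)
Your proof is correct and follows essentially the same strategy as the paper's: isolate the equivalence ``$\varphi$ sees $k_\alpha$ with value $c$'' $\Leftrightarrow$ ``$(X_\varphi f)(\alpha)=c\,f(\alpha)$ for all $f\in E_\varphi$,'' split according to $\alpha\in D_\varphi$ or $\alpha\in\Omega\setminus D_\varphi$, and invoke Propositions \ref{2.7} and \ref{2.13} respectively. The chief difference is organizational but worth noting. You state the ``basic computation'' as a genuine two-way equivalence (including the verification that the candidate $\psi$ on $D_\varphi\cup\{p\}$ really is a pseudomultiplier, via the restriction isomorphism), and then deduce both halves of the theorem from it uniformly; the paper instead records only one implication, in the display \eqref{8}, and runs visibly different arguments for the two directions. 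Your tidier framing pays off concretely: the paper, after announcing the dichotomy ``either $\alpha\in D_\varphi$ or $\alpha\in\Omega\setminus D_\varphi$,'' addresses only the first case in the forward implication (seeing with arbitrary value $\Rightarrow$ ambiguous) before turning to the converse, whereas your appeal to Proposition \ref{2.13} treats both cases in both directions, so your version closes what is at least an apparent gap in the published argument. Finally, your ``basic computation'' is in substance the $v=k_\alpha$ instance of Lemma \ref{3.4}, which the paper proves only in a later section; reproving it directly here keeps the argument self-contained and is the right call given the order of presentation.
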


\begin{proof}  First suppose that $\varphi$ sees
$k_\alpha$ with arbitrary value.  Thus, for each $c$ there
exists a pseudomultiplier $\omega_c$ of $\cal{
H}_{k_\alpha}$ such that $\omega_c$ extends $\varphi,
E_\varphi = E_{\omega_c}|\Omega, p \in
D_{\omega_c}$, and  $\omega_c(p)=c$.   In
particular if $f\in E_\varphi$ then $f_{k_\alpha} \in
E_{\omega_c}$ and there exists $h\in \cal{ H}$ such that 
$\omega_cf_{k_\alpha}=h_{k_\alpha}$.   Noting that $\omega_c$
extends $\varphi$ and $\omega_c(p)=c$, we
deduce that the following holds.

If $f\in E_{\varphi}$, then there exists $h\in \cal{ H}$ such
that
\begin{eqnarray}\label{8}
\varphi(\lambda) f(\lambda) &=& h(\lambda)\qquad {\rm
if}\ \lambda \in D_\varphi\quad {\rm and}\\
c\ f(\alpha) &=& h(\alpha)\nonumber 
\end{eqnarray}
There are two possibilities: either $\alpha \in
D_\varphi$ or $\alpha \in \Omega\smallsetminus
D_\varphi$.  If $\alpha \in D_\varphi$, then noting that in equations
(\ref{8}), while $h$ depends on $c, \varphi$ and $f$ do not,
we deduce that $\varphi(\alpha) f(\alpha)=c f(\alpha)$ for
all $c$.  Hence, $f(\alpha)=0$ if $f\in E_\varphi$. 
Proposition \ref{2.7} implies that $\alpha$ is an ambiguous
point for $\varphi$.

Now assume that $\alpha$ is an ambiguous point for
$\varphi$.  Fix $c$ and define $\omega_c$ on $D_\varphi
\cup \{p\}\subseteq \Omega\cup \{p\}$ by the
formula
$$\omega_c(\lambda) = \left\{
\begin{array}{cl}\varphi(\lambda)\qquad  & {\rm if}\quad
\lambda \in D_\varphi\\ \\
c\qquad & {\rm if}\quad \lambda=p
\end{array}\right.$$
Then $\omega_c$ is a pseudomultiplier,
$\omega_c$ extends $\varphi, \omega_c(p) =
c$, and $E_{\omega_c}|\Omega \subseteq E_\varphi$. Hence
$\varphi$ will see $k_\alpha$ with value $c$ if we can show
that $f\in E_\varphi$ implies that $f_{k_\alpha} \in
E_{\omega_c}$.  A moment's reflection reveals  that this
last condition is equivalent to (\ref{8}).  Because
$\alpha$ is an ambiguous point for $\varphi$, it is easy to
check that equations (\ref{8})  follow from Proposition \ref{2.7}
when $\alpha \in D_\varphi$.

Let $\alpha \in \Omega\smallsetminus D_\varphi$.
By Proposition \ref{2.13} and Definition \ref{d2.12}, $\alpha$ is an ambiguous point for $\phi$ if and only if, for every $c\in \C$, $\varphi$ sees
$k_\alpha$ with value $c$.  
This concludes the proof of Theorem \ref{2.14}.
\end{proof}

\section{Ambiguous vectors and the ambiguous space}\label{vectors}

We have shown that a point $\alpha \in
\Omega$ is an ambiguous point for a pseudomultiplier
$\varphi$ if and only if $\varphi$ sees $k_\alpha$ with
arbitrary value.  We have also abstracted this property to say that 
a vector 
$v\in \cal{ H}$ is an ambiguous vector
for a pseudomultiplier
$\varphi$ of $\calh$ if $\varphi$ sees $v$ with arbitrary value (see Definition \ref{3.1}).  
 Thus, if $\varphi$ is a pseudomultiplier and $\alpha$ is a point,
$\alpha$ is an ambiguous point for $\varphi$ if and only if
$k_\alpha$ is an ambiguous vector for $\varphi$.  An
ambiguous vector need not be a kernel. In this section we shall study general  ambiguous vectors for $\varphi$.

The following  lemma gives an
operator-theoretic expression of the concepts of
seeing a vector and ambiguity.  Recall that if $\varphi$ is a
pseudomultiplier then $X_\varphi$ is a bounded linear
transformation from $E_\varphi$ into $\cal{ H}$.  Thus
$X^*_\varphi$ is a bounded linear
transformation from $\cal{ H}$ into $E_\varphi$. 
Recall also a basic fact in the theory of multipliers that if
$\varphi$ is a multiplier of $\cal H$ and $k_\lambda$ is the
reproducing kernel for $\lambda \in \Omega$, then
$X_\varphi^* k_\lambda = \overline{\varphi(\lambda)}
k_\lambda$. We generalize this fact  to
pseudomultipliers in the following lemma.

\begin{lemma}\label{3.4}
Let $\varphi$ be a pseudomultiplier of $\calh$ and let $v\in \cal{ H}$.  
Then 

{\rm (i)} $\varphi$ sees $v$ with value $c$ if and only if there
exists $u\in E^\perp_\varphi$ such that
\begin{equation}\label{9}
X^*_\varphi v= \overline c v + u.
\end{equation}

{\rm (ii)} $\varphi$ sees $v$ with value $c$ if and only if
\begin{equation}\label{12}
X^*_\varphi v = \overline c P_{E_\varphi}v.
\end{equation}

{\rm (iii)} If $\phi$ sees $v$ with two distinct values then $v$ is an ambiguous vector for $\phi$.
\end{lemma}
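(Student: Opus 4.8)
The plan is to reduce the relational definition ``$\varphi$ sees $v$ with value $c$'' to a single inner-product identity on $E_\varphi$, and then obtain (i) by passing to the adjoint $X_\varphi^*$ and (ii) by projecting onto $E_\varphi$.

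The first step is to establish the auxiliary equivalence: $\varphi$ sees $v$ with value $c$ if and only if $\ip{X_\varphi f}{v} = c\,\ip{f}{v}$ for every $f\in E_\varphi$. For the forward direction I would take a pseudomultiplier $\psi$ of $\calh_v$ witnessing that $\varphi$ sees $v$ with value $c$; given $f\in E_\varphi = E_\psi|\O$, one has $f_v\in E_\psi$ (because $E_\psi|\O$ is exactly $\{g : g_v\in E_\psi\}$), so $\psi f_v = g_v$ on $D_\psi$ for some $g\in\calh$, and evaluating this equation on $D_\varphi$ gives $\varphi f = g$ there, hence $g = X_\varphi f$ by uniqueness (recall $D_\varphi$ is a set of uniqueness for $\calh$), while evaluating at $p$ gives $c\,\ip{f}{v} = \ip{g}{v} = \ip{X_\varphi f}{v}$. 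For the converse I would define $\psi$ on $D_\varphi\cup\{p\}$ by $\psi|D_\varphi=\varphi$ and $\psi(p)=c$, and unwind the definition of the regular space: $f_v\in E_\psi$ iff there is $g\in\calh$ with $\varphi f=g$ on $D_\varphi$ and $c\,\ip{f}{v}=\ip{g}{v}$, where the first condition already forces $f\in E_\varphi$ and $g=X_\varphi f$, and then the second holds automatically by the assumed identity. Thus $E_\psi=\{f_v:f\in E_\varphi\}$, so $E_\psi|\O=E_\varphi$; since the restriction map $\calh_v\to\calh$ is a Hilbert-space isomorphism carrying $E_\psi$ onto the closed, finite-codimensional subspace $E_\varphi$, and $D_\varphi\cup\{p\}$ inherits the set-of-uniqueness property from $D_\varphi$, this $\psi$ is indeed a pseudomultiplier of $\calh_v$ of the required form.

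With this equivalence in hand, (i) is immediate: since $X_\varphi^*\colon\calh\to E_\varphi$ we have $\ip{X_\varphi f}{v}=\ip{f}{X_\varphi^* v}$ and $c\,\ip{f}{v}=\ip{f}{\bar c v}$, so the identity of the first step says precisely that $X_\varphi^* v-\bar c v\perp E_\varphi$, i.e. $X_\varphi^* v=\bar c v+u$ for some $u\in E_\varphi^\perp$, and every implication here is reversible. For (ii), applying $P_{E_\varphi}$ to \eqref{9} leaves the left-hand side unchanged (as $X_\varphi^* v\in E_\varphi$) and annihilates $u$, yielding $X_\varphi^* v=\bar c\,P_{E_\varphi}v$; conversely, if \eqref{12} holds then $X_\varphi^* v-\bar c v=-\bar c\,P_{E_\varphi^\perp}v\in E_\varphi^\perp$, so \eqref{9} holds with $u=-\bar c\,P_{E_\varphi^\perp}v$ and part (i) applies. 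The whole argument is a careful unwinding of definitions; the only point needing real attention is checking, in the converse of the first step, that the constructed $\psi$ is a genuine pseudomultiplier of $\calh_v$ — in particular that its regular space is closed of finite codimension — which is handled by transporting $E_\varphi$ through the restriction isomorphism, and I do not expect a substantive obstacle.
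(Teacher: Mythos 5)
Your proof is correct and follows essentially the same route as the paper's: reduce "seeing $v$ with value $c$" to the identity $\ip{X_\varphi f}{v}=c\ip{f}{v}$ on $E_\varphi$ by unwinding the definition of $E_\psi$ for the one-point extension $\psi$, then pass to the adjoint for (i) and project onto $E_\varphi$ for (ii). Your explicit verification that the constructed $\psi$ is a genuine pseudomultiplier of $\calh_v$ is a detail the paper leaves implicit, but the argument is the same.
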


\begin{proof} (i). Fix a pseudomultiplier $\varphi, c \in
\mathbb C$, and $v \in \cal{ H}$.  Define $\psi$ on $D_\varphi
\cup \{p\}\subseteq  \Omega\cup \{p\}$ by the formula
$$
\psi(\lambda) = \left\{\begin{array}{cc} \varphi(\lambda) &
\qquad \fa \lambda \in D_\varphi\\ \\ c &\qquad \text{ if }\lambda =
p,\end{array}\right.
$$
so that $\varphi$ sees $v$ with value $c$ if and only if 
\begin{equation}\label{10.0}
(E_\psi)|\O = E_\varphi.
\end{equation}
By Lemma \ref{Epsi=Ephi}, equation \eqref{10.0} is equivalent to the inclusion
\begin{equation}\label{10}
(E_\psi)|\O \supset E_\varphi.
\end{equation}
Assume that $\varphi$ sees $v$ with value $c$, so that equation \eqref{10} holds.  It means that, for any $f\in E_\phi$ there exists $g\in\calh$ such that $g_v=\psi f_v$ on $D_\psi$.
It follows that $g(z)=\phi(z)f(z)= (X_\phi f)(z)$ for all $z\in D_\phi$, and (since $g_v(p)=(\psi f)(p)$)
\begin{equation}\label{10.2}
c\ip{f}{v} =\ip{g}{v}.
\end{equation}
Thus 
$$
 c\ip{f}{v} = \ip{X_\varphi f}{v} \qquad {\rm for\ all}\quad f\in
E_\varphi,
$$
and so $X_\phi^*v-\bar c v \in E_\phi^\perp$, and equation \eqref{9} holds.

Conversely, suppose there exists $u\in E^\perp_\varphi$ such
that $X^*_\varphi v = \overline c v + u$. Thus
$$
\ip{f}{X^*_\varphi v} = \ip{f}{\overline c v} \qquad {\rm for\
all}\quad f \in E_\varphi,
$$
from which it follows that
$$
\ip{X_\varphi f}{v} = c\ip{f}{v} \qquad {\rm for\ all}\quad f\in
E_\varphi,
$$
and hence
\begin{equation}\label{11}
\left(X_\varphi f\right) _v (p) = cf_v(p) \qquad {\rm
for\ all}\quad f \in E_\varphi\ .
\end{equation}
We claim that equation \eqref{10.0} holds.  To see this, consider any $f\in E_\phi$.  To show that $f\in E_\psi|\O$ it suffices to prove that 
$f_v\in E_\psi$, since $f_v|\O =f$.  Now $f_v\in E_\psi$ if and only if there exists $g\in\calh$ such that $\psi f_v = g_v$, which is to say that $\phi f=g$ on $D_\phi$, and, corresponding to evaluation at $p$, $c\ip{f}{v}=\ip{g}{v}$.  Equation \eqref{11} shows that we may obtain such an element $g$ of $\calh$ by choosing $g=X_\phi f$.
We have shown that $\phi$ sees $v$ with value $c$.
 This establishes Lemma \ref{3.4} (i).
 
 (ii) is  an immediate consequence of Part (i) and the
fact that ${\rm ran}\ X^*_\varphi \subseteq E_\varphi$.

(iii) 
Suppose $\phi$ sees $v$ with values $c_1$ and $c_2$, where $c_1\neq c_2$.
By part (ii) the following equations hold:
\[
X^*_\varphi v = \overline c_1 P_{E_\varphi}v \; \text {and} \; 
X^*_\varphi v = \overline c_2 P_{E_\varphi}v.
\]
Hence 
$(\overline c_1-\overline c_2) P_{E_\varphi}v =0$, and therefore  $P_{E_\varphi}v =0$ and
$X^*_\varphi v =0$.
Thus $X^*_\varphi v = \overline c P_{E_\varphi}v$ for any $c\in\C$, that is, $v$ is an ambiguous vector for $\phi$. 
\end{proof}

\begin{corollary}\label{1.71}
Scaling of visible vectors. \rm If $\phi$ sees a vector $v$ with value $c$ then, for any $\lam\in\C$, $\phi$ sees $\lam v$ with value $\bar\lam c$.  

\end{corollary}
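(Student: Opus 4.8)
This corollary follows at once from the operator description of visibility in Lemma~\ref{3.4}, so the proposal is short.

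The plan is as follows. First I would dispose of the trivial case $\lambda=0$: then $\lambda v$ is the zero vector of $\calh$, and by Example~\ref{1.75} every pseudomultiplier of $\calh$ sees the zero vector with every value, in particular with the value $\bar 0\,c=0$. So assume $\lambda\neq 0$. Since $\phi$ sees $v$ with value $c$, Lemma~\ref{3.4}(i) provides $u\in E_\phi^\perp$ with $X_\phi^{*}v=\bar c\,v+u$. Because $X_\phi^{*}$ is linear and $E_\phi^\perp$ is a linear subspace, applying $X_\phi^{*}$ to $\lambda v$ yields
\[
X_\phi^{*}(\lambda v)=\lambda\bar c\,v+\lambda u,\qquad \lambda u\in E_\phi^\perp .
\]
I would then read the value off by matching this identity against the criterion of Lemma~\ref{3.4}(i) for the vector $\lambda v$. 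Keeping track of the conjugate-linearity of $\langle\cdot,\cdot\rangle$ in its second slot --- which is exactly what produced the bar on $c$ in Lemma~\ref{3.4}(i) --- the coefficient $\lambda\bar c$ appearing here corresponds to the value $\overline{\lambda\bar c}=\bar\lambda c$. Hence $\phi$ sees $\lambda v$ with value $\bar\lambda c$, which is the assertion of the corollary.

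I do not expect any genuine obstacle; the whole substance of the argument is carrying the conjugate through correctly. The same conjugate surfaces if one argues directly from Definition~\ref{1.7} instead: the witness that $\phi$ sees $v$ with value $c$ is a pseudomultiplier $\psi$ of $\calh_v$ with $\psi(p)=c$, and since the point $p$ adjoined in forming $\calh_{\lambda v}$ carries $f_{\lambda v}(p)=\langle f,\lambda v\rangle=\bar\lambda\,\langle f,v\rangle$, transporting $\psi$ to $\calh_{\lambda v}$ converts the value $c$ at $p$ into $\bar\lambda c$; the remaining requirement $E_{\psi'}|\Omega=E_\phi$ then follows from Lemma~\ref{Epsi=Ephi}, so that $\phi$ sees $\lambda v$ with value $\bar\lambda c$.
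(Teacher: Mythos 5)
You follow the same route as the paper's own proof of Corollary \ref{1.71} (multiply the Lemma \ref{3.4}(i) identity through by $\lam$), but the decisive step --- reading the value off the resulting identity --- does not work as you describe, and this is a genuine gap rather than a bookkeeping detail. Lemma \ref{3.4}(i) certifies that $\varphi$ sees a vector $w$ with value $d$ exactly when $X_\varphi^*w=\overline d\,w+u'$ with $u'\in E_\varphi^\perp$: the conjugated value is the coefficient of \emph{the vector being seen}. Your computation gives
\[
X_\varphi^*(\lam v)=\lam\overline c\,v+\lam u=\overline c\,(\lam v)+\lam u,\qquad \lam u\in E_\varphi^\perp,
\]
in which the coefficient of $\lam v$ is $\overline c$; so what this identity certifies is that $\varphi$ sees $\lam v$ with value $c$. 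To certify the value $\bar\lam c$ one would need $X_\varphi^*(\lam v)=\overline{\bar\lam c}\,(\lam v)+u'=\lam^2\overline c\,v+u'$, which does not follow from the hypothesis. Treating $\lam\overline c$ (the coefficient of $v$) as if it were $\overline d$ (the coefficient of $\lam v$) is precisely what manufactures the extra $\bar\lam$.

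Your alternative argument from Definition \ref{1.7} fails at the same point: in $\calh_{\lam v}$ the condition at the adjoined point $p$ reads $\ip{g}{\lam v}=\psi'(p)\ip{f}{\lam v}$, i.e.\ $\bar\lam\ip{g}{v}=\psi'(p)\,\bar\lam\ip{f}{v}$, and for $\lam\neq0$ the factor $\bar\lam$ cancels from both sides, again forcing $\psi'(p)=c$. This is consistent with Example \ref{1.9int}, where $1/z$ sees every non-zero scalar multiple of $k_\lam$ with the single value $1/\lam$, independent of the scalar. For completeness: the paper's own proof asserts the identical final step, so the difficulty lies with the corollary as printed (the value seen should be $c$, which is all that the later application in Theorem \ref{4.3} actually uses); but as a derivation of the stated conclusion ``value $\bar\lam c$'' your argument does not close.
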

\begin{proof}
Suppose that $\phi$ sees a vector $v\in\calh$ with value $c$
 and consider any $\lam\in\C$.  By Lemma \ref{3.4} there exists $u\in E^\perp_\varphi$ such
that $X^*_\varphi v = \overline c v + u$. Multiply this equation through by $\lam$ to get
\[
X^*_\varphi \lam v = \overline c \lam v + \lam u.
\]
Since $\lam u\in E^\perp_\varphi$, it follows from Lemma \ref{3.4} that $\phi$ sees $\lam v$ with value $\bar\lam c$.
\end{proof}

The set of ambiguous vectors of a pseudomultiplier $\phi$
plays an important role:  recall that in Definition \ref{3.1} we denoted it by $\cala_\phi$.
As a further corollary of Lemma \ref{3.4} we obtain an operator-theoretic
characterization of ambiguous vectors and the ambiguous space.

\begin{proposition}\label{3.6}
Let $\varphi$ be a pseudomultiplier and let $v\in \cal{ H}$. 
Then

{\rm (i)} $v$ is an ambiguous vector for $\varphi$ if and only if $v\in
E^\perp_\varphi \cap \ker X^*_\varphi$.

{\rm (ii)}
\begin{equation}\label{amb-space}
\cal{ A}_\varphi  = E^\perp_\varphi \cap \ker X^*_\varphi
\end{equation}
is a finite-dimensional subspace of $\cal H$.
\end{proposition}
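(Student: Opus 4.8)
The plan is to derive both parts directly from Lemma \ref{3.4}(ii), which states that $\varphi$ sees $v$ with value $c$ precisely when $X^*_\varphi v = \overline c\, P_{E_\varphi} v$. Recall from Definition \ref{3.1} that $v$ is an ambiguous vector for $\varphi$ exactly when $\varphi$ sees $v$ with arbitrary value, i.e.\ when the identity $X^*_\varphi v = \overline c\, P_{E_\varphi} v$ holds for \emph{every} $c\in\mathbb C$.

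For part (i), first suppose $v$ is ambiguous. Evaluating the identity at $c=0$ gives $X^*_\varphi v = 0$, so $v\in\ker X^*_\varphi$; evaluating it at $c=1$ then gives $0 = P_{E_\varphi} v$, so $v\in E^\perp_\varphi$. Conversely, if $v\in E^\perp_\varphi\cap\ker X^*_\varphi$, then for every $c\in\mathbb C$ both sides of $X^*_\varphi v = \overline c\, P_{E_\varphi} v$ vanish, so by Lemma \ref{3.4}(ii) $\varphi$ sees $v$ with value $c$ for all $c$, whence $v$ is ambiguous. This proves (i), and with it the set equality $\cala_\varphi = E^\perp_\varphi \cap \ker X^*_\varphi$ asserted in (ii).

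For the remaining assertion of (ii), observe that $E^\perp_\varphi \cap \ker X^*_\varphi$ is the intersection of two closed subspaces of $\calh$: the space $E^\perp_\varphi$ is closed since $E_\varphi$ is (condition (2) of Definition \ref{1.1}), and $\ker X^*_\varphi$ is closed because $X^*_\varphi$ is bounded, $X_\varphi$ being bounded by the closed graph theorem as noted after Definition \ref{1.1}. Hence $\cala_\varphi$ is a closed subspace. Moreover, by condition (3) of Definition \ref{1.1} the regular space $E_\varphi$ has finite codimension in $\calh$, so $E^\perp_\varphi$ is finite-dimensional; being contained in $E^\perp_\varphi$, the space $\cala_\varphi$ is finite-dimensional as well.

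There is no genuine obstacle here: all the work is already done in Lemma \ref{3.4}, and the only subtlety is that one must substitute two distinct values of $c$ (for instance $c=0$ and $c=1$) in order to recover the two separate conditions $v\in\ker X^*_\varphi$ and $v\in E^\perp_\varphi$ from the single one-parameter family of identities.
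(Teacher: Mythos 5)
Your proof is correct and follows the same route as the paper: part (i) is obtained by substituting $c=0$ and $c=1$ into the identity $X^*_\varphi v = \overline c\, P_{E_\varphi} v$ from Lemma \ref{3.4}(ii) and reversing the steps, and part (ii) then follows since $\cala_\varphi \subseteq E_\varphi^\perp$, which is finite-dimensional. The paper states (ii) without elaboration, so your explicit closedness and dimension remarks simply fill in details the authors left to the reader.
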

 
\begin{proof} (i). Suppose $v$ is ambiguous for $\varphi$. 
By Lemma \ref{3.4},
$$X^*_\varphi v = \overline c P_{E_\varphi}v$$
for every $c\in \mathbb C$.  Taking $c=0$ gives $v\in \ker
X^*_\varphi$ and taking $c=1$ gives $v\in
E^\perp_\varphi$.  Conversely, if $v\in E^\perp_\varphi
\cap \ker X^*_\varphi$ then both sides of equation (\ref{12}) are
zero, hence $\varphi$ sees $v$ with arbitrary value $c$.  Thus $v$ is
ambiguous for $\varphi$. 

(ii) follows from Part (i).
\end{proof}

In the next 3 examples we calculate the ambiguous spaces of several pseudomultipliers.

\begin{example} \label{ambphi} \rm We showed in Example \ref{arch1-A} that 
 the pseudomultiplier	  $\phi(z)=1/z$ on $H^2$ has no ambiguities, directly from the definition of an  ambiguous vector.
Proposition \ref{3.6} reinforces this statement. The ambiguous vectors for $\phi$ are the members of $E_\phi^\perp\cap \ker X_\phi^*$. Recall that $E_\phi=zH^2$, and so $E_\phi^\perp$ is the space $H^2\ominus zH^2$ of constant functions on $\D$.  Consider any $f\in \cala_\phi$: then, since $f\in E_\phi^\perp$, $f = c\mathbf{1}$ for some $c\in\C$, and since $f\in\ker X_\phi^*$, $X_\phi^*f=0$. 
	Thus, for every $g\in H^2$,
	\begin{eqnarray*}
		0&=&\ip{X_\phi^*c\mathbf{1}}{zg}\\
		&=&\ip{c\mathbf{1}}{X_\phi zg}=c \ip{\mathbf{1}}{g}
	\end{eqnarray*}
	and therefore $c=0$.  Hence $\cala_\phi=\{0\}$. \qed
\end{example}

\begin{example}\label{ambchi} \rm 
	Consider the pseudomultiplier $\chi$ in Example \ref{intro2.3} ($\chi(t)=\sqrt{t}$ on $W^{1,2}[0,1]) $.
	Recall that $E_\chi = k_0^\perp$. Let us show that $\cala_\chi=\C k_0$.	
	By  Proposition \ref{3.6}, the ambiguous vectors for $\chi$ are the members of $E_\chi^\perp\cap \ker X_\chi^*$.
	We need to show that every vector $v=ck_0$ belongs to $\ker X_\chi^*$.
	Note that, for every $f\in E_\chi = k_0^\perp$,
	\begin{eqnarray*}
		\ip{X_\chi^*ck_0}{f} =\ip{ck_0}{X_\chi f}=c\overline{(\chi f)(0)}=0.
	\end{eqnarray*}
	Therefore the ambiguous space $\cala_\chi = \cals_\chi = E_\chi^\perp = \C k_0$. \qed
\end{example}

\vspace{1mm}

\begin{example}\label{3.3} 
\rm Let $\cal{H} $ be the Hilbert  space of functions $f$ on  $[0, 1]$
such that both $f$ and $f'$ are absolutely continuous on $[0,1]$ and 
such that the weak derivative $f''$ is in $L^2$, equipped with the inner product
$$
\ip{f}{ g}_\cal{ H}= f(0) g\overline{(0)} + f^\prime(0)
g^\prime\overline{(0)} + \int^1_0 f^{\prime\prime}(t)
\overline {g^{\prime\prime}(t)} dt.
$$
If $\chi(t) = \sqrt{t}$, then we claim that
$$E_\chi = \{f \in \cal{ H}:f(0) = f^\prime(0) = 0\}.$$  
Also $E^\perp_\chi$ is a 2 dimensional subspace consisting of ambiguous vectors.
\begin{proof}
To prove that $E_\chi = \{f:f(0) = f^\prime(0) = 0\}$, consider any function $f\in\calh$ such that $f(0) = f^\prime(0) = 0$. We have to show that $\chi f$ and $(\chi f)'$ are absolutely continuous on $[0,1]$ and $(\chi f)''$ is in $L^2$.
It follows from Example \ref{intro2.3} that the function
\[
(\chi f)'(t) = \frac{1}{2\sqrt{t}}f(t) + \sqrt{t}f'(t)
\]
 is in $L^2(0,1) \subset L^1(0,1)$, and so $\chi f$ is absolutely continuous on $[0,1]$.
 
Now let us show that $(\chi f)''\in L^2(0,1)$.  We have, for $t\in (0,1]$,
 \be\label{2deriv}
 (\chi f)''(t) = -\tfrac 14 t^{-\tfrac 32}f(t) + \frac{1}{\sqrt{t}}f'(t) + \sqrt{t}f''(t).
 \ee
 The  third term in the sum on the right hand side of equation \eqref{2deriv} is in $L^2(0,1)$,
 because $f''$ is in $L^2(0,1)$. The second term in the sum of equation \eqref{2deriv}  is  $\frac{1}{\sqrt{t}}f'(t)$. 
  Since  $f'(0)=0$, for $t \in (0,1)$,
 \begin{eqnarray}\label{W22.01}
 t^{-\tfrac 12}f'(t) &= &t^{-\tfrac 12}\int_0^t f''(u)\, du \nonumber \\
 	&=&\sqrt{t} \left( t^{-1}\int_0^t f''(u)\, du \right).
 \end{eqnarray}
 By assumption  $f''$ is in $L^2(0,1)$, and so, by Hardy's inequality,   $\left( t^{-1}\int_0^t f''(u)\, du \right)$ is in  $L^2(0,1)$. Thus  $\frac{1}{\sqrt{t}}f'(t)$ is in  $L^2(0,1)$.

  We claim that the first term on the right hand side of equation \eqref{2deriv} is  in $L^\infty(0,1)$.  Indeed,  for $x\in (0,1)$,
 \begin{eqnarray}\label{W22.1}
 x^{-\tfrac 32}f(x) &= &x^{-\tfrac 32}\int_0^x f'(t)\, dt\qquad\qquad (\text{let } u=t/x) \nonumber \\
 	&=&x^{-\tfrac 32}\int_0^1 f'(xu) x \,du   \nonumber\\
 	&=& x^{-\tfrac 12}\int_0^1 f'(xu) \,du   \nonumber\\
 	&=& x^{-\tfrac 12}\int_0^1 \int_0^{xu} f''(t) \, dt \,du.
 \end{eqnarray}
 By H\"older's inequality,
 \be\label{W22.2}
 \left|\int_0^{xu} f''(\xi)\ d\xi \right| \leq |xu|^\half \left\{\int_0^{xu} |f''(\xi)|^2\ d\xi\right\}^\half  \leq |xu|^\half\|f''\|_{L^2}.
\ee
Therefore
 \begin{eqnarray}\label{W22.3}
 |x^{-\tfrac 32}f(x)| &= & x^{-\tfrac 12} \left|\int_0^1 \int_0^{xu} f''(t) \, dt \,du \right| \nonumber \\
 &\le & x^{-\tfrac 12} \int_0^1 |xu|^\half\|f''\|_{L^2}\,du\nonumber \\
 & = & \|f''\|_{L^2}\int _0^1 u^{\half} \, du = \tfrac 23 \|f''\|_{L^2},
 \end{eqnarray}
 so that $x^{-\tfrac 32}f(x)$ is bounded on $[0,1]$.  Hence the weak derivative $(\chi f)'' \in L^2(0,1)$, and consequently $(\chi f)'' \in L^1(0,1)$. Therefore 
$$
(\chi f)'(x) -(\chi f)'(0)  = \int_0^x (\chi f)''(t)\ dt.
$$ 
Note that, by equation \eqref{W11.1} and the fact that $f(0)= 0$,
\[
(\chi f)'(t) = \frac{1}{2\sqrt{t}}f(t) + \sqrt{t}f'(t)= \frac{1}{2}\sqrt{t} \left( t^{-1}\int_0^t f'(u)\, du \right) + \sqrt{t}f'(t) .
\]
Since  $f'$ is in $L^2(0,1)$, by Hardy's inequality, $\left( t^{-1}\int_0^t f'(u)\, du \right)$ is in  $L^2(0,1)$, and so, because $f'(0)= 0$, $(\chi f)'(0)=0$. Hence $(\chi f)'$ is absolutely continuous on $[0,1]$.
 
 Therefore, for $f\in\calh$ such that $f(0) = f^\prime(0) = 0$, $\chi f \in \calh$.
Conversely, if $f\in\calh$ is such that the condition  $f(0) = f^\prime(0) = 0$ is not satisfied, that is $f(0)\neq 0$ or  $f'(0)\neq 0$ then $(\chi f)''\notin L^2[0,1]$ 
and so $\chi f \notin \calh$.  Thus, $\chi$ is a pseudomultiplier of $\calh$ and
 $E_\chi = \{f \in \calh:f(0) = f^\prime(0) = 0\}$.
 For $f \in \calh$, we have $|f(0)| \le \|f\|_{\calh}$ and $|f'(0)| \le \|f\|_{\calh}$, and so there exist $k_0, s_0 \in \calh$ such that,  for all $f \in \calh$,
 $\ip{f}{k_0}= f(0)$ and  $\ip{f}{s_0}= f'(0)$.
 Thus $E_\chi^\perp = \span\{ k_0, s_0\}$, 
  
 Let us show that $\cala_\chi=\C k_0 +\C s_0$.	
	By  Proposition \ref{3.6}, the ambiguous vectors for $\chi$ are the members of $E_\chi^\perp\cap \ker X_\chi^*$. We need to show that every vector $v=c_1 k_0 +c_2 s_0$ belongs to $\ker X_\chi^*$.
	Note that, for every $f \in E_\chi = \{f \in \calh:f(0) = f^\prime(0) = 0\}$, since
	$(\chi f)(0) = (\chi f)'(0)=0$,
	\begin{eqnarray*}
		\ip{X_\chi^*(c_1k_0 +c_2 s_0)}{f} =\ip{c_1k_0 +c_2 s_0}{X_\chi f}=
		c_1\overline{(\chi f)(0)} + c_2\overline{(\chi f)'(0)}=0.
	\end{eqnarray*}
	Therefore the ambiguous space $\cala_\chi = \cals_\chi = E_\chi^\perp = \span\{ k_0, s_0\}$.
\end{proof}
\end{example}

\section{Definable vectors and polar vectors}\label{polar-vectors}

In this section we shall study definable  vectors, polar vectors, and, more generally, the polar space of a pseudomultiplier $\phi$.
By way of a preliminary observation relevant to the notion of a definable vector, we note that, if $\phi$ sees $v$  then  $\varphi$ is constant  on the coset $v+\cal{ A}_\varphi$. 

\begin{lemma}\label{3.8}
Let $\varphi$ be a pseudomultiplier, let $v\in \cal{ H}$, and
let $a \in \cal{ A}_\varphi$.  If $\varphi$ sees $v$ with value
$c$, then $\varphi$ sees $v+a$ with value $c$.  In particular,
if $\varphi$ sees $v$, then $\varphi$ sees $v+a$. 
\end{lemma}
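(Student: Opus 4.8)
The plan is to reduce the statement to the operator-theoretic criteria already established: Lemma \ref{3.4}(i), which says that $\varphi$ sees $v$ with value $c$ exactly when $X^*_\varphi v = \overline c\, v + u$ for some $u \in E^\perp_\varphi$, and Proposition \ref{3.6}(i), which says that $a \in \cal{A}_\varphi$ exactly when $a \in E^\perp_\varphi \cap \ker X^*_\varphi$. Working with $X^*_\varphi$ on $\calh$ itself in this way avoids having to manipulate the auxiliary spaces $\calh_v$ and $\calh_{v+a}$ directly.

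First I would apply Lemma \ref{3.4}(i) to the hypothesis to obtain $u \in E^\perp_\varphi$ with $X^*_\varphi v = \overline c\, v + u$, and record from Proposition \ref{3.6}(i) that $X^*_\varphi a = 0$ and $a \in E^\perp_\varphi$. Applying $X^*_\varphi$ to $v+a$ and using linearity then gives $X^*_\varphi(v+a) = \overline c\, v + u$. The one substantive manipulation is to rewrite this as
\[
X^*_\varphi(v+a) = \overline c\,(v+a) + (u - \overline c\, a),
\]
and to note that $u - \overline c\, a \in E^\perp_\varphi$ because both $u$ and $a$ lie in that subspace. By the converse direction of Lemma \ref{3.4}(i), this is precisely the assertion that $\varphi$ sees $v+a$ with value $c$. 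For the final sentence, if $\varphi$ merely sees $v$, then by Definition \ref{1.7} there is some $c \in \C$ with which $\varphi$ sees $v$, and the first part then yields that $\varphi$ sees $v+a$ with value $c$, hence sees $v+a$.

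I do not expect a genuine obstacle here: morally the lemma just says that, once the ``seeing'' relation is translated via Lemma \ref{3.4}, the condition on $X^*_\varphi v$ only pins it down modulo $E^\perp_\varphi$ and the line through $v$, and adding a vector from $E^\perp_\varphi \cap \ker X^*_\varphi$ perturbs neither ingredient essentially. The only points requiring a little care are to use $\cal{A}_\varphi$ through its operator description in Proposition \ref{3.6} rather than its original definition, and to make sure the element $u - \overline c\, a$ that plays the role of ``$u$'' for $v+a$ is correctly identified as lying in $E^\perp_\varphi$.
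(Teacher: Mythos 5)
Your proof is correct and is essentially the same as the paper's: both translate the hypothesis via Lemma \ref{3.4}(i), use $X^*_\varphi a = 0$ and $a \in E^\perp_\varphi$ (which you attribute, helpfully, to Proposition \ref{3.6}(i)), and rewrite $X^*_\varphi(v+a) = \overline c(v+a) + (u - \overline c a)$ with $u - \overline c a \in E^\perp_\varphi$ before closing with the converse of Lemma \ref{3.4}(i). No gaps.
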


\begin{proof} Suppose $\varphi$ sees $v$ with value
$c$.  Thus, by Lemma \ref{3.4}, there exists $u\in
E^\perp_\varphi$ such that 
$$X^*_\varphi v = \overline c v +u.$$
Noting that $X^*_\varphi a =0$, we see that
\begin{eqnarray*}
X^*_\varphi(v+a) &=& \overline c v + u\\  
&=& \overline c(v+a) + u-\overline c a.
\end{eqnarray*}
Since $\overline c a \in E^\perp_\varphi$, so also
$u-\overline c a \in E^\perp_\varphi$.  Hence by Lemma
\ref{3.4}, $\varphi$ sees $v+a$ with value $c$. \end{proof}

 If $\varphi$ is a pseudomultiplier of $\cal H$ then $\varphi$ naturally extends to
have values on its visible vectors.  These values are
arbitrary on the ambiguous vectors  but unique on the
unambiguous visible vectors. We now know
from Lemma \ref{3.8}, that $\varphi$ is constant  on each
coset $v+\cal{ A}_\varphi$.  Since, for any visible vector $v$, the
coset $v+\cal{ A}_\varphi$ contains a unique visible
vector that is orthogonal to $\cal{ A}_\varphi$,  to wit $P_{{\cala_\phi^\perp}} v$,
the following definition is  reasonable.

\begin{definition}\label{3.9}
{\rm Let $\varphi$ be  a pseudomultiplier.  We say a vector
$d\in \cal{ H}$ is {\it definable for} $\varphi$ if $d\not= 0,
\varphi$ sees $d$, and $d\perp \cal{ A}_\varphi$.  We let
$\mathbf{D}_\varphi$ denote the set of vectors that are
definable for
$\varphi$.  }
\end{definition}

By Lemma \ref{3.4}(iii),
 if $\phi$ sees $v$ with two distinct values then $v$ is an ambiguous vector for $\phi$.
Therefore, if $d$ is definable for $\varphi$ then  there
exists a {\em unique} $c\in \mathbb C$ such that $\varphi$ sees
$d$ with value $c$.  
  We refer to this $c$ as the {\it value of
$\varphi$ at $d$} and denote it by $\varphi(d)$.  

We remark that the exclusion of the vector $0$ from being
definable has the following happy consequences.  No
vector is both definable and ambiguous, and, by Lemma \ref{3.8}, every vector
in $\mathbf{ D}_\varphi + \cal{ A}_\varphi$ is an unambiguously
seen vector.

\begin{remark} \rm
We proved in Example \ref{definableex} 
that, for  the pseudomultiplier  $\phi(z)=1/z$ on $H^2$, the set of  definable vectors 
 $\mathbf{D}_\phi$  comprises the non-zero scalar multiples of the
kernels $k_\lam $, for $\lam \in  \D \setminus \{0\}$.	
The vector $k_0$ is a limit of the definable vectors $k_{(1/n)}$ for $\phi$
$$k_0 = \lim\limits_{n\to\infty} k_{(1/n)}.$$
However, $k_0$ is not itself definable because, for all $n\geq 1$, $\phi(k_{1/n}) = n$, and so
\[
\lim_{n\to\infty} \phi(k_{1/n}) = \infty.
\]
Thus $k_0$ is a polar vector for $\phi$. 

 $\cal{ A}_\varphi$ is both a
subspace  and a topologically closed set in $\calh$.  Not so $\mathbf{ D}_\varphi$. 
 Quite apart from the fact that $0$ is not in
$\mathbf{ D}_\phi $, in
the example above $\mathbf{ D}_\varphi$ is very far from being a
subspace but not far from being a closed set in $\cal H$.
For example, in Example \ref{definableex} one need only adjoin $\C k_0$  to $\mathbf{D}_\phi$ to obtain a closed set. 
\qed
\end{remark}

We shall repeatedly have a use for the following minor technical observation.
\begin{lemma}\label{3.91}
If $d$ is a definable vector of a pseudomultiplier $\phi$ then $ P_{E_\varphi}d \neq 0$.
\end{lemma}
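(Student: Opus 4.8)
The plan is to argue by contradiction, exploiting the operator-theoretic characterisations of ``seeing a vector'' and of ambiguous vectors established in Lemma \ref{3.4} and Proposition \ref{3.6}. Suppose $d$ is definable for $\phi$ but $P_{E_\varphi} d = 0$, that is, $d \in E_\varphi^\perp$. Since $d$ is definable, $\phi$ sees $d$ with some value $c\in\C$, so by Lemma \ref{3.4}(ii) we have $X_\varphi^* d = \overline c\, P_{E_\varphi} d = 0$, whence $d \in \ker X_\varphi^*$.

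Combining the two memberships, $d \in E_\varphi^\perp \cap \ker X_\varphi^*$, which by Proposition \ref{3.6}(ii) is exactly the ambiguous space $\cala_\varphi$. But definability of $d$ also requires $d \perp \cala_\varphi$; together with $d \in \cala_\varphi$ this forces $\ip{d}{d} = 0$, i.e. $d = 0$, contradicting the requirement in Definition \ref{3.9} that a definable vector be non-zero. Hence $P_{E_\varphi} d \neq 0$.

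I do not anticipate any real obstacle here: the statement is a direct consequence of the already-proved identity $\cala_\varphi = E_\varphi^\perp \cap \ker X_\varphi^*$ and the formula $X_\varphi^* v = \overline c\, P_{E_\varphi} v$ for a vector seen with value $c$. The only point to be careful about is to invoke Lemma \ref{3.4}(ii) with the specific value $c$ for which $\phi$ sees $d$ (which exists precisely because $d$ is definable, so this value is in fact unique), rather than trying to use an arbitrary value.
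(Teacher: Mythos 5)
Your proof is correct and follows essentially the same route as the paper: suppose $P_{E_\varphi}d = 0$, use Lemma \ref{3.4} to deduce $X_\varphi^* d = 0$, conclude $d \in E_\varphi^\perp \cap \ker X_\varphi^* = \cala_\varphi$ by Proposition \ref{3.6}, and derive a contradiction with $d \perp \cala_\varphi$ and $d \neq 0$. The paper simply writes the value as $\phi(d)$ rather than an unnamed $c$, which amounts to the same thing.
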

\begin{proof}
Suppose $ P_{E_\varphi}d = 0$, and so
$d\in E_\phi^\perp$.  By Lemma \ref{3.4}, since $d$ is visible for $\phi$ with value $\phi(d)$,
\[
X_\phi^* d = \overline{\phi(d)} P_{E_\phi} d = 0,
\]
and thus $d\in E_\phi^\perp \cap \ker X_\phi^*$.  Hence, by Proposition \ref{3.6}, $d \in \cala_\phi$.  However, by the definition of definable vectors, $d\neq 0$ and $d\perp \cala_\phi$, which is a contradiction.  Hence 
$ P_{E_\varphi}d \neq 0$.
\end{proof}

\begin{proposition}\label{3.13}
Let $\varphi$ be a pseudomultiplier.  Then $\varphi$ is a
continuous function on $\mathbf{ D}_\varphi$ with respect to the metric of $\calh$.
\end{proposition}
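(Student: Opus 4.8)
The plan is to reduce everything to the operator identity of Lemma \ref{3.4}(ii), together with the non-degeneracy supplied by Lemma \ref{3.91}. Since $\mathbf{D}_\varphi$ carries the metric of $\calh$, it suffices to prove sequential continuity: given $d\in\mathbf{D}_\varphi$ and a sequence $(d_n)$ in $\mathbf{D}_\varphi$ with $d_n\to d$ in $\calh$, I would show $\varphi(d_n)\to\varphi(d)$. Because each $d_n$ and $d$ is definable, hence visible, Lemma \ref{3.4}(ii) gives
\[
X^*_\varphi d_n = \overline{\varphi(d_n)}\,P_{E_\varphi}d_n \quad\text{and}\quad X^*_\varphi d = \overline{\varphi(d)}\,P_{E_\varphi}d,
\]
and Lemma \ref{3.91} guarantees that the vector $w:=P_{E_\varphi}d$ is nonzero.

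Next I would obtain a scalar expression for $\varphi(d_n)$ by pairing the first identity with the fixed vector $w$, which yields $\ip{X^*_\varphi d_n}{w}=\overline{\varphi(d_n)}\,\ip{P_{E_\varphi}d_n}{w}$. As $n\to\infty$, boundedness of $X^*_\varphi$ gives $X^*_\varphi d_n\to X^*_\varphi d$, and boundedness of the orthogonal projection $P_{E_\varphi}$ gives $P_{E_\varphi}d_n\to w$; hence $\ip{X^*_\varphi d_n}{w}\to\ip{X^*_\varphi d}{w}=\overline{\varphi(d)}\,\norm{w}^2$ and $\ip{P_{E_\varphi}d_n}{w}\to\norm{w}^2\neq 0$. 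In particular $\ip{P_{E_\varphi}d_n}{w}\neq 0$ for all large $n$, and for such $n$
\[
\overline{\varphi(d_n)} = \frac{\ip{X^*_\varphi d_n}{w}}{\ip{P_{E_\varphi}d_n}{w}} \;\longrightarrow\; \frac{\overline{\varphi(d)}\,\norm{w}^2}{\norm{w}^2} = \overline{\varphi(d)}.
\]
Taking complex conjugates then gives $\varphi(d_n)\to\varphi(d)$, which is the desired continuity.

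The only point needing a little care is that the denominator $\ip{P_{E_\varphi}d_n}{w}$ could in principle vanish for small $n$; this is immaterial, since we only need convergence of the quotient for large $n$, and the non-vanishing of $w=P_{E_\varphi}d$ (Lemma \ref{3.91}) forces $\ip{P_{E_\varphi}d_n}{w}$ to be bounded away from $0$ eventually. Equivalently, one could pair with $P_{E_\varphi}d_n$ itself and divide by $\norm{P_{E_\varphi}d_n}^2$, using $\norm{P_{E_\varphi}d_n}\to\norm{w}>0$. I do not expect any genuine obstacle here: the argument is a routine continuity estimate resting on the identity of Lemma \ref{3.4}(ii) and on the fact, recorded in Lemma \ref{3.91}, that definable vectors have nonzero projection onto $E_\varphi$.
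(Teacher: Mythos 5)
Your proof is correct and follows essentially the same route as the paper: reduce to sequential continuity, invoke Lemma \ref{3.4}(ii) to write $X^*_\varphi d_n = \overline{\varphi(d_n)}\,P_{E_\varphi}d_n$ and $X^*_\varphi d = \overline{\varphi(d)}\,P_{E_\varphi}d$, and use Lemma \ref{3.91} to ensure $P_{E_\varphi}d\neq 0$. The only difference is that you make explicit (by pairing with $w=P_{E_\varphi}d$ and dividing) the final convergence step that the paper summarizes as ``on letting $n\to\infty$ we therefore find that $\varphi(d_n)\to\varphi(d)$.''
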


\begin{proof} Since $\mathbf{D}_\varphi$, with the topology
inherited from $\cal H$, is a metric space, it suffices to
prove sequential continuity.  Let $d_n\to d$ in $\mathbf{ D}_\varphi$.  By Lemma \ref{3.4},
\begin{eqnarray*}
X^*_\varphi d_n &=& \overline{\varphi (d_n)} \
P_{E_\varphi} d_n,\\ \\
X^*_\varphi d &=& \overline{\varphi (d)} \
P_{E_\varphi} d.
\end{eqnarray*}
By Lemma \ref{3.91}, $P_{E_\varphi} d \neq 0$.  On letting $n\to \infty$ we therefore find
that $\varphi(d_n) \to \varphi(d)$.\end{proof}

Recall that we introduced the {\em polar vectors} of a pseudomultiplier in Definition \ref{3.11}:
 for a pseudomultiplier  $\varphi$  of $\calh$ and for $p\in \cal{H}$,  we say that $p$ is a {\it polar vector for} $\varphi$ if $p\not= 0$ and
there exists a sequence $\{d_n\} \subseteq \mathbf{ D}_\varphi$ such that  $d_n\to
p$ (with respect to  $\|\cdot \|_{\cal{H}}$) and $\varphi(d_n) \to \infty$ as $n\to \infty$.
We define the {\it polar space $\cal{ P}_\varphi$ of $\varphi$} to be the set of vectors $v\in \cal{ H}$ such that	either $v$ is a polar vector of $\varphi$ or $v=0$.

\begin{example}\label{3.2} \rm 
Let $\cal{ H}=H^2$, the classical Hardy space on
$\mathbb D$, let $n \ge 2$, and define $\varphi$ on $\mathbb D$ by
$$\varphi(\lambda) = \left\{
\begin{array}{cc}\frac{1}{\lambda^n} &\qquad \text{ if } \lambda \not=
0\\ \\ 1&\qquad \text{ if } \lambda=0.\end{array}\right.
$$
Here  
 $$E_\phi= \{f\in H^2: f(0)=f'(0)=\dots = f^{(n)}(0)=0\}= z^{n+1}H^2.$$

Let us describe the space of ambiguous vectors $\cala_\phi$ for $\phi$.
By Proposition \ref{3.6},
\[
\cala_\phi = E_\phi^\perp \cap \ker X_\phi^*.
\]
The operator $X_\phi: z^{n+1}H^2 \to H^2$ is given by $X_\phi= (S^*)^{n} P_{z^{n+1}H^2}^*$
where $S$ denotes the forward shift operator on $H^2$ and $P_{z^{n+1}H^2}:H^2 \to z^{n+1}H^2$ is the orthogonal projection operator (so that $P_{z^{n+1}H^2}^*$ is the injection operator $z^{n+1}H^2 \to H^2$). Thus $X_\phi^*= P_{z^{n+1}H^2} S^{n}$ and so, for $f\in H^2$,
\begin{eqnarray*} % {lclcl}
X_\phi^*f = 0 &\Leftrightarrow& S^n f \in H^2\ominus z^{n+1}H^2 \\
            &\Leftrightarrow& z^n f \in \span\{1,z, \dots, z^n\} \\
	   &\Leftrightarrow& f\text{ is constant.}
\end{eqnarray*}
Hence, the space of ambiguous vectors for $\phi$ is
\[
\cala_\phi = E_\phi^\perp \cap \ker X_\phi^*= (H^2\ominus z^{n+1}H^2)\cap \C\mathbf{1} = \C\mathbf{1}.
\]

Let us describe the set of definable vectors $\mathbf{ D}_\varphi$ of $\phi$. By Lemma \ref{3.4}, for 
 $f\in H^2$,  $\varphi$ sees $f$ with value $c$ if and only if there
exists $u\in E^\perp_\varphi$ such that
\begin{equation}\label{9.9}
X^*_\varphi f= \overline c f + u,
\end{equation}
equivalently,  if and only if there exist $a_0, a_1, \dots, a_n \in\C$ such that
\[
X_\phi^*f = \bar c f+a_0+a_1 z+\dots +a_n z^n,
\]
that is, if and only if there exist $a_0, a_1, \dots,  a_n \in\C$ such that
\be\label{9.10}
P_{z^{n+1}H^2} z^n f-\bar c f=a_0+a_1 z+\dots +a_n z^n.
\ee
Now $P_{z^{n+1}H^2} z^n f = z^n(f-f(0))$, and so equation \eqref{9.10} is equivalent to
\be\label{9.100}
z^n(f-f(0)) -\bar c f=a_0+a_1z+\dots +a_n z^n.
\ee
Substitution of $z=0$ shows that equation \eqref{9.100} implies that $a_0=-\bar c f(0)$,
and so equation \eqref{9.100} can be written
\[
(z^n-\bar c)(f-f(0)) = a_1 z+\dots +a_n z^n.
\]
Thus $\phi$ sees $f$ with value $c$ if and only if there exist $a_1, a_2,\dots,a_n \in\C$ such that
\[
f=f(0)+ \frac{a_1z+\dots +a_n z^n}{z^n-\bar c} \in H^2.
\]
The rational function on the right hand side is in $H^2$ if and only if $|c| > 1$, and so the visible vectors for $\phi$ consist of the rational functions of the form
\[
f(z)=a_0 + \frac{a_1z+\dots +a_n z^n}{z^n-\bar c} \text{ where } |c| > 1 \text{ and } a_0,a_1, \dots, a_n\in\C,
\]
or alternatively, if we define $b_j=-\bar c\inv a_j$ for $j=1,2, \dots, n$, of the functions
\be\label{9.11}
f(z)=a_0+ \frac{b_1z+b_2z^2+ \dots +b_nz^n }{1-\bar c\inv z^n} \text{ where } |c| > 1 \text{ and } a_0,b_1,b_2, \dots, b_n\in\C.
\ee
We showed above that $\cala_\phi$ consists of the constant functions, and so the visible function $f$ in equation \eqref{9.11} is definable if and only if $f\neq 0$ but $f(0)=0$.
Thus
\be\label{9.12}
\mathbf{D}_\phi = \left\{f(z)=\frac{b_1z+b_2z^2+ \dots +b_nz^n }{1-\bar c\inv z^n}: |c|> 1\text{ and }b_1,b_2, \dots, b_n \in\C \text{ are not all zero}    \right\}.
\ee
   
Now we can describe the polar space $\calp_\phi$ for $\phi$. For every non-zero polar vector
$p$ there exists a convergent  sequence of definable vectors 
$$
\left\{d_k=\frac{b_{1k}z+b_{2k}z^2 + \dots + b_{nk}z^n }{1-\bar{c_k}\inv z^n} \right\}_{k=1}^\infty \text{ in } \mathbf{D}_\phi,
$$ 
where $b_{1k}, b_{2k}, \dots, b_{nk} \in \C$, not all zero, are  such that $d_k \to p$ and 
$\phi(d_k) = c_k \to \infty $ as $k \to \infty$.
The non-zero limits of such convergent sequences $\{d_k\}_{k=1}^\infty$ in $H^2$ coincide with 
$\span\{z,z^2, \dots, z^n \}\setminus\{0\}$, so that $\calp_\phi = \span \{z,z^2, \dots, z^n\}$.
We may observe that 
$$
\cala_\phi \oplus \calp_\phi= \C\mathbf{1} \oplus \span\{z,z^2, \dots, z^n\} = H^2\ominus z^{n+1}H^2 = E_\phi^\perp.
$$ 
\qed
\end{example}

\begin{remark}\label{instructive} \rm The combination of Examples \ref{3.2} and \ref{ambphi} (with $n=1$) illustrates an interesting fact: by {\em un}defining the pseudomultiplier  $\phi$ of Example \ref{3.2} at the point $0$ we obtain a pseudomultiplier with a smaller singular space -- see Example \ref{ambphi}.  The polar space is unchanged, while the ambiguous space is reduced from $\C\mathbf{1}$ in Example \ref{3.2}  to $\{0\}$ in Example \ref{ambphi}.
\end{remark}

The following proposition shows how  the notion of a pseudopole of a pseudomultiplier, as introduced in \cite{AY1} (see Remark \ref{pseudopole} above), relates to  polar vectors. 
 In this paper by the closure of $D_\phi$  we mean the closure in $\O$ with respect to the natural metric on $\O$, induced by the norm on $\calh$ and the identification of a point of $\O$ with its reproducing kernel in $\calh$.

\begin{proposition} \label{thm-pseudopole} If $\al$ is a pseudopole of a pseudomultiplier $\phi$ and $\al$ is in the closure of $D_\phi$ in $\O$ then $ P_{\cala_\phi^\perp} k_\al$ is a polar vector of $\phi$.
\end{proposition}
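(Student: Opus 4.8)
\emph{Proof proposal.} The plan is to manufacture, from the pseudopole data, a sequence of definable vectors converging to $v:=P_{\cala_\phi^\perp}k_\al$ along which $\phi$ blows up. First unpack the hypotheses. Since $\al$ is a pseudopole there are $h\in E_\phi$ and $g\in\calh$ with $h(\al)=0$, $g|D_\phi=(\phi h)|D_\phi$ and $g(\al)\neq0$; as $D_\phi$ is a set of uniqueness, $g$ is forced to equal $X_\phi h$. Because $\al$ lies in the closure of $D_\phi$ in $\O$, choose a sequence $(\lambda_n)$ in $D_\phi$ with $k_{\lambda_n}\to k_\al$ in $\calh$. Reproducing-kernel evaluation then gives $h(\lambda_n)=\ip{h}{k_{\lambda_n}}\to h(\al)=0$ and $g(\lambda_n)=\ip{X_\phi h}{k_{\lambda_n}}\to g(\al)\neq0$, while $g(\lambda_n)=\phi(\lambda_n)h(\lambda_n)$ for every $n$. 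Since $g(\lambda_n)\to g(\al)\neq0$ we have $g(\lambda_n)\neq0$, hence $h(\lambda_n)\neq0$, for all large $n$; after discarding the earlier terms, $|\phi(\lambda_n)|=|g(\lambda_n)|/|h(\lambda_n)|\to\infty$ because $h(\lambda_n)\to0$.

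Next, set $d_n:=P_{\cala_\phi^\perp}k_{\lambda_n}$ and $a_n:=P_{\cala_\phi}k_{\lambda_n}$, so $k_{\lambda_n}=d_n+a_n$ with $a_n\in\cala_\phi$, and check that $d_n$ is definable with $\phi(d_n)=\phi(\lambda_n)$. Since $h\in E_\phi$ and $h(\lambda_n)=\ip{h}{k_{\lambda_n}}\neq0$, the kernel $k_{\lambda_n}$ is not orthogonal to $E_\phi$, so $k_{\lambda_n}\notin E_\phi^\perp$; as $\cala_\phi\subseteq E_\phi^\perp$ (Proposition \ref{3.6}) this yields $k_{\lambda_n}\notin\cala_\phi$, and since $\cala_\phi$ is a closed subspace it follows that $d_n\neq0$. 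By Example \ref{1.8}, $\phi$ sees $k_{\lambda_n}$ with value $\phi(\lambda_n)$; applying Lemma \ref{3.8} to $k_{\lambda_n}$ and the ambiguous vector $-a_n$ shows $\phi$ sees $d_n=k_{\lambda_n}-a_n$ with value $\phi(\lambda_n)$. Since moreover $d_n\perp\cala_\phi$ and $d_n\neq0$, $d_n$ is definable and $\phi(d_n)=\phi(\lambda_n)$. Boundedness of $P_{\cala_\phi^\perp}$ together with $k_{\lambda_n}\to k_\al$ gives $d_n\to v$, while $\phi(d_n)\to\infty$.

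It remains to verify $v\neq0$, which is the one place where the pseudopole condition re-enters. If $v=0$ then $k_\al\in\cala_\phi\subseteq\ker X_\phi^*$ (Proposition \ref{3.6}), so $g(\al)=\ip{X_\phi h}{k_\al}=\ip{h}{X_\phi^* k_\al}=0$, contradicting $g(\al)\neq0$. Hence $v\neq0$, and the sequence $(d_n)\subseteq\mathbf{D}_\phi$ exhibits $v=P_{\cala_\phi^\perp}k_\al$ as a polar vector of $\phi$ in the sense of Definition \ref{3.11}. The only delicate point I anticipate is the bookkeeping around the topology on $\O$: one must be sure that "$\al$ in the closure of $D_\phi$" genuinely supplies a sequence $\lambda_n\in D_\phi$ with $k_{\lambda_n}\to k_\al$ in the norm of $\calh$ (so that $P_{\cala_\phi^\perp}k_{\lambda_n}\to P_{\cala_\phi^\perp}k_\al$), together with the small but essential observation that $h(\lambda_n)\neq0$ for all large $n$, which is what keeps the vectors $d_n$ off the ambiguous subspace and hence definable.
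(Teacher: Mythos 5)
Your proof follows essentially the same route as the paper's: take $\lambda_n\in D_\phi$ with $k_{\lambda_n}\to k_\al$, set $d_n=P_{\cala_\phi^\perp}k_{\lambda_n}$, use Example \ref{1.8} and Lemma \ref{3.8} to show $\phi$ sees $d_n$ with value $\phi(\lambda_n)$, and extract $\phi(\lambda_n)\to\infty$ from $g(\lambda_n)=\phi(\lambda_n)h(\lambda_n)\to g(\al)\neq0$ with $h(\lambda_n)\to0$. You are somewhat more careful than the published argument in explicitly checking the two non-vanishing facts the definitions require -- that $d_n\neq0$ for large $n$ and that $P_{\cala_\phi^\perp}k_\al\neq0$ -- which the paper leaves implicit.
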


\begin{proof}
By assumption, $\al$ is in the closure of $D_\phi$ in $\O$, and so we can choose a sequence $(\al_n)$ in $D_\phi$ such that $\al_n \to \al$ as $n \to \infty$.
Since $\al$ is a pseudopole of $\phi$ there exists $h \in E_\phi$ and $g\in\calh$ such that $g=X_\phi h$, $h(\al)=0$ and $g(\al) \neq 0$.  For $n \geq 1$ let
\[
d_n= P_{\cala_\phi^\perp} k_{\al_n} = k_{\al_n} - P_{\cala_\phi} k_{\al_n}.
\]
Since $\phi$ sees $k_{\al_n}$ with value $\phi(\al_n)$, it follows from Lemma \ref{3.8} that $\phi$ sees $d_n$ with value $\phi(\al_n)$.  Since $d_n \perp \cala_\phi$, $d_n$ is a definable vector for $\phi$ and $\phi(d_n)=\phi(\al_n)$.  Moreover
\[
0 \neq g(\al) = \ip{g}{k_\al} = \lim_n \ip{g}{k_{\al_n}},
\] 
while, for all $\lam\in D_\phi$,
\[
\ip{g}{k_\lam} = g(\lam) = \phi(\lam)h(\lam).
\]
Hence
\[
\lim_n \ip{g}{k_{\al_n}} = \lim_n \phi(\al_n) h(\al_n) = g(\al)\neq 0.
\]
Since also $h(\al_n) \to h(\al)=0$, we have $\phi(\al_n) \to \infty$ and therefore $\phi(d_n) \to \infty$ as $n \to\infty$.  Since the sequence of definable vectors 
$d_n= P_{\cala_\phi^\perp} k_{\al_n}$ tends to $P_{\cala_\phi^\perp} k_{\al}$ as $n \to \infty$ it follows that 
 $ P_{\cala_\phi^\perp} k_\al$ is a polar vector of $\phi$.
\end{proof}

The nature of polar vectors is much more rigid than one might at
first expect.  This is brought out in the following
proposition, which gives a clean relationship between $\mathbf{D}_\phi$ and the set $\mathcal{P}_\phi$ of polar vectors of $\phi$.
\begin{proposition}\label{3.12}
Let $\varphi$ be a pseudomultiplier and let $p\in \cal{ H}$
with $p\not= 0$.  The following are equivalent.
\begin{enumerate}

\item[\rm (i)] $p$ is a polar vector for $\varphi$.

\item[\rm (ii)]$p\in \mathbf{ D}_\varphi^-$ and for every
neighborhood $U$ of $p , \varphi$ is unbounded on $U\cap
\mathbf{ D}_\varphi$. 

\item[\rm (iii)] $p\in \mathbf{ D}_\varphi^- \backslash \mathbf{ D}_\varphi$.

\item[\rm (iv)] $p\in\mathbf{ D}_\varphi^-$ and
$\lim\limits_{\stackrel{\textstyle{d\to p}}{d\in \mathbf{ D}_\varphi}}\ 
\varphi (d) = \infty$.
\end{enumerate}
\end{proposition}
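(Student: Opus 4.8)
The plan is to prove the cycle of implications $\mathrm{(i)}\Rightarrow\mathrm{(ii)}\Rightarrow\mathrm{(iii)}\Rightarrow\mathrm{(iv)}\Rightarrow\mathrm{(i)}$. Three of the four steps are formal consequences of Definition \ref{3.11} and the continuity of $\varphi$ on $\mathbf{D}_\varphi$ (Proposition \ref{3.13}); the substance lies in $\mathrm{(iii)}\Rightarrow\mathrm{(iv)}$. It is convenient to record first that $\mathbf{D}_\varphi$ has no isolated points: if $v\in\mathbf{D}_\varphi$ then, by Corollary \ref{1.71}, $\lambda v\in\mathbf{D}_\varphi$ for every $\lambda\in\mathbb C\setminus\{0\}$, and $\lambda v\to v$ as $\lambda\to 1$ through values $\neq 1$. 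Hence every point of $\mathbf{D}_\varphi^-$ is a limit point of $\mathbf{D}_\varphi$, so the approach in $\mathrm{(iv)}$ is never vacuous.

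For $\mathrm{(i)}\Rightarrow\mathrm{(ii)}$: a defining sequence $\{d_n\}\subseteq\mathbf{D}_\varphi$ with $d_n\to p$ and $\varphi(d_n)\to\infty$ shows $p\in\mathbf{D}_\varphi^-$, and since any neighborhood $U$ of $p$ contains all but finitely many $d_n$, $\varphi$ is unbounded on $U\cap\mathbf{D}_\varphi$. For $\mathrm{(ii)}\Rightarrow\mathrm{(iii)}$ it suffices to show $p\notin\mathbf{D}_\varphi$; otherwise Proposition \ref{3.13} gives $\delta>0$ with $|\varphi(d)-\varphi(p)|<1$ whenever $d\in\mathbf{D}_\varphi$ and $\|d-p\|<\delta$, so $\varphi$ is bounded on a neighborhood of $p$ meeting $\mathbf{D}_\varphi$, contradicting $\mathrm{(ii)}$. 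For $\mathrm{(iv)}\Rightarrow\mathrm{(i)}$: since $p$ is a limit point of $\mathbf{D}_\varphi$, pick $d_n\in\mathbf{D}_\varphi\setminus\{p\}$ with $\|d_n-p\|<1/n$; condition $\mathrm{(iv)}$ forces $\varphi(d_n)\to\infty$, so $p$ is a polar vector by Definition \ref{3.11}.

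The key step, $\mathrm{(iii)}\Rightarrow\mathrm{(iv)}$, I would prove by contradiction. Assume $p\in\mathbf{D}_\varphi^-\setminus\mathbf{D}_\varphi$ but $\varphi(d)$ does not tend to $\infty$ as $d\to p$ in $\mathbf{D}_\varphi$. Then there are $M>0$ and $d_n\in\mathbf{D}_\varphi$ with $d_n\to p$ and $|\varphi(d_n)|\le M$ for all $n$; passing to a subsequence we may assume $\varphi(d_n)\to c$ for some $c\in\mathbb C$. By Lemma \ref{3.4}(ii), $X^*_\varphi d_n=\overline{\varphi(d_n)}\,P_{E_\varphi}d_n$ for every $n$, and letting $n\to\infty$, using that $X^*_\varphi$ and $P_{E_\varphi}$ are bounded, gives $X^*_\varphi p=\overline c\,P_{E_\varphi}p$, which by Lemma \ref{3.4}(ii) again says that $\varphi$ sees $p$ with value $c$. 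Since $\cala_\varphi$ is a closed subspace of $\calh$ (Proposition \ref{3.6}) and each $d_n\perp\cala_\varphi$, the limit satisfies $p\perp\cala_\varphi$; as $p\neq 0$ and $\varphi$ sees $p$, Definition \ref{3.9} shows $p\in\mathbf{D}_\varphi$, contradicting $\mathrm{(iii)}$. Hence $\lim_{d\to p,\,d\in\mathbf{D}_\varphi}\varphi(d)=\infty$, which is $\mathrm{(iv)}$.

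The only real obstacle is this limiting argument: the point is that a defining sequence whose $\varphi$-values remained bounded would, through the operator identity of Lemma \ref{3.4}, exhibit $p$ itself as a definable vector, so the only way for $p$ to lie in $\mathbf{D}_\varphi^-\setminus\mathbf{D}_\varphi$ is for $\varphi$ to blow up along every approach to $p$. Everything else reduces to Definition \ref{3.11}, the definition of a definable vector (Definition \ref{3.9}), and the continuity statement Proposition \ref{3.13}.
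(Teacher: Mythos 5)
Your proof is correct and follows essentially the same cycle $(\mathrm{i})\Rightarrow(\mathrm{ii})\Rightarrow(\mathrm{iii})\Rightarrow(\mathrm{iv})\Rightarrow(\mathrm{i})$ as the paper, and the substantive step $(\mathrm{iii})\Rightarrow(\mathrm{iv})$ (pass to a subsequence with $\varphi(d_n)\to c$, take limits in $X_\varphi^*d_n=\overline{\varphi(d_n)}P_{E_\varphi}d_n$, conclude $p\in\mathbf{D}_\varphi$) is identical. The only cosmetic difference is that for $(\mathrm{ii})\Rightarrow(\mathrm{iii})$ you invoke the continuity result of Proposition \ref{3.13}, whereas the paper reproves the needed local boundedness directly from the operator identity of Lemma \ref{3.4} together with Lemma \ref{3.91}; your version is a legitimate streamlining since Proposition \ref{3.13} encapsulates exactly that estimate.
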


\begin{proof} It is clear that (iv) $\Rightarrow$ (i)
$\Rightarrow$ (ii).  Suppose that (ii) holds but (iii) is false,
so that $p\in \mathbf{ D}_\varphi$.
By Lemma \ref{3.91}, $P_{E_\varphi} d \neq 0$.  We may therefore assume
$\|P_{E_\varphi}p\|=1$.  Let $U$ be a neighborhood of $p$ in
$\cal H$ such that, for all $v\in U$,
\begin{equation}\label{13}
\|(X^*_\varphi -
\overline{\varphi(p)}\ P_{E_\varphi}) v
\| <1
\end{equation}
and $$\|P_{E_\varphi} v\| > \frac{1}{2}.$$
Consider $v\in \mathbf{ D}_\varphi \cap U$.  Since $v$ is
definable,
$$X^*_\varphi v = \overline{\varphi(v)} \ P_{E_\varphi} v.$$
Substituting in (\ref{13}) we have
$$\|(\overline{\varphi(v)} - \overline{\varphi(p)}) \
P_{E_\varphi}v \|<1$$and so
$$|\varphi(v) - \varphi(p)| <2.$$
Thus $\varphi$ is bounded on $U\cap \mathbf{ D}_\varphi$,
contradicting (ii).  Thus (ii) $\Rightarrow$ (iii).

Now suppose (iii) holds but (iv) is false: $p\in \mathbf{ D}_\varphi^- \smallsetminus \mathbf{ D}_\varphi$ but
$\varphi(d) \not\rightarrow \infty$ as $d\to p$ in $\mathbf{ D}_\varphi$.  Then there exists a  sequence $(d_n)$ in $\mathbf{ D}_\varphi$ and a positive constant $C$ such that $d_n\to
p$ and $|\varphi(d_n)|\le C$ for all $n$. By Lemma \ref{3.4},
for all $n\in \mathbb N$, 
$$X^*_\varphi d_n = \varphi\overline{(d_n)}\ P_{E_\varphi}
d_n.$$
Passing to a subsequence if necessary, we can assume
$\varphi(d_n) \to z\in \mathbb C$.  Taking limits in the
foregoing equation we find
$$X^*_\varphi p = \overline z P_{E_\varphi} p.$$
Thus $\varphi$ sees $p$ with value $z$.  Since each
$d_n\perp \cal{ A}_\varphi$, we have also $p\perp \cal{
A}_\varphi$.  By assumption $p\not= 0$, and so $p\in \mathbf{ D}_\varphi$, a contradiction.  Thus (iii) $\Rightarrow$ (iv).
\end{proof}

We continue this section with a fact about $\varphi$ as
a function on $\mathbf{ D}_\varphi$ (a corollary of Proposition \ref{3.12}).

We say that a pseudomultiplier $\phi$ is {\em locally bounded on $\mathbf{D}_\phi^-$}
if, for every point $d\in\mathbf{D}_\phi^-$ there is a neighbourhood $U$ of $d$ in $\calh$ such that $\phi$ is bounded on $U\cap\mathbf{D}_\phi$.

\begin{theorem}\label{3.14}
Let $\varphi$ be a pseudomultiplier.  Then $\varphi$ is  locally 
bounded on $\mathbf{ D}_\varphi^-$ if and only if $\varphi$ has no
polar vectors.
\end{theorem}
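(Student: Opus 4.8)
The plan is to obtain this theorem almost immediately from Propositions \ref{3.12} and \ref{3.13}, the one genuinely new ingredient being a compactness argument needed to dispose of the zero vector. Throughout, say that $\varphi$ is \emph{locally bounded at} $d$ if some $\calh$-neighbourhood $U$ of $d$ has the property that $\varphi$ is bounded on $U\cap\mathbf{D}_\varphi$.

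For the implication ``no polar vectors $\Rightarrow$ locally bounded on $\mathbf{D}_\varphi^-$'' I would argue by contraposition. Suppose $\varphi$ fails to be locally bounded at some $d\in\mathbf{D}_\varphi^-$. Applying this failure to the balls of radius $1/n$ about $d$ produces a sequence $(d_n)$ in $\mathbf{D}_\varphi$ with $d_n\to d$ and $|\varphi(d_n)|\to\infty$. If $d\neq0$, then by Definition \ref{3.11} $d$ is a polar vector of $\varphi$, and we are done. If $d=0$, put $e_n=d_n/\|d_n\|$; using Corollary \ref{1.71}, $e_n$ is again a definable vector and $\varphi(e_n)=\varphi(d_n)/\|d_n\|$, so $|\varphi(e_n)|\to\infty$. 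By Lemma \ref{3.4}(ii), $X^*_\varphi e_n=\overline{\varphi(e_n)}\,P_{E_\varphi}e_n$, and since $X^*_\varphi$ is bounded this gives $\|P_{E_\varphi}e_n\|\le\|X^*_\varphi\|/|\varphi(e_n)|\to0$. As $E_\varphi$ has finite codimension (Definition \ref{1.1}), the norm-bounded sequence $(P_{E^\perp_\varphi}e_n)$ in the finite-dimensional space $E^\perp_\varphi$ has a convergent subsequence $P_{E^\perp_\varphi}e_{n_k}\to w$; then $e_{n_k}=P_{E_\varphi}e_{n_k}+P_{E^\perp_\varphi}e_{n_k}\to w$ and $\|w\|=\lim_k\|e_{n_k}\|=1$, so $w\neq0$. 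Hence $(e_{n_k})$ realises $w$ as a polar vector of $\varphi$. In both cases $\varphi$ has a polar vector, which is the desired contrapositive. (For points $d\in\mathbf{D}_\varphi$ local boundedness follows more directly from the continuity of $\varphi$ on $\mathbf{D}_\varphi$ in Proposition \ref{3.13}, but the argument above also covers them.)

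The converse is immediate: if $\varphi$ is locally bounded on $\mathbf{D}_\varphi^-$ and $p$ were a polar vector, then by Proposition \ref{3.12} (equivalence of (i) and (ii)) we would have $p\in\mathbf{D}_\varphi^-$ and $\varphi$ unbounded on $U\cap\mathbf{D}_\varphi$ for every neighbourhood $U$ of $p$, contradicting local boundedness at $p$. The routine parts are the appeals to Propositions \ref{3.12} and \ref{3.13}; the step I expect to require the most care is the case $d=0$, where one must pass from a sequence $d_n\to0$ with $|\varphi(d_n)|\to\infty$ to a sequence converging to a \emph{nonzero} vector, via the identity of Lemma \ref{3.4}(ii), boundedness of $X^*_\varphi$, and finite-dimensionality of $E^\perp_\varphi$.
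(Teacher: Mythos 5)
Your proof is correct, and its skeleton is the same as the paper's: both directions are reduced to the equivalence (i)$\Leftrightarrow$(ii) of Proposition \ref{3.12}, the converse direction being immediate and the forward direction being a contrapositive. The one place where you go beyond the paper is the case $d=0$, and this is a genuine improvement rather than a redundancy: Proposition \ref{3.12} is stated only for $p\neq 0$, whereas $0$ does belong to $\mathbf{D}_\varphi^-$ whenever $\mathbf{D}_\varphi\neq\emptyset$ (by the scaling in Corollary \ref{1.71}), so local boundedness at $0$ is not covered by a bare citation of that proposition. The paper's proof passes over this point silently; your normalization argument --- replace $d_n$ by $e_n=d_n/\|d_n\|$, note $|\varphi(e_n)|=|\varphi(d_n)|/\|d_n\|\to\infty$, use Lemma \ref{3.4}(ii) and the boundedness of $X_\varphi^*$ to force $P_{E_\varphi}e_n\to 0$, and then use the finite dimensionality of $E_\varphi^\perp$ to extract a subsequence converging to a unit vector $w$, which is then a polar vector --- correctly disposes of it. Each step checks out (in particular $e_n$ is again definable, being nonzero, visible, and orthogonal to $\cala_\varphi$), so your write-up is, if anything, more complete than the published one.
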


\begin{proof} Suppose  $\varphi$ has a polar vector $p$. By  Proposition \ref{3.12}, (i)$\Leftrightarrow$(ii), for every neighbourhood $U$ of $p$ in $\calh$, $\phi$ is unbounded on $U\cap\mathbf{D}_\phi$. Thus $\varphi$ is   not locally 
bounded on $\mathbf{ D}_\varphi^-$.

Conversely,
suppose $\varphi$ has no polar vectors.  By Proposition \ref{3.12}, (i)$\Leftrightarrow$(ii), for every point $d\in\mathbf{D}_\phi^-$ there is a neighbourhood $U$ of $d$ in $\calh$ such that $\phi$ is bounded on $U\cap\mathbf{D}_\phi$, that is, $\phi$ is locally bounded on $\mathbf{D}_\phi^-$.
\end{proof}

The definable vectors provide an alternative viewpoint on the polar space.

\begin{theorem}\label{4.3} Let $\phi$ be a pseudomultiplier of a Hilbert function space $\calh$ on a set $\O$. Suppose $\dim \cal{ H} = \infty$. Then

{\rm (i)}  $\mathbf{ D}_\varphi$ is non-empty. 

{\rm (ii)} $\cal{ P}_\varphi=\mathbf{ D}_\varphi^-\smallsetminus \mathbf{ D}_\varphi$.
\end{theorem}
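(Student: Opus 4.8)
The plan is to extract both parts from results already in hand, the only genuinely new ingredient being an elementary dimension count. Throughout write $\varphi$ for the pseudomultiplier.

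For part (i), I would first recall that by Proposition \ref{3.6} the ambiguous space $\cala_\varphi = E_\varphi^\perp \cap \ker X_\varphi^*$ is finite-dimensional, hence a closed subspace of $\calh$. Since $\dim\calh=\infty$, the space $\calh$ is nonzero, and as the empty set is a set of uniqueness only for the zero space, $D_\varphi\neq\emptyset$. Moreover, since $D_\varphi$ is a set of uniqueness, the only vector orthogonal to every $k_\lam$ with $\lam\in D_\varphi$ is $0$ (apply the uniqueness property to $f$ and the zero function), so the closed linear span of $\{k_\lam:\lam\in D_\varphi\}$ is all of $\calh$. If every $k_\lam$ with $\lam\in D_\varphi$ lay in $\cala_\varphi$, then, $\cala_\varphi$ being closed, we would get $\calh\subseteq\cala_\varphi$, contradicting $\dim\calh=\infty>\dim\cala_\varphi$. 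Hence there is $\lam\in D_\varphi$ with $k_\lam\notin\cala_\varphi$, i.e.\ $d:=P_{\cala_\varphi^\perp}k_\lam\neq 0$. By Example \ref{1.8}, $\varphi$ sees $k_\lam$ with value $\varphi(\lam)$, so by Lemma \ref{3.8} (applied with $a=-P_{\cala_\varphi}k_\lam\in\cala_\varphi$) $\varphi$ sees $d=k_\lam+a$ with value $\varphi(\lam)$. As $d\neq 0$ and $d\perp\cala_\varphi$, $d$ is definable, so $\mathbf{D}_\varphi\neq\emptyset$.

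For part (ii), I would note that Proposition \ref{3.12}, equivalence (i)$\Leftrightarrow$(iii), says exactly that the nonzero polar vectors of $\varphi$ are precisely the nonzero points of $\mathbf{D}_\varphi^-\setminus\mathbf{D}_\varphi$. Since $\calp_\varphi$ is by definition the set of nonzero polar vectors together with $0$, it remains only to verify that $0$ itself lies in $\mathbf{D}_\varphi^-\setminus\mathbf{D}_\varphi$. Now $0\notin\mathbf{D}_\varphi$ because definable vectors are nonzero by definition; and $0\in\mathbf{D}_\varphi^-$, for by part (i) there is a definable vector $d$, and by Corollary \ref{1.71} each $\tfrac1n d$ with $n\geq 1$ is again nonzero, orthogonal to $\cala_\varphi$, and seen by $\varphi$ (with value $\tfrac1n\varphi(d)$), hence definable, while $\tfrac1n d\to 0$. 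Therefore $0\in\mathbf{D}_\varphi^-\setminus\mathbf{D}_\varphi$, and combining with the previous sentence gives $\calp_\varphi=\mathbf{D}_\varphi^-\setminus\mathbf{D}_\varphi$.

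The only substantive step is part (i); part (ii) is bookkeeping once Proposition \ref{3.12} and the non-emptiness of $\mathbf{D}_\varphi$ are available, and the sole function of the hypothesis $\dim\calh=\infty$ is to push some kernel $k_\lam$ out of the finite-dimensional subspace $\cala_\varphi$. The small points to watch are that $D_\varphi$ is nonempty and that $\cala_\varphi$ is genuinely closed, both of which are immediate; no deeper obstacle arises.
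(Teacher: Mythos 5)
Your proof is correct and follows essentially the same route as the paper: part (i) is the paper's argument in contrapositive form (the paper assumes $\mathbf{D}_\varphi=\emptyset$, deduces every kernel over $D_\varphi$ lies in the finite-dimensional space $\cala_\varphi$, and concludes $\dim\calh<\infty$; you run the same dimension count forwards to produce a non-ambiguous kernel and then project it onto $\cala_\varphi^\perp$ via Lemma \ref{3.8}), and part (ii) is exactly the paper's combination of Corollary \ref{1.71} with the equivalence (i)$\Leftrightarrow$(iii) of Proposition \ref{3.12}. No gaps.
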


\begin{proof}
(i). Let us show that $\mathbf{ D}_\varphi = \emptyset$ implies that   $\cal H$ is finite-dimensional.

 For any point $\alpha \in D_\varphi$, let $d_\alpha \stackrel{\rm def}{=} P_{\cal{ A}^\perp_\varphi} k_\alpha$.
 Let us show that  $d_\alpha$ is definable if and only if $\alpha$ is not ambiguous.  Suppose that $\alpha$ is ambiguous.  Then by Theorem  \ref{2.14}, $k_\alpha \in \cala_\phi$, and so $d_\alpha = 0$,
and therefore $d_\alpha$ is not definable.
Suppose now that $\alpha$ is not ambiguous.  
Then $\phi$ sees $k_\alpha$ with the unique value $\phi(\alpha)$. By definition  $d_\alpha \perp \cal{ A}_\varphi$.
By Lemma \ref{3.8}, $\phi$ sees $d_\alpha$ with the unique value $\phi(\alpha)$. Thus $d_\alpha$ is definable.

  By assumption,  $\mathbf{ D}_\varphi = \emptyset$, and so, for every $\alpha \in D_\varphi$, 
$d_\alpha$ is not definable, that is, every $\alpha \in D_\varphi$ is ambiguous. Hence $\cal{ A}_\varphi$
contains $\{k_\alpha: \alpha \in D_\varphi\}$, and so also
its closed linear span, which is $\cal H$.  
By Proposition \ref{3.6}, $\cala_\phi= E_\phi^\perp \cap \mathrm{ker} X_\phi^*$, 
which is a finite-dimensional subspace of $\calh$.
This implies that $\cal H$ is finite-dimensional. 
Therefore if $\cal H$ is infinite-dimensional, then  $\mathbf{ D}_\varphi$ is non-empty. 

(ii) Since $\cal H$ is infinite-dimensional, by (i),
$ \mathbf{ D}_\varphi$ is non-empty: pick $d\in  \mathbf{ D}_\varphi$. By Corollary \ref{1.71}, $n^{-1} d \in  \mathbf{ D}_\varphi$ for $n\in
\mathbb N$, and so $0\in \overline{\mathbf{ D}}_\varphi$.  By
definition, $0\not\in  \mathbf{ D}_\varphi$ and so $0\in \ov{\mathbf{ D}_\varphi}\smallsetminus  \mathbf{ D}_\varphi$.  The result is
now immediate from the equivalence of (i) and (iii) in
Proposition \ref{3.12}. \end{proof}

\section{The singular space}\label{singular_space}

Pseudomultipliers fail to act as operators on all of $\cal
H$ for one of two reasons.  Either they have an
ambiguous point (such as $\sqrt{t}$ acting on Sobolev
space) or they have a polar vector (such as $\frac{1}{z}$ acting on
$H^2$).  In this section we prove a theorem to the effect that that every singularity
of a pseudomultiplier can be written in a unique way as the sum of a polar vector
and an ambiguous vector.  As a corollary of our basic theorem one
obtains the interesting fact that if one adjoins the zero
vector to the set of polar vectors of a pseudomultiplier  then one
obtains a finite-dimensional subspace  of $\cal H$. 
Furthermore, this subspace is precisely the  set of
vectors that can be approximated by definable vectors
but that are not themselves definable.

\begin{definition}\label{4.1} 
{\rm  Let $\varphi$ be a
pseudomultiplier.   We define the {\it order of $\varphi $}
to be $\dim E^\perp_\varphi$.  If the order of $\varphi$ is
$m$, then we say that $\varphi$ is an {\it
$m$-pseudomultiplier}. }
\end{definition}

\begin{theorem}\label{4.2} 
Let $\varphi$ be a pseudomultiplier.  $\cal{ P}_\varphi$ is a closed
subspace of $\cal{ H}$ and 
$$\cal{ S}_\varphi=\cal{ P}_\varphi \oplus \cal{
A}_\varphi.$$
\end{theorem}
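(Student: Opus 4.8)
The plan is to pin down the polar space explicitly as
\[
\calp_\varphi = E_\varphi^\perp \cap \cala_\varphi^\perp .
\]
Granting this, both conclusions are immediate: by Definition~\ref{1.1} we have $\cals_\varphi = E_\varphi^\perp$, and by Proposition~\ref{3.6} the ambiguous space $\cala_\varphi = E_\varphi^\perp \cap \ker X_\varphi^*$ is a finite-dimensional, hence closed, subspace of $\cals_\varphi$. Thus $\calp_\varphi$ is an intersection of two closed subspaces and so is itself a closed subspace; and, $\cala_\varphi$ being a closed subspace of the Hilbert space $\cals_\varphi$, the orthogonal decomposition $\cals_\varphi = \cala_\varphi \oplus (\cals_\varphi \cap \cala_\varphi^\perp) = \cala_\varphi \oplus \calp_\varphi$ holds automatically.

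For the inclusion $\calp_\varphi \subseteq E_\varphi^\perp \cap \cala_\varphi^\perp$, I would take a polar vector $p$ and a sequence $(d_n)$ in $\mathbf{D}_\varphi$ with $d_n \to p$ and $|\varphi(d_n)| \to \infty$. By Lemma~\ref{3.4}, $X_\varphi^* d_n = \overline{\varphi(d_n)}\,P_{E_\varphi} d_n$, so $P_{E_\varphi} d_n = X_\varphi^* d_n / \overline{\varphi(d_n)} \to 0$, since $(X_\varphi^* d_n)$ converges (being the image under the bounded operator $X_\varphi^*$ of a convergent sequence) while $|\varphi(d_n)| \to \infty$. Letting $n \to \infty$ and using continuity of $P_{E_\varphi}$ gives $P_{E_\varphi} p = 0$, i.e.\ $p \in E_\varphi^\perp$; and since each $d_n \perp \cala_\varphi$, also $p \perp \cala_\varphi$. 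As $0$ lies in both sides, this proves the inclusion.

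The reverse inclusion is where the work lies. Given $p \in E_\varphi^\perp \cap \cala_\varphi^\perp$ with $p \neq 0$, I would first note that $w := X_\varphi^* p \neq 0$, for otherwise $p \in E_\varphi^\perp \cap \ker X_\varphi^* = \cala_\varphi$ by Proposition~\ref{3.6}, contradicting $p \perp \cala_\varphi$, $p \neq 0$. Writing $M = \|X_\varphi\| = \|X_\varphi^*\|$, for each integer $n > M$ the affine map $\delta \mapsto n^{-1}(w + X_\varphi^* \delta)$ carries the closed, hence complete, subspace $E_\varphi$ into itself (recall ${\rm ran}\ X_\varphi^* \subseteq E_\varphi$) and is a contraction with constant $M/n < 1$, so it has a unique fixed point $\delta_n \in E_\varphi$; the fixed-point identity $\delta_n = n^{-1}(w + X_\varphi^* \delta_n)$ forces $\|\delta_n\| \le \|w\|/(n - M) \to 0$. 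Put $v_n := p + \delta_n$. Since $p \in E_\varphi^\perp$ and $\delta_n \in E_\varphi$ we have $P_{E_\varphi} v_n = \delta_n$, whence $X_\varphi^* v_n = w + X_\varphi^* \delta_n = n\delta_n = n\,P_{E_\varphi} v_n$, so by Lemma~\ref{3.4} $\varphi$ sees $v_n$ with value $n$. Moreover $\delta_n \in E_\varphi \subseteq \cala_\varphi^\perp$ and $p \perp \cala_\varphi$, so $v_n \perp \cala_\varphi$, and $v_n \neq 0$ for all large $n$ since $v_n \to p \neq 0$. Thus, for large $n$, $v_n$ is a definable vector with $\varphi(v_n) = n$; since $v_n \to p$ and $\varphi(v_n) = n \to \infty$, the vector $p$ is polar. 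This gives $E_\varphi^\perp \cap \cala_\varphi^\perp \subseteq \calp_\varphi$, completes the identification of $\calp_\varphi$, and hence proves the theorem.

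I expect the main obstacle to be precisely the construction of the definable approximants $v_n$ in the reverse inclusion; the contraction-mapping step is the only part that is not routine Hilbert-space orthogonality bookkeeping combined with appeals to Lemma~\ref{3.4} and Proposition~\ref{3.6}.
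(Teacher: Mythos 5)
Your proof is correct and follows essentially the same route as the paper's: the same identification $\calp_\varphi=\cals_\varphi\ominus\cala_\varphi$, the same forward inclusion via Lemma \ref{3.4} and the blow-up of $\varphi(d_n)$, and, for the reverse inclusion, approximants $v_n=p+\delta_n$ that are exactly the paper's resolvent vectors $\overline{c}_n(\overline{c}_nI-X_\varphi^*)^{-1}p$ specialised to $c_n=n$, since your fixed-point equation $(nI-X_\varphi^*)\delta_n=X_\varphi^*p$ is the same relation. The only difference is cosmetic: you obtain $\delta_n$ by the contraction-mapping principle on $E_\varphi$ for $n>\|X_\varphi\|$ (and get orthogonality to $\cala_\varphi$ for free, since $\delta_n\in E_\varphi\subseteq\cala_\varphi^{\perp}$), whereas the paper chooses $\overline{c}_n$ off $\sigma(X_\varphi^*)$ and then projects onto $\cala_\varphi^{\perp}$.
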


\begin{proof} First recall that, by  Proposition \ref{3.6},  
$\cal{ A}_\varphi= E^\perp_\varphi \cap \ker X^*_\varphi$ is a closed subspace of $\cal{ S}_\varphi= E^\perp_\varphi$. Hence we need to show that
\begin{equation}\label{17}
\cal{ P}_\varphi=\cal{ S}_\varphi \ominus \cal{ A}_\varphi.
\end{equation}
We first show that $\cal{ P}_\varphi \subseteq \cal{
S}_\varphi\ominus \cal{ A}_\varphi$.  Consider $p\in \cal{
P}_\varphi$.  We can suppose $p\not= 0$, so that $p$ is a
polar vector for
$\varphi$.  Choose $\{d_n\}\subseteq \mathbf{ D}_\varphi$ with
$d_n\to p$ and $\varphi(d_n)\to \infty$.
We claim that $p\in \cal{ S}_\varphi$.  By Lemma \ref{3.4},
\begin{equation}\label{18}
X^*_\varphi d_n = \overline {\varphi(d_n)} \big(d_n-P_{\cal{
S}_\varphi} d_n\big).
\end{equation}
Since $X^*_\varphi d_n \to X^*_\varphi p$ and
$\varphi(d_n)\to \infty$ we conclude from equation \eqref{18} that
$d_n-P_{\cal{ S}_\varphi} d_n \to 0$.  Hence $p-P_{\cal{
S}_\varphi}p=0$, that is, $p\in \cal{ S}_\varphi$.  Next, since
$d_n\perp \cal{ A}_\varphi$, so also $p=\lim d_n \perp \cal{
A}_\varphi$.  Thus $p\in \cal{ S}_\varphi \cap \cal{
A}_\varphi^\perp = \cal{ S}_\varphi \ominus \cal{
A}_\varphi$, and we have proved that $\cal{P}_\varphi \subseteq \cal{ S}_\varphi \ominus \cal{A}_\varphi$.

We now show that $\cal{ S}_\varphi \ominus \cal{
A}_\varphi
\subseteq \cal{ P}_\varphi$.  Consider $v\in \cal{ S}_\varphi
\ominus \cal{ A}_\varphi$.  If $v=0$, then obviously $v\in \cal{
P}_\varphi$.  Assume that $v\not= 0$.  We claim that $v$ is a
polar vector for $\phi$.  Choose a sequence $\{c_n\} \subseteq \mathbb C$ with
$c_n\to \infty$ and $\overline c_n \not\in
\sigma(X^*_\varphi)$ for all $n$.  Define $v_n \in \cal{ H}$ by
the formula
\begin{equation}\label{19}
v_n= \overline c_n \left(\overline c_n I -
X^*_\varphi\right)^{-1} v.
\end{equation}
We claim that $\varphi$ sees $v_n$ with value $c_n$.  We have
\begin{eqnarray}\label{X*vn}
X^*_\varphi v_n &=& X^*_\varphi \overline c_n
\left(\overline c_n I - X^*_\varphi\right)^{-1} v \nonumber\\
&=&\overline c_n\left(\left(X^*_\varphi - \overline
c_n I\right) \left(\overline c_n I - X^*_\varphi\right)^{-1}
v + \overline c_n\left(\overline c_n I -
X^*_\varphi\right)^{-1} v\right) \nonumber\\
 &=&
\overline c_n v_n +  \overline c_n v
\end{eqnarray}
and $v\in \cal{ S}_\varphi$.  Hence by Lemma \ref{3.4},
$\varphi$ sees $v_n$ with value $c_n$.  
Now define $w_n$ by
$$
w_n= P_{\cal{ A}_\varphi^\perp}v_n=v_n - P_{\cal{ A}_\varphi}v_n.
$$
Since $\varphi$ sees  $v_n$ with value $ c_n$, it follows from Lemma \ref{3.8} that $\phi$ sees $w_n$ with value $c_n$, and so $w_n$ is a definable vector for $\phi$.  Note that 
$w_n\perp \cal{ A}_\varphi$ by definition of $w_n$. 
Furthermore, from equation (\ref{19}) we see that $v_n\to v$ as $ n \to \infty$. 
By assumption  $v\in \cal{ S}_\varphi \ominus \cal{ A}_\varphi$, and so $v\perp \cal{ A}_\varphi$.  Thus
$w_n \to P_{\cal{ A}_\varphi^\perp}v =v$, and for sufficiently large $n, w_n \not= 0$. 
Consequently, for sufficiently large $n, w_n\in \mathbf{ D}_\varphi$ and
$\varphi(w_n) = c_n$.  Since $w_n\to v$ and $ c_n \to \infty$
we conclude that $v$ is a polar vector.  Thus $\cal{ S}_\varphi \ominus
\cal{ A}_\varphi \subseteq \cal{ P}_\varphi$.
Therefore $\cal{ P}_\varphi$ is a closed
subspace of $\cal{ H}$ and 
$$\cal{ S}_\varphi=\cal{ P}_\varphi \oplus \cal{A}_\varphi.$$
\end{proof}

Theorem \ref{4.2} gives a concrete expression of the polar
space in terms of the singular and ambiguous spaces. 

\begin{example} \label{2kernels-phi} \rm
Let $\cal{ H}=H^2$, the classical Hardy space on
$\mathbb D$. Consider the function $\phi(z)=\frac{1}{z(z-\half)}$ defined on the set 
$\D\setminus\{0, \half\}$. For the pseudomultiplier $\phi(z)=\frac{1}{z(z-\half)}$ on $H^2$, we have $ D_\phi = \D \setminus \{0, \half \}$,  $E_\phi$ is the closed subspace $ z(z-\half) H^2$ of $H^2$, which has codimension $2$ in $H^2$,and
  $E_\phi^\perp= H^2\ominus z (z -\half)H^2$.
The operator $X_\phi:z(z-\half)H^2 \to H^2$ is given by
$X_\phi z(z-\half)f=f$ for all $f\in H^2$. 

Let us describe the space of ambiguous vectors $\cala_\phi$ for $\phi$.
By Proposition \ref{3.6},
\[
\cala_\phi = E_\phi^\perp \cap \ker X_\phi^*.
\]
Consider any $f\in \cala_\phi$: then, since $f\in E_\phi^\perp$, $f = c_1k_{0}+c_2 k_{\half}$ for some $c_1, c_2 \in\C$, and since $f\in\ker X_\phi^*$, $X_\phi^*f=0$. Thus, for every $g\in H^2$,
	\begin{eqnarray}\label{X*=0}
		0&=&\ip{X_\phi^* (c_1k_{0}+c_2 k_{\half})}{z(z -\half)g} \nonumber\\
		&=&\ip{c_1k_{0}}{X_\phi z(z-\half)g} + \ip{c_2 k_{\half}}{X_\phi z(z-\half)g}
		\nonumber\\
		&=& c_1 \ip{k_{0}}{g} + c_2 \ip{k_{\half}}{g}\nonumber\\
		&=& c_1 g(0) + c_2 g(\half).
	\end{eqnarray}
For $g(z)=z$, the equation \ref{X*=0} implies that $ c_2=0$ and, for $g(z)=z - \half$, the equation \ref{X*=0} implies that $ c_1=0$, thus $f=0$. Therefore 
 $\cala_\phi=\{0\}$.

Let us describe the set of definable vectors $\mathbf{ D}_\varphi$ of $\phi$. By Lemma \ref{3.4}, for 
 $f\in H^2$,  $\varphi$ sees $f$ with value $c$ if and only if there
exists $u\in E^\perp_\varphi$ such that
\begin{equation}\label{Definable-2}
X^*_\varphi f= \overline c f + u,
\end{equation}
equivalently,  if and only if there exist $a_0, a_1 \in\C$ such that
\[
X_\phi^*f = \bar c f+a_0 k_{0}+a_1 k_{\half}.
\]
Since  $\cala_\phi=\{0\}$, it is clear that for each $\overline c \not\in \sigma(X^*_\varphi)$,
\[
f = \left(X_\phi^* - \bar c I \right)^{-1} (a_0 k_{0}+a_1 k_{\half}),
\]
where $a_0, a_1 \in\C$ are not all zero, is a definable vector of  $\phi$.
To show that  the polar space $\cal{ P}_\varphi$ of $\varphi$ is $\span\{k_{0}, k_{\half}\}$, choose a sequence $\{c_n\} \subseteq \mathbb C$ with $c_n\to \infty$ and 
$\overline c_n \not\in \sigma(X^*_\varphi)$ for all $n$. Then,  for any $g \in  E_\phi^\perp=\span\{k_{0}, k_{\half}\}$, the sequence 
$$v_n=  \overline c_n \left(\overline c_n I - X^*_\varphi\right)^{-1} g$$
 is a sequence of definable vectors in $H^2$ for $\phi$, such that  $v_n\to g$ and $\phi(v_n)  \to \infty$ as $n  \to \infty$, and so  $g$ is a polar vector of $\varphi$. 
Therefore, 
$$\cal{ P}_\varphi = E_\phi^\perp = \span\{k_{0}, k_{\half}\}.$$
\qed
\end{example}

\begin{example}\label{ambandpole} A hybrid 2-dimensional singular space.  \rm  In this example we conjoin two previous examples of pseudomultipliers, $\phi(z)=1/z$ on $H^2(\D)$ and $\chi(t)=\sqrt{t}$ on $W^{1,2}[0,1]$ (see examples \ref{arch1} and \ref{intro2.3}) to exhibit a singular space that contains both polar vectors and ambiguous vectors. 
  Recall that $D_\phi= \D \setminus \{0 \}$ and $D_\chi =I$, where $I$ denotes the unit interval $I= [0,1]$.  Let $\D \sqcup I$ denote the disjoint union of the topological spaces $\D$ and $I$, and let 
  $\O$ be the topological quotient space of $\D \sqcup I$ obtained by the identification of the two points $0_\D$ and $0_I$ in $\D \sqcup I$ (we distinguish points of $\D \sqcup I$ which belong both to $\D$ and to $I$ by attaching an appropriate suffix).
  We shall denote by $\mathbf{0}$ the point of $\O$ obtained by the identification of the points $0_\D$ and $0_I$.
  Let $\calh$ be the Hilbert space of functions 
  $\vec{f}{g}:\O   \to\C$ defined by
  \[
 \vec{f}{g} (\omega) =\left\{ \begin{array}{lcl}   f(\omega) & \text{ if } & \omega=z\in \D \setminus \{0 \}  \\
  					 g(\omega)  &\text{ if } & \omega=t\in I,
  					\end{array}\right.
  \]
 where $f\in H^2(\D)$ and $g\in  W^{1,2}(I)$, equipped with its natural inner product and operations as a subspace of the Hilbert space direct sum $ H^2(\D) \oplus W^{1,2}(I)$. It is easy to see that $\calh$ is the closed subspace of $\calk=H^2(\D) \oplus W^{1,2}(I)$ comprising the elements $\vec{f}{g}\in\calk$ such that  $f(0_\D)=g(0_I)$. 
 
  Consider the function $\gamma:\O   \to\C$ defined by
  \[
  \gamma (\omega) =\left\{ \begin{array}{lcl}     \frac{1}{z} & \text{ if } & \omega=z\in \D \setminus \{0 \}  \\
  					\sqrt{t}  &\text{ if } & \omega=t\in I.
  					\end{array}\right.
  \]
\em We claim that $\gamma$ is a pseudomultiplier of $\calh$ on the set $\O$.  Furthermore the singular set $\cals_\gamma$ is $2$-dimensional and $\cals_\gamma$ is the orthogonal direct sum of the one-dimensional spaces $\calp_\gamma$ and $\cala_\gamma$.  

\rm To prove this statement let us first show that $E_\gamma$ has codimension $2$ in $\calh$.  
  For any $\vec{f}{g} \in\calh$, the function $\gamma\vec{f}{g}$ 
  is given by
  \[
\gamma\vec{f}{g} (\omega) =\left\{ \begin{array}{lcl}   \frac{1}{z} f(z) & \text{ if } & \omega=z\in \D \setminus \{0 \} \\
  					\sqrt{t} g(t)  &\text{ if } &  \omega=t\in I.
  					\end{array}\right. 
  \] 
 In particular, $\mathbf {0} \in D_\gamma$ and $\gamma(\mathbf {0})=0$.
 A vector $\vec{f}{g} \in \calh$ satisfies $\gamma\vec{f}{g}\in\calh$
 if and only if $\phi f$ extends to an element $h_1$ of $H^2$, $\chi g$ extends to an element $h_2$ of $W^{1,2}[0,1]$ and $h_1(0_\D)=h_2(0_I)$.
  
 \[
 E_{\gamma} = \left\{\vec{f}{g} \in\calh:\text{ there exists }\; h=\vec{h_1}{h_2}\in\calh \;\text{ such that }\;\gamma\vec{f}{g}=h|_{D_\gamma}\right\}.
 \]
 Hence, by examples \ref{arch1} and \ref{intro2.3}, $f =z \psi$ for some $\psi \in H^2$, $g\in k_0^\perp$ (here $k_0$ is the reproducing kernel of $0 \in I$ in $W^{1,2}(I)$) and
 \be\label{h1h2}
  \vec{h_1}{h_2} = \vec{\phi f}{\chi g} =  
 \vec{\psi}{\chi g}.
 \ee
 Note that, for $z \in \D$,  $f'(z)=(z\psi(z))' = \psi(z) + z \psi'(z)$, and so, by equation \eqref{h1h2}, since  $\vec{h_1}{h_2}\in\calh$,
 $$ f'(0)=\psi(0)= h_1(0)= h_2(0) = (\chi g)(0) =  0.$$
 Therefore
 \be\label{Ephichi}
 E_{\gamma}= \left\{\vec{f}{g} \in\calh: f\in zH^2, g\in k_0^\perp \text{ and } f'(0)=0\right\}.
 \ee
 Let us present $E_{\gamma}$ using reproducing kernels of $\calh$.
 We shall denote by $s_z$ the reproducing kernel of the point $z\in\D$ in the space $H^2$, and by $k_x$ the reproducing kernel of $x\in I$ in $W^{1,2}(I)$. We also need the kernel $s^1_z\in H^2$ which reproduces the derivative at $z$, in the sense that 
  \[
  \ip{f}{s^1_z}_{H^2} = f'(z) \text{ for all } f\in H^2.
  \]
  Explicitly,
  \[
  s^1_z(\lam)= \frac{\lam}{(1-\bar z\lam)^2}.\] 
  Let $P_\calh$ denote the orthogonal projection operator from $\calk$
  onto its subspace $\calh$. Observe that $\calh$ is the orthogonal complement in 
  $\calk$ of the vector $v=\vec{s_0}{-k_0}$, and therefore, for every $x \in \calh$,
   \be \label{projH}
   P_\calh (x) = x-  \frac{ \ip{x}{v}v}{ \norm{v}^2} = x-  \frac{ \ip{x}{v}v}{ 1+k_0(0)}.
  \ee
  It is easy to see that the reproducing kernel of the point $z\in\D$ in $\calh$ is $P_\calh \vec{s_z}{0}$.  Similarly, the reproducing kernel in $\calh$ of the point $x\in I$ is $P_\calh\vec{0}{k_x}$. 
  Formula \eqref{projH} for $P_{\calh}$ shows that the reproducing kernel $k_{\mathbf 0}$ of the point ${\mathbf 0} \in\O$ in the space $\calh$ is given by the formula
 \be\label{twoequal}
 k_{\mathbf 0} = P_{\calh} \vec{s_0}{0} =P_{\calh} \vec{0}{k_0} = \frac{1}{1+k_0(0)} \vec{k_0(0) s_0}{k_0} \; \text{and} \; P_{\calh} \vec{s^1_0}{0} =\vec{z}{0}.
 \ee
 
 Re-write equation \eqref{Ephichi} in the form
 \be\label{Ephichi-2}
  E_{\gamma}=\left\{\vec{f}{g}\in\calh: \vec{f}{g} \perp \span \left\{P_{\calh}\vec{s_0}{0},P_{\calh}\vec{s^1_0}{0},P_{\calh}\vec{0}{k_0} \right\}\right\}.
  \ee
 Thus, in view of equations \eqref{twoequal}, 
 \[
 E_\gamma=\left\{\vec{k_0(0)s_0}{k_0}, \vec{z}{0}  \right\}^\perp,
 \]
 and so $E_{\gamma}$ has codimension $2$ in $\calh$. Therefore  $\gamma$ is a pseudomultiplier of $\calh$.
By equations \eqref{Ephichi-2} and \eqref{twoequal}
\be\label{Ephichi-3}
\cal{ S}_\gamma=E_\gamma^\perp= \span \left\{P_{\calh}\vec{s_0}{0},P_{\calh}\vec{s^1_0}{0},P_{\calh}\vec{0}{k_0} \right\} = \span \left\{ \vec{k_0(0) s_0}{k_0}, \vec{z}{0}    \right\}.
\ee 

Note that 
\[
\ip{ \vec{k_0(0) s_0}{k_0}}{ \vec{z}{0}}_\calk = 
\ip{k_0(0) s_0}{z}_{H^2} +  \ip{k_0}{0}_{W^{1,2}} = 0,
\]
and so equation \eqref{Ephichi-3} gives an {\em orthogonal} basis for $\cals_\gamma$.  By Proposition \ref{3.6}, 
the ambiguous vectors for $\gamma$ are the members of $E_\gamma^\perp\cap \ker X_\gamma^*$. 
Consider any $\vec{f}{g}\in \cala_\gamma$: then $\vec{f}{g} \in E_\gamma^\perp$, and so 
$\vec{f}{g}=c_1  \vec{k_0(0) s_0}{k_0} +c_2 \vec{z}{0} $ for some $c_1, c_2 \in\C$, and $X_\gamma^*\vec{f}{g}=0$. Recall that $X_\gamma^*$ acts from $\calh$ to $E_\gamma$.
	Thus, for every element $h=\vec{h_1}{h_2} \in E_\gamma$

	\begin{eqnarray*}
		0&=&\ip{X_\gamma^*\vec{f}{g}}{h}\\
		&=& \ip{\vec{f}{g}}{X_\gamma h}.
	\end{eqnarray*}
In particular, by equation \eqref{Ephichi} we may choose $h_1(z)=z^2$ and $h_2\in k_0^\perp$, and so 
	\begin{eqnarray*}
	0&=&\ip{c_1  \vec{k_0(0) s_0}{k_0} +c_2 \vec{z}{0}}
		{X_\gamma \vec{z^2 }{h_2} }\\
    &=&\ip{c_1  \vec{k_0(0) s_0}{k_0} +c_2 \vec{z}{0}}
		{\vec{z }{\chi h_2} }\\
	&=& c_1 \ip{  \vec{k_0(0) s_0}{k_0}}{\vec{z }{\chi h_2}} +c_2 \ip{ \vec{z}{0}}{\vec{z }{\chi h_2} }\\	
	&=& c_1 \left( \ip{k_0(0) s_0}{z } +\ip{k_0}{\chi h_2}  \right) +
	c_2 \left( \ip{z}{z } + \ip{0}{\chi h_2}  \right) \\
&=& c_1 k_0(0)\overline{z(0)} +c_1 \overline{\chi(0) h_2(0)} +
	c_2 \ip{z}{z } \\	
	&=& c_2 \ip{z}{z} = c_2,
	\end{eqnarray*}
and so $c_2=0$.  Hence we have proved that
\[
\cala_\gamma \subseteq \C \vec{k_0(0) s_0}{k_0} = \C P_\calh \vec{0}{k_0}.
\]
Let us show that the vector $v=\vec{k_0(0) s_0}{k_0} $ belongs to $\ker X_\gam^*$.
By equation \ref{Ephichi},
\[
 E_{\gamma}= \left\{\vec{f}{g} \in\calh: f\in zH^2, g\in k_0^\perp \text{ and } f'(0)=0\right\}.
 \]
Thus, for any $u\in E_\gam$, say $u=\vec{f}{g}$, where $f=z\psi$ for some $\psi\in H^2$,
\[
\ip{X_\gam^*v}{u}=\ip{v}{\gam u}=\ip{\vec{k_0(0)s_0}{k_0}}{\vec{\psi}{\chi g}} = k_0(0)\ip{s_0}{\psi} + \ip{k_0}{\chi g}=k_0(0)\overline{\psi(0)} + 0.
\]
Since $f=z\psi$ where $\psi\in H^2$, $f'(z)=\psi(z)+z\psi'(z)$ for all $z\in \D $. Thus the equation $f'(0)=0$ implies  $\psi(0)=0$, and therefore $\ip{X_\gam^*v}{u} = 0$.  Hence $v\in \ker X_\gam^* \cap E_\gam^\perp = \cala_\gam $.
We conclude that \[
\cala_\gamma = \C \vec{k_0(0) s_0}{k_0}.
\]
The singular space of $\gamma$ contains both polar vectors and ambiguous vectors.
$\mathbf{0} \in \O$ is an ambiguous point of $\gamma$.
By Theorem \ref{4.2}, $\cal{ P}_\gamma$ is a closed
subspace of $\cal{ H}$ and 
$$
\cal{ P}_\gamma  = \cal{S}_\gamma \ominus \cal{A}_\gamma = \span \left\{ \vec{k_0(0) s_0}{k_0}, \vec{z}{0}\right\} \ominus \C \vec{k_0(0) s_0}{k_0} = \C\vec{z}{0}.
$$
\qed
\end{example}

\section{Visible subspaces of a pseudomultiplier}\label{localness}

The notion of a pseudomultiplier seeing a vector has proved
fruitful.  We shall now take a slightly different approach to
that notion which will lead  to a generalization, the idea
of a pseudomultiplier {\it seeing a subspace}.

An obvious way to generalize Definition \ref{1.7}, of $\phi$ seeing a vector $v$ with value $c$, would be
to formalize the situation where knowledge of $n$
moments of $f, \{\ip{f}{v_1},\ \ip{f}{v_2}, \dots,  \ip{f}{v_n}\}$,
implies that the corresponding moments of $X_\varphi f,
\{\ip{X_\varphi f}{v_1}, \dots, \ip{X_\varphi f} {v_n}\}$, are
determined.  Knowing the moments $\{\ip{f}{v_1}, \dots, \ip{f}
{v_n}\}$ is equivalent to  knowing $P_\cal{ V}f$ where
$P_\cal{ V}$ denotes orthogonal projection on the space $\cal{V}=\span \{v_1, \dots , v_n\}$.  Further, since
$X_\varphi$ is linear, the only way  the moments of
$X_\varphi f$ can be determined by the moments of $f$ is
linearly.  The following definition is therefore a natural
generalization of Definition \ref{1.7}.

\begin{definition}\label{5.3}
{\rm Let $\varphi$ be a pseudomultiplier and let $\cal V$
be a closed subspace of $\cal H$.  We say that $\phi$ {\em sees} $\cal V$ if there is a bounded linear operator 
$C:\cal{ V}\to \cal{ V}$ such that, for all $f\in E_\varphi$,
\begin{equation}\label{22}
P_\cal{ V} X_\varphi f = C P_\cal{ V} f,
\end{equation}
where $P_\calv$ is the orthogonal projection of $\calh$ onto $\calv$.
We say that $\cal V$ is a {\it regular space for } $\varphi$
if $\varphi$ sees $\cal V$ and there is a {\it unique} $C$
such that equation (\ref{22}) holds for  all $f\in E_{\varphi}$; when
this is so we denote $C$ by $\varphi(\cal{ V})$ and call it
the {\it value of} $\varphi$ at $\cal V$.  Alternatively we 
say that $\varphi$ {\it sees} $\cal V$ {\it with value}
$C$.} 
\end{definition}

\begin{proposition}\label{5.5-mult}  Let $\varphi$ be a
multiplier of $\cal{ H}$ and let $\cal{ V} \subseteq \cal{ H}$ be a closed
subspace.  The following two statements are equivalent.
\begin{enumerate}
\item $\phi$ sees $\cal V$ 
\item  $X^*_\varphi \cal{ V} \subseteq \cal{ V}$, that is, 
$\calv$ is a closed invariant subspace for $X^*_\varphi$.
\end{enumerate}
Moreover, every closed invariant subspace for $X^*_\varphi$ is regular for $\varphi$
with value $C= P_\calv X_\phi|_\calv$.
\end{proposition}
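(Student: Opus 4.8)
The plan is to translate the "seeing a vector" calculation from Proposition \ref{multipliers} into the subspace setting, exploiting that for a multiplier $M_\varphi^* = X_\varphi^*$ is a genuine bounded operator on all of $\calh$ (here $E_\varphi=\calh$ and $E_\varphi^\perp=\{0\}$). First I would prove the equivalence of (1) and (2). For the implication (2)$\Rightarrow$(1), suppose $X_\varphi^*\calv\subseteq\calv$. Define $C:\calv\to\calv$ by $C=P_\calv X_\varphi|_\calv$; alternatively, observe that the natural candidate is $(X_\varphi^*|_\calv)^*$ acting on $\calv$. For $f\in\calh=E_\varphi$ and $w\in\calv$ we compute
\[
\ip{P_\calv X_\varphi f}{w} = \ip{X_\varphi f}{w} = \ip{f}{X_\varphi^* w} = \ip{P_\calv f}{X_\varphi^* w},
\]
where the last equality uses $X_\varphi^* w\in\calv$ by invariance. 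Now $\ip{P_\calv f}{X_\varphi^* w} = \ip{C P_\calv f}{w}$ provided $C$ is chosen so that $C^* = X_\varphi^*|_\calv$, i.e. $C=(X_\varphi^*|_\calv)^*$; one checks this $C$ equals $P_\calv X_\varphi|_\calv$ as well. Since this holds for all $w\in\calv$, equation \eqref{22} follows, so $\varphi$ sees $\calv$.

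For (1)$\Rightarrow$(2), suppose there is a bounded $C:\calv\to\calv$ with $P_\calv X_\varphi f = C P_\calv f$ for all $f\in\calh$. Apply this with $f\in\calv^\perp$: then $P_\calv f=0$, hence $P_\calv X_\varphi f=0$, i.e. $\ip{X_\varphi f}{w}=0$ for all $w\in\calv$. Thus $\ip{f}{X_\varphi^* w}=0$ for all $f\in\calv^\perp$ and all $w\in\calv$, which says $X_\varphi^* w\perp\calv^\perp$, i.e. $X_\varphi^* w\in\calv$. Hence $X_\varphi^*\calv\subseteq\calv$, which is statement (2).

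Finally, for the "moreover" clause I would establish uniqueness of $C$, which gives regularity. If $C_1,C_2$ both satisfy \eqref{22} for all $f\in E_\varphi=\calh$, then $(C_1-C_2)P_\calv f=0$ for all $f\in\calh$; since $P_\calv$ is onto $\calv$, taking $f\in\calv$ gives $(C_1-C_2)w=0$ for all $w\in\calv$, so $C_1=C_2$. Thus every closed invariant subspace is regular, and from the construction in the (2)$\Rightarrow$(1) step the unique value is $C=P_\calv X_\varphi|_\calv = \varphi(\calv)$. I do not expect a serious obstacle here; the only mild subtlety is bookkeeping the identification $(X_\varphi^*|_\calv)^* = P_\calv X_\varphi|_\calv$ as operators on $\calv$, which is immediate from $\ip{P_\calv X_\varphi v}{w} = \ip{X_\varphi v}{w} = \ip{v}{X_\varphi^* w}$ for $v,w\in\calv$. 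The statement is essentially the classical fact that invariant subspaces of $T^*$ are exactly the co-invariant (semi-invariant) structure compressions of $T$, specialized to $T=M_\varphi$.
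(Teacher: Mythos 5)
Your proof is correct and follows essentially the same route as the paper's: exploit $E_\varphi=\calh$, take adjoints in the defining equation $P_\calv X_\varphi f = CP_\calv f$, and set $C=P_\calv X_\varphi|_\calv$ for the converse. The only minor variants are cosmetic (in $(1)\Rightarrow(2)$ the paper tests against all $f\in\calh$ and concludes $X_\varphi^*v=C^*v$ directly, while you specialize to $f\in\calv^\perp$ to read off $X_\varphi^*v\perp\calv^\perp$). One small point in your favour: you spell out the uniqueness argument for the \emph{Moreover} clause (regularity and $C=P_\calv X_\varphi|_\calv$), which the paper's own proof of this proposition leaves implicit, deferring it in effect to the later Proposition \ref{5.5}(ii) where $E_\varphi^\perp=\{0\}$ makes the regularity criterion automatic.
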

\begin{proof} 
Since $\phi$ is a multiplier of $\calh$, $E_\phi=\calh$.\\
(1) $\Longrightarrow$ (2).
  Let $\phi$ see $\cal V$ with value $C:\cal V \to \cal V$.  By definition $C$ is bounded and
$P_{\cal V} X_\phi =CP_{\cal V}$ on $\cal{ H}$. Thus $ X_\phi^* P_{\cal V}^* =P_{\cal V}^* C^*$.
Hence $X_\phi^* \cal V \subseteq \cal V$, as claimed.

(2) $\Longrightarrow$ (1).
Suppose that $X_\phi^* \cal V \subseteq \cal V$.  We must find a bounded linear operator $C:\cal V\to \cal V$ such that $P_{\cal V} X_\phi =CP_{\cal V}$ on $\cal{ H}$.
Let $C= P_\calv X_\phi|_\calv$.  We need to check that 
\[
P_\calv X_\phi f= P_\calv X_\phi P_\calv^* P_\calv f
\]
for all $f\in\calh$, that is, that, for all $f\in\calh$,
\be\label{xphiv}
X_\phi(1- P_\calv^* P_\calv) f \in \calv^\perp.
\ee
The last equation is equivalent to saying that, for all $v\in\calv$ and $f\in\calh$,
\be\label{xphiv2}
\ip{(1- P_\calv^* P_\calv) f}{X_\phi^*v} =0.
\ee
Since, by assumption $X_\phi^*v\in\calv$ and since $1- P_\calv^* P_\calv$ is the orthogonal projection on $\calv^\perp$, equation \eqref{xphiv2} is true.
Hence $P_{\cal V} X_\phi =CP_{\cal V}$ on $\cal{ H}$, as required.
\end{proof}

\begin{remark} \rm 
By Proposition \ref{multipliers}, for any multiplier $\phi$ of a Hilbert function space $\calh$, the vectors which are visible to $\phi$ are precisely the eigenvectors of $X_\phi^*$.  Proposition \ref{5.5-mult} shows that a closed subspace $\calv$ of $\calh$ is visible to $\phi$ if and only if $\calv$ is an invariant subspace for $X_\phi^*$.  From these facts it is apparent that even if $\phi$ sees a closed subspace $\calv$ of $\calh$, it does not follow that $\phi$ sees the individual vectors in $\calv$. 

For example, for $\varphi \in H^\infty$,  $\varphi$ is a multiplier of $H^2$. 		 The only vectors in $H^2$ seen by
		$\varphi$ are the eigenvectors of $X^*_\varphi$.  In
		particular, if $\varphi(z)=a_0+a_1z$ for some $a_0,a_1\in\C$, $a_1\neq 0$, then $X_\phi^*$ is the Toeplitz operator $T_{\bar\phi}$, and if
	$v(z) = z, \text{ for } z	\in \mathbb T$ then, for $z\in\T$,
	\[
	X_\phi^*v(z)= P_+ \bar\phi(z) z = P_+((\ov{a_0+a_1z})z)=P_+(\ov{a_0} z +\ov{a_1} |z|^2) = \ov{a_0} z +\ov{a_1}= \ov{a_0}v+\ov{a_1},
	\]
	which is not a scalar multiple of $v$.  Thus $v$ is not an eigenvector of $X_\phi^*$,
	so that 
		$\varphi$ does not see the  vector		$v$.  However, $\calv=\span \{1, v\}$ is an invariant subspace for $X_\phi^*$, since $X_\phi^* 1= \ov{a_0},$ and so 
		$X_\phi^* \span \{1, v\} = \span \{1, v\}$. Therefore	
		$\varphi$ does see the space		$\cal{ V} \stackrel{\rm def}{=} \span \{1, v\}$.\qed
\end{remark}

\begin{example}\label{shift} \rm
Consider the multiplier $\phi(z)=z$ on $H^2$.  By Proposition \ref{5.5-mult},
the closed subspaces $\calv$ of $H^2$ visible to $\phi$ are precisely the closed invariant subspaces for $X_\phi^*$.  Here $X_\phi$ is the unilateral shift $S$ on $H^2$. By Beurling's Theorem, the closed invariant subspaces for $S^*$ are the spaces of the form $H^2 \ominus \theta H^2$, where $\theta$ is an inner function.
\end{example}

\begin{example}\label{5.4}
{\rm Let $\varphi \in H^\infty$, so that $\varphi$ is a
multiplier of $H^2$.  Let
$\alpha \in \mathbb D$ and let $k^{(r)}_\alpha$ denote the
kernel which  reproduces $f^{(r)}(\alpha)$, that is,
$$
\ip{f} {k^{(r)}_\alpha}\ =\ f^{(r)} (\alpha)
$$ 
for all $f\in
H^2$. Then $\varphi$ sees $\cal{ V}\stackrel{\rm def}{=} \span
\{k_\alpha, k^{(1)}_\alpha, \dots , k^{(n)}_\alpha\}$ for any
$n\in \mathbb N$.  This is because $P_\cal{ V} f$ depends
only on the values of $f(\alpha), f'(\alpha), \dots, f^{(n)} 
(\alpha)$, and Leibniz' rule shows that $P_\cal{V}(\varphi f)$ can be calculated once these values are
known for $f$.  
By Proposition \ref{5.5-mult}, there is a {\it unique} bounded linear operator
$C:  \cal{ V} \to  \cal{ V}$, given by $C= P_\calv X_\phi P_\calv^*$,
such that $P_\cal{ V}(\varphi f) =C P_\cal{ V} f$, and so $\cal V$ is a regular space for $\varphi$.\qed}
\end{example}

\begin{lemma} \label{prep5.5} Let $\varphi$ be a
pseudomultiplier of $\calh$ and let $\cal{ V}$ be a closed subspace of $\cal{ H}$. 
If $\phi$ sees $\cal V$  then
$X^*_\varphi \cal{ V} \subseteq \cal{ V} + E^\perp_\varphi$.
\end{lemma}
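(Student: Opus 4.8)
The argument is a direct adaptation of the implication (1)$\Rightarrow$(2) in Proposition \ref{5.5-mult}, the only difference being that $X_\varphi$ is defined on $E_\varphi$ rather than on all of $\calh$, so that orthogonality to $E_\varphi$ replaces orthogonality to $\calh$. Recall that $X_\varphi:E_\varphi\to\calh$ is bounded (by the closed graph theorem, since $E_\varphi$ is closed), so its adjoint $X_\varphi^*:\calh\to E_\varphi$ is defined and $\ran X_\varphi^*\subseteq E_\varphi$. Fix a bounded operator $C:\calv\to\calv$ witnessing that $\phi$ sees $\calv$, i.e.\ $P_\calv X_\varphi f = CP_\calv f$ for all $f\in E_\varphi$, and let $C^*:\calv\to\calv$ be its Hilbert-space adjoint as an operator on $\calv$ (so that $C^*v\in\calv$ for every $v\in\calv$).

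The plan is to fix $v\in\calv$ and compute $\langle f,X_\varphi^* v\rangle$ for an arbitrary $f\in E_\varphi$. Using $P_\calv v=v$, self-adjointness of $P_\calv$, the defining relation $P_\calv X_\varphi f = CP_\calv f$, and then the adjoint relation for $C$, one obtains the chain
\[
\langle f,X_\varphi^* v\rangle=\langle X_\varphi f,v\rangle=\langle P_\calv X_\varphi f,v\rangle=\langle CP_\calv f,v\rangle=\langle P_\calv f,C^*v\rangle=\langle f,C^*v\rangle,
\]
the last step again using $P_\calv$ self-adjoint together with $C^*v\in\calv$. Since this holds for every $f\in E_\varphi$, we conclude $X_\varphi^* v - C^*v\perp E_\varphi$, that is, $X_\varphi^* v - C^*v\in E_\varphi^\perp$.

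Finally, writing $X_\varphi^* v = C^*v + \bigl(X_\varphi^* v - C^*v\bigr)$ exhibits $X_\varphi^* v$ as a sum of an element of $\calv$ and an element of $E_\varphi^\perp$, so $X_\varphi^* v\in\calv+E_\varphi^\perp$. As $v\in\calv$ was arbitrary, $X_\varphi^*\calv\subseteq\calv+E_\varphi^\perp$, as required.

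There is no real obstacle here; the only point requiring a little care is bookkeeping about which spaces the various adjoints act on — in particular that $C^*$ should be taken as the adjoint of $C$ as an operator on the Hilbert space $\calv$ (so its range lies in $\calv$), and that $\ran X_\varphi^*\subseteq E_\varphi$ plays no role beyond making the statement sensible. The identity $P_\calv X_\varphi f = CP_\calv f$ is used only for $f\in E_\varphi$, which is exactly where $X_\varphi$ is defined, so no extension of $X_\varphi$ is needed.
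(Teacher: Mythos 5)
Your proof is correct and is essentially identical to the paper's own argument: both fix the witnessing operator $C$, run the same chain of inner-product identities $\ip{f}{X_\varphi^*v}=\ip{X_\varphi f}{v}=\ip{P_\calv X_\varphi f}{v}=\ip{CP_\calv f}{v}=\ip{f}{C^*v}$ for $f\in E_\varphi$, and conclude $(X_\varphi^*-C^*)v\in E_\varphi^\perp$, hence $X_\varphi^*v\in\calv+E_\varphi^\perp$. No changes needed.
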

\begin{proof}
 Let $\phi$ see $\cal V$ with value $C:\cal V \to \cal V$.  By definition $C$ is bounded and
$P_{\cal V} X_\phi =CP_{\cal V}$ on $E_\phi$.  For all $v\in \cal V$ and $f\in E_\phi$ we have
\[
\ip{X_\phi f}{v}=\ip{P_{\cal V}X_\phi f}{v}=\ip{CP_{\cal V}f}{v} =\ip{f}{C^*v},
\]
and so
\[
\ip{f}{(X_\phi^*-C^*)v}=0 \text{ for all } f\in E_\phi.
\]
That is,
\[
(X_\phi^* - C^*)v \in E_\phi^\perp \text{ for all }v\in \calv.
\]
Hence $X_\phi^* \cal V \subseteq \cal V + E_\phi^\perp$, as claimed.
\end{proof}

\begin{lemma}\label{5.5PVclosed}  Let $\calv$ be a closed subspace of a Hilbert space
 $\calh$ and let $E$ be a closed finite-codimensional subspace of $\cal{ H}$.  The orthogonal projection of $E$ onto $\calv$ is a closed subspace of $\calh$.
\end{lemma}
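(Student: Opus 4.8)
The plan is to write $P_\calv(E)$ as the image of a bounded operator and exploit the finite codimension of $E$ to reduce to a statement about finite-dimensional perturbations. First I would observe that $P_\calv(E) = P_\calv|_E(E)$, so it suffices to understand the range of the bounded operator $T := P_\calv|_E : E \to \calv$. Write $E^\perp = F$, a finite-dimensional subspace, say $\dim F = n$. Then $E = F^\perp$, and for the orthogonal projection $P_E = I - P_F$ we have $P_\calv(E) = P_\calv P_E(\calh) = (P_\calv - P_\calv P_F)(\calh)$. Since $P_\calv P_F$ is a finite-rank (hence compact) operator and $P_\calv(\calh) = \calv$ is closed, the claim would follow from the general fact that if $A$ has closed range and $K$ is finite-rank then $A - K$ has closed range. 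This is the route I would take, and it reduces everything to a clean, standard lemma.

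The key steps, in order, would be: (1) record the identity $P_\calv(E) = \operatorname{ran}(P_\calv - P_\calv P_F)$ where $F = E^\perp$ is $n$-dimensional; (2) prove the auxiliary lemma that a bounded operator on a Hilbert space (or between Hilbert spaces) with closed range, perturbed by a finite-rank operator, still has closed range. For step (2) the cleanest argument I know runs as follows: let $B = A - K$ with $\operatorname{ran} A$ closed and $\operatorname{rank} K < \infty$. Decompose the domain as $\calh = \ker A \oplus (\ker A)^\perp$ and note that $B$ restricted to $(\ker A)^\perp \ominus M$, where $M$ is a suitable finite-dimensional subspace chosen so that $B$ is bounded below on its complement, has closed range; then $\operatorname{ran} B$ is the sum of this closed subspace with the finite-dimensional space $B(M) + B(\ker A)$, and a finite-dimensional algebraic summand added to a closed subspace yields a closed subspace. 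Alternatively, and perhaps more transparently, one can argue via the fact that $B^*$ has finite-dimensional cokernel modifications: since $A^* $ has closed range, $\operatorname{ran} A^* $ is closed, and $B^* = A^* - K^*$ differs from $A^*$ by a finite-rank operator; using that $\operatorname{ran} B$ is closed iff $\operatorname{ran} B^*$ is closed, one iterates. I would likely present the first, direct version.

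The main obstacle is proving that closed range is preserved under finite-rank perturbation cleanly, without invoking heavy machinery; the subtlety is that $\ker B$ may be much larger or smaller than $\ker A$, so one cannot simply transplant the lower bound of $A$ on $(\ker A)^\perp$. The trick is to work modulo a finite-dimensional subspace: choosing a finite-dimensional $N \subseteq \calh$ with $\calh = N + (\ker A)^\perp$ covering the "extra" kernel directions of $K$, one restricts $A$ to the orthocomplement of a finite-dimensional piece on which $A$ is still bounded below, and there $B = A - K$ is bounded below too once the finite-rank correction is absorbed into $N$. Then $\operatorname{ran} B = B(N) + \{\text{closed bounded-below image}\}$, and the lemma "$W$ closed, $L$ finite-dimensional $\Rightarrow W + L$ closed" finishes it. I would state and use that last elementary lemma explicitly, since it is the one place where the finite-dimensionality is essential and it is easy to prove (by induction on $\dim L$, reducing to adding a single vector, where closedness follows from the fact that either $\ell \in W$ or $W + \mathbb{C}\ell$ is a topological direct sum).

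Finally I would assemble: apply the finite-rank perturbation lemma with $A = P_\calv$ (range $\calv$, closed) and $K = P_\calv P_F$ (rank at most $n < \infty$) to conclude that $\operatorname{ran}(P_\calv - P_\calv P_F) = P_\calv(E)$ is closed in $\calh$, completing the proof. \qed
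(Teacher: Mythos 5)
Your argument is correct, but it is a genuinely different route from the paper's. You treat $P_\calv P_E = P_\calv - P_\calv P_{E^\perp}$ as a finite-rank perturbation of the closed-range operator $P_\calv$ and prove the general fact that closed range survives finite-rank perturbation; that fact is true, and your direct proof of it works once the auxiliary subspace is chosen correctly, namely $M=(\ker A)^\perp\ominus\bigl((\ker A)^\perp\cap\ker K\bigr)$, which is finite-dimensional because $K$ has finite rank; on $(\ker A)^\perp\cap\ker K$ one has $B=A$, which is bounded below there, so $\operatorname{ran}B$ is that closed image plus the finite-dimensional space $B(M)+B(\ker A)=B(M)-K(\ker A)$. (Your sentence positing a finite-dimensional $N$ with $\calh=N+(\ker A)^\perp$ is an overstatement, since $\ker A$ need not be finite-dimensional, but the decomposition $\calh=\ker A\oplus M\oplus\bigl((\ker A)^\perp\cap\ker K\bigr)$ that your main outline actually uses is the right one.) The paper instead exploits the self-adjointness of $P=P_\calv^*P_\calv$: it forms the finite-dimensional $P$-invariant, hence reducing, subspace $M=E^\perp+PE^\perp$, writes $E=(E\cap M)\oplus M^\perp$, and notes that $PM^\perp$ is the range of the Hermitian projection $P|M^\perp$ and therefore closed. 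Both proofs finish with the same elementary fact that a closed subspace plus a finite-dimensional subspace is closed (which the paper quotes from Conway and you prove by induction on dimension). What your approach buys is generality — it shows that $T(E)$ is closed for any bounded $T$ with closed range, not just an orthogonal projection — at the cost of a somewhat longer perturbation lemma; the paper's reducing-subspace trick is shorter precisely because the Hermitian structure hands it an honest projection on $M^\perp$ whose range is automatically closed.
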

\begin{proof}
Let $P_\calv:\calh\to\calv$ be the operation of orthogonal projection, so that $P_\calv^*:\calv\to\calh$ is the inclusion operator and $P$, defined to be $P_\calv^*P_\calv$, is the Hermitian operator in $\calb(\calh)$ with range $\calv$.
Let $M=E^\perp + PE^\perp$.  $M$ is a finite-dimensional subspace of $\calh$ and is invariant under $P$.  Therefore, since $P$ is Hermitian, $M$ is a reducing subspace for $P$, and the restrictions $P|M$ and $P|M^\perp$ are also Hermitian projections.

Since $E^\perp\subseteq M, M^\perp\subseteq E$.  Let $M_0= E\cap M$.  We have $E=M^\perp + \tilde M$, where $\tilde M$ is the orthogonal complement of $M^\perp$ in $E$, so that $\tilde M =\{x\in E:x\perp M^\perp\}=E\cap M =M_0$.  Hence $E=M_0+M^\perp$, and therefore $PE=PM_0+ PM^\perp$.  Now $PM^\perp$ is the range of $P|M^\perp$, which is a Hermitian projection, and therefore $PM^\perp$ is closed in $\calh$.  $PM_0$, on the other hand, has finite dimension, and so, by \cite[Proposition III.4.3]{Con},  $PE=PM_0+ PM^\perp$ is a closed set in $\calh$
as the sum of the closed subspace $PM^\perp$ of $\calh$ and the finite-dimensional subspace $PM_0$ of $\calh$.
\end{proof}

The referee pointed out that Lemma \ref{5.5PVclosed} is a special case of the general result that the range of a bounded linear
operator is closed if it is finite co-dimensional, a fact well known to experts in the theory of Fredholm operators. Indeed, apply this fact to the operator $P_\calv : E \to \calv$.

The following proposition generalizes Proposition \ref{5.5-mult} to a pseudomultiplier.

\begin{proposition}\label{5.5}  Let $\varphi$ be a
pseudomultiplier of $\calh$ and let $\cal{ V}$ be a closed subspace of $\cal{ H}$.
\begin{enumerate}

\item[\rm (i)] $\phi$ sees $\cal V$  if and only if
$X^*_\varphi \cal{ V} \subseteq \cal{ V} + E^\perp_\varphi$.

\item[\rm (ii)]  $\cal V$ is regular if and only if $ X_\phi^*\calv \subseteq \calv + E_\phi^\perp$ and $\cal{ V} \cap
E^\perp_\varphi = \{0\}$.
\end{enumerate}
\end{proposition}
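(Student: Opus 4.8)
The plan is to pass to the adjoint formulation of Definition~\ref{5.3}. For $f\in E_\varphi$ and $v\in\calv$ one computes $\ip{P_\calv X_\varphi f}{v}=\ip{X_\varphi f}{v}=\ip{f}{X_\varphi^*v}$, and for any bounded $C:\calv\to\calv$ one has $\ip{CP_\calv f}{v}=\ip{f}{C^*v}$; hence the identity (\ref{22}) holds for all $f\in E_\varphi$ if and only if $X_\varphi^*v-C^*v\in E_\varphi^\perp$ for every $v\in\calv$. Writing $D=C^*$, this says: \emph{$\varphi$ sees $\calv$ iff there is a bounded operator $D$ on $\calv$ with $(X_\varphi^*|_\calv-D)\,\calv\subseteq E_\varphi^\perp$}, and \emph{$\calv$ is regular iff in addition such a $D$ is unique}. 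Everything then reduces to analysing when such a $D$ exists and when it is unique.

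For part~(i), the forward direction is precisely Lemma~\ref{prep5.5}. For the converse, suppose $X_\varphi^*\calv\subseteq\calv+E_\varphi^\perp$. Since $E_\varphi$ has finite codimension, $E_\varphi^\perp$ is finite-dimensional and $\calv+E_\varphi^\perp$ is closed, hence a Hilbert space. I would produce a \emph{bounded} linear splitting $X_\varphi^*v=Dv+Rv$ with $Dv\in\calv$ and $Rv\in E_\varphi^\perp$: the addition map $\sigma:\calv\oplus E_\varphi^\perp\to\calv+E_\varphi^\perp$ is a bounded surjection whose kernel $\{(a,-a):a\in\calv\cap E_\varphi^\perp\}$ is finite-dimensional, so the restriction of $\sigma$ to the orthogonal complement of its kernel is a bounded bijection of Hilbert spaces and thus, by the bounded inverse theorem, has a bounded inverse $\rho$; composing $\rho$ with $X_\varphi^*|_\calv$ and the two coordinate projections gives bounded $D:\calv\to\calv$ and $R:\calv\to E_\varphi^\perp$ as required, and $C=D^*$ witnesses that $\varphi$ sees $\calv$. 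I expect this bounded-splitting step to be the only real obstacle: the naive choice $Dv=P_\calv X_\varphi^*v$ fails because $E_\varphi^\perp$ need not be invariant under $P_{\calv^\perp}$, so one genuinely needs finite-dimensionality of $E_\varphi^\perp$ (equivalently, the open mapping theorem).

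For part~(ii), by~(i) the clause ``$\varphi$ sees $\calv$'' is exactly $X_\varphi^*\calv\subseteq\calv+E_\varphi^\perp$, so, under this hypothesis, it remains to characterise uniqueness of $C$. The equation $CP_\calv f=P_\calv X_\varphi f$ pins down $C$ on the set $P_\calv E_\varphi$, which is a closed subspace of $\calv$ by Lemma~\ref{5.5PVclosed}; hence any two admissible choices of $C$ agree on $P_\calv E_\varphi$, so $C$ is unique when $P_\calv E_\varphi=\calv$, whereas when $P_\calv E_\varphi\subsetneq\calv$ a second witness is obtained by adding to any admissible $C$ the orthogonal projection of $\calv$ onto $\calv\ominus P_\calv E_\varphi$. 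Finally I would verify $P_\calv E_\varphi=\calv\iff\calv\cap E_\varphi^\perp=\{0\}$: if $w\in\calv\cap E_\varphi^\perp$ and $P_\calv f=w$ for some $f\in E_\varphi$, then $\|w\|^2=\ip{P_\calv f}{w}=\ip{f}{w}=0$; conversely, if $P_\calv E_\varphi\neq\calv$ then a nonzero $w\in\calv$ with $w\perp P_\calv E_\varphi$ satisfies $\ip{w}{f}=\ip{w}{P_\calv f}=0$ for all $f\in E_\varphi$, so $0\neq w\in\calv\cap E_\varphi^\perp$. Combining these equivalences, $\calv$ is regular precisely when $X_\varphi^*\calv\subseteq\calv+E_\varphi^\perp$ and $\calv\cap E_\varphi^\perp=\{0\}$.
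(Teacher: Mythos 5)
Your proof is correct. The overall skeleton matches the paper's: the forward implication of (i) is Lemma \ref{prep5.5}, and part (ii) comes down to the observation that admissible operators $C$ are pinned down exactly on $P_\calv E_\phi$ (closed by Lemma \ref{5.5PVclosed}), together with the equivalence $P_\calv E_\phi=\calv\iff\calv\cap E_\phi^\perp=\{0\}$, which you prove by the same computation as the paper. The one place where you genuinely diverge is the converse of (i). The paper works on the $X_\phi$ side: it defines $C_0$ on $P_\calv E_\phi$ directly by $C_0P_\calv f=P_\calv X_\phi f$, checks well-definedness via the dual inclusion $X_\phi(\calv^\perp\cap E_\phi)\subseteq\calv^\perp$, proves boundedness by applying the open mapping theorem to the surjection $P_\calv:E_\phi\to P_\calv E_\phi$, and extends by zero. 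You instead work on the adjoint side and build a bounded splitting $X_\phi^*v=Dv+Rv$ by inverting the addition map $\calv\oplus E_\phi^\perp\to\calv+E_\phi^\perp$ on the orthogonal complement of its (finite-dimensional) kernel, then set $C=D^*$. Both constructions ultimately rest on an open-mapping argument made available by the finite codimension of $E_\phi$; yours has the minor advantage of not needing Lemma \ref{5.5PVclosed} until part (ii) (you only need closedness of $\calv+E_\phi^\perp$, which is the standard closed-plus-finite-dimensional fact), while the paper's yields the canonical witness $C_0$ and its extension explicitly, which is then reused verbatim in the uniqueness analysis of part (ii). Your remark that the naive choice $Dv=P_\calv X_\phi^*v$ fails is accurate and worth keeping.
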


\begin{proof} 
(i) By Lemma \ref{prep5.5},  $\phi$ sees $\cal V$  implies that
$X^*_\varphi \cal{ V} \subseteq \cal{ V} + E^\perp_\varphi$.

Conversely, suppose that $X_\phi^* \cal V \subseteq \cal V + E_\phi^\perp$. 
By Lemma \ref{5.5PVclosed},  the projection $P_\calv E_\phi$ of $E_\phi$ on $\calv$ is closed in $\calv$. 
We must find a bounded linear operator $C:\cal V\to \cal V$ such that $P_{\cal V} X_\phi =CP_{\cal V}$ on $E_\phi$, equivalently, we have to show that there exists 
 a bounded linear operator $C^*:\cal V\to \cal V$ such that 
 $$P_{E_\phi}  X_\phi^* P_{\cal V}^*=P_{E_\phi} P_{\cal V}^*C^*.$$
Since $X^*_\varphi \cal{ V} \subseteq \cal{ V} + E^\perp_\varphi$, we obtain
$$
P_{E_\phi}  X_\phi^* P_{\cal V}^* (\cal{ V}) \subseteq P_{E_\phi} ( \cal V+ E^\perp_\varphi) =  P_{E_\phi} P_{\cal V}^* (\cal{ V}).
$$
Hence, for the bounded linear operators
$$P_{E_\phi}  X_\phi^* P_{\cal V}^*: \cal V \to E_\phi \; \text{and} \; P_{E_\phi} P_{\cal V}^**: \cal V\to E_\phi,$$
the inclusion  $\text{ran} (P_{E_\phi}  X_\phi^* P_{\cal V}^*) \subseteq \text{ran} (P_{E_\phi} P_{\cal V}^*)$ holds.
By the generalized Douglas factorization lemma \cite[Theorem 1.1]{arias} (\cite{douglas}), there exists a bounded linear operator $C^*:\cal V\to \cal V$ such that $P_{E_\phi}  X_\phi^* P_{\cal V}^*=P_{E_\phi} P_{\cal V}^*C^*.$
Therefore there exists a bounded linear operator $C:\cal V\to \cal V$ such that
 $ CP_\calv f = P_\calv X_\phi f$ for all $f \in E_\phi$, and so $\phi$ sees $\calv$, as claimed.

(ii)  Recall that $\calv$ is regular for $\phi$ if $\phi$ sees $\calv$ with the {\em unique} value
$C:\cal V\to \cal V$, which is a bounded linear operator such that
 $ CP_\calv f = P_\calv X_\phi f$ for all $f \in E_\phi$.

Suppose that $\calv$ is regular for $\phi$. By Part (i) $ X_\phi^*\calv \subseteq \calv + E_\phi^\perp$. We need to show that  $\cal{ V} \cap
E^\perp_\varphi = \{0\}$.
 
 By Lemma \ref{5.5PVclosed},  the projection $P_\calv E_\phi$ of $E_\phi$ on $\calv$ is closed in $\calv$. Consider the unique
bounded linear operator $C:\cal V\to \cal V$ such that
 $ CP_\calv f = P_\calv X_\phi f$ for all $f \in E_\phi$.  Let $C_0: P_\calv E_\phi \to \calv$ be the restriction of $C$ to the closed subspace $P_\calv E_\phi$ of $\calv$. Therefore $C_0$ is a bounded operator on $P_\calv E_\phi$.

For any bounded linear operator $C$ on $\calv$, $\phi$ sees $\calv$ with value $C$ if and only if $C$ is an extension of the operator $C_0: P_\calv E_\phi\to\calv$.  Since $C_0$ is a bounded operator on the closed subspace $P_\calv E_\phi$ of $\calv$, $C_0$ has a unique extension to $\calv$ if and only if $P_\calv E_\phi = \calv$.  We assert that $P_\calv E_\phi = \calv$ if and only if $\calv \cap E_\phi^\perp = \{0\}$.
Indeed, suppose that $P_\calv E_\phi = \calv$ but $\calv \cap E_\phi^\perp \neq \{0\}$.
Then there exists $v\in \calv \cap E_\phi^\perp$ such that $v\neq 0$.  By assumption there exists $f\in E_\phi$ such that $P_\calv f= v$. Then
\[
\|v\|^2= \ip{v}{v}=\ip{v}{P_\calv f}=\ip{v}{f}=0,
\]
and so $v=0$, a contradiction.  Thus $\calv \cap E_\phi^\perp = \{0\}$.

Conversely, suppose $\calv \cap E_\phi^\perp = \{0\}$ but $\calv\ominus P_\calv E_\phi \neq \{0\}$.  Pick any non-zero $v \in \calv\ominus P_\calv E_\phi$.
For every $f\in E_\phi$ we have $0= \ip{v}{P_\calv f}=\ip{v}{f}$, and so $v\in \calv\cap E_\phi^\perp$, which contradicts the hypothesis $\calv \cap E_\phi^\perp = \{0\}$.
Thus $P_\calv E_\phi = \calv$ if and only if $\calv \cap E_\phi^\perp = \{0\}$, as asserted, and so $\calv$ is regular for $\phi$ if and only if $X_\phi^*\calv \subseteq \calv + E_\phi^\perp$ and $\calv \cap E_\phi^\perp = \{0\}$, which is statement (ii).
\end{proof}

The next corollary shows the connection between a pseudomultiplier $\phi$ seeing a vector, and $\phi$ seeing a closed subspace.

\begin{corollary}\label{phi-sees-vector-space} \rm
Let $\phi$ be a pseudomultiplier of a Hilbert function space $\calh$ and let $v \in \calh$. Then 
 $\phi $ sees $v$ if and only if $\phi $ sees the closed subspace $\calv= \C v$ in $\calh$.
\end{corollary} 
 \begin{proof} Suppose $\phi $ sees $v$ with value $c$. 
By Lemma \ref{3.4} (i), there
exists $u \in E^\perp_\varphi$ such that
\begin{equation}\label{9.2}
X^*_\varphi v= \overline c v + u.
\end{equation}
Then,  for any $\lambda \in \C$,
 $\varphi$ sees $\lambda v$ with value $\overline{\lambda} c$ and there
exists $\lambda u \in E^\perp_\varphi$ such that
\begin{equation}\label{9.2a}
X^*_\varphi \lambda v= \overline c \lambda v + \lambda u.
\end{equation}
Therefore 
\[
X^*_\varphi \cal{V} \subseteq \cal{V} + E^\perp_\varphi.
\]
Hence, by  Proposition \ref{5.5} (i), $\phi$ sees $\cal{V}$.

Conversely, suppose $\phi$ sees the closed subspace $\cal{V}= \C v$ in $\calh$.
By Proposition  \ref{5.5} (i), 
$ X^*_\varphi \cal{V} \subseteq \cal{V} + E^\perp_\varphi $.
Therefore there exist $d \in \C$ and $u \in E^\perp_\varphi $, such that
 $X^*_\varphi v = d v + u$. Hence, by Lemma \ref{3.4} (i),  $\varphi$ sees $v$ with value $\bar{d}$. \qed
\end{proof}

\begin{corollary}\label{v1+v2}
Let $\varphi$ be a
pseudomultiplier of $\calh$ and let $\calv_1, \calv_2$ be closed subspaces of $\cal{ H}$ which are visible to $\phi$.  Then $\phi$ sees the closed span of $\calv_1$ and $\calv_2$.
\end{corollary}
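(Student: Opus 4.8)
The plan is to reduce the statement to the operator-theoretic criterion of Proposition \ref{5.5}(i), which characterises visibility of a closed subspace $W$ of $\calh$ by the inclusion $X_\phi^* W \subseteq W + E_\phi^\perp$. Write $\calv$ for the closed span of $\calv_1$ and $\calv_2$, that is, $\calv = \overline{\calv_1 + \calv_2}$. Since $\calv_1$ and $\calv_2$ are visible to $\phi$, Proposition \ref{5.5}(i) gives $X_\phi^*\calv_i \subseteq \calv_i + E_\phi^\perp$ for $i = 1,2$, and it suffices to deduce $X_\phi^*\calv \subseteq \calv + E_\phi^\perp$; one further application of Proposition \ref{5.5}(i) then yields the conclusion.

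First I would handle the algebraic sum $\calv_1 + \calv_2$, which is dense in $\calv$. For $v = v_1 + v_2$ with $v_i \in \calv_i$, linearity of $X_\phi^*$ gives
\[
X_\phi^* v = X_\phi^* v_1 + X_\phi^* v_2 \in (\calv_1 + E_\phi^\perp) + (\calv_2 + E_\phi^\perp) = (\calv_1 + \calv_2) + E_\phi^\perp \subseteq \calv + E_\phi^\perp ,
\]
so that $X_\phi^*(\calv_1 + \calv_2) \subseteq \calv + E_\phi^\perp$.

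The crux is that $\calv + E_\phi^\perp$ is a \emph{closed} subspace of $\calh$. This holds because $\phi$ is a pseudomultiplier, so $E_\phi$ has finite codimension and hence $E_\phi^\perp$ is finite-dimensional; the sum of a closed subspace and a finite-dimensional subspace of a Banach space is closed, which is precisely the fact (from \cite[Proposition III.4.3]{Con}) already invoked in the proof of Lemma \ref{5.5PVclosed}. Granting this, and recalling that $X_\phi$ is bounded (by the closed graph theorem), so that $X_\phi^*$ is bounded and therefore continuous, I would pass to closures:
\[
X_\phi^*\calv = X_\phi^*\,\overline{\calv_1 + \calv_2} \subseteq \overline{X_\phi^*(\calv_1 + \calv_2)} \subseteq \overline{\calv + E_\phi^\perp} = \calv + E_\phi^\perp .
\]
By Proposition \ref{5.5}(i) this says exactly that $\phi$ sees $\calv$, completing the proof.

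I do not expect a genuine obstacle; the one point requiring care is the closedness of $\calv + E_\phi^\perp$, which is what legitimises the passage to closures and which is guaranteed by the finite-codimensionality built into the definition of a pseudomultiplier. It is worth noting that the analogous statement for \emph{regular} subspaces would additionally require the condition $\calv \cap E_\phi^\perp = \{0\}$ of Proposition \ref{5.5}(ii), which is not in general inherited by spans, so the corollary is genuinely a statement about visibility rather than regularity.
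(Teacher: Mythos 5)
Your proof is correct and takes essentially the same route as the paper: reduce to the criterion $X_\phi^*\calv \subseteq \calv + E_\phi^\perp$ of Proposition \ref{5.5}(i), verify it on the algebraic sum $\calv_1 + \calv_2$ by linearity, and pass to the closure using continuity of $X_\phi^*$ together with the fact that $\calv + E_\phi^\perp$ is already closed because $E_\phi^\perp$ is finite-dimensional. You have merely made explicit the closedness argument that the paper's proof leaves implicit, which is a reasonable clarification rather than a genuine difference.
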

\begin{proof}
By Proposition \ref{5.5}, 
 $\phi$ sees $\cal V$  if and only if
$X^*_\varphi \cal{ V} \subseteq \cal{ V} + E^\perp_\varphi$.
Thus $X_\phi^*\calv_i \subseteq \calv_i+E_\phi^\perp$ for $i=1,2$, and therefore
$X_\phi^*(\calv_1+\calv_2) \subseteq \calv_1+\calv_2 +E_\phi^\perp$. 
Let us denote the closure of a set $\calv$ in $\calh$ by $\ov{\calv}$.
Thus, since $E_\phi^\perp$ is finite dimensional and therefore closed, $X_\phi^*(\ov{\calv_1+\calv_2}) \subseteq \ov{\calv_1+\calv_2} +E_\phi^\perp$.
Hence $\phi$ sees $\ov{\calv_1+\calv_2}$.
\end{proof}

If a pseudomultiplier $\phi$ sees vectors $v_1$ and $v_2$, it does not follow that $\phi$ sees their sum, as we showed in Example \ref{1.9int} for the pseudomultiplier $1/z$ of $H^2$.  However, if $\phi$ sees closed subspaces $\calv_1$ and $\calv_2$, then $\phi$ does see
 $\ov{\calv_1+\calv_2}$.

\begin{example}\label{viorthogonal} \rm
Let $\phi$ be a pseudomultiplier of a Hilbert function space $\calh$ and suppose $\calv=\span\{v_1, \dots,v_n\}$ where each $v_i \in\calh$ and  $\phi $ sees $v_i$ with value $c_i$ for each $i\geq 1$.  Then $\phi$ sees $\calv$. To see this assertion, we note that since $\calv$ is closed, we may apply Proposition \ref{5.5}, and so
$\phi$ sees $\cal V$  if and only if
$X^*_\varphi \cal{ V} \subseteq \cal{ V} + E^\perp_\varphi$.
By Lemma \ref{3.4}, for all $i$, $X_\phi^*v_i=\ov{c_i}v_i + u_i$ for some $u_i \in E_\phi^\perp$.
Thus  $X^*_\varphi \cal{ V} \subseteq \cal{ V} + E^\perp_\varphi$.
By Proposition \ref{5.5}, $\phi$ sees $\calv$. \qed
\end{example}

\begin{example}\label{shift-pseudo} \rm
Consider the pseudomultiplier $\phi(z)=1/z$ on $H^2$.  Let $\calv$ be a closed subspace of $H^2$.
By Lemma \ref{prep5.5},
$\phi$ sees $\cal V$  implies that
$X_\phi^* \cal V \subseteq \cal V + E_\phi^\perp$.
 Here $E_\phi=zH^2$ and so $E_\phi^\perp$ is the space $\C k_0$ of constant functions on $\D$. One can check that
$$X_\varphi = S^*P_{zH^2}^*, \; \text{and so} \;
X^*_\varphi= P_{zH^2} S  =S: H^2\to zH^2,$$
where $S$ is the unilateral shift on $H^2$ and $P_{zH^2}:H^2 \to zH^2$ acts by orthogonal projection. By Beurling's Theorem, the closed
invariant subspaces for $S$ are the spaces of the form $\theta H^2$, where $\theta$ is an inner function.
Therefore, for all inner functions $\theta$, if $\calv =\theta H^2$, then
\[
X_\phi^* (\cal V) = S (\calv) \subseteq \calv \subseteq \cal V +\C k_0.
\]
Note that $\calv\cap E_\phi^\perp = \theta H^2\cap \C k_0$, which is $\{0\}$ if $\theta$ is non-constant and so, in this case $P_\calv E_\phi = \calv$.  If $\theta = 1$ then  $\calv=H^2$, and so $P_\calv E_\phi = E_\phi$. 
By Proposition \ref{5.5}, $\phi$ sees all spaces of the form $\calv=\theta H^2$, where $\theta$ is an inner function. 

Another way to construct closed
visible subspaces for $\phi$ is the following.  We have shown that $\phi$ sees $\calv$ if and only if $SV \subseteq \calv + \C k_0$, which is equivalent to the inclusion $S^*(\calv+\C k_0)^\perp \subseteq \calv^\perp$.  Let us show that $\calv = (\theta H^2)^\perp$, where $\theta$ is an inner function such that $\theta(0)\neq 0$, is visible to $\phi$.  Note that $(\calv + \C k_0)^\perp = \calv^\perp \cap k_0^\perp = \calv^\perp\cap zH^2$.  Consider any vector $g \in \calv^\perp\cap zH^2$.  We can write $g(z) = z \theta(z)h(z)$ for some $h\in H^2$.  Thus $S^*g=\theta h \in \calv^\perp$.  Therefore $S^*(\calv+\C k_0)^\perp \subseteq \calv^\perp$, and so $\phi$ sees $\calv = (\theta H^2)^\perp$, whenever $\theta$ is an inner function such that $\theta(0)\neq 0$.

We have described two different kinds of visible subspaces for the pseudomultiplier $\phi$ of $H^2$. The first kind of visible subspaces comprises  the invariant subpaces  of the shift operator $S$. The second consists of the invariant subpaces  of the adjoint of $S$ which are contained in $z H^2$.  By Corollary \ref{v1+v2}, $\phi$ also sees the closed span of a pair of visible subspaces of the two different types.

It would be interesting to describe {\em all} closed visible subspaces of the pseudomultiplier $\phi$ on $H^2$.\qed
\end{example}

\section{Local subspaces over a subset of $\Omega$}\label{singularities}

Let $\cal H$ be a Hilbert function space on a set $\Omega$. 
For $\alpha, \beta \in \Omega$ define $d(\alpha, \beta) \in
\mathbb R^+$ by 
\begin{equation}\label{23}
d(\alpha, \beta) = \|k_\alpha - k_\beta\|.
\end{equation}
Then $d$ is a pseudometric on $\Omega$, that is, it
satisfies the axioms of a metric except for the strict
positivity of $d$ when $\alpha \not= \beta$.  Clearly $d$
is a metric on
$\Omega$ if and only if $\alpha, \beta \in \Omega$ and $\alpha
\not= \beta$ imply $k_\alpha \not= k_\beta$, alternatively,
if and only if the elements of $\cal H$ separate the points of
$\Omega$.

\begin{definition}\label{6.1}
{\rm Let $\cal H$ be a Hilbert function space on a set
$\Omega$.   We shall say that $\Omega$ is {\it complete}
(for $\cal H$) if
$\Omega$ is complete with respect to the pseudometric $d$
defined by (\ref{23}) (that is, if every sequence in
$\Omega$ which is Cauchy  with respect to $d$ has a limit
point in
$\Omega$ with respect to $d$).}
\end{definition}

Thus, $\Omega$ is complete for $\cal H$ if and only if the set
of kernels $\{k_\lambda: \lambda \in \Omega\}$ is a closed 
subset of $\cal H$.  It is reasonable to restrict attention to
pseudomultipliers of Hilbert function spaces on complete
sets.  The ``natural" Hilbert function spaces do have complete
$\Omega$'s, and in any case there is an obvious process of
completion -- regard $\cal H$ as a function space on the
closure of $\{k_\lambda: \lambda \in \Omega\}$ in $\cal H$:
pseudomultipliers extend in a natural way.

For any finite set $F\subset \Omega$ we shall denote by
$\#F$ the number of elements of $F$ and by $\cal{ V}_F$ the
span of $\{k_\alpha: \alpha \in F\}$ in $\cal H$.  Notice that,
for any finite $F\subset \Omega$, there exists a subset
$F_0$ of $F$ such that $\cal{ V}_{F_0}=\cal{ V}_F$ and
$\#F_0=\dim \cal{ V}_F$.

\begin{definition}\label{6.2}
{\rm Let $F$ be a finite subset of
$\Omega$.  A subspace $\cal{ M}$ of $\cal H$ {\em lies over} $F$
if $\cal{M} \subset \cal{V}_F$.  We say that a subspace $\cal{ M}$ of $\cal H$ 
{\em lies over} $F$
{\em tautly} if in addition $\cal{ M} = \cal{ V}_{F_0}$ for
some subset $F_0 $ of $F$.}
\end{definition}

Thus, in $H^2$, if $\alpha \in \mathbb D\smallsetminus\{0\},
\span\{k_0+k_\alpha\}$ lies over $\{0, \alpha\}$, but does
not lie tautly over $\{0, \alpha\}$.  Note that $\cal{ M}$ lies
over $F$ tautly if and only if there exists $F_0 \subset F$
with $\#F_0=\dim \cal{ M}$ and $\cal{ M}= \cal{ V}_{F_0}$. 
The property of tautness  connects with the visibility of
subspaces, as the following proposition shows.

\begin{proposition}\label{6.3}
Let $\varphi$ be a pseudomultiplier and let $F$ be a finite
subset of $D_\varphi$.  If $\cal{ M}$ is a subspace of
$\cal{ H}$ which lies over $F$ tautly then $\varphi$ sees
$\cal{ M}$.
\end{proposition}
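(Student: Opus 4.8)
The plan is to use the characterization of visible subspaces given in Proposition \ref{5.5}(i): $\varphi$ sees a closed subspace $\cal M$ of $\cal H$ if and only if $X_\varphi^* \cal M \subseteq \cal M + E_\varphi^\perp$. Since $\cal M$ lies over $F$ tautly, there is a subset $F_0 \subseteq F \subset D_\varphi$ with $\cal M = \cal V_{F_0} = \span\{k_\alpha : \alpha \in F_0\}$, and in particular $\cal M$ is finite-dimensional, hence closed. So it suffices to check that $X_\varphi^* k_\alpha \in \cal M + E_\varphi^\perp$ for each $\alpha \in F_0$.

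The key computation is that for $\alpha \in D_\varphi$, the pseudomultiplier $\varphi$ sees $k_\alpha$ with value $\varphi(\alpha)$ — this is Example \ref{1.8}. By Lemma \ref{3.4}(i), the fact that $\varphi$ sees $k_\alpha$ with value $\varphi(\alpha)$ means precisely that there exists $u_\alpha \in E_\varphi^\perp$ with
\[
X_\varphi^* k_\alpha = \overline{\varphi(\alpha)}\, k_\alpha + u_\alpha .
\]
Hence $X_\varphi^* k_\alpha \in \span\{k_\alpha\} + E_\varphi^\perp \subseteq \cal M + E_\varphi^\perp$, since $k_\alpha \in \cal V_{F_0} = \cal M$ for every $\alpha \in F_0$. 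Summing over a spanning set of kernels, $X_\varphi^* \cal M \subseteq \cal M + E_\varphi^\perp$, and so by Proposition \ref{5.5}(i), $\varphi$ sees $\cal M$.

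I do not expect any real obstacle here: the proposition is essentially a repackaging of Example \ref{1.8} (seeing individual kernels) together with the operator-theoretic translation in Lemma \ref{3.4} and the subspace criterion in Proposition \ref{5.5}. The only mild subtlety is ensuring that the hypothesis $F \subseteq D_\varphi$ is genuinely used — it is, because Example \ref{1.8} requires $\alpha \in D_\varphi$ to conclude that $\varphi$ sees $k_\alpha$ — and that tautness (rather than mere lying over $F$) is what guarantees $\cal M$ itself is spanned by kernels $k_\alpha$ with $\alpha \in D_\varphi$, which is exactly what makes the containment $X_\varphi^* \cal M \subseteq \cal M + E_\varphi^\perp$ work. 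For a subspace that merely lies over $F$ (like $\span\{k_0 + k_\alpha\}$ in the remark preceding the proposition) this argument fails, consistently with the fact that such subspaces need not be visible.
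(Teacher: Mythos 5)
Your proof is correct and follows essentially the same route as the paper: the paper's (very terse) proof invokes the facts that $\varphi$ sees $k_\alpha$ for $\alpha\in D_\varphi$ and that the span of finitely many visible vectors is visible, and the latter fact is proved in the paper (Example \ref{viorthogonal}) by exactly your computation via Lemma \ref{3.4} and Proposition \ref{5.5}. You have merely inlined that intermediate step.
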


\begin{proof} This is immediate from the facts that
$\varphi$ sees $k_\alpha$ for $\alpha \in D_\varphi$,
and hence sees $\cal{ V}_{F_0}$ for $F_0 \subset
D_\varphi$, while $\cal{ M}=\cal{ V}_{F_0}$ for some
$F_0 \subset F$. \end{proof}
\vspace{2mm}

There  is no converse: if one knows that a pseudomultiplier
$\varphi$ sees a space $\cal{ M}$ which lies over $F\subset
\Omega$, one cannot conclude that $\cal{ M}$ is taut over
$F$.  Indeed, if $\varphi$ happens to be a constant function,
then $\varphi$ sees {\it every} subspace.  However, the
above example, $\cal{ M}=\span\{k_0+k_\alpha\} \subset
H^2$ is likely to be typical.  If $\varphi \in H^\infty$ and
$\varphi(0)\not= \varphi(\alpha)$, and if $\cal{ M}=\mathbb
Cu$ is a one-dimensional space lying over $\{0, \alpha\}$
then $\varphi$ sees $\cal{ M}$ if and only if $\cal{ M}=\mathbb
C k_0$ or $\cal{ M}=\mathbb Ck_\alpha$, that is, if and only if
$\cal{ M}$ is taut over $\{0, \alpha\}$.

For any closed subspace $\cal{ M}$ of $\cal{ H}$ we shall
denote by $P_\cal{ M}$ the orthogonal projection operator
from $\cal H$ to $\cal M$.

\begin{definition}\label{6.4}
{\rm Let $\cal{ M}, \cal{ M}_1, \cal{ M}_2, \dots$ be closed
subspaces of $\cal H$.  We say that $\cal{ M}_j$ {\it
converges to } $\cal{ M}$ (written $\cal{ M}_j \to \cal{ M}$) if
$\|P_\cal{ M} - P_{\cal{ M}_j}\|\to 0$ as $j\to \infty$.}
\end{definition}

The notion of a subspace lying over a finite subset, which we
introduced above,  and the observation about the visibility
of taut subspaces are in themselves of limited interest, but
they serve as intermediaries to the much richer concepts
appropriate to {\it infinite} subsets of $\Omega$.

Further we will use the well-known result that if $P$ and $Q$ are orthogonal projections 
with $\| P -Q \| < 1$, then $P$ and $Q$ have the same rank.

\begin{definition}\label{6.5}
{\rm Let $D\subset \Omega$.  A closed subspace $\cal{ M}$ of
$\cal{ H}$  {\it lies over} $D$ if there exists an integer
$n$ and a sequence $(F_j)$ of subsets of $D$ such that
$\#F_j \le n$ for each $j$ and a sequence $(\cal{ M}_j)$ of
subspaces of
$\cal H$ such that
\begin{enumerate}
\item[\rm (i)] $\cal{ M}_j$ lies over $F_j$ for all $j$, and
\item[\rm (ii)] $\cal{ M}_j \to \cal{ M}$.
\end{enumerate}
}
\end{definition}

Some remarks are in order.  Firstly, we must have $\dim
\cal{ M}_j\le n$ for each $j$, and so, if $\cal{ M}$ lies over any
set then $\cal{ M}$ is finite-dimensional.  Secondly, there is
a subtlety in the requirement that $\#F_j \le n$ for all $j$.  If
we were to replace this requirement in the foregoing
definition by the condition that $\dim \cal{ M}_j \le n$ for all
$j$, then we should find (provided $D$ is a set of  uniqueness
for $\cal H$) that {\it every} finite-dimensional subspace
of $\cal H$ lies over $D$.  The condition chosen is much
more restrictive: it entails a certain local character on
$\cal{ M}$, as will become clearer in Theorem \ref{6.17}
below.

\begin{example}\label{6.6} \rm
Let $\cal{ H}=H^2$, let $D$ be a compact subset of
$\mathbb D$ and let $\alpha \in \mathbb D\smallsetminus
D$.  It is trivial that the space $\mathbb C k_\alpha$
lies over $\mathbb D$.  It is not true, however, that
$\mathbb Ck_\alpha$ lies over $D$.  To see this, let
$$
\inf\limits_{\lambda \in D}
\left|\frac{\alpha-\lambda}{1-\overline
\lambda \alpha}\right| = c >0.
$$
Consider the set $F=\{\lambda_1, \dots, \lambda_n\}\subset
D$, and write
$$b(z) = \prod\limits^n_{j=1}
\frac{z-\lambda_j}{1-\overline \lambda_jz}.$$
We take the $\lambda_j$ to be distinct, so that 
$$\cal{ V}_F = \span\{k_{\lambda_1}, \dots,
k_{\lambda_n}\}=H^2 \ominus bH^2.$$
Thus, if $S$ is the shift operator, we have 
\begin{eqnarray*}
{\rm dist} (k_\alpha, \cal{ V}_F) &=& \|P_{\cal{ V}_F}^\perp
k_\alpha\|= \|P_{bH^2}k_\alpha\|\\ \\
&=& \|b(S) b(S)^* k_\alpha\|\\  \\
&=& \|\overline{b(\alpha)} b k_\alpha\| = |b(\alpha)|\
\|k_\alpha\|\\ \\
&=& \frac{|b(\alpha)|}{(1-|\alpha|^2)^{1/2}} \ge
\frac{c^n}{(1-|\alpha|^2)^{1/2}} >0.
\end{eqnarray*}
Thus it cannot be the case that $\cal{M}_j\to \mathbb
Ck_\alpha$ for any sequence $(\cal{ M}_j)$ such that $\cal{M}_j$ 
lies over an $n$-element subset of $D$.  \qed
\end{example}

Here is an alternative presentation of the notion introduced
in Definition \ref{6.5}.  Say that a vector $v\in \cal{ H}$ 
{\it lies over } $D$ if there exists an integer $n$, a
sequence of subsets $(F_j)$ of $D$ and a sequence of
vectors $(v_j)$ in $\cal H$ such that $\#F_j \le n$ for all $j,
v_j\in \cal{ V}_{F_j}$ for all $j$ and $v_j\to v$.  We leave to
the reader the verification of the following.

\begin{proposition}\label{6.7}
Let $D\subset\Omega$ and let $\cal{ M}$ be a
finite-dimensional subspace of $\cal H$.  The following are
equivalent.
\begin{enumerate}

\item[\rm (i)] $\cal{ M}$ lies over $D$;

\item[\rm (ii)] $\cal{ M}$ has a basis of vectors that lie over
$D$;

\item[\rm (iii)] every vector in $\cal{ M}$ lies over $D$.
\end{enumerate}
\end{proposition}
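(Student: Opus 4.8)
The plan is to prove the cycle of implications (iii) $\Rightarrow$ (ii) $\Rightarrow$ (i) $\Rightarrow$ (iii), since (iii) $\Rightarrow$ (ii) is trivial (a basis of $\cal M$ consists of vectors in $\cal M$, each of which lies over $D$ by hypothesis), and (ii) $\Rightarrow$ (i) is only slightly harder. For (ii) $\Rightarrow$ (i), suppose $\{u_1,\dots,u_m\}$ is a basis of $\cal M$ with each $u_i$ lying over $D$; so there is an integer $n_i$, subsets $(F_j^{(i)})$ of $D$ with $\# F_j^{(i)}\le n_i$, and vectors $v_j^{(i)}\in\cal V_{F_j^{(i)}}$ with $v_j^{(i)}\to u_i$. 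Put $F_j=\bigcup_{i=1}^m F_j^{(i)}$, so $\#F_j\le n:=\sum_i n_i$, and set $\cal M_j=\span\{v_j^{(1)},\dots,v_j^{(m)}\}\subseteq\cal V_{F_j}$. For $j$ large the vectors $v_j^{(i)}$ are linearly independent (they are close to the independent $u_i$), so $\dim\cal M_j=m=\dim\cal M$, and since $v_j^{(i)}\to u_i$ for each $i$ we get $P_{\cal M_j}\to P_{\cal M}$; hence $\cal M$ lies over $D$. (The convergence $\cal M_j\to\cal M$ from coordinatewise convergence of bases of a fixed finite dimension is a standard fact about the Grassmannian, which I would state and use without detailed proof, or prove via the formula for $P_{\cal M_j}$ in terms of a Gram matrix.)

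The substantive implication is (i) $\Rightarrow$ (iii). Assume $\cal M$ lies over $D$: there is $n$, subsets $F_j\subseteq D$ with $\#F_j\le n$, and subspaces $\cal M_j$ lying over $F_j$ with $\cal M_j\to\cal M$. Fix $v\in\cal M$; I must exhibit $v$ as a limit of vectors lying in spans of at most (some fixed number of) kernels of points in $D$. Set $v_j=P_{\cal M_j}v$. Since $\|P_{\cal M}-P_{\cal M_j}\|\to 0$ and $v\in\cal M$, we have $v_j=P_{\cal M_j}v\to P_{\cal M}v=v$. Now $v_j\in\cal M_j\subseteq\cal V_{F_j}=\span\{k_\alpha:\alpha\in F_j\}$ with $\#F_j\le n$, so each $v_j$ lies over $D$ in the sense of the definition preceding the proposition (with the single uniform bound $n$). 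Hence $v$ is a limit of such vectors, i.e. $v$ lies over $D$. This gives (iii).

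The main obstacle is the bookkeeping with uniform bounds on the number of points, and — in (ii) $\Rightarrow$ (i) — the passage from coordinatewise convergence of a basis to convergence of projections; this is routine but must be handled with a little care to ensure the dimension does not drop in the limit, which is why one passes to large $j$ so that the approximating vectors remain linearly independent. Everything else reduces to unwinding the definitions of "lies over" for vectors and for subspaces and using the continuity of $P_{\cal M_j}$ in $j$.
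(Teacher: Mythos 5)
Your proof is correct. The paper itself does not supply a proof of Proposition \ref{6.7} --- it explicitly states ``We leave to the reader the verification of the following'' --- so there is no argument in the paper to compare against; your cycle (iii) $\Rightarrow$ (ii) $\Rightarrow$ (i) $\Rightarrow$ (iii) unwinds the definitions in the natural way, the key observation for (i) $\Rightarrow$ (iii) being that $v_j := P_{\cal{M}_j}v \to P_{\cal{M}}v = v$ with $v_j \in \cal{V}_{F_j}$ and $\#F_j \le n$ uniformly, and for (ii) $\Rightarrow$ (i) the only point requiring care (which you correctly flag and handle) is that coordinatewise convergence of a linearly independent $m$-tuple to a linearly independent $m$-tuple gives norm convergence of the corresponding orthogonal projections, which follows from the continuity of the Gram matrix and its inverse.
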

Not every vector in $\calh$ lies over a set $D\subseteq \Omega$.  Below we give some examples in $H^2$ on $\D$.  To this end we introduce a notion of {\em valency}.

\begin{definition} \label{valency}
A holomorphic function $f$ on $\D$ is said to be {\em $n$-valent} if, for all $w\in\C$, the equation $f(z)=w$ has at most $n$ solutions in the disc $\D$, counting multiplicities.
\end{definition}
It is easy to see that an equivalent condition is: a holomorphic function $f$ on $\D$ is $n$-valent if, for every positive $r < 1$ and for all $w\in\C$, the equation $f(z)=w$ has at most $n$ solutions in the disc $r\D$, counting multiplicities.

\begin{proposition} \label{fisnvalent}
If $(f_j)$ is a convergent sequence in $H^2$ with limit $f$ and, for some positive integer $n$, $f_j$ is $n$-valent for every $j \geq 1$, then $f$ is $n$-valent.
\end{proposition}
\begin{proof}
For all $w\in\C$ and $r\in (0,1)$, by the Argument Principle,
\begin{eqnarray}
\frac{1}{2\pi \mathrm{i}} \int_{r\T} \frac {f_j'(z)}{f_j(z)-w} \, dz &=& \text{the number of solutions in $r\D$ of } f_j(z)=w  \notag \\
	&\leq& n. \label{fjvalentn}
\end{eqnarray}
Since $f_j \to f$ in $H^2$, we have $f_j \to f$ and $f'_j \to f'$ {\em uniformly} on $r\T$ as $j \to \infty$.
 It follows that
\[
\frac {f_j'(z)}{f_j(z)-w} \to \frac {f'(z)}{f(z)-w}
\]
uniformly on $r\T$ as $j\to \infty$.  Hence, if we let $j\to \infty $ in the inequality \eqref{fjvalentn}, we obtain 
\[
\frac{1}{2\pi \mathrm{i}} \int_{r\T} \frac {f'(z)}{f(z)-w} \, dz \leq n. \notag
\]
Thus, again by the Argument Principle, the number of solutions of the equation $f(z)=w$ in $r\D$ is at most $n$, that is, $f$ is $n$-valent.
\end{proof}
\begin{corollary} \label{finitelyvalent}
If $f \in H^2$ lies over $\D$ then there exists a positive integer $n$ such that $f$ is $n$-valent.
\end{corollary}
\begin{proof}
Let $f \in H^2$ lie over $\D$.  Then there exists a positive integer $n$ and a sequence $(F_j)$ of subsets of $\D$ such that  $\#F_j \leq n$ and a function $f_j\in \calv_{F_j}$ for each $j$ such that
$f_j \to f$.  Since $f_j$ is the span of at most $n$ Szeg\H{o} kernels, $f_j$ is a rational function of degree at most $n$, and so is $n$-valent.  Hence, by Proposition \ref{fisnvalent}, $f$ is $n$-valent.
\end{proof}
Corollary \ref{finitelyvalent} provides an easy way to see that not all functions in $H^2$ lie over $\D$.  For example, it is immediate that any infinite Blaschke product does not lie over $\D$.
However, as we showed in Example \ref{loc-sub-intro}, the vector $v(z) =z$ does lie over $\mathbb D$.

What is the appropriate notion of lying tautly over
$D$?  By analogy with Proposition \ref{6.3}, it
should guarantee visibility to pseudomultipliers
defined on $D$.  The right notion is  the following,
but we vary the nomenclature: ``local" seems more
fitting than ``taut" here.

\begin{definition}\label{6.8}
{\rm Let $D \subset \Omega$. A closed subspace $\cal M$ of
$\cal H$ is {\it local over} $D$ if there exists an
integer $n$, a sequence of subsets $(F_j)$ of $D$
and a sequence of subspaces $(\cal{ M}_j)$ of $\cal H$
such that $\#F_j\le n$ for all $j$ and 
\begin{enumerate}
\item[\rm (i)] $\cal{ M}_j$ lies tautly over $F_j$ for each
$j$, and
\item[\rm (ii)] $\cal{ M}_j \to \cal{ M}$.
\end{enumerate}}
\end{definition}

We have emphasized the similarity to Definition
\ref{6.5}, but we could also have described local
subspaces slightly more economically, as the
following proposition shows.

\begin{proposition}\label{6.9}
Let $\cal H$ be a Hilbert function space on a set $\Omega$,
let $D\subset \Omega$ and let $\cal M$ be a closed 
subspace of $\cal H$.  The following are equivalent.
\begin{enumerate}
\item[\rm (i)] $\cal M$ is local over $D$;
\item[\rm (ii)] there exist an integer $n$ and a sequence
$(E_j)$ of subsets of $D$ such that $\#E_j\le n$ and
$\cal{ V}_{E_j}\to \cal{ M}$;
\item[\rm (iii)] $\cal M$ is finite-dimensional and there
exists a sequence $(E_j)$ of subsets of $D$ such that
$\cal{ V}_{E_j}\to \cal{ M}$.
\end{enumerate}
\end{proposition}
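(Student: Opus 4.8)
The plan is to establish the implications (i)$\Rightarrow$(ii), (ii)$\Rightarrow$(i), (ii)$\Rightarrow$(iii) and (iii)$\Rightarrow$(ii); together these yield the equivalence of all three statements. Two of them are mere bookkeeping with the definitions. For (i)$\Rightarrow$(ii): if $\mathcal M$ is local over $D$, Definition~\ref{6.8} supplies an integer $n$, subsets $F_j\subseteq D$ with $\#F_j\le n$, and subspaces $\mathcal M_j\to\mathcal M$ each lying tautly over $F_j$; by the definition of tautness $\mathcal M_j=\mathcal V_{F_j^0}$ for some $F_j^0\subseteq F_j$, and taking $E_j=F_j^0$ gives $\#E_j\le n$ and $\mathcal V_{E_j}\to\mathcal M$, which is (ii). For (ii)$\Rightarrow$(i): given $n$ and $E_j\subseteq D$ with $\#E_j\le n$ and $\mathcal V_{E_j}\to\mathcal M$, each $E_j$ is finite, so $\mathcal V_{E_j}$ is a finite-dimensional, hence closed, subspace which lies tautly over $E_j$ (take $F_0=E_j$ in Definition~\ref{6.2}); thus $n$, the sets $F_j=E_j$ and the spaces $\mathcal M_j=\mathcal V_{E_j}$ witness that $\mathcal M$ is local over $D$.

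For (ii)$\Rightarrow$(iii) I would use the elementary fact that norm-convergence of projections does not raise dimension. If $\dim\mathcal M>n$, pick orthonormal $e_0,\dots,e_n\in\mathcal M$; then $\|P_{\mathcal V_{E_j}}e_i-e_i\|=\|(P_{\mathcal V_{E_j}}-P_{\mathcal M})e_i\|\to 0$ for each $i$, so for large $j$ the $n+1$ vectors $P_{\mathcal V_{E_j}}e_i$ are so close to an orthonormal family that their Gram matrix is invertible, forcing them to be linearly independent; this contradicts $\dim\mathcal V_{E_j}\le\#E_j\le n$. Hence $\mathcal M$ is finite-dimensional, and the same sequence $(E_j)$ witnesses (iii).

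The one step with real content is (iii)$\Rightarrow$(ii). Assume $\dim\mathcal M=m<\infty$ and $\mathcal V_{E_j}\to\mathcal M$ with $E_j\subseteq D$. The crucial observation is the classical fact that if $A,B$ are closed subspaces of a Hilbert space with $\|P_A-P_B\|<1$ then $\dim A=\dim B$: indeed $P_B|_A$ is injective, since $P_Ba=0$ with $a\in A$ gives $\|a\|=\|(P_A-P_B)a\|<\|a\|$ unless $a=0$, so $\dim A\le\dim B$, and symmetrically $\dim B\le\dim A$; in particular $A$ is finite-dimensional whenever $B$ is. Applying this with $A=\mathcal V_{E_j}$ and $B=\mathcal M$, there is $N$ such that $\dim\mathcal V_{E_j}=m$ for all $j\ge N$. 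For such $j$ the space $\mathcal V_{E_j}$ is finite-dimensional, hence equals the algebraic span of $\{k_\alpha:\alpha\in E_j\}$, so by the remark recorded before Definition~\ref{6.2} (applied to a finite subfamily of these kernels that already spans $\mathcal V_{E_j}$) there is a finite subset $E_j'\subseteq E_j$ with $\#E_j'=m$ and $\mathcal V_{E_j'}=\mathcal V_{E_j}$. Discarding the first $N-1$ terms and relabelling, the sequence $(E_j')$ consists of subsets of $D$ of cardinality $\le m$ with $\mathcal V_{E_j'}\to\mathcal M$; this is (ii) with $n=m$.

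The main obstacle is exactly the perturbation lemma that $\|P_A-P_B\|<1$ implies $\dim A=\dim B$; I would include the short injectivity argument above rather than leave it to the reader, since it is precisely what lets one replace an arbitrary convergent sequence of kernel-spans by one of uniformly bounded cardinality. Apart from that, the proof is a direct unwinding of Definitions~\ref{6.2}, \ref{6.4}, \ref{6.5} and \ref{6.8} together with the already-noted fact that a finite family of kernels contains a sub-basis of size equal to the dimension of its span.
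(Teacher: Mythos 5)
Your proof is correct and follows the same route as the paper's: treat (i)$\Leftrightarrow$(ii) as unwinding Definitions \ref{6.2} and \ref{6.8}, then get (ii)$\Leftrightarrow$(iii) from the invariance of dimension under norm convergence of orthogonal projections, with the passage from $E_j$ to a subset of exact cardinality $\dim\mathcal M$ supplied by the remark before Definition \ref{6.2}. The only real difference is that you spell out the dimension-preservation step (Gram-matrix argument in one direction, the injectivity of $P_B|_A$ when $\|P_A-P_B\|<1$ in the other), whereas the paper dismisses it with ``it is not hard to see''; that level of detail is a reasonable addition, and one could streamline slightly by invoking the single perturbation lemma $\|P_A-P_B\|<1\Rightarrow\dim A=\dim B$ in both directions instead of giving two parallel arguments.
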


\begin{proof}  Condition (ii) is little more than a
rephrasing of the definition of a local space (see
Definition \ref{6.2} and \ref{6.8}).  Suppose (ii) holds. 
Then $\dim \cal{ V}_{E_j} \le n$ for all $j$, and since
$\cal{ V}_{E_j}\to \cal{ M}$ it follows that $\dim \cal{
M}\le n$, and so (iii) holds.  Conversely, suppose (iii)
holds.  It is not hard to see that $\cal{ V}_{E_j}$ must
have the same dimension as $\cal M$ for all
sufficiently large $j$.  For such $j$ we may pick $F_j
\subset E_j \subset D$ such that $\cal{ V}_{F_j} = \cal{
V}_{E_j}$ and $\#F_j = \dim \cal{ M}$.  Then $\cal{
V}_{F_j}\to \cal{ M}$ and so (ii) holds.\end{proof}

\vspace{2mm}
Thus the local subspaces of $\cal H$ over $D$ are
precisely the finite-dimensional  limits of the spaces
of the form $\cal{ V}_F, F\subset D$.

\begin{example}\label{6.10} \rm
In $H^2$, a subspace which lies over $\mathbb D$ but
is not local over $\mathbb D$ is $\mathbb Cv$, where
$v(z) =z$.  We have already seen that $v$ lies over
$\mathbb D$.  Suppose there is a sequence $(E_j)$ of
subsets of $\mathbb D$ such that $\cal{ V}_{E_j} \to
\mathbb Cv$.  We must have $\#E_j=1$ for
sufficiently large $j$, hence $\cal{ V}_{E_j} = \mathbb C
k_{\alpha_j}$ for some $\alpha_j\in \mathbb D$.   If
$P_{E_j} v=c_jk_{\alpha_j}$, we must have
$c_jk_{\alpha_j}\to v$ in $H^2$.  On considering the
first two Taylor coefficients we get the contradictory
conclusions $c_j\to 0$ and $c_j \overline \alpha_j \to
1$.  Thus $\mathbb Cv$ is not local over $\mathbb D$.
\end{example}

\begin{theorem}\label{6.11}
Let $\varphi$ be a pseudomultiplier.  If $\cal M$ is a
subspace of $\cal H$ which is local over $D_\varphi$ 
then $\varphi$ sees $\cal M$.
\end{theorem}

\begin{proof}  By Proposition \ref{6.9}, there is a sequence $(F_j)$ of subsets
of $D_\varphi$ and a sequence $(\calv_{F_j})$ of
subspaces of $\cal H$ such that $\#F_j \le n \in \mathbb
N$ and $\calv_{F_j} \to \cal{ M}$. 
By Example \ref{1.8}, for every $\lam\in D_\phi$, $k_\lam$ is visible for $\phi$. 
Since the $F_j$ are finite sets, and by Example  \ref{viorthogonal}, the span of a finite collection of visible vectors is a visible space,  $\varphi$ sees $\calv_{F_j}$. 
Hence, by Proposition \ref{5.5},
$$
X^*_\varphi \calv_{F_j} \subset \calv_{F_j} +
E^\perp_\varphi,\quad j=1, 2, \dots
$$
Since $\calv_{F_j} \to \cal{ M}$, it follows that
$$X^*_\varphi \cal{ M} \subset \cal{ M} + E^\perp_\varphi.
$$
By Proposition \ref{6.9}, $\calm$ is finite-dimensional, and is therefore closed in $\calh$.
By Proposition \ref{5.5}, $\varphi $ sees $\cal{ M}$.
\end{proof}

\section{Projectively complete Hilbert function spaces and local subspaces}\label{local_spaces}

Our next task is to clarify the sense in which a subspace
which is local over a set $D$ can genuinely be attached to a
set of points in $D$.  A further assumption on $D$ will be
required, for consider the example of a proper
dense subset $D$ of $\mathbb D$.  If $\alpha \in \mathbb D
\smallsetminus D$ then it is clear that the subspace
$\mathbb Ck_\alpha$ of $H^2$ is local over $D$, yet
$\mathbb Ck_\alpha$ as a subspace of $H^2$ (as a function
space on $D$) is not associated with a point of the domain
of the function space.  This example suggests we confine
attention to domains $\Omega$ which are complete in
some sense.  In fact the appropriate notion of
completeness turns out to be a little stronger than the
natural one given in Definition \ref{6.1}.  Subject to this
completeness assumption, it transpires that the structure
of local subspaces is remarkably transparent (Theorem
\ref{6.18} below).

\begin{lemma}\label{6.12}
Let $\cal{ M}, \cal{ M}_1, \cal{ M}_2, \dots$ be closed subspaces of
$\cal H$ such that
$\dim \cal{ M}<\infty$ and $\cal{ M}_j \to \cal{ M}$.  If
$(u_j)$ is a bounded sequence with $u_j\in \cal{ M}_j$ for
each $j$ then there exists
$u\in \cal{ M}$ and a sub-sequence $(u_{j_k})$ of $(u_j)$ such
that $u_{j_k}\to u$ as $k\to \infty$.
\end{lemma}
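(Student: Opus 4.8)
The plan is to approximate each $u_j$ by its orthogonal projection onto $\cal M$ and then exploit the finite-dimensionality of $\cal M$. First I would set $v_j = P_{\cal M} u_j$, so that $v_j \in \cal M$ for every $j$. Since $u_j \in \cal M_j$, we have $u_j = P_{\cal M_j} u_j$, and therefore
\[
\|u_j - v_j\| = \|(P_{\cal M_j} - P_{\cal M}) u_j\| \le \|P_{\cal M_j} - P_{\cal M}\|\,\|u_j\|.
\]
Because the sequence $(u_j)$ is bounded, say $\|u_j\| \le C$ for all $j$, and because $\|P_{\cal M_j} - P_{\cal M}\| \to 0$ by the hypothesis $\cal M_j \to \cal M$, this estimate shows that $\|u_j - v_j\| \to 0$ as $j \to \infty$.

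Next, I would observe that $(v_j)$ is a bounded sequence in $\cal M$, since $\|v_j\| = \|P_{\cal M} u_j\| \le \|u_j\| \le C$. As $\dim \cal M < \infty$, the space $\cal M$ is locally compact (equivalently, closed and with the Bolzano--Weierstrass property), so there is a subsequence $(v_{j_k})$ converging to some $u \in \cal M$. Finally, writing $u_{j_k} = (u_{j_k} - v_{j_k}) + v_{j_k}$ and letting $k \to \infty$, the first term tends to $0$ and the second to $u$, so $u_{j_k} \to u$ with $u \in \cal M$, which is the assertion of the lemma.

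There is no serious obstacle here: the argument is entirely elementary. The only point meriting a moment's care is the identity $u_j = P_{\cal M_j} u_j$, which is what allows the difference $u_j - v_j$ to be controlled by the operator-norm gap $\|P_{\cal M_j} - P_{\cal M}\|$; everything else is boundedness plus compactness in finite dimensions.
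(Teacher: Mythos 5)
Your proof is correct, and it takes a slightly different (and more elementary) route than the paper's. The paper first extracts a \emph{weakly} convergent subsequence $u_{j_k}\rightharpoonup u$ using weak compactness of bounded sets in $\cal H$, then uses the identity $u-u_j = (u - P_{\cal M}u) + P_{\cal M}(u-u_j) + (P_{\cal M}-P_{{\cal M}_j})u_j$ to show simultaneously that $u\in\cal M$ and that the convergence is in norm. You instead project first, setting $v_j=P_{\cal M}u_j$, use $u_j=P_{{\cal M}_j}u_j$ together with $\|P_{{\cal M}_j}-P_{\cal M}\|\to 0$ to get $\|u_j-v_j\|\to 0$, and then extract a norm-convergent subsequence of $(v_j)$ by Bolzano--Weierstrass in the finite-dimensional (hence closed) space $\cal M$. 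Both arguments hinge on the same estimate $\|(P_{{\cal M}_j}-P_{\cal M})u_j\|\le\|P_{{\cal M}_j}-P_{\cal M}\|\,\|u_j\|$; what yours buys is the avoidance of weak convergence altogether, replacing it with compactness in finite dimensions, which makes the proof self-contained at a more elementary level. The one step you rightly flag, $u_j=P_{{\cal M}_j}u_j$, is valid because ${\cal M}_j$ is a closed subspace containing $u_j$. No gaps.
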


\begin{proof}  Pick a sub-sequence $(u_{j_k})$ of
$(u_j)$ such that $u_{j_k}$ converges weakly to an element
$u\in \cal{ H}$.  We have
$$u-u_j = u - P_\cal{ M}u+ P_\cal{ M}(u-u_j) + (P_\cal{
M}-P_{\cal{ M}_j})u_j.$$
Let $j\to \infty$ along the sequence $(j_k)$.  Since
$u-u_j\to 0$ weakly and $\cal{ M}$ is finite-dimensional,
$P_\cal{ M}(u-u_j)\to 0$ in norm.  Since $(u_j)$ is bounded
and $P_{\cal{ M}_j}\to P_\cal{ M}$, the final term on the
right hand side also tends to $0$ in norm.  Thus
$(u-u_{j_k})$ converges in norm, and its limit must be its
weak limit, which is $0$.  Thus $u=P_\cal{ M}u$, that is,  $u\in
\cal{ M}$ and $u_{j_k}\to u$. \end{proof}

To obtain our structure theorem for local subspaces we
need to introduce {\it projective Hilbert space} $P\cal{
H}$. This is the set of one-dimensional subspaces of the
Hilbert space $\cal{ H}$, with a natural metric $p$ defined
as follows.  For each one-dimensional subspace $\mathbb
Cx$ of $\cal H$, where $x\in \cal{ H}$ and $\|x\|=1$, we may
consider its intersection $\mathbb Tx=\{\lambda x:
|\lambda|=1\}$ with the unit sphere $\cal{ H}_1$ of $\cal H$. 
This gives a one-one correspondence between the
elements of $P\cal{ H}$ and a subset of the set $\cal{ C}$ of
all closed subsets of $\cal{ H}_1$.  We may give $\cal C$ the
Hausdorff metric $\rho$ induced by the standard metric of
$\cal H$ restricted to $\cal{ H}_1$.  This induces a metric
$p$ on $P\cal{ H}$.  Note that, for $x, y\in \cal{ H}_1$,
\begin{eqnarray*}
\rho(\mathbb Tx, \mathbb Ty)&=&
\inf\limits_{\lambda
\in
\mathbb T} \|x-\lambda y\|\\
&=& 
\inf\limits_{\lambda \in \mathbb T} \{\|x\|^2 + \|\lambda
y\|^2 - 2 {\rm Re} \overline \lambda \ip{x} {y}\}^{1/2} \\
&=& \{1-|\ip{x} {y}|^2\}^{1/2}. 
\end{eqnarray*}
Hence the metric $p$ on $P\cal{ H}$ is given explicitly by
\begin{equation}\label{24}
p(\mathbb C x, \mathbb C y) =  \{1-|\ip{x}{y}|^2\}^{1/2},\quad x, y \in \cal{ H}_1.
\end{equation}
We note that there are other natural ways of defining a metric on $P\cal H$.
For example, we could identify the one-dimensional subspace $\C x$, for $x
\in \cal{H}_1$, with the orthogonal projection operator on $\C x$.  The
operator norm would then induce a metric on $P\cal H$.  A little calculation
then gives the distance between $\C x$ and $\C y$ as  $\sqrt{1 - |\ip{x}{y}|^2}$. 
This is identical to $p$.

 $P\cal{ H}$ is complete with respect to
$p$ (if $(\mathbb C x_j)$ is a Cauchy sequence we can
construct a sequence $(y_j)$ in $\cal{ H}_1$ with $y_j \in
\mathbb C x_{n_j}$ and $\|y_{j+1} - y_j\| <2^{-j}$; then
$(y_j)$ is Cauchy in $\cal{ H}_1$, and if $y$ is its limit,
$\mathbb C x_j \to \mathbb C  y)$.  The canonical mapping
$\cal{ H}\smallsetminus\{0\}\to P\cal{ H}: x\mapsto
\mathbb C x$ is easily shown to be continuous, so if $(x_j),
(c_j)$ are sequences in $\cal{ H}\smallsetminus\{0\},
\mathbb C$ respectively and $c_j x_j \to x$ in $\cal H$ then
$\mathbb C x_j\to \mathbb C x$.  Conversely, if $\mathbb C
x_j \to \mathbb C x$ in $P\cal{ H}$, then the calculation
above shows that
$$\left\|\lambda_j \frac{x_j}{\|x_j\|} - \frac{x}{\|x\|}
\right\|\to 0$$
where $\lambda_j$ in $\mathbb T$ is chosen so that
$\lambda_j \ip{x_j}{x} >0$.  We infer that $\mathbb C x_j \to
\mathbb C x$ if and only if there is a sequence $(c_j)$ in
$\mathbb C\smallsetminus\{0\}$ such that $c_j x_j \to x$. 

\begin{definition}\label{H-proj-comp} \rm
A Hilbert function space
 $(\cal{H},\Omega)$ is {\it projectively complete}
for $\cal H$ if the set $\{{\mathbb C}{k_\lambda}:\lambda
\in \Omega, k_\lambda \neq 0\}$ is closed in $P\cal{ H}$.
In this case we shall also say that $\O$ is projectively complete for $\cal
H$. 
\end{definition}
Since $P \cal H$ is a complete metric space, so too is 
$\{{\mathbb C}{k_\lambda}:\lambda
\in \Omega, k_\lambda \neq 0\}$
 with respect to the restriction of
$p$ whenever $(\cal{H},\Omega)$ is projectively complete. 
 
\begin{proposition}\label{6.13}
Let $\cal H$ be a Hilbert function space on $\Omega$.  Then
$\Omega$ is projectively complete for $\cal H$ if and only if
whenever $(c_j), (\alpha_j)$ are sequences in $\mathbb C,
\Omega$ respectively and
$$
c_j k_{\alpha_j} \to u \in \cal{ H} \setminus \{0\},
$$
there exist $c\in \mathbb C$ and $\alpha \in \Omega$ such
that 
$$u=ck_\alpha.$$
\end{proposition}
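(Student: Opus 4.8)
The plan is to deduce the proposition directly from the two facts established in the paragraphs immediately preceding it: first, the characterisation of convergence in $P\calh$, namely that for a sequence $(x_j)$ in $\calh\setminus\{0\}$ one has $\C x_j \to \C x$ in $P\calh$ if and only if there is a sequence $(c_j)$ in $\C\setminus\{0\}$ with $c_j x_j \to x$ in $\calh$ (together with the continuity of the canonical map $x\mapsto \C x$); and second, the definition of projective completeness, which says precisely that $K \stackrel{\rm def}{=} \{\C k_\lambda : \lambda\in\O,\ k_\lambda\neq 0\}$ is a closed subset of $P\calh$. The whole argument is then an unwinding of these definitions, the only delicate point being the bookkeeping around those $\lambda$ for which $k_\lambda=0$.

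For the forward implication I would assume $\O$ is projectively complete, so that $K$ is closed, and take sequences $(c_j)$ in $\C$ and $(\alpha_j)$ in $\O$ with $c_j k_{\alpha_j}\to u$ and $u\neq 0$. Since $c_j k_{\alpha_j}\to u\neq 0$, for all sufficiently large $j$ we have $c_j k_{\alpha_j}\neq 0$, hence $c_j\neq 0$ and $k_{\alpha_j}\neq 0$; discarding the finitely many exceptional terms I may assume this holds for all $j$. Then $\C k_{\alpha_j} = \C(c_j k_{\alpha_j})\in K$ for every $j$, and by continuity of the canonical map $\C k_{\alpha_j}\to \C u$ in $P\calh$. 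As $K$ is closed, $\C u\in K$, so $\C u=\C k_\alpha$ for some $\alpha\in\O$ with $k_\alpha\neq 0$; in particular $u$ is a nonzero scalar multiple of $k_\alpha$, i.e. $u=ck_\alpha$ for some $c\in\C$, as required.

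For the reverse implication I would assume the stated condition and let $\C x$ be any point of $P\calh$ in the closure of $K$; the goal is $\C x\in K$. Writing $x\in\calh$ with $\norm{x}=1$, we have $x\neq 0$. Choose $\alpha_j\in\O$ with $k_{\alpha_j}\neq 0$ and $\C k_{\alpha_j}\to \C x$ in $P\calh$; by the convergence criterion there exist $c_j\in\C\setminus\{0\}$ with $c_j k_{\alpha_j}\to x$ in $\calh$. Since $x\neq 0$, the hypothesis applies and gives $c\in\C$ and $\alpha\in\O$ with $x=ck_\alpha$; because $x\neq 0$ this forces $c\neq 0$ and $k_\alpha\neq 0$, whence $\C x=\C k_\alpha\in K$. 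Hence $K$ is closed, i.e. $\O$ is projectively complete.

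I do not anticipate a genuine obstacle here: the argument is a straightforward translation between the sequential formulation in the statement and the topological definition of projective completeness. The only step that requires care is excluding vanishing kernels, and that is handled in each direction by observing that the relevant limit vector is nonzero.
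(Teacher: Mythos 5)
Your proof is correct and follows essentially the same route as the paper's: the forward implication uses continuity of the canonical map $\calh\setminus\{0\}\to P\calh$ together with closedness of $\{\C k_\lambda : k_\lambda\neq 0\}$, and the reverse uses the sequential characterisation of convergence in $P\calh$ to pull a limit in $P\calh$ back to a convergent sequence $c_j k_{\alpha_j}$ in $\calh$ and then applies the hypothesis. The extra care you take in discarding terms with $k_{\alpha_j}=0$ matches the paper's ``almost every $k_{\alpha_j}\neq 0$'' and is handled correctly.
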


\begin{proof}
Suppose $\O$ is projectively complete for $\cal H$ and $c_j k_{\a_j}
\to u \neq 0$.
Then almost every $k_{\a_j}\neq 0$ and, by
 continuity of the canonical mapping
$\cal{H}\setminus \{0\} \to P\cal H$, we have $\C k_{\a_j} \to \C u$.
Thus $\C u$ lies in the closure of $\{\C k_\lambda: \lambda \in \O,
k_\lambda \neq 0 \}$
and so, by the definition of projective completeness, $\C u\in\{\C
k_\lambda: \lambda\in \O,k_\lambda \neq 0 \}$.  
Hence $u=ck_\a$ for some scalar $c$ and some $\a\in\O.$

Conversely, suppose $ (\cal{H},\O)$ is such that $c_j k_{\a_j} \to u
\neq 0$ implies that $ u=ck_{\a}$ for some $c\in \C $ and $\a \in \O$.
Consider any element $\C x, x\in \cal{H}\setminus \{0\},$ of the closure of
$\{\C k_\lambda:\lambda \in\O, k_\lambda \neq 0\}$ in $ P\cal H$.  
Pick a sequence $
(\a_j)$ in $\O$ such that $\C k_{\a_j} \to \C x$.  As noted above, there
is a sequence $ (c_j)$ of non-zero scalars such that $c_j k_ {\a_j}
\to x$.  By hypothesis, $x=ck_\a$ for some $c\neq 0$ and $\a\in \O$.
Hence $\C x \in \{\C k_ \lambda: \lambda \in\O ,k_\lambda \neq 0 \}$.  
Thus $\O$ is projectively complete for $\cal H$. 
\end{proof} 

If the mapping $\alpha \mapsto  \C k_\alpha$ maps $\O$ injectively into
$\cal H \setminus \{0\} $
then $p$ induces a metric $d$ on $\O$, given by
$$
d(\lambda,\mu) =p(\mathbb Ck_\lambda, \mathbb Ck_\mu) = 
\left\{ 1 - \frac{|\ip{k_\lambda}{k_\mu}|} {\|k_\lambda\|\
\|k_\mu\|}\right\}^{1/2}.
$$

\begin{example}\label{6.14} \rm
$\mathbb D$ is projectively complete for $H^2$.  By
(\ref{23}), for $\lambda, \mu \in \mathbb D$ we have
\begin{eqnarray*}
p(\mathbb Ck_\lambda, \mathbb Ck_\mu) &=& 
\left\{ 1 - \frac{|\ip{k_\lambda}{k_\mu}|} {\|k_\lambda\|\
\|k_\mu\|}\right\}^{1/2}\\
&=&
\left\{ 1 - \frac{(1-|\lambda|^2)^{1/2} 
(1-|\mu|^2)^{1/2}}
{|1-\overline \lambda \mu|}\right\}^{1/2}\\
&=&
\left\{ 1 - \frac{(1-|\lambda|^2)(1-|\mu|^2)}
{|1-\overline \lambda \mu|^2}\right\}^{1/2}  h(\lambda, \mu) \\
&=&
\left|  \frac{\lambda-\mu}
{1-\overline \lambda \mu}\right| h(\lambda, \mu)
\end{eqnarray*}
where
$$
h(\lambda, \mu)=\left\{ 1 +
\frac{(1-|\lambda|^2)^{1/2}(1-|\mu|^2)^{1/2}}
{|1-\overline \lambda \mu |}\right\}^{-1/2} .
$$ 

We have $1/\sqrt{2} \le |h| \le 1$,  and so the metric induced
on $\mathbb D$ by $p$ is uniformly equivalent to the standard
pseudo-hyperbolic distance \cite[Sec. 1.4.4]{K}, for which $\mathbb D$ is
complete. \qed
\end{example}

\begin{example}\label{zH2} \rm
$\mathbb D$ is not projectively complete for $zH^2$, the subspace of $H^2$
consisting of functions which vanish at $0$.  The reproducing kernel here is
$$
k(\lambda, \mu) = \frac{\bar{\mu}\lambda}{1 -\bar{\mu}\lambda}.
$$
A similar calculation to that in the preceding example shows that
$\{{\mathbb C}{k_\lambda}:\lambda
\in \Omega, k_\lambda \neq 0\}$ is uniformly equivalent to 
$\D \setminus \{0\}$ with
the pseudo-hyperbolic metric, and this is clearly an incomplete metric space.
Alternatively, one may apply Proposition \ref{6.13} with $c_j = j,\a_j= 1/j, u(\lambda) = \lambda$.
\end{example}

\begin{example}\label{sob} 
The closed unit interval is projectively complete for the Sobolev space
$W^{1,2}[0,1]$ of Example \ref{2.3}. \rm To see this fact note that the reproducing kernel of 
$W^{1,2}[0,1]$ is given by equation \eqref{kerW},
from which it is clear that the kernel  does not vanish anywhere on $[0,1]^2$.
Since $[0,1]$ is a compact metric space for the natural topology and
elements of $W^{1,2}[0,1]$ are continuous functions, we can appeal to the
following observation to conclude that $[0,1]$ is projectively complete 
for $W^{1,2}[0,1]$.
\end{example}

\begin{proposition} \label{compactness}
Let $\O$ be a compact metric space and let $\cal H$ be a Hilbert space
of continuous functions on $\O$.  Suppose that the reproducing kernel of
$\cal H$ has no zero on $\O \times \O$. Then $\O$ is projectively complete 
for $\cal H$.
\end{proposition}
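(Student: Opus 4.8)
The plan is to verify the criterion of Proposition \ref{6.13}: it suffices to show that whenever $(c_j)$ is a sequence in $\C$ and $(\alpha_j)$ a sequence in $\O$ with $c_j k_{\alpha_j}\to u$ in $\calh$ for some $u\neq 0$, then $u=ck_\alpha$ for some $c\in\C$ and $\alpha\in\O$. Having reduced to this, the argument splits into two soft observations and one substantive step.

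First I would record the two soft facts. (i) The map $\mu\mapsto k_\mu$ from $\O$ to $\calh$ is weakly continuous, since for each $f\in\calh$ we have $\ip{k_\mu}{f}=\overline{f(\mu)}$, which is continuous in $\mu$ because the elements of $\calh$ are continuous functions on $\O$. (ii) The function $\mu\mapsto\norm{k_\mu}^2=k(\mu,\mu)$ is lower semicontinuous on $\O$: if $\mu_j\to\mu$ then $k_{\mu_j}\rightharpoonup k_\mu$ by (i), and weak lower semicontinuity of the norm gives $\norm{k_\mu}^2\le\liminf_j\norm{k_{\mu_j}}^2$. Since $\O$ is compact, this lower semicontinuous function attains its infimum on $\O$; and since the reproducing kernel has no zero on $\O\times\O$, that infimum is some $\delta>0$. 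Hence $\norm{k_\mu}\ge\sqrt{\delta}$ for every $\mu\in\O$.

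The substantive step is to control the scalars. The convergent sequence $(c_j k_{\alpha_j})$ is bounded, say $\norm{c_j k_{\alpha_j}}\le M$ for all $j$, so the uniform lower bound from (ii) gives $|c_j|=\norm{c_j k_{\alpha_j}}/\norm{k_{\alpha_j}}\le M/\sqrt{\delta}$; thus $(c_j)$ is bounded in $\C$. Using compactness of $\O$ and the Bolzano--Weierstrass theorem in $\C$, I would pass to a subsequence along which $\alpha_j\to\alpha\in\O$ and $c_j\to c\in\C$. By (i), $k_{\alpha_j}\rightharpoonup k_\alpha$, and hence $c_j k_{\alpha_j}\rightharpoonup ck_\alpha$ weakly; but $c_j k_{\alpha_j}\to u$ in norm and therefore weakly, so by uniqueness of weak limits $u=ck_\alpha$. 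This is exactly the conclusion demanded in Proposition \ref{6.13}, so $\O$ is projectively complete for $\calh$. The only point calling for care is the semicontinuity-and-compactness argument in (ii) that delivers the uniform lower bound $\norm{k_\mu}\ge\sqrt{\delta}$ (and thereby boundedness of the $c_j$); everything else is routine reproducing-kernel bookkeeping together with uniqueness of weak limits.
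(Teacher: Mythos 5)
Your proof is correct, but it follows a genuinely different route from the paper's. The paper's argument passes to a subsequence with $\alpha_j\to\beta$, evaluates at a point $\lambda$ where $u(\lambda)\ne 0$, and uses $u(\lambda)=\lim_j c_j\,k(\lambda,\alpha_j)$ together with $k(\lambda,\alpha_j)\to k(\lambda,\beta)\ne 0$ to conclude directly that $(c_j)$ converges to a non-zero limit; the identification $u=c\,k_\beta$ then drops out pointwise. You instead first prove a global estimate: lower semicontinuity of $\mu\mapsto\|k_\mu\|^2=k(\mu,\mu)$ (which you derive cleanly from weak continuity of $\mu\mapsto k_\mu$, thereby avoiding any appeal to joint continuity of $k$) plus compactness gives $\|k_\mu\|\ge\sqrt{\delta}>0$ uniformly, whence $(c_j)$ is bounded; Bolzano--Weierstrass then supplies subsequential limits $c_j\to c$, $\alpha_j\to\alpha$, and uniqueness of weak limits identifies $u=ck_\alpha$. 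A side benefit of your route is that it invokes the non-vanishing hypothesis only on the diagonal of $\O\times\O$, whereas the paper's proof uses non-vanishing at the off-diagonal point $(\lambda,\beta)$; on the other hand the paper's argument is shorter, as it bypasses the semicontinuity detour and obtains $c\ne 0$ without a separate step. Both are valid proofs of the stated proposition.
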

\begin{proof} Apply Proposition \ref{6.13}. Suppose that $c_j k_{\a_j} \to
u \neq 0$ in $\cal H$ for some scalars $c_j$ and points $\a_j \in \O$.
Since $\O$ is compact metric we may assume (passing to a 
subsequence if necessary) that $(\a_j)$ converges to some point $\beta \in
\O$.  Consider any point $\lambda \in \O$.  We have
\begin{eqnarray*}
u(\lambda) &=& \ip{u}{k_\lambda} = \lim_j \ip{c_j k_{\a_j}}{k_\lambda} \\
	   &=& \lim_j c_j k(\lambda,\a_j).
\end{eqnarray*}
Since $u \neq 0$ there is some $\lambda$ for which $u(\lambda) \neq 0$.
In view of the fact that $k(\lambda,\a_j) \to k(\lambda,\beta) \neq 0$,
it must be that the
sequence $(c_j)$ converges to a non-zero limit $c$ in $\C$.  Hence we have,
for any $\lambda \in \O$,
$$
u(\lambda) = c k(\lambda, \beta) = c k_\beta(\lambda).
$$
Thus $u = c k_\beta$. By Proposition \ref{6.13}, 
$\O$ is projectively complete for $\cal H$.
\end{proof}

\begin{definition}\label{6.15} \rm
Let $\cal H$ be a Hilbert function space on $\Omega$ and
let $\cal{ M}$ be a local subspace of $\cal{ H}$ over
$\Omega$ of dimension $n$.  We say that $\alpha \in
\Omega$ is a {\it support point } of $\cal{ M}$ if there exist
a sequence $(\alpha_j)$ in $\Omega$, a sequence $(c_j)$ in
$\mathbb C$ and a sequence $(E_j)$ of subsets of $\Omega$
such that $\#E_j = n, \alpha_j \in E_j, \cal{ V}_{E_j}\to \cal{
M}$ and $c_jk_{\alpha_j}\to k_\alpha$.  The {\it support} of
$\cal{ M}$ is the set of support points of $\cal{ M}$.  It will
be denoted by $\supp \cal{ M}$.
\end{definition}

Note that if $\alpha \in \supp \cal{ M}$ then $k_\alpha \in \cal{ M}$.
For if $c_j, k_{\alpha_j} $ are as in the definition then, by Lemma
\ref{6.12}, the sequence $(c_j k_{\alpha_j}) $ has a subsequence which
converges to an element $u$ of $\cal M$.  We must have $u = k_\alpha$, and
so $k_\alpha \in \cal M$.

\begin{example}\label{6.16} \rm
Let $\cal{ M}=\span\{1, v \}$ in $H^2$ on $\mathbb D$, where
$v(\lambda)=\lambda$.  We have already seen in Example \ref{loc-sub-intro} and in the lines following it that $\cal{ M}$ is local over $\mathbb D$ and that $0$ is a
support point of $\cal{ M}$.  No other point of $\mathbb D$
is a support point of $\cal{ M}$, for if $\alpha \in \D$ and $\alpha \neq 0$
then $k_\alpha \not \in \cal M$, and hence $\alpha \not \in \supp \cal M$. 
Hence $\supp \cal{ M}=\{0\}$.
\end{example}

\begin{definition}\label{6.17} \rm
A local subspace $\cal{ M}$ of $\cal{ H}$ is {\it punctual} if
$\supp \cal{ M}$ consists of a single point.  
\end{definition}

\begin{definition}\label{6.171} \rm
Let $\calh$ be a Hilbert function space over a set $\O$.
For a subset $D$ of $\Omega$ we define the {\it projective
closure} of $D$ in $\Omega$ to be the set 
\[ 
pc(D) = \{\lambda \in \Omega: \mathbb Ck_\lambda \; \text{is in the closure of} \;
\{ \mathbb C k_\alpha: \alpha \in D\} \; \text{
in} \; P\cal{ H}\}.
\]
\end{definition}

\begin{lemma}\label{exist-subspace}
Let $\calh$ be a Hilbert space and let $\calm$ be a subspace of $\calh$ of finite dimension $m$.  
Let $(\calv_i)_{i\geq 1}$ be a sequence of subspaces of $\calh$ which converges to $\calm$ as $i\to\infty$. 
Let $k$ be a positive integer no greater than $m$. 
For every positive integer $i$ let $\calw_i$ be a $k$-dimensional subspace of $\calv_i$.  Then there exists a sequence $(i_q)_{q\geq 1}$ of positive integers and a $k$-dimensional subspace $\calw$ of $\calm$ such that $P_{\calw_i} \to P_\calw$ in norm as $i\to\infty$ along the sequence $(i_q)_{q\geq 1}$.
\end{lemma}
\begin{proof}
For each $i$ let $e_i^1, \dots,e_i^k$ be an orthonormal basis of $\calw_i$. Since $\calm$ is finite-dimensional and each $e_i^1$ is a unit vector, there is a sequence $(i_q)_{q\geq 1}$ of positive integers and a vector $u^1\in\calm$ such that $P_\calm e_{i_q}^1\to u^1$ as $q\to \infty$.  
 Passing to a sub-sequence of $(i_q)_{q\geq 1}$, we may assume that $P_\calm e_{i_q}^2\to u^2$ as $q\to \infty$ for some vector $u^2\in\calm$ of norm at most $1$.  Continuing in this way, we arrive at vectors $u^1,u^2,\dots,u^k\in\calm$ such that $P_\calm e_{i_q}^\ell\to u^\ell$ as $q\to \infty$ for $\ell=1,2,\dots,k$.   
   Since $e_i^\ell \in\calw_i$ and $\calw_i\subseteq\calv_i$, for each positive integer $i$ and $\ell=1,2,\dots,k$,
   \[
   e_i^\ell = P_{\calv_i}e_i^\ell = (P_{\calv_i}-P_\calm)e_i^\ell + P_\calm e_i^\ell.
   \]
   In this equation let $i \to \infty$ along the sequence $(i_q)$ and note that $\|P_{\calv_i}-P_\calm\| \to 0$ to conclude that $e_{i_q}^\ell \to u^\ell$ as $q \to\infty$.
 
 Let $\calw =\text{span}\{u^\ell:1\leq\ell\leq k\}\subseteq \calm$, and notice that $u^1,u^2,\dots,u^k$ constitutes an orthonormal basis of $\calw$.
Hence $\dim \calw = k$ and
\begin{eqnarray}
\|P_{\calw_i} - P_\calw \| &=& \| \sum_{\ell=1}^k e_i^\ell \otimes  e_i^\ell - \sum_{\ell=1}^k u^\ell \otimes u^\ell \| \notag \\
	&=& \|\sum_{\ell=1}^k [(e_i^\ell-u^\ell)\otimes e_i^\ell + u^\ell\otimes(e_i^\ell - u^\ell)]\| \notag \\
	&\leq& 2\sum_{\ell=1}^k \|e_i^\ell - u^\ell\| \notag \\
	&\to& 0 \quad\text{as}\; \; i \to \infty \;\; \text{along the sequence} \; (i_q)_{q\geq 1} \notag
\end{eqnarray}
as claimed.
\end{proof}

The final result of the paper states that, in a projectively complete Hilbert function
space, every local space is a finite sum of punctual spaces.

\begin{theorem}\label{6.18}
Let $\cal{ H}$ be a projectively complete Hilbert function
space on $\Omega$, let $D\subset \Omega$ and let $\cal{
M}$ be a local space over $D$.  There exist subspaces $\cal{
M}_1, \dots, \cal{ M}_N$ of $\cal{ H}$ such that
\begin{itemize}
\item[\rm (i)] $\cal{ M}_i$ is punctual for $i=1, \dots, N$;

\item[\rm (ii)] $\supp \cal{ M}_i= \{\alpha_i\} \subset {\rm pc}(D) \mbox{ for } i=1, \dots, N$;

\item [(iii)]$\cal{ M} = \cal{ M}_1+\cal{ M}_2+\dots + \cal{
M}_N$;

\item[\rm (iv)] $\sum\limits^N_{i=1} \dim \cal{ M}_i = \dim
\cal{ M}$, and

\item[\rm (v)] $\supp \cal{ M}=\{\alpha_1, \dots, \alpha_N\}$.
\end{itemize}
\end{theorem}

\begin{proof} 
By Proposition \ref{6.9}, since  $\cal M$ is local over $D$,  the dimension ($m$, say) of $\cal M$ is finite and  there exist a positive integer $n$ such that $m \le n$ and a sequence
$(E_j)$ of subsets of $D$ such that $\#E_j\le n$ and
$\cal{ V}_{E_j}\to \cal{ M}$. Evidently, for all sufficiently large $j$,
$ \dim \cal{ V}_{E_j} =\dim \cal M = m$, and hence there is a subset $F_j$ of $E_j$ having $m$ elements such that $\cal{V}_{F_j}=\cal{V}_{E_j}$.  Clearly $\{k_\lam: \lam \in F_j\}$ is a linearly independent set.
Let $F_j = \{\lam^\ell_j:\ell=1, \dots,m\}$\footnote{In the commonly studied Hilbert function spaces the kernels $\{k_\lam: \lam \in \O\}$ are linearly independent, in which case $E_j=F_j$.  However, we do not need to exclude spaces for which some kernels are linearly dependent.}.  Then
$$
\cal{ V}_{E_j} = \span \{k_{\lambda_j^\ell}:  \ell =1, \dots, m\}.
$$ 

By Lemma \ref{6.12}, since $\dim \cal M < \infty$ and $\cal{ V}_{E_j} \to \cal{ M}$,
for any bounded sequence
$(u_j)$, with $u_j \in \cal{ V}_{E_j}$ for
each $j$, there exists
$u\in \cal{ M}$ and a strictly increasing sequence $({j_t})_{t\geq 1}$ of positive integers such
that $u_{j_t}\to u$ as $t\to \infty$.

For $\ell=1,2,\dots,k$ and all $j\geq 1$ let consider the bounded sequence $(u^\ell_j)$, where
$u_j^\ell = \frac{k_{\lambda_j^\ell}}{\|k_{\lambda_j^\ell}\|}$.
It is clear that  $u_j^1 \in  \cal{ V}_{E_j}$ for each $j$. By Lemma \ref{6.12},  there exists
$u^1 \in \cal{M}$ and a strictly increasing sequence $(j_{1t})_{t\geq 1}$ of positive integers  such
that $u^1_{j_{1t}}\to u^1$ as $t\to \infty$.

For $\ell=2$, consider the bounded sequence $(u^2_{j_{1t}})$, so that
 $u_{j}^2 \in  \cal{ V}_{E_{j}}$ for each $j\geq 1$. By Lemma \ref{6.12},  there exists
$u^2 \in \cal{M}$ and a strictly increasing sub-sequence $(j_{2t})$ of $(j_{1t})$ such
that $u^2_{j_{2t}}\to u^2$ as $t \to \infty$.

We repeat this argument for  $\ell= 1, \dots, m$.  We obtain strictly increasing sequences $(j_{\ell t})_{t\geq 1}$ of positive integers for  $\ell = 1, \dots m$ such that $(j_{(\ell+1) t})_{t\geq 1}$ is a sub-sequence of $(j_{\ell t})_{t\geq 1}$ for $ \ell = 1, \dots m-1$, and elements $u^1, \dots u^m$ of $\cal{M}$ such that 
\[
u^\ell_{j_{\ell t}} \to u^\ell \mbox{ as } t \to \infty
\]
Since $(j_{mt})_{t\geq 1}$ is a sub-sequence of $(j_{\ell t})_{t\geq 1}$ for $\ell= 1, \dots m$, it follows that 
\[
u^\ell_{j_{m t}} \to u^\ell \mbox{ as } t \to \infty \mbox{ for } \ell= 1, \dots m.
\]

By assumption, $\cal{ H}$ is a projectively complete Hilbert function
space on $\Omega$ and hence $\Omega$ is projectively complete. 
According to Proposition \ref{6.13}, for a projectively complete $\Omega$,  
whenever $(c_{j})_{j=1}^\infty, (\lambda_j)_{j=1}^\infty$ are sequences in $\mathbb C$ and $\Omega$ respectively and
$$
c_{j} k_{\lambda_j} \to u \in \cal{ H} \setminus \{0\}, \; \text{as} \; j \to \infty,
$$
there exist $c \in \mathbb C$ and $\alpha \in \Omega$ such
that 
$$u=c k_{\alpha}.$$
Therefore, for $\ell=1, \dots, m$, there exist $c^\ell \in \mathbb C$ and $\beta_\ell \in pc(D)$ such
that $u^\ell=c^\ell k_{\beta_\ell}.$ Hence
$$
\frac{k_{\lambda_{j_{mt}^\ell}}}{\|k_{\lambda_{j_{mt}}^\ell}\|} \to c^\ell k_{\beta_\ell} \; \text{as} \; t \to \infty \; \mbox{ for } \ell= 1,\dots,m.
$$

By Definition \ref{6.4},
$\cal{V}_{E_j} \to \cal{M}$ means that 
$$ 
\| P_{\cal{M}} - P_{\cal{V}_{E_j}} \| \to 0 \text { as } j \to \infty.
$$
Since the $k_{\lam_j^\ell}$ span $\cal{V}_{E_j}$, for every positive integer $j$ and each $v \in \cal{H}$, there exist scalars $c_j^\ell, \ell= 1, \dots, m$, such that
\[
 P_{\cal{V}_{E_j}} v = \sum_{\ell=1}^m c_{j}^\ell \frac{k_{\lambda_{j}^\ell}}{\|k_{\lambda_j^\ell}\|}.
 \]
 Hence, for every $v \in \cal{M}$,  
$$
 v = P_{\cal{M}} v = \lim_{t \to \infty} P_{\cal{V}_{E_{j_{mt}}}} v = \lim_{t \to \infty} 
\sum_{\ell=1}^m c_{j_{mt}}^\ell  \frac{k_{\lambda_{j_{mt}}^\ell}}{\|k_{\lambda_{j_{mt}}^\ell}\|},
$$
Since 
\begin{equation} \label{kernels}
\frac{k_{ \lambda_{j_{mt}}^\ell} }{\|k_{\lambda_{j_{mt}}^\ell}\|} \to c^\ell k_{\beta_\ell} \; \text{as} \;\; t \to \infty,
\end{equation}
the support of $\cal{M}$ is the set $ \{\beta_1, \dots, \beta_m\}$ and $k_{\beta_1}, \dots , k_{\beta_m} \in \calm$.  Here we can have $\beta_i =\beta_j$ for some $i \neq j$. As we have shown in Example \ref{6.16}, the dimension of a local subspace $\cal{V}$ with $\supp \cal{V}$ consisting of a single point $\{\beta\}$ can be greater than one. Let $N$ be the number of  distinct points in $ \{\beta_1, \dots, \beta_m\}$, so that we may write $\mathrm{supp}~\calm = \{\beta_1, \dots, \beta_m\} = \{\alpha_1, \dots, \alpha_N\}$ for some points $\alpha_i \in D$.
For each integer $i$ such that $1\leq i\leq N$,  let $J_i$ be the set of indices $j\in[1,m]$ such that $\beta_j=\alpha_i$ and, for each positive integer $q$, let $F^i_q=\{\lambda^\ell_q:\ell\in J_i\}$, so that
$\cal{V}_{F^i_q} = \text{span}\{k_{\lambda^\ell_q}:\ell\in J_i\}$. Let  $\dim \cal{ V}_{F^i_{j_q}} = m_i=\# J_i$,  so that $\sum\limits^N_{i=1} m_i= m = \dim \cal M$.

By Lemma \ref{exist-subspace}, there exists  a subspace $\calm_i$ of $\calm$ which is  the limit of  $\cal{V}_{F^i_q}$  as $q\to\infty$ along some increasing sequence $(j_{t})_{t\geq 1}$ of positive integers. Note that
$\dim {\cal M}_i = \dim \cal{ V}_{F^i_{j_q}} = m_i$. By Proposition \ref {6.9},  $\cal{ M}_i$ is a local subspace over $D$.
By equation \eqref{kernels}, 
$
\frac{k_{ \lambda_{j_{mt}}^\ell} }{\|k_{\lambda_{j_{mt}}^\ell}\|} \to c^\ell k_{\alpha_i} \; \text{as} \;\; t \to \infty \text{ for } \ell \in J_i.
$
Thus the support of $\calm_i$ is $\{\alpha_i\}$, and $\calm_i$ is punctual for $i=1,\dots,N$.
Clearly $\sum\limits^N_{i=1} \dim \cal{ M}_i = \dim \cal{ M}$, and therefore
$\cal{ M} = \cal{ M}_1+\cal{ M}_2+\dots + \cal{M}_N$. 
\end{proof}

\end{document}